\theoremstyle{plain}
\newtheorem{thm}{Theorem}[section]
\newtheorem{prop}[thm]{Proposition}
\newtheorem{lem}[thm]{Lemma}
\newtheorem{cor}[thm]{Corollary}
\theoremstyle{definition}
\newtheorem{defi}[thm]{Definition}
\newtheorem{rem}[thm]{Remark}
\newtheorem{ex}[thm]{Example}
\newtheorem{notation}[thm]{Notation}
\newcommand{\ZZ}{\mathbb{Z}}
\newcommand{\NN}{\mathbb{N}}
\newcommand{\QQ}{\mathbb{Q}}
\newcommand{\RR}{\mathbb{R}}
\newcommand{\CC}{\mathbb{C}}
\newcommand{\PP}{\mathbb{P}}
\newcommand{\KK}{\mathbb{K}}
\newcommand{\CS}{\mathbb{C}^{*}}
\newcommand{\GR}{\mathrm{gr}}
\newcommand{\wt}[1]{\widetilde{#1}}
\newcommand{\wtkai}[1]{{#1}\hspace{-0.4cm}\raisebox{0.1cm}{$\widetilde{\quad}$}}
\newcommand{\ov}[1]{\overline{#1}}
\newcommand{\simto}{\overset{\sim}{\to}}
\newcommand{\DBkai}{\mathrm{{D}^{b}}}
\newcommand{\calM}{\mathcal{M}}
\newcommand{\calE}{\mathscr{E}}
\newcommand{\calN}{\mathcal{N}}
\newcommand{\eu}{\mathcal{E}}
\newcommand{\scrM}{\mathscr{M}}
\newcommand{\scrN}{\mathscr{N}}
\newcommand{\scrT}{\mathscr{T}}
\newcommand{\scrA}{\mathscr{A}}
\newcommand{\Ker}{\mathrm{Ker}}
\newcommand{\blal}{\bm{\alpha}}
\newcommand{\blbt}{\bm{\beta}}
\newcommand{\ble}{\bm{e}}
\newcommand{\RES}[1]{{}^\tau\hspace{-1mm}{#1}}
\newcommand{\TO}[1]{{}^T\hspace{-1mm}{#1}}
\newcommand{\ti}{z_{i}}
\newcommand{\Vto}{V_{z_{1}}}
\newcommand{\Mi}{M_{1}}
\newcommand{\pa}{\partial}
\newcommand{\bld}[1]{\boldsymbol{#1}}
\newcommand{\RESB}{{}^\tau}
\newcommand\reallywidehat[1]{%
\savestack{\tmpbox}{\stretchto{%
  \scaleto{%
    \scalerel*[\widthof{\ensuremath{#1}}]{\kern-.6pt\bigwedge\kern-.6pt}%
    {\rule[-\textheight/2]{1ex}{\textheight}}
  }{\textheight}%
}{0.5ex}}%
\stackon[1pt]{#1}{\tmpbox}%
}
\newcommand{\whmu}[1]{(#1)^{\wedge}}
\newcommand{\whmunk}[1]{{#1}^{\wedge}}
\newcommand{\lam}{\lambda}
\newcommand{\integ}{\mathrm{int}}
\newcommand{\irrF}{F^{\mathrm{irr}}}
\newcommand{\Cnt}{\CC^n_{z}}
\newcommand{\Pnt}{\PP^n_{z}}
\newcommand{\Cnx}{\CC^n_{\zeta}}
\newcommand{\Pnx}{\PP^n_{\zeta}}
\newcommand{\UV}{U^{\vee}}
\newcommand{\xizp}{{\zeta_{0}'}}
\newcommand{\zetn}{\zeta_{n}}
\newcommand{\expo}{\mathscr{E}}
\newcommand{\subjclass}[2][2010]{%
  \let\@oldtitle\@title%
  \gdef\@title{\@oldtitle\footnotetext{#1 \emph{Mathematics subject classification.} #2}}%
}
\newcommand{\keywords}[1]{%
  \let\@@oldtitle\@title%
  \gdef\@title{\@@oldtitle\footnotetext{\emph{Key words and phrases.} #1.}}%
}
\newcommand{\Addresses}{{
  \bigskip
  \footnotesize

  Takahiro Saito, \textsc{Research Institute for Mathematical Sciences, Kyoto University, Kyoto 606-8502, Japan}\par\nopagebreak
  \textit{E-mail address} : \texttt{takahiro@kurims.kyoto-u.ac.jp}

}}
\title{The Hodge filtration of a monodromic mixed Hodge module and the irregular Hodge filtration}
 \date{}
\author{Takahiro Saito}
\subjclass{14F10, 32S35, 32S40}
\keywords{D-module, Perverse sheaf, Mixed Hodge module, Mixed twistor D-module, Irregular Hodge filtration\\
\quad \quad This article will appear at Annales de l'Institut Fourier}
\begin{document}
\maketitle

\begin{abstract}
For an algebraic vector bundle $E$ over a smooth algebraic variety $X$,
a monodromic $D$-module on $E$ is decomposed into a direct sum of some $O$-modules on $X$.
We show that the Hodge filtration of a monodromic mixed Hodge module is decomposed with respect to the decomposition of the underlying $D$-module.
By using this result, we endow the Fourier-Laplace transform $\whmunk{M}$ of the underlying $D$-module $M$ of a monodromic mixed Hodge module with a mixed Hodge module structure.
Moreover, we describe the irregular Hodge filtration on $\whmunk{M}$ concretely and 
show that it coincides with the Hodge filtration at all integer indices.
\end{abstract}
\tableofcontents

\section{{I}ntroduction}
The present paper is a continuation of our previous paper~\cite{SaiMon}.

In this paper, we only deal with algebraic objects.
Let $E$ be an algebraic vector bundle over a smooth algebraic variety $X$ of finite type over $\CC$ and $\pi\colon E\to X$ be the projection.
Since $\pi$ is affine morphism, we will identify an algebraic $D$-module $M$ on $E$ with an algebraic $\pi_{*}D$-module $\pi_{*}M$ (see Lemma~\ref{haruni}).
We denote by $\eu_{E}$ the Euler vector field on $E$.
For a trivialization $\pi^{-1}(U)\simeq U\times \CC^n$ ($U\subset X$) and coordinates $(z_{1},\dots ,z_{n})$ of $\CC^n$,
$\eu_{E}$ is $\sum_{i=1}^{n}z_{i}\pa_{z_{i}}$.
Then, if an algebraic $D$-module $M$ on $E$ is monodromic (see Definition~\ref{mondef}),
we have a decomposition
\begin{align}\label{sirusi}
M=\bigoplus_{\beta\in \RR}M^{\beta},
\end{align}
where $M^{\beta}=\bigcup_{l\geq 0}\Ker((\eu_{E}-\beta)^l\colon \pi_{*}M\to \pi_{*}M)$ (see Proposition~\ref{decomprop}).
A monodromic mixed Hodge module is a mixed Hodge module on $E$ whose underlying $D$-module is monodromic.
Then, our first main result is the following.
\begin{thm}[Theorem~\ref{Fdecomp}]\label{owaranai}
Let $\calM$ be a monodromic mixed Hodge module on $E$ and $M$ the underlying $D$-module.
Then, the Hodge filtration $\{F_{p}M\}_{p\in \ZZ}$ of $M$ is decomposed with respect to the decomposition~(\ref{sirusi}):
\begin{align}\label{yasasiku}
F_{p}M=\bigoplus_{\beta\in \RR}F_{p}M^{\beta},
\end{align}
where $F_{p}M^{\beta}:=F_{p}M\cap M^{\beta}$.
\end{thm}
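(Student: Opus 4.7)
The strategy is to combine M.~Saito's strict compatibility of the Hodge filtration with the Kashiwara--Malgrange $V$-filtration with the explicit form of the $V$-filtration for a monodromic $D$-module along the zero section of $E$. Working locally on $X$, we trivialize $E\simeq X\times\CN$ with fiber coordinates so that $\eu_E=\sum_i z_i\pa_{z_i}$.

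Consider first the rank-one case $n=1$. For a monodromic $M$, a direct check (using uniqueness of the $V$-filtration) shows that $V^\alpha M=\bigoplus_{\beta\ge\alpha}M^\beta$, so the projection $V^\alpha M\twoheadrightarrow\gr^\alpha_V M$ admits the canonical splitting $M^\alpha\hookrightarrow V^\alpha M$. Saito's theorem gives the strictness $F_pV^\alpha M=F_pM\cap V^\alpha M$, and the induced filtration $F_p\gr^\alpha_V M=F_pV^\alpha M/F_pV^{>\alpha}M$ underlies the mixed Hodge module $\gr^\alpha_V\calM$ on $X$. If one can identify this induced filtration with $F_pM\cap M^\alpha$ under the canonical splitting, the theorem follows by descending induction on the minimum $\beta$ appearing in the decomposition of a given $m\in F_pM$: writing $m=\sum_j m^{\beta_j}$ with $\beta_1>\cdots>\beta_k$, we get $m\in F_pV^{\beta_k}M$, its image $m^{\beta_k}\in\gr^{\beta_k}_V M$ lies in $F_p\gr^{\beta_k}_V M=F_pM\cap M^{\beta_k}$, hence $m^{\beta_k}\in F_pM$; replacing $m$ by $m-m^{\beta_k}$ and iterating yields (\ref{yasasiku}).

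For general $n$, proceed by induction on $n$. Apply the rank-one result to the last coordinate $z_n$ to split $F_\bullet M$ by generalized $z_n\pa_{z_n}$-eigenvalues, provided one first verifies that a monodromic mixed Hodge module for the total Euler field $\eu_E$ induces usable decompositions with respect to $z_n\pa_{z_n}$ (plausibly via stability of the MHM category under the scaling $\CS$-action on the last factor). Each summand is then a monodromic mixed Hodge module on $X\times\CC^{n-1}$ for the partial Euler field $\sum_{i<n}z_i\pa_{z_i}$, and the induction hypothesis finishes the proof.

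\textbf{Main obstacle.} The crucial step is the identification $F_p\gr^\alpha_V M=F_pM\cap M^\alpha$ under the canonical splitting in the rank-one case; equivalently, showing that the $\alpha$-eigenspace projection $V^\alpha M\to M^\alpha$ carries $F_pV^\alpha M$ into $F_pM$. Since an element of $F_pV^\alpha M$ has the form $m^\alpha+m'$ with $m'\in V^{>\alpha}M$, controlling $m^\alpha$ requires more than strictness alone: this is really the theorem at the graded level, and its resolution must come from the fine structure of monodromic $D$-modules established in the author's previous paper \cite{SaiMon}, together with the MHM-theoretic determination of the Hodge filtration on $\gr^\alpha_V\calM$ via nearby cycles. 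A secondary point, the transfer from $\eu_E$-monodromicity to $z_n\pa_{z_n}$-monodromicity in the higher-rank induction, is also non-trivial.
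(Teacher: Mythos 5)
There is a genuine gap, and it is in the higher-rank induction step. You propose to ``apply the rank-one result to the last coordinate $z_{n}$ to split $F_{\bullet}M$ by generalized $z_{n}\pa_{z_{n}}$-eigenvalues,'' but an $\eu_{E}$-monodromic module need not admit any such splitting: requiring every section to be killed by a polynomial in $z_{n}\pa_{z_{n}}$ (for each $i$) is exactly the strictly stronger ``normal crossing type'' condition, and a general monodromic module fails it --- e.g.\ the regular holonomic module attached to a conic perverse sheaf that is locally constant on the $\CS$-orbits but not on the coordinate strata (say, one built from a hyperplane arrangement containing non-coordinate hyperplanes through $0$). So the point you call ``secondary'' and ``non-trivial'' (transfer from $\eu_{E}$-monodromicity to $z_{n}\pa_{z_{n}}$-monodromicity) is in fact false in general, and the whole induction as written does not get off the ground. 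Your rank-one discussion has a flagged hole as well (the identification $F_{p}\gr^{\alpha}_{V}M=F_{p}M\cap M^{\alpha}$ under the canonical splitting is precisely the theorem at the graded level, and strictness alone does not give it), but that case can simply be cited as Proposition~\ref{rankoneFdecom} from \cite{SaiMon}, so it is not the fatal issue.

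The paper's proof also inducts on $n$ along the divisor $D_{1}=\{z_{1}=0\}$, but it avoids coordinate-wise eigenvalue decompositions entirely. One restricts $\calM$ to $V_{1}=E\setminus D_{1}$ and uses the blow-up chart $V_{1}\simeq \CS_{s}\times U_{1}$, under which the total Euler field $\eu_{E}$ becomes the single vector field $s\pa_{s}$ (Lemma~\ref{eegekai}); thus $\calM|_{X\times V_{1}}$ is a rank-one monodromic mixed Hodge module and its Hodge filtration decomposes by the result of \cite{SaiMon} (Proposition~\ref{behomazun}). Strict specializability along $z_{1}$, in the form $F_{p}V^{>-1}_{z_{1}}M=j_{*}(F_{p}M|_{V_{1}})\cap V^{>-1}_{z_{1}}M$, then decomposes $F_{p}V^{>-1}_{z_{1}}M$ (Corollary~\ref{oboro}), hence the Hodge filtrations of $\calM[*D_{1}]$, $\calM[!D_{1}]$ and of Beilinson's maximal extension $\Xi_{z_{1}}\calM$. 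Finally, one proves that $\psi_{z_{1},\alpha}M$ and $\phi_{z_{1},1}M$ are again monodromic on $X\times\CC^{n-1}$ (Lemma~\ref{rapp}, Corollary~\ref{nobita}) --- this, not a coordinate-wise splitting of $M$ itself, is what feeds the induction hypothesis --- and recovers $\calM$ as the middle cohomology of the gluing complex $\psi_{z_{1},1}\calM\to \Xi_{z_{1}}\calM\oplus\phi_{z_{1},1}\calM\to\psi_{z_{1},1}\calM(-1)$, whose terms all have decomposed Hodge filtrations. If you want to salvage your approach, you would need to replace the $z_{n}\pa_{z_{n}}$-splitting step by some device of this kind that converts $\eu_{E}$-monodromicity into a one-variable statement.
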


\begin{rem}
Theorem~\ref{owaranai} was shown in a different way in a recent preprint \cite{ChenDirks} by Chen-Dirks.
\end{rem}

When the rank of $E$ is one, this result was already shown in \cite{SaiMon}.
We will prove it by using the fact that the pull-back of a monodromic $D$-module on $\CC^n$ by the blowing up morphism $\wt{\CC^n}\to \CC^n$ is a monodromic $D$-module on a line bundle (Lemma~\ref{rankoneFdecom}) and some results for monodromic mixed Hodge modules on line bundles in \cite{SaiMon}.

We consider the Fourier-Laplace transform $\whmunk{M}$ of a $D$-module $M$ on $E$, which is a $D$-module on the dual vector bundle $\pi^{\vee}\colon E^{\vee}\to X$.
When $E$ is trivial: $E\simeq X\times \CC^n$ (then, $E^{\vee}$ is also trivial: $E\simeq X\times {\check{\CC}}^n$) and $X$ is affine, we can identify $D$-modules on $E$ (resp. $E^{\vee}$) with the $\Gamma(E;D)$-modules (resp. $\Gamma(E^{\vee};D)$) of their global sections.
In this case,
$\whmunk{M}$ is $M$ as a set and its $\Gamma(E^{\vee};D)$-module structure is defined so that for $P\in \Gamma(X;D)$ and $1\leq i\leq n$ we have
\begin{align*}
    P\cdot \whmunk{m}=&\whmu{Pm},\\
    \zeta_{i}\cdot\whmunk{m}=&\whmu{\pa_{z_{i}}m},\quad \mbox{and}\\
    \pa_{\zeta_{i}}\cdot\whmunk{m}=&-\whmu{z_{i}m},
\end{align*}
where 
$(z_{1},\dots, z_{n})$ (resp. $(\zeta_{1},\dots,\zeta_{n})$) is the coordinate system of $\CC^n$ (resp. the dual ${\check{{\CC}}}^n$) and $\whmunk{m}$ is a global section in $\whmunk{M}$ corresponding to a global section $m\in M$ (see Lemma~\ref{march}).
Furthermore, in this case, for a $\Gamma(X;O)$-submodule $F\subset M$, we define a $\Gamma(X;O)$-submodule $\whmunk{F}\subset \whmunk{M}$ as
\[\whmunk{F}:=\{\whmunk{m}\in \whmunk{M}\ |\ m\in F\}.\]
Even in a general case (not necessary $E$ is trivial), we can define an $O_{X}$-submodule $\whmunk{F}$ of $\pi^{\vee}_{*}\whmunk{M}$ for an $O_{X}$-module $F$ of $\pi_{*}M$ (Definition~\ref{tetuko}).
Recall that the underlying $D$-module of a mixed Hodge module is regular holonomic.
In general, even if $M$ is regular, $\whmunk{M}$ may not be regular.
Therefore, even if $M$ is the underlying $D$-module of a mixed Hodge module,
$\whmunk{M}$ may not be so.
Nevertheless, it is known that when a $D$-module $M$ is monodromic and regular, so is $\whmunk{M}$ (see Lemma~\ref{tobuneko}).
Therefore, for the underlying $D$-module $M$ of a monodromic mixed Hodge module, $\whmunk{M}$ may be equipped with a mixed Hodge module structure.
Reichelt~\cite{ReiLau} gave definitions of mixed Hodge module structures on the homogeneous $A$-hypergeometric $D$-modules, which are expressed as the Fourier-Laplace transform of certain monodromic $D$-modules.
Moreover, Reichelt-Walther~\cite{RWWGKZ} defined a mixed Hodge module whose underlying $D$-module is the Fourier-Laplace transform of the underlying $D$-module of a monodromic mixed Hodge module.
By a different method from theirs, using Theorem~\ref{owaranai} (for a line bundle), we show the following.
\begin{prop}[Definition~\ref{ganbaruzo}]\label{blue}
For a monodromic mixed Hodge module $\calM$ on $E$,
we can naturally define a mixed Hodge module $\whmunk{\calM}$ on $E^{\vee}$ whose underlying $D$-module is $\whmunk{M}$.
\end{prop}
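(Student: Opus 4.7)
The plan is to construct the mixed Hodge module structure on $\whmunk{\calM}$ by transporting the filtration data on $\calM$ through the Fourier-Laplace transform using Theorem~\ref{owaranai}, and then to reduce the verification of the mixed Hodge module axioms to the rank-one case already handled in \cite{SaiMon}. Since the assertion is local on $X$, one may assume $E=X\times \CC^n$ is trivial with $X$ affine. By Lemma~\ref{tobuneko}, $\whmunk{M}$ is a regular holonomic monodromic $D$-module on $E^{\vee}$, so a mixed Hodge module structure on $\whmunk{M}$, if it exists, is uniquely determined by the Hodge filtration, weight filtration, and $\QQ$-structure.

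The natural choice is to define $F_{p}\whmunk{M}:=\whmunk{F_{p}M}$ and $W_{k}\whmunk{M}:=\whmunk{W_{k}M}$ via Definition~\ref{tetuko}. The crucial input is Theorem~\ref{owaranai}: the decomposition $F_{p}M=\bigoplus_{\beta}F_{p}M^{\beta}$ is what makes $\whmunk{F_{p}M}$ respect the analogous eigenspace decomposition of $\whmunk{M}$ and ensures that it is $O_{E^{\vee}}$-coherent and compatible with the order filtration on $D_{E^{\vee}}$, because each $M^{\beta}$ is an $O_{X}$-module whose Fourier-Laplace image is explicit and because the decomposition transports cleanly across the transform. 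The $\QQ$-perverse sheaf underlying $\whmunk{\calM}$ is provided by the geometric Fourier transform on monodromic perverse sheaves, whose compatibility with the $D$-module Fourier-Laplace on regular monodromic objects is classical; this gives us the desired $\QQ$-structure together with a canonical comparison isomorphism with $\mathrm{DR}(\whmunk{M})$.

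The main obstacle is verifying the mixed Hodge module axioms — most critically, the good behavior of $(F_{\bullet}\whmunk{M},W_{\bullet}\whmunk{M})$ under the $V$-filtration and the resulting nearby and vanishing cycle functors along smooth hypersurfaces of $E^{\vee}$. The strategy is to reduce to a line bundle situation, where this was established in \cite{SaiMon}. Concretely, one factors the Fourier-Laplace transform through $n$ intermediate partial transforms, one coordinate at a time, and uses the eigencomponent decomposition from Theorem~\ref{owaranai} to realize, at each step, the intermediate object as a monodromic mixed Hodge module on a line bundle over the remaining factors. Theorem~\ref{owaranai} is indispensable precisely here: without it, the intermediate objects need not be monodromic with respect to each individual coordinate direction, and the line bundle version of the construction from \cite{SaiMon} could not be applied step by step. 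The naturalness of the resulting $\whmunk{\calM}$ and its independence of the local trivialization then follow from the canonicity of the eigenspace decomposition and the naturality of the geometric Fourier transform.
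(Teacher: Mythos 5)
Your construction breaks down at two points. First, the filtration you transport is not the right one: setting $F_{p}\whmunk{M}:=\whmunk{F_{p}M}$ (and $W_{k}\whmunk{M}:=\whmunk{W_{k}M}$) does not produce a mixed Hodge module. Already in rank one the Hodge filtration of the Fourier transform constructed in \cite{SaiMon} carries eigenvalue-dependent shifts, see (\ref{dance}), and in general the answer is $F_{p}\whmunk{M}=\bigoplus_{\beta\in\RR}\whmu{F_{p+\lfloor-\beta\rfloor}M^{\beta}}$ (Corollary~\ref{nenmatu}), not $\bigoplus_{\beta}\whmu{F_{p}M^{\beta}}$. A minimal test case: for $\calM$ the constant Hodge module on $\CC_{z}$, $\whmunk{M}$ is the delta module at $\zeta=0$, whose Hodge filtration must grow step by step with powers of $\pa_{\zeta}$; the naive transform dumps everything into one step, so coherence of the Rees module and strict specializability along $\zeta=0$ fail. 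The weight filtration is likewise not the naive transform (note the Tate twists in the gluing data (\ref{pogy})). In fact the paper never transports filtrations directly even in rank one: Lemma~\ref{soutou} defines $\whmunk{\calM}$ on a line bundle through the gluing equivalence of Proposition~\ref{hoshinoouji}, i.e.\ by exchanging the nearby/vanishing-cycle data, and only afterwards identifies the resulting filtration.

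Second, the reduction ``one coordinate at a time'' is not available. Monodromicity with respect to the total Euler field $\eu_{E}=\sum_{i}z_{i}\pa_{z_{i}}$ does not imply monodromicity with respect to each $z_{i}\pa_{z_{i}}$ separately; that stronger property is exactly the ``normal crossing type'' condition of the paper, which a general monodromic module does not satisfy. Theorem~\ref{owaranai} cannot repair this: it only decomposes $F_{\bullet}M$ along the eigenvalues of the total Euler field and says nothing about local finiteness of the individual actions $z_{i}\pa_{z_{i}}$, so after a partial transform in $z_{1}$ the intermediate object need not be monodromic on the line bundle in the $z_{1}$-direction, and the rank-one construction of \cite{SaiMon} cannot be applied step by step. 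The paper circumvents this with Brylinski's identity (Lemma~\ref{scaret}), $\whmunk{M}\simeq H^{1}\iota^{\dag}(\whmu{H^{0}\omega_{\dag}H^{-n}p^{\dag}M})$, which expresses the $n$-dimensional transform through a single rank-one transform in the pairing variable $s$ on $\CC\times E^{\vee}$; Lemma~\ref{borero} shows $H^{0}\omega_{\dag}H^{-n}p^{\dag}M$ is monodromic in $s$, so Lemma~\ref{soutou} applies there, and Definition~\ref{ganbaruzo} then sets $\whmunk{\calM}:=H^{1}\iota^{\dag}(\whmu{H^{0}\omega_{\dag}H^{-n}p^{\dag}\calM})(1)$ using the mixed-Hodge-module functors $H^{-n}p^{\dag}$, $H^{0}\omega_{\dag}$, $H^{1}\iota^{\dag}$ (so no axioms need to be reverified by hand); independence of the trivialization is handled via Lemma~\ref{suza} and Remark~\ref{sikan}. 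Note also that this construction only requires the line-bundle decomposition (Proposition~\ref{rankoneFdecom}), not the full Theorem~\ref{owaranai}.
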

When $E$ is a line bundle, this result was already proved in \cite{SaiMon}.
To show it in the general case, we will describe $\whmunk{M}$ in terms of the Fourier-Laplace transform on a line bundle (Lemma~\ref{scaret}) and use the results in \cite{SaiMon}.
We can describe the Hodge filtration of $\whmunk{M}$ concretely below (Corollary~\ref{kitaku}).

Next, we consider the irregular Hodge filtrations.
For a holomorphic function $f$ on $E$,
the exponential $D$-module $\calE^{f}$ (
i.e. the structure sheaf $O_{E}$ with the connection $g\mapsto dg+df\cdot g$) is not regular in general.
Since the underlying $D$-module of a mixed Hodge module is regular,
we can not apply the theory of mixed Hodge module to endow it with a natural Hodge filtration.
Nevertheless, for the underlying $D$-module $M$ of a mixed Hodge module $\calM$ on $E$,
Esnault-Sabbah-Yu~\cite{ESY} and Sabbah-Yu~\cite{SabYuIrrFil} defined a natural filtration $\irrF_{\alpha+\bullet}(M\otimes \calE^{f})$ of the exponentially twisted $D$-module $M\otimes \calE^{f}$ for $\alpha\in [0,1)$, called the irregular Hodge filtration.
Note that combining $\irrF_{\alpha+\bullet}(M\otimes \calE^{f})$ for all $\alpha\in [0,1)$, we can consider the filtration $\{\irrF_{\gamma}(M\otimes \calE^{f})\}_{\gamma\in \RR}$ indexed by $\RR$, not only $\ZZ$.
These constructions were generalized as the ``irregular Hodge theory'' in \cite{IrrHodge} and 
the category of irregular Hodge modules was established, which contains mixed Hodge modules and ``exponentially twisted mixed Hodge modules'',
as a full subcategory of the category of integrable mixed twistor $D$-modules introduced by Mochizuki~\cite{MTM}.
By using the pushforward and pullback functors between the category of irregular Hodge modules (see Proposition~\ref{freeway}),
we obtain the ``Fourier-Laplace transform of a mixed Hodge module'' in the category of irregular Hodge modules (\ref{hyakusyo}),
and we thus obtain natural filtrations (also called the irregular Hodge filtrations) on the Fourier-Laplace transforms and their stalks: the twisted de Rham cohomologies, so that they are generalizations of the ones defined in Deligne~\cite{DelIrr}, Yu~\cite{YuIrr} and Sabbah~\cite{SabFoutwo}.
However,
these filtrations are not the Hodge filtration of a mixed Hodge module in general.
In general, it is difficult to compute the irregular Hodge filtrations concretely.
However, in our situation, due to Theorem~\ref{owaranai} we can describe the irregular Hodge filtration (for $\alpha\in [0,1)$) $\{\irrF_{\alpha+p}\whmunk{M}\}_{p\in \ZZ}$ of $\whmunk{M}$ in terms of the original Hodge filtration $F_{\bullet}M$ as follows.
\begin{thm}[Thoerem~\ref{prosyuto}]\label{irrgutai}
For $\alpha\in [0,1)$ and $p\in \ZZ$,
we have
\[\irrF_{\alpha+p}\whmunk{M}=\bigoplus_{\beta\in \RR}\whmu{F_{p+\lfloor\alpha-\beta\rfloor}M^{\beta}},\]
where $\lfloor\alpha-\beta\rfloor$ is the largest integer less than or equal to $\alpha-\beta$.
\end{thm}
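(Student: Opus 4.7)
The plan is to use the decomposition afforded by Theorem~\ref{owaranai} to reduce the assertion to a per-$\beta$-eigenvalue statement, and then to use Lemma~\ref{scaret} to reduce that statement to the line-bundle case treated in~\cite{SaiMon}. Since each $M^{\beta}$ is intrinsically defined by the action of $\eu_{E}$, both the Fourier-Laplace transform $\whmunk{M}$ and, I expect, its irregular Hodge filtration split compatibly, giving $\whmunk{M}=\bigoplus_{\beta}\whmu{M^{\beta}}$. Combining this with the decomposition $F_{p}M=\bigoplus_{\beta}F_{p}M^{\beta}$ of Theorem~\ref{owaranai}, the formula reduces to proving, for each $\beta\in\RR$ separately,
\begin{equation*}
\irrF_{\alpha+p}\whmu{M^{\beta}}=\whmu{F_{p+\lfloor\alpha-\beta\rfloor}M^{\beta}}.
\end{equation*}

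The first genuinely nontrivial step is to show that $\irrF_{\alpha+p}$ respects the $\beta$-decomposition. I would deduce this by realizing each $\beta$-summand as the image of a morphism of irregular Hodge modules, for instance via a polynomial in $\eu_{E}$ projecting onto the generalized $\beta$-eigenspace, and then invoking Proposition~\ref{freeway} for the functoriality of $\irrF_{\bullet}$ under the relevant categorical operations. Once this compatibility is secured, Lemma~\ref{scaret} realizes the Fourier-Laplace transform on $E$ through a chain of pullbacks and pushforwards involving a line-bundle Fourier-Laplace transform; the same functoriality of $\irrF_{\bullet}$ then transports the computation to the line-bundle setting. There, for a monodromic mixed Hodge module concentrated at the single eigenvalue $\beta$, the desired formula was essentially established in~\cite{SaiMon}, with the shift $\lfloor\alpha-\beta\rfloor$ arising from the $V$-filtration exponents along the divisor used to define $\irrF_{\bullet}$, together with the normalization $\alpha\in[0,1)$ which forces the floor function to appear.

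The main obstacle will be the first step: verifying that the globally defined irregular Hodge filtration on $\whmunk{\calM}$ genuinely splits along the monodromic decomposition. The irregular Hodge filtration is defined through the integrable mixed twistor $D$-module structure on $\whmunk{\calM}$, and compatibility with an internal $D$-module decomposition requires showing that the Euler-eigenspace projectors lift to morphisms in the category of irregular Hodge modules. Once this is in hand, the remaining steps are essentially a transport of structure via Lemma~\ref{scaret}, Proposition~\ref{freeway}, and the single-$\beta$ formula from~\cite{SaiMon}, and the bookkeeping of the shift $\lfloor\alpha-\beta\rfloor$ through the blow-up in Lemma~\ref{scaret} should be routine once the splitting is established.
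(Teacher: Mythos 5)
Your reduction strategy has two genuine gaps. First, the splitting of $\irrF_{\alpha+\bullet}$ along the Euler decomposition cannot be obtained from ``projectors given by polynomials in $\eu_{E}$'' viewed as morphisms of irregular Hodge modules: there are in general infinitely many exponents $\beta$, so no single polynomial in the Euler field isolates one generalized eigenspace, and in any case the action of $\eu_{E}$ does not commute with the $D$-module structure (it is not even a $D_{E}$-linear endomorphism, let alone a morphism in $\mathrm{IrrMHM}$), while Proposition~\ref{freeway} only says that certain pushforwards and non-characteristic pullbacks preserve the property of being an irregular Hodge module --- it gives neither a formula for how $\irrF$ transforms nor any statement about internal direct-sum decompositions. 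Note also that each $M^{\beta}$ is only an $O_{X}$-module (Remark~\ref{basie}), so $\whmu{M^{\beta}}$ is not itself the Fourier--Laplace transform of a $D$-module and the per-$\beta$ identity $\irrF_{\alpha+p}\whmu{M^{\beta}}=\whmu{F_{p+\lfloor\alpha-\beta\rfloor}M^{\beta}}$ has no independent meaning; the compatibility of $\irrF$ with the decomposition is precisely part of what must be proved. Second, the rank-one input you want to quote is not in \cite{SaiMon}: that paper constructs the mixed Hodge module $\whmunk{\calM}$ and proves the Hodge-filtration formula (\ref{dance}), but says nothing about the irregular Hodge filtration. In the present paper the coincidence of the two filtrations in the line-bundle case is deduced from Theorem~\ref{prosyuto} (applied with $n=1$) together with (\ref{dance}), so using that coincidence to prove Theorem~\ref{prosyuto} is circular. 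Likewise, transporting $\irrF$ through the chain of Lemma~\ref{scaret} is not routine bookkeeping: it is the content of Lemmas~\ref{koukyou}--\ref{tanktop} (non-characteristic pullbacks, a projection formula for mixed twistor modules, and the behaviour of $\irrF$ under $\iota^{*}$ for monodromic modules), and in the paper this transport is used to prove Theorem~\ref{takarajima}, not Theorem~\ref{prosyuto}. (Incidentally, Lemma~\ref{scaret} involves no blow-up; the blow-up appears only in the proof of Theorem~\ref{Fdecomp}.)

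The paper's proof runs in the opposite direction: it computes $\irrF_{\alpha+\bullet}\whmunk{M}$ directly from the definition via the rescaled module. Using the explicit $R^{\integ}$-module description of $\whmunk{\scrM}$ (Lemma~\ref{walkin}), in particular $\lam^{2}\pa_{\lam}\whmunk{m}=\whmu{(\lam^{2}\pa_{\lam}+\lam\eu)m}$, one checks that for $m\in F_{p}M^{\beta}$ the section $\tau^{i}\otimes\whmu{m\lam^{p}}$ is annihilated by a power of $\tau\eth_{\tau}-(i-p-\beta)\lam$; strictness of the graded pieces then forces $V_{\tau}^{\gamma}(\RES{(\whmunk{\scrM})})=\bigoplus_{i-p-\beta\geq\gamma}\tau^{i}\otimes\whmu{F_{p}M^{\beta}\lam^{p}}$ (Lemma~\ref{golgo}), and restricting to $\tau=\lam$ and taking $\lam$-graded pieces converts the condition $i-p-\beta\geq-\alpha$ into $p\leq i+\lfloor\alpha-\beta\rfloor$, which is exactly the asserted formula. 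Theorem~\ref{owaranai} enters only through the decomposition $\scrM=\bigoplus_{\beta,p}F_{p}M^{\beta}\lam^{p}$, and the eigenvalue splitting of $\irrF$ comes out of this computation rather than going into it. If you wish to salvage your route you would first have to establish the rank-one irregular Hodge computation yourself, which is the same rescaling argument, so nothing would be gained.
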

We will prove it by describing the rescaled module (Subsection~\ref{subirrH}) and its Kashiwara-Malgrange filtration explicitly (Lemma~\ref{golgo}).

Eventually, on $\whmunk{M}$, we have two Hodge filtrations: the first one is the Hodge filtration $\{F_{p}\whmunk{M}\}_{p\in \ZZ}$ defined in Proposition~\ref{blue} and the second one is the irregular Hodge filtration $\{\irrF_{\alpha+p}\whmunk{M}\}_{p\in \ZZ}$ (for $\alpha\in [0,1)$).
In fact, these filtrations are equal at all integer indices.

\begin{thm}[Theorem~\ref{takarajima}]\label{konokimochiha}
For $p\in \ZZ$, we have
\[\irrF_{p}\whmunk{M}=F_{p}\whmunk{M}.\]
\end{thm}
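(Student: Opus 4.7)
The plan is to derive the theorem as a direct comparison of two explicit presentations of the filtrations on each monodromic summand. Applying Theorem~\ref{irrgutai} with $\alpha=0$ gives the concrete formula
\[\irrF_{p}\whmunk{M}=\bigoplus_{\beta\in\RR}\whmu{F_{p+\lfloor-\beta\rfloor}M^{\beta}},\]
while the Hodge filtration $F_{p}\whmunk{M}$ of the mixed Hodge module constructed in Proposition~\ref{blue} admits, by Corollary~\ref{kitaku}, a parallel componentwise description. Hence the theorem reduces to recognising that the two $\beta$-dependent shift functions coincide.

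I would first treat the case where $E$ is a line bundle. Here the mixed Hodge module structure on $\whmunk{\calM}$ and its Hodge filtration were already produced in \cite{SaiMon}, together with the explicit componentwise formula $F_{p}\whmunk{M}=\bigoplus_{\beta}\whmu{F_{p+\lfloor-\beta\rfloor}M^{\beta}}$. Matching this against $\irrF_{p}\whmunk{M}$ in the formula above yields the equality in rank one.

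To pass from a line bundle to a general vector bundle $E$, I would invoke the description of $\whmunk{M}$ as an iterated Fourier-Laplace transform in line bundle directions (Lemma~\ref{scaret}), which is the same mechanism used to construct the mixed Hodge module structure in Proposition~\ref{blue}. Both filtrations are compatible with this reduction: the Hodge filtration by the very construction in Proposition~\ref{blue}, and the irregular Hodge filtration through the functoriality of irregular Hodge modules under pushforward and pullback (Proposition~\ref{freeway}), combined with the decomposition~(\ref{yasasiku}) of Theorem~\ref{owaranai} which guarantees that the $\beta$-decomposition is respected by the Hodge structure. Consequently the line bundle identity propagates summand-by-summand to arbitrary $E$.

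The principal obstacle is not conceptual but a bookkeeping one: one must verify that the shift function appearing in the line bundle formula of \cite{SaiMon}, in the rescaling construction underlying the proof of Theorem~\ref{irrgutai} (Lemma~\ref{golgo}), and in Corollary~\ref{kitaku} all coincide with $\beta\mapsto\lfloor-\beta\rfloor=-\lceil\beta\rceil$, in particular at the integer values of $\beta$ where $\lfloor-\beta\rfloor=-\beta$. Once these conventions are aligned, no further Hodge-theoretic input is required, and the equality $\irrF_{p}\whmunk{M}=F_{p}\whmunk{M}$ at integer indices drops out.
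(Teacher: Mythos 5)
Your opening reduction is circular: Corollary~\ref{kitaku} (= Corollary~\ref{nenmatu}) is not an independent description of $F_{p}\whmunk{M}$ — in the paper it is \emph{deduced} from the very equality $\irrF_{p}\whmunk{M}=F_{p}\whmunk{M}$ together with Theorem~\ref{irrgutai}, precisely because the Hodge filtration of the pushforward in Definition~\ref{ganbaruzo} cannot be computed directly. So "comparing the two shift functions" via Corollary~\ref{kitaku} assumes what is to be proved. Your rank-one step, by contrast, is sound and is exactly the paper's equation~(\ref{senriyukumono}): on a line bundle the explicit formula~(\ref{dance}) from \cite{SaiMon} matches the rescaling computation of Theorem~\ref{prosyuto}, so there the two filtrations agree.

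The genuine gap is in the propagation from the line bundle to general $E$. Proposition~\ref{freeway} only asserts that projective pushforward and non-characteristic pullback preserve the category of irregular Hodge modules; it gives no formula for how the irregular Hodge \emph{filtration} transforms, so "both filtrations are compatible with this reduction" does not follow from it. What is actually required, and what occupies most of the paper's proof, is (a) an $R$-module/mixed-twistor analogue of Lemma~\ref{scaret}, namely $\whmunk{\scrM}\simeq H^{1}\TO{\iota}^{!}\whmu{H^{0}\TO{\omega}_{*}p^{*}\scrM}(1)$ (Lemma~\ref{moucyotto}), whose proof needs the non-characteristic tensor comparison, base change and projection formula for mixed twistor $D$-modules (Lemmas~\ref{koukyou}, \ref{kaitaku}, \ref{projform}, \ref{abare}); and (b) the explicit behaviour of the irregular Hodge filtration under the non-characteristic restriction $\iota$, namely $\irrF_{\alpha+\bullet}\iota^{*}M_{1}=\iota^{*}\irrF_{\alpha+\bullet+1}M_{1}$ (Lemma~\ref{tanktop}), which uses monodromicity of the rescaled module along $\tau$ rather than mere functoriality. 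On the Hodge side the compatibility does hold by the standard formula for the non-characteristic pullback of a mixed Hodge module, but without (a) and (b) the irregular side of your reduction is unsupported, and Theorem~\ref{owaranai} alone (the $\beta$-decomposition of $F_{\bullet}M$) cannot substitute for them.
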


In particular, we can say that ``the irregular Hodge filtration indexed by $\ZZ$ of the Fourier-Laplace transform of the monodromic mixed Hodge module is in fact the Hodge filtration''.
Remark that the irregular Hodge filtration $\{\irrF_{\gamma}\whmunk{M}\}_{\gamma\in \RR}$ jumps at also non-integer indices in general and thus the filtration indexed by $\RR$ has more information.
For example, it plays an important role when considering the tensor products (see Proposition~3.18 of \cite{IrrHodge})

Combining this result with Theorem~\ref{irrgutai},
the Hodge filtration (defined by Proposition~\ref{blue})
can be described as follows.
\begin{cor}[Corollary~\ref{nenmatu}]\label{kitaku}
For $p\in \ZZ$, we have
\[F_{p}\whmunk{M}=\bigoplus_{\beta\in \RR}\whmu{F_{p+\lfloor-\beta\rfloor}M^{\beta}}.\]
\end{cor}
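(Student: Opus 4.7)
The plan is to derive this statement as a direct formal consequence of Theorem~\ref{konokimochiha} combined with Theorem~\ref{irrgutai}. The former identifies the Hodge filtration $F_{p}\whmunk{M}$ coming from the mixed Hodge module structure of Proposition~\ref{blue} with the integer-indexed piece $\irrF_{p}\whmunk{M}$ of the irregular Hodge filtration, and the latter gives an explicit expression for $\irrF_{\alpha+p}\whmunk{M}$ in terms of the decomposition (\ref{sirusi}) and the original Hodge filtration $F_{\bullet}M$.

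Concretely, I would specialize Theorem~\ref{irrgutai} to the integer case $\alpha=0$. Since $\lfloor 0-\beta\rfloor=\lfloor -\beta\rfloor$ for every $\beta\in\RR$, this yields
\[
\irrF_{p}\whmunk{M}=\bigoplus_{\beta\in \RR}\whmu{F_{p+\lfloor -\beta\rfloor}M^{\beta}}.
\]
Then I would invoke Theorem~\ref{konokimochiha} to rewrite the left-hand side as $F_{p}\whmunk{M}$, obtaining the asserted identity.

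There is no real obstacle in the proof of the corollary itself: once Theorems~\ref{konokimochiha} and~\ref{irrgutai} are in place, the statement is a matter of substitution, and both the direct-sum decomposition and the hat operation $\whmu{\cdot}$ are already set up compatibly for the equality to make sense term-by-term. The substantive work sits in the two input theorems, namely the explicit description of the Kashiwara--Malgrange filtration on the rescaled module (used for Theorem~\ref{irrgutai}) and the comparison of the two Hodge filtrations on $\whmunk{M}$ (used for Theorem~\ref{konokimochiha}); the corollary just records their combined content in a closed form.
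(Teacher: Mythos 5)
Your proposal is correct and matches the paper's own derivation: the corollary is obtained exactly by specializing Theorem~\ref{prosyuto} (Theorem~\ref{irrgutai}) to $\alpha=0$, where $\lfloor-\beta\rfloor=\lfloor 0-\beta\rfloor$, and then replacing $\irrF_{p}\whmunk{M}$ by $F_{p}\whmunk{M}$ via Theorem~\ref{takarajima} (Theorem~\ref{konokimochiha}). Nothing further is needed.
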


Finally, we consider the irregular Hodge filtration ``at infinity''.
Let $\wt{E}$ (resp. $\wt{E^{\vee}}$) be the projective compactification of $E$ (resp. $E^{\vee}$).
We denote by $j\colon E\hookrightarrow \wt{E}$ and $j^{\vee}\colon E^{\vee}\hookrightarrow \wt{E^{\vee}}$ the inclusions and $D_{\infty}$ (resp. $D_{\infty}^{\vee}$) the divisor $\wt{E}\setminus E$ (resp. $\wt{E^{\vee}}\setminus E^{\vee}$).
For a $D$-module $M$ on $E$ let $N$ be the pushforward $j_{*}M$ of $M$ by $j$, which is a $D$-module on $\wt{E}$ (recall that our $D$-modules are algebraic).
Then, we can consider the Fourier-Laplace transform $\whmunk{N}$ of $N$, which is a $D$-module on $\wt{E^{\vee}}$.
We will see that this is equal to $j^{\vee}_{*}\whmunk{M}$ (Lemma~\ref{train}).
For a mixed Hodge module $\calM$ on $E$ and its extension $\calN$ to $\wt{E}$ whose underlying $D$-module is $N$,
we can also consider the irregular Hodge filtration on $\whmunk{N}(=j_{*}^{\vee}\whmunk{M})$.
Then, we can compute explicitly the irregular Hodge filtration on $\whmunk{N}$ (Theorem~\ref{dengaku}) and get the following result.

\begin{cor}[Corollary~\ref{sincyan}]\label{dekireba}
The irregular Hodge filtration $\{F_{\alpha+p}\whmunk{N}\}_{p\in \ZZ}$ for $\alpha\in [0,1)$ has the strict specializability property along $D_{\infty}^{\vee}$.
\end{cor}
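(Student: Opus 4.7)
My plan is to derive the strict specializability directly from the explicit formula for the irregular Hodge filtration on $\whmunk{N}$ provided by Theorem~\ref{dengaku}. The strategy is to describe both the irregular Hodge filtration $\irrF_{\alpha+p}\whmunk{N}$ and the Kashiwara--Malgrange $V$-filtration along $D_{\infty}^{\vee}$ in terms of the monodromic decomposition $M=\bigoplus_{\beta}M^{\beta}$ of the underlying $D$-module, and then verify the strictness conditions piece by piece.

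First, I would reduce to a local model by trivializing $E$ over an affine open $U\subset X$, so that $E^{\vee}|_{U}\simeq U\times\check{\CC}^n$, and by choosing coordinates on $\wt{E^{\vee}}|_{U}\simeq U\times\PP^n$ so that $D_{\infty}^{\vee}$ is locally defined by $w=0$ (e.g.\ $w=1/\zeta_n$). By Lemma~\ref{train} we have $\whmunk{N}\simeq j^{\vee}_{*}\whmunk{M}$, so locally $\whmunk{N}$ is the localization of $\whmunk{M}$ along $w$. Under the Fourier--Laplace correspondence the Euler field $\eu_{E}$ on $E$ transforms into $-n-\eu_{E^{\vee}}$, so the monodromic decomposition $\whmunk{M}=\bigoplus_{\beta}\whmu{M^{\beta}}$ is a decomposition into generalized eigenspaces for $\eu_{E^{\vee}}$ with eigenvalue $-n-\beta$. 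From this, the $V$-filtration of $\whmunk{N}$ along $w=0$ can be written out explicitly in terms of this decomposition together with the $w$-degree, paralleling the computation done in \cite{SaiMon} for line bundles.

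Next, I would combine this description with the formula from Theorem~\ref{dengaku}, which expresses $\irrF_{\alpha+p}\whmunk{N}$ near $D_{\infty}^{\vee}$ as a sum, indexed by $\beta\in\RR$ and powers of $w^{-1}$, of explicit terms involving $F_{p+\lfloor\alpha-\beta\rfloor}M^{\beta}$. Matching the two descriptions shows that both the irregular Hodge filtration and the $V$-filtration are ``diagonal'' with respect to the bigrading coming from the monodromic decomposition and the $w$-degree. The defining conditions of strict specializability---compatibility of $\irrF_{\bullet}$ with $V^{\gamma}$, with $\GR^{V}_{\gamma}\whmunk{N}$, and with the canonical and variation morphisms---then reduce to a direct check on each bigraded piece, where they are essentially tautological.

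The main obstacle I anticipate is the combinatorial book-keeping involving the floor function $\lfloor\alpha-\beta\rfloor$ and the monodromic index $\beta$ as these cross integer boundaries: one needs to verify that the jumps of $\irrF_{\alpha+\bullet}$ align precisely with the jumps of the $V$-filtration and that no torsion is introduced when taking $\GR^{V}$. The crucial input that should make this work is the fact, already exploited for line bundles in \cite{SaiMon}, that $\eu_{E}$ acts on each $M^{\beta}$ as $\beta$ plus a nilpotent endomorphism; together with Theorem~\ref{owaranai}, which guarantees that the Hodge filtration splits along the monodromic decomposition, this pins down a compatibility on each generalized eigenspace that is strong enough to imply strictness.
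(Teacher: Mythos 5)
Your proposal follows essentially the same route as the paper: working locally with $w=\zeta_0'=1/\zeta_n$, describing the Kashiwara--Malgrange filtration along $D_\infty^\vee$ through the monodromic decomposition (this is Lemma~\ref{kinomi}), feeding in the explicit formula of Theorem~\ref{dengaku} (via Corollary~\ref{kenmei2cor}), and checking the two conditions of Definition~\ref{tron} term by term, where the only real work is exactly the floor-function/$j_\beta$ bookkeeping you flag (handled in the paper by Lemma~\ref{mikkiikai} together with $\partial_{z_n}F_pM^\beta\subset F_{p+1}M^{\beta-1}$). So the plan is correct and matches the paper's argument; the checks are not quite ``tautological,'' but the obstacle you identify is precisely where the paper's proof does its computation.
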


For the definition of the strict specializability,
see Definition~\ref{tron}.
Remark that this result is shown in a more general setting in a recent preprint by Mochizuki~\cite{MochiResc}.

By the definition of $\calN$, we have $\calN=\calN[*D_{\infty}]$ (see Lemma~\ref{hiphop}).
For the definition of ``$[*D_{\infty}]$'', see Proposition~\ref{localizfac}.
In particular, the Hodge filtration of $\calN$ is described in terms of $F_{\bullet}V^{0}_{D_{\infty}}N$,
where $V^{\bullet}_{D_{\infty}}N$ is the Kashiwara-Malgrange filtration of $N$ along $D_{\infty}$. 
For the $D$-module with a filtration $(\whmunk{N},F_{\bullet}\whmunk{N})$,
we denote by 
$(\whmunk{N},F_{\bullet}\whmunk{N})[*D_{\infty}^{\vee}]$ the $D$-module $\whmunk{N}[*D_{\infty}^{\vee}]:=\whmunk{N}(*D_{\infty}^{\vee})(=\whmunk{N})$ with the filtration 
$F_{\bullet}(\whmunk{N}[*D_{\infty}^{\vee}])$
defined by the same formula as usual ``$[*{D_{\infty}^{\vee}}]$'' for the localization of a mixed Hodge module.
Then, we have the following.
\begin{cor}[Corollary~\ref{nonnonbaa}]\label{aozora}
We have
\[(\whmunk{N},\irrF_{\bullet}\whmunk{N})=
(\whmunk{N},\irrF_{\bullet}\whmunk{N})[*D_{\infty}^{\vee}].\]
\end{cor}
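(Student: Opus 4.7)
The plan is to combine two ingredients: the fact that the underlying $D$-module $\whmunk{N}$ already equals its localization $\whmunk{N}(*D_{\infty}^{\vee})$, and the strict specializability of the irregular Hodge filtration along $D_{\infty}^{\vee}$ established in Corollary~\ref{dekireba}.

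First, I would observe that by Lemma~\ref{train} we have $\whmunk{N}=j^{\vee}_{*}\whmunk{M}$; since $j^{\vee}\colon E^{\vee}\hookrightarrow \wt{E^{\vee}}$ is an open immersion with complement $D_{\infty}^{\vee}$, this immediately gives $\whmunk{N}=\whmunk{N}(*D_{\infty}^{\vee})$ at the level of underlying $D$-modules. Hence both sides of the desired equality agree as $D$-modules, and only the equality of filtrations remains to be checked.

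Next, I would unpack the definition of $\irrF_{\bullet}(\whmunk{N}[*D_{\infty}^{\vee}])$: by construction, it is given by the same formula as the Hodge filtration of the $[*D_{\infty}^{\vee}]$-localization of a mixed Hodge module. Thus, for a local defining equation $t$ of $D_{\infty}^{\vee}$, one has (schematically)
\[\irrF_{p}(\whmunk{N}[*D_{\infty}^{\vee}])=\sum_{i\geq 0}\partial_{t}^{i}\,\irrF_{p-i}V^{>-1}_{D_{\infty}^{\vee}}\whmunk{N},\]
where $V^{\bullet}_{D_{\infty}^{\vee}}\whmunk{N}$ denotes the Kashiwara--Malgrange filtration. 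The desired equality is therefore equivalent to showing that $\irrF_{p}\whmunk{N}$ is generated, through the action of the normal $\partial_{t}$, by the restriction of the filtration to $V^{>-1}_{D_{\infty}^{\vee}}\whmunk{N}$.

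The key input is the explicit description of both $\irrF_{\bullet}\whmunk{N}$ and $V^{\bullet}_{D_{\infty}^{\vee}}\whmunk{N}$ already obtained in the proofs of Theorem~\ref{dengaku} and Corollary~\ref{dekireba}. From those explicit formulas the required generation property follows by a direct check. The main obstacle is the bookkeeping of how the irregular Hodge indices interact with the $V$-filtration jumps along $D_{\infty}^{\vee}$; since strict specializability is already established and the underlying $D$-module is manifestly localized, however, this reduces to essentially the same verification used to pass from the MHM definition of $[*D]$ back to the original filtration on an object already of the form $\calN=\calN[*D]$ (cf.\ the analogous fact for $\calN$ in Lemma~\ref{hiphop}), and yields the corollary without essentially new difficulties.
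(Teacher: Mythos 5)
Your overall strategy is the same as the paper's: reduce to the charts at infinity, use the explicit description of the irregular Hodge filtration there (Theorem~\ref{dengaku}, Corollary~\ref{kenmei2cor}) together with the strict specializability of Corollary~\ref{sincyan} (= Corollary~\ref{dekireba}), and deduce the generation property that characterizes $[*D_{\infty}^{\vee}]$. This is exactly how the paper proceeds, via Corollary~\ref{cyobi}, of which Corollary~\ref{nonnonbaa} is a restatement. However, there is a concrete error in your reduction: the filtration of the localization is \emph{not} given by $\sum_{i\geq 0}\pa_{t}^{i}\,\irrF_{p-i}V^{>-1}\whmunk{N}$; by Proposition~\ref{localizfac}(v) the formula for $[*D]$ uses $V^{\geq -1}$, while the formula with $V^{>-1}$ is the one for the dual localization $[!D]$. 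This is not a harmless slip: the generation statement from $V^{>-1}$ that you propose to verify is false for a localized module in general (already for $\CC[\zeta^{\pm 1}]$ one has $\sum_{k}\pa_{\zeta}^{k}V^{>-1}=\CC[\zeta]\neq\CC[\zeta^{\pm1}]$, since the graded piece $\GR^{-1}_{V}$ is not reached by $\pa_{\zeta}$ from $\GR^{0}_{V}$), so the ``direct check'' you defer to could not succeed as stated. With the correct threshold $V^{\geq -1}$ the reduction is right, and then, as in the paper's proof of Corollary~\ref{cyobi}(ii), the generation property does follow from strict specializability by the standard induction on the $V$-filtration, since the underlying module is already its stupid localization.

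A second, smaller point: you leave the actual verification entirely implicit (``follows by a direct check\dots without essentially new difficulties''), but this check is the real content of the paper's Corollaries~\ref{kenmei2cor} and \ref{cyobi}. In particular, the compatibility of the filtration with $V^{\geq-1}$ (part (i) of Corollary~\ref{cyobi}, i.e.\ $\irrF_{\alpha+p}V^{-1}_{\xizp}=\xizp^{-1}\irrF_{\alpha+p}V^{0}_{\xizp}$) rests on the specific combinatorics of the exponents $j_{\beta}$ — namely $j_{\beta}=j_{\beta-1}=0$ whenever $j_{\beta}+\beta+n\geq 0$ (Lemma~\ref{mikkiikai}) — combined with the identity $\xizp^{-1}\whmunk{m}=\whmu{\pa_{z_{n}}m}$. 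Also note that you cannot simply appeal to the analogous MHM fact for $\calN=\calN[*D_{\infty}]$: at this stage $\irrF_{\bullet}\whmunk{N}$ is not yet known to be the Hodge filtration of a mixed Hodge module (that is Corollary~\ref{rattlechain}, proved afterwards using the present statement), so the verification must be carried out directly from the explicit formulas or from strict specializability.
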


By Corollaries~\ref{dekireba} and \ref{aozora}, we can say that ``the irregular Hodge filtration of the Fourier-Laplace transform of a monodromic mixed Hodge module has the same properties as the usual Hodge filtrations''.
To be more precise, we can conclude as follows.
Let $\wt{\whmunk{\calM}}$ be the extension of the mixed Hodge module $\whmunk{\calM}$ on $E^{\vee}$ to $\wt{E^{\vee}}$ such that $\wt{\whmunk{\calM}}=\wt{\whmunk{\calM}}[*D_{\infty}^{\vee}]$,
whose underlying $D$-module is denoted by $\wt{\whmunk{M}}$
(in fact, there exists such an extension by the definition of ``algebraic'' mixed Hodge module in \cite{MHM}). 
Note that we have $\wt{\whmunk{M}}=\whmunk{N}$.
Then, by Theorem~\ref{konokimochiha} and Corollary~\ref{aozora} we have the following.
\begin{cor}[Corollary~\ref{rattlechain}]
The irregular Hodge filtration $\{\irrF_{p}\whmunk{N}\}_{p\in \ZZ}$ indexed by integers is the Hodge filtration of the mixed Hodge module $\wt{\whmunk{\calM}}$.
\end{cor}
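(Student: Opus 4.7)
The plan is to deduce the final corollary as a short combination of Theorem~\ref{konokimochiha} and Corollary~\ref{aozora}, using the definition of $\wt{\whmunk{\calM}}$ as the unique MHM extension satisfying $\wt{\whmunk{\calM}}=\wt{\whmunk{\calM}}[*D_{\infty}^{\vee}]$. Both filtrations being compared, namely $\irrF_{\bullet}\whmunk{N}$ and the Hodge filtration $F_{\bullet}\wt{\whmunk{M}}$ of $\wt{\whmunk{\calM}}$, live on the same underlying $D$-module $\whmunk{N}=\wt{\whmunk{M}}$. The idea is to observe that each of them is obtained, by the same localization formula along $D_{\infty}^{\vee}$, from its restriction to $E^{\vee}$; then to apply Theorem~\ref{konokimochiha} to conclude that the two restrictions coincide.

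First, I would unwind what $F_{\bullet}\wt{\whmunk{M}}$ is. Since $\wt{\whmunk{\calM}}$ is an algebraic MHM extension of $\whmunk{\calM}$ with the property $\wt{\whmunk{\calM}}=\wt{\whmunk{\calM}}[*D_{\infty}^{\vee}]$, its Hodge filtration is determined by the localization formula of Proposition~\ref{localizfac}: it is obtained from the Hodge filtration of the mixed Hodge module $\whmunk{\calM}$ (on the open part $E^{\vee}$), namely $F_{\bullet}\whmunk{M}$, together with the $V$-filtration of $\whmunk{N}$ along $D_{\infty}^{\vee}$. In the notation introduced just before Corollary~\ref{aozora}, this is precisely $(\whmunk{N},F_{\bullet}\whmunk{N})[*D_{\infty}^{\vee}]$ applied to the extension $F_{\bullet}\whmunk{N}$ obtained from $F_{\bullet}\whmunk{M}$ by the standard push-forward via $j^{\vee}$.

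Second, Corollary~\ref{aozora} tells us that the irregular Hodge filtration satisfies
\[(\whmunk{N},\irrF_{\bullet}\whmunk{N})=(\whmunk{N},\irrF_{\bullet}\whmunk{N})[*D_{\infty}^{\vee}],\]
so $\irrF_{\bullet}\whmunk{N}$ is likewise obtained from its restriction to $E^{\vee}$, namely $\irrF_{\bullet}\whmunk{M}$, by the very same $[*D_{\infty}^{\vee}]$-operation (which involves only the $V$-filtration along $D_{\infty}^{\vee}$, an intrinsic datum of $\whmunk{N}$, and the filtration on the complement of $D_{\infty}^{\vee}$). But by Theorem~\ref{konokimochiha}, we have $\irrF_{p}\whmunk{M}=F_{p}\whmunk{M}$ for every $p\in\ZZ$. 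Applying the identical localization formula to two equal input filtrations produces equal output filtrations, which yields $\irrF_{p}\whmunk{N}=F_{p}\wt{\whmunk{M}}$ for all $p\in\ZZ$, as required.

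The main obstacle — really the only nontrivial point to confirm — is that the $[*D_{\infty}^{\vee}]$-operation on filtered $D$-modules used in Corollary~\ref{aozora} is literally the same procedure as the one governing the Hodge filtration of the MHM $\wt{\whmunk{\calM}}=\wt{\whmunk{\calM}}[*D_{\infty}^{\vee}]$ via Proposition~\ref{localizfac}. This is essentially built into the notation adopted just above Corollary~\ref{aozora} (the phrase ``the same formula as usual $[*D_{\infty}^{\vee}]$''), but one should double-check that both recipes use the same Kashiwara–Malgrange filtration $V^{\bullet}_{D_{\infty}^{\vee}}\whmunk{N}$, which is a purely $D$-module-theoretic invariant independent of any Hodge structure; once this matching is made explicit, the corollary follows in two lines from Theorem~\ref{konokimochiha} and Corollary~\ref{aozora}.
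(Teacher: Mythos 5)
Your proposal is correct and follows essentially the same route as the paper: the paper's proof is precisely the one-line combination of Theorem~\ref{takarajima} (equality of the two filtrations on the open part $E^{\vee}$) with Corollary~\ref{nonnonbaa} (the irregular Hodge filtration on $\whmunk{N}$ satisfies the $[*D_{\infty}^{\vee}]$-localization formula), compared against the Hodge filtration of $\wt{\whmunk{\calM}}=\wt{\whmunk{\calM}}[*D_{\infty}^{\vee}]$ via Proposition~\ref{localizfac}. Your explicit check that both recipes use the same Kashiwara--Malgrange filtration $V^{\bullet}_{D_{\infty}^{\vee}}\whmunk{N}$ is exactly the matching the paper leaves implicit, so nothing is missing.
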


\subsection*{Acknowledgments}
I would like to express my sincere gratitude to Professor Takuro Mochizuki for helpful discussions, valuable suggestions, and explaining some of his results on mixed twistor D-modules used in Section~4.
I would like to deeply appreciate Professor Claude Sabbah's help including answering many questions, reading the first version of the manuscript, and suggesting improvements to it. 
This work and the previous one \cite{SaiMon} have been inspired by his suggestion in 2018.
I am also grateful to Tatsuki Kuwagaki and Yota Shamoto for fruitful conversations.
I thank Yuichi Ike for answering my questions about constructible sheaves.
My thanks also go to Genki Sato for his help with English writing. 
I thank Alberto Casta\~no Dom\'inguez for helpful comments on the Fourier-Laplace transforms of $R$-modules.
I also thank the referee for useful comments.
This work is supported by JSPS KAKENHI Grant Number 20J00922.

\section{{M}onodromic mixed Hodge modules}
\subsection{{M}onodromic mixed Hodge modules on vector bundles}
In this subsection, we recall the notion of monodromic $D$-module on a vector bundle and some basic facts.
We refer to \cite{BryFou} and \cite{SaiMon}.
Let $X$ be a smooth algebraic variety of finite type over $\CC$,
$O_{X}$ the structure sheaf on $X$ and $D_{X}$ the sheaf of differential operators on $X$.
Basically, we consider only algebraic left $D$-modules in this paper.
Moreover, all the $D$-modules and $O$-modules are quasi-coherent.

Let $\pi\colon E\to X$ be an algebraic vector bundle and $M$ a (quasi-coherent) $D$-module (resp. $O$-module) on $E$.
Then, $\pi_{*}M$ is a $\pi_{*}D$-module (resp. $\pi_{*}O$-module).
Conversely, for a $\pi_{*}D$-module $N$ (resp. $\pi_{*}O$-modules) on $X$,
we define a $D_{E}$-module (resp. $O_{E}$-modules) $\pi^{*}N$ as
\begin{align*}
\pi^*N:=D_{E}\otimes_{\pi^{-1}\pi_{*}D_{X}}\pi^{-1}N.\\
(\mbox{resp.\ } \pi^*N:=O_{E}\otimes_{\pi^{-1}\pi_{*}O_{X}}\pi^{-1}N)
\end{align*}
Because $\pi$ is an affine morphism (i.e. the inverse image of an affine open subset of $X$ is affine),
we have the following equivalence.

\begin{lem}[see Proposition~7.10 of Brylinski~\cite{BryFou}]\label{haruni}
The functors $\pi_{*}$ and $\pi^*$ define an equivalence of categories between the category of quasi-coherent $D_{E}$-modules (resp. $O_{E}$-modules) and that of quasi-coherent $\pi_{*}D_{E}$-modules (resp. $\pi_{*}O_{E}$-modules).
The same assertion holds for the categories of coherent $D$ or $O$-modules.
\end{lem}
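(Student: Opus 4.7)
The plan is to first establish the equivalence for quasi-coherent $O$-modules, which is classical for affine morphisms, and then bootstrap it to the $D$-module statement.

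For the $O$-module part, I would argue locally. Take any affine open $U\subset X$; since $\pi$ is affine, $\pi^{-1}(U)\subset E$ is also affine, and Serre's equivalence identifies quasi-coherent $O_{\pi^{-1}(U)}$-modules with modules over the ring $\Gamma(\pi^{-1}(U),O_{E})=\Gamma(U,\pi_{*}O_{E})$, while quasi-coherent $\pi_{*}O_{E}|_{U}$-modules on $U$ correspond to the same class of modules. These identifications are compatible with restriction to smaller affine opens, so gluing over an affine cover of $X$ produces a global equivalence with quasi-inverses given by $\pi_{*}$ and the $O$-module $\pi^{*}$ defined in the statement. The same argument handles the coherent case, since a quasi-coherent $O_{E}$-module is coherent iff it is finitely generated on each trivializing chart, which matches finite generation as a $\pi_{*}O_{E}$-module.

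For the $D$-module part, the key observation is that a quasi-coherent $D_{E}$-module is the same data as a quasi-coherent $O_{E}$-module $M$ equipped with a compatible action of the sheaf of rings $D_{E}$, and analogously on the base for $\pi_{*}D_{E}$-modules. Under the $O$-module equivalence already established, compatible $D_{E}$-actions on $M$ correspond bijectively to compatible $\pi_{*}D_{E}$-actions on $\pi_{*}M$: on each affine open $U\subset X$ both pieces of data are encoded by a ring homomorphism from $\Gamma(U,\pi_{*}D_{E})=\Gamma(\pi^{-1}(U),D_{E})$ to $\mathrm{End}_{\CC}$ of the corresponding module extending the $\Gamma(U,\pi_{*}O_{E})$-action. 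The adjunction morphisms $M\to\pi^{*}\pi_{*}M$ and $\pi_{*}\pi^{*}N\to N$ are then isomorphisms because they are so on the underlying $O$-modules. For $D$-coherence, on a trivializing chart $U\times\CC^{n}$ the sheaf $\pi_{*}D_{E}|_{U}$ is a finitely generated $D_{X}|_{U}$-algebra, generated by $z_{1},\ldots,z_{n},\pa_{z_{1}},\ldots,\pa_{z_{n}}$, and is locally noetherian, so finite generation as a $D_{E}$-module matches finite generation as a $\pi_{*}D_{E}$-module.

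The main obstacle I anticipate is organizing the $D$-module bookkeeping cleanly. In particular, one must check that the natural map $\pi^{-1}\pi_{*}D_{E}\to D_{E}$ is compatible with the $O$-module equivalence so that the $D$-action is transported correctly, and that $D_{E}$ is generated as a $(\pi^{-1}\pi_{*}D_{E},\pi^{-1}\pi_{*}O_{E})$-bimodule in a way that makes the tensor product defining $\pi^{*}$ well behaved. These are routine local verifications on trivializing charts, after which the adjunction isomorphisms and the coherence statement drop out.
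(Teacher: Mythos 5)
Your proposal is correct and takes essentially the same route as the source the paper relies on: the paper gives no proof of this lemma, citing Proposition~7.10 of Brylinski, and that argument is exactly yours --- Serre's equivalence over affine opens $U\subset X$ (using that $\pi^{-1}(U)$ is affine) for the $O$-module case, then transport of the $D$-action via the fact that on an affine variety quasi-coherent $D$-modules are the same as modules over the ring of global differential operators, with $\Gamma(\pi^{-1}(U),D_{E})=\Gamma(U,\pi_{*}D_{E})$. The local verifications you defer at the end --- that $D_{E}|_{\pi^{-1}(U)}$ is generated over $O_{E}$ by $\Gamma(U,\pi_{*}D_{E})$, so that the tensor-product definition of $\pi^{*}$ has the expected underlying $O$-module and the adjunction maps are isomorphisms --- are indeed routine and close the argument, including the coherent case.
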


In the following, we identify $D_{E}$-modules (resp. $O_{E}$-modules) with $\pi_{*}D_{E}$-modules (resp. $\pi_{*}D_{E}$-modules).

\begin{rem}
When $X$ is the one point set, $E$ is just a vector space $\CC^n$.
We sometimes consider a $D$-module on an affine subset of $\CC^n$, such as $(\CC^*)^n$, where $\CC^*:=\CC\setminus \{0\}$.
In this case, we have the equivalence similar to Lemma~\ref{haruni}.
For example, we identify $D_{(\CC^*)^n}$-modules with 
$\CC[z_{1}^{\pm 1},\dots,z_{n}^{\pm 1}]\langle\partial_{z_{1}},\dots \partial_{z_{n}}\rangle$-modules,
where $\CC[z_{1}^{\pm 1},\dots,z_{n}^{\pm 1}]\langle\partial_{z_{1}},\dots \partial_{z_{n}}\rangle$ is a ring of differential operators on $(\CC^*)^n$.
\end{rem}

Let $V$ be a vector space of rank $n$ and $e_{1},\dots, e_{n}$ a basis of $V$.
The coordinates of $V$ associated to the basis $e_{1},\dots, e_{n}$ is the isomorphism $\CC^n\simeq V$ which sends $(z_{1},\dots,z_{n})\in \CC^n$ to $z_{1}e_{1}+\dots+z_{n}e_{n}\in V$.
It is easy to see that the vector field $\sum_{i=1}^{n}z_{i}\partial_{z_{i}}$ on $V$ does not depend on the choice of the basis $e_{1},\dots, e_{n}$.
It is called the Euler vector field on $V$.
More generally, 
for any local trivialization $\pi^{-1}(U) \simeq U\times \CC^n$ ($U\subset X$) of the vector bundle $\pi\colon E\to X$ and bundle coordinates $(x_{1},\dots,x_{m},z_{1},\dots,z_{n})\in U\times \CC^n$,
the vector field $\sum_{i=1}^{n}z_{i}\partial_{z_{i}}$ on $\pi^{-1}(U)$ does not depend on the choice of the local trivialization and we thus obtain a vector field on $E$.
It is called the Euler vector field on $E$ and we denote it by $\eu_{E}$.
We can regard $\eu_{E}$ as a section of $\pi_{*}D_{E}$.

\begin{defi}\label{mondef}
Let $M$ be a $D$-module on $E$.
We say that $M$ is monodromic if for any (local algebraic) section $m\in \pi_{*}M$ there is a polynomial $b(u)\in \CC[u]$ such that $b(\eu_{E})m=0$.
Moreover, if all the roots of the minimal polynomial of such $b(u)$ is in $\QQ$ (resp. $\RR$) for any $m$,
we say that $M$ is $\QQ$-monodromic (resp. $\RR$-monodromic).
\end{defi}

Since we only consider $\QQ$-monodromic $D$-module in this paper,
we will say ``monodromic'' as ``$\QQ$-monodromic''.

\begin{rem}\label{magical}
For a subset $V$ of $\CC^n$ (e.g. $(\CS)^n$) and a $D$-module $M$ on $X\times V$,
we also define ``$M$ is monodromic'' in the same way as Definition~\ref{mondef}.
\end{rem}

\begin{rem}\label{cyuusyo}
Remark that $E$ is equipped with a natural $\CS$-action.
Assume that $M$ is regular holonomic.
Let $K$ be the perverse sheaf corresponding to $M$ via the Riemann-Hilbert correspondence.
Then, $M$ is monodromic if and only if $K$ is cohomologically locally constant on each $\CS$-orbit in $E$, i.e. each cohomology $H^{j}(K)$ is locally constant on each $\CS$-orbit (Proposition~7.12 of Brylisnki~\cite{BryFou}).
\end{rem}

Note that we can also endow $\pi_{*}M$ with a $O_{X}$-modules structure by the adjunction $O_{X}\to \pi_{*}\pi^{-1}O_{X}\to \pi_{*}O_{E}$.
For a $D_{E}$-module $M$ and $\beta\in \QQ$,
we define a $O_{X}$-submodule $M^{\beta}$ of $\pi_{*}M$ by
\begin{align}\label{gion}
M^{\beta}:=\bigcup_{l\geq 0}\Ker((\eu_{E}-\beta)^{l}\colon \pi_{*}M\to \pi_{*}M).
\end{align}

\begin{prop}\label{decomprop}
A $D_{E}$-module $M$ is monodromic if and only if 
$M$ (recall that we identify it with $\pi_{*}M$) is a direct sum of the family of $O_{X}$-modules
$\{M^{\beta}\}_{\beta\in \RR}$ as
\begin{align}\label{macdonald}
M=\bigoplus_{\beta\in \RR}M^{\beta}.
\end{align}
\end{prop}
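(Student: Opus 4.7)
The plan is to prove the two implications of Proposition~\ref{decomprop} separately, carrying out the non-trivial direction ($\Rightarrow$) in two steps: first establish that the sum $\sum_{\beta}M^{\beta}\subset \pi_{*}M$ is always direct, independently of any monodromicity hypothesis, and then show that under the monodromicity hypothesis every local section of $\pi_{*}M$ lies in the internal sum $\sum_{\beta\in \RR}M^{\beta}$. Before either step I would verify that each $M^{\beta}$ is actually an $O_{X}$-submodule; this comes down to the identity $[\eu_{E},f]=0$ for $f$ a local section of $\pi^{-1}O_{X}$, which holds because $\eu_{E}$ is a vertical vector field and so annihilates $\pi$-pullbacks. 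Consequently, if $(\eu_{E}-\beta)^{l}m=0$, then $(\eu_{E}-\beta)^{l}(fm)=0$.

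For directness, suppose $\sum_{i=1}^{k}m_{i}=0$ with $m_{i}\in M^{\beta_{i}}$ and $\beta_{1},\dots,\beta_{k}$ pairwise distinct, and pick integers $l_{i}\geq 0$ with $(\eu_{E}-\beta_{i})^{l_{i}}m_{i}=0$. The polynomial $Q_{j}(u):=\prod_{i\neq j}(u-\beta_{i})^{l_{i}}$ is coprime to $(u-\beta_{j})^{l_{j}}$ in $\CC[u]$, so B\'ezout produces $A_{j},B_{j}\in \CC[u]$ with $A_{j}(u)(u-\beta_{j})^{l_{j}}+B_{j}(u)Q_{j}(u)=1$. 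Applying $Q_{j}(\eu_{E})$ to the relation $\sum_{i}m_{i}=0$ kills every term other than the $j$-th, since $(u-\beta_{i})^{l_{i}}\mid Q_{j}(u)$ for $i\neq j$, so $Q_{j}(\eu_{E})m_{j}=0$; evaluating the B\'ezout identity at $\eu_{E}$ and applying it to $m_{j}$ then forces $m_{j}=0$.

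For exhaustion, let $m\in \pi_{*}M$ be any local section and choose $b(u)=\prod_{i=1}^{k}(u-\beta_{i})^{l_{i}}\in \CC[u]$ with distinct $\beta_{i}\in \QQ$ (using the $\QQ$-monodromicity hypothesis) such that $b(\eu_{E})m=0$. Partial-fraction decomposition modulo $b$ yields $1=\sum_{i}A_{i}(u)R_{i}(u)$ with $R_{i}(u):=\prod_{j\neq i}(u-\beta_{j})^{l_{j}}$. Setting $m_{i}:=A_{i}(\eu_{E})R_{i}(\eu_{E})m$, one checks $m=\sum_{i}m_{i}$ and $(\eu_{E}-\beta_{i})^{l_{i}}m_{i}=0$, the latter because $(u-\beta_{i})^{l_{i}}A_{i}(u)R_{i}(u)$ is divisible by $b(u)$. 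Hence each $m_{i}\in M^{\beta_{i}}$ and $m\in \sum_{\beta\in \RR}M^{\beta}$. The converse direction ($\Leftarrow$) is immediate: a local section $m$ of $\bigoplus_{\beta}M^{\beta}$ is a finite sum $\sum_{i}m_{\beta_{i}}$ and is therefore annihilated by $\prod_{i}(\eu_{E}-\beta_{i})^{l_{i}}$ for suitable $l_{i}$, with rational $\beta_{i}$ under the paper's convention. I do not anticipate a serious obstacle; the argument is a standard generalized-eigenspace decomposition for the scalar operator $\eu_{E}$ acting on $\pi_{*}M$, the only step specific to the geometric setting being the verticality of $\eu_{E}$ that keeps each $M^{\beta}$ in the category of $O_{X}$-modules.
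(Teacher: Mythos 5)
Your proof is correct and is exactly the standard generalized-eigenspace argument (directness via B\'ezout coprimality, exhaustion via partial fractions applied to $b(\eu_{E})$, plus the observation that $\eu_{E}$ commutes with $\pi^{-1}O_{X}$ so each $M^{\beta}$ is an $O_{X}$-submodule); the paper itself gives no proof here but delegates to Proposition~1.7 of \cite{SaiMon}, whose argument is of the same kind. The only point worth flagging is cosmetic: the index set $\RR$ versus the $\QQ$-monodromicity convention, which you already address in passing.
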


\begin{proof}
The proof is similar to the proof of Proposition~1.7 of \cite{SaiMon}.
\end{proof}

\begin{rem}\label{basie}
If $E$ is trivial and we fix a trivialization $E\simeq X\times \CC^n$,
we can endow $M^{\beta}$ with a natural $D_{X}$-module structure.
Because a lift of a section of $D_{X}$ to $E$ is not unique,
we can not define a natural $D_{X}$-module structure on $M^{\beta}$ in general.
\end{rem}

\begin{rem}
Submodules, quotient modules and extensions (in the category of $D_{E}$-modules) of monodromic $D_{E}$-modules are monodromic again.
\end{rem}

Let us consider a mixed Hodge module $\calM=(M,F_{\bullet}M, K, W_{\bullet}K)$ on $E$,
where $M$ is the underlying $D$-module, $F_{\bullet}M$ is the Hodge filtration, $K$ is the underlying $\QQ$-perverse sheaf and $W_{\bullet}K$ is the weight filtration.

\begin{defi}
We say that $\calM$ is monodromic if $M$ is monodromic.
\end{defi}

\subsection{{T}he case where $E$ is a trivial bundle of rank one}\label{subrankone}
Let us recall some results for a monodromic mixed Hodge module $\calM=(M,F_{\bullet}M, K, W_{\bullet}K)$ in the case where $E$ is a trivial bundle of rank one i.e. $E\simeq X\times \CC_{z}$.
We fix this trivialization in this subsection.
See \cite{SaiMon} for details.

Note that in this case $M^{\beta}(=\bigcup_{l\geq 0}\Ker((z\partial_{z}-\beta)^l\colon M\to M))$ is not only an $O_{X}$-module, it is a $D_{X}$-module (see Remark~\ref{basie}).
Moreover, we have a decomposition 
\begin{align}\label{treasure}
M=\bigoplus_{\beta\in \RR}M^{\beta},
\end{align}
where the $D_{X}[z]\langle{\partial_{z}}\rangle$-module structure is defined by using the morphisms
$z\colon M^{\beta}\to M^{\beta+1}$ and $\partial_{z}\colon M^{\beta}\to M^{\beta-1}$ (in fact, $M^{\beta}=0$ for $\beta\notin \QQ$).

We say that $M$ is $\KK$($=\QQ$ or $\RR$)-specializable if 
there exists the Kashiwara-Malgrange filtration along $z$ of $M$ indexed by $\KK$. We denote by $\{V^{\beta}_{z}M\}_{\beta\in \RR}$ the Kashiwara-Malgrange filtration of $M$ along $z$, where the index is defined to be $\GR^{\beta}_{V_{z}}M=V^{\beta}_{z}M/V^{>\beta}_{z}M$ is killed by $(z\pa_{z}-\beta)^l$ for some $l\geq 0$.
Sometimes $\GR^{\beta}_{V_{z}}M$ is abbreviated to $\GR^{\beta}_{V}M$.
The decomposition~(\ref{treasure}) leads to the following.

\begin{prop}[Proposition~1.15 of \cite{SaiMon}]\label{express}
Let $M$ be a monodromic coherent $D$-module on $X\times \CC_{z}$.
Then, $M$ is specializable, i.e. the Kashiwara-Malgrange filtration of $M$ along $z=0$ exists and we have
\[V_{z}^{\gamma}M=\bigoplus_{\beta\geq \gamma}M^{\beta}.\]
Therefore, we have
\[\GR^{\gamma}_{V}M=M^{\gamma}.\]
In particular, the $\alpha$-nearby cycle and the vanishing cycle of $M$ along $z$ are described as follows:
\begin{align}
    \psi_{z,\alpha}M(=\GR^{\alpha}_{V}M)&=M^{\alpha},\mbox{\quad and}\\
    \phi_{z,1}M(=\GR^{-1}_{V}M)&=M^{-1}.
\end{align}
Moreover, the morphism $\mathrm{can}\colon \psi_{z,0}M\to \phi_{z,1}M$ (resp. the morphism $\mathrm{var}\colon \phi_{z,1}M\to \psi_{z,0}M$) is $-\pa_{z}\colon M^{0}\to M^{-1}$ (resp. $z\colon M^{-1}\to M^{0}$).
\end{prop}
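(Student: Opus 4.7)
My strategy is to take the explicit candidate $V^{\gamma}_z M := \bigoplus_{\beta\geq \gamma} M^{\beta}$, verify the characterizing axioms of the Kashiwara--Malgrange filtration along $z=0$ for it, and then read off the remaining claims from the resulting formula $\GR^{\gamma}_V M = M^{\gamma}$. The starting point is the decomposition $M=\bigoplus_{\beta\in \RR}M^{\beta}$ of Proposition~\ref{decomprop}, together with shift relations that I would record first. From the commutators $[z\partial_z,z]=z$ and $[z\partial_z,\partial_z]=-\partial_z$, an induction yields $(z\partial_z-\beta-1)^l z=z(z\partial_z-\beta)^l$ and the analogous identity for $\partial_z$, so that $z\cdot M^{\beta}\subset M^{\beta+1}$ and $\partial_z\cdot M^{\beta}\subset M^{\beta-1}$. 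Using $\partial_z z=z\partial_z+1$ together with the fact that $z\partial_z$ acts on $M^{\beta+1}$ as $(\beta+1)+N$ with $N$ locally nilpotent, both $\partial_z z\colon M^{\beta}\to M^{\beta}$ and $z\partial_z\colon M^{\beta+1}\to M^{\beta+1}$ are invertible whenever $\beta+1\neq 0$. Hence $z\colon M^{\beta}\to M^{\beta+1}$ is an isomorphism for $\beta\neq -1$, and dually $\partial_z\colon M^{\beta}\to M^{\beta-1}$ is an isomorphism for $\beta\neq 0$.

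With these ingredients the axioms become essentially formal. The family $\{V^{\gamma}_z M\}_{\gamma\in \RR}$ is decreasing and exhaustive by construction, and the shift relations ensure that it is stable under the subring $V^0 D_E\subset D_E$ of operators preserving the $V$-order along $z=0$, with $z V^{\gamma}\subset V^{\gamma+1}$ and $\partial_z V^{\gamma}\subset V^{\gamma-1}$. Nilpotency of $z\partial_z-\gamma$ on $\GR^{\gamma}_V M = M^{\gamma}$ is built into the very definition of $M^{\gamma}$. The one step that needs genuine work is the coherence of each $V^{\gamma}_z M$ as a $V^0 D_E$-module, and this is where I expect the main technical obstacle. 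The input is that $M$ is coherent over $D_E$ and monodromic, so any local generating section satisfies a polynomial relation $b(z\partial_z)=0$; this forces only finitely many $\ZZ$-cosets of $\beta$'s to occur locally in the decomposition. Combined with the isomorphisms $z\colon M^{\beta}\simeq M^{\beta+1}$ for $\beta\neq -1$, one can then promote a $D_E$-coherent generating set of $M$ to a $V^0 D_E$-coherent generating set of $V^{\gamma}_z M$ by applying suitable powers of $z$ and $\partial_z$. Once coherence is in hand, the uniqueness of the Kashiwara--Malgrange filtration identifies our candidate with the actual $V^{\gamma}_z M$ and yields $\GR^{\gamma}_V M = M^{\gamma}$.

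The nearby/vanishing cycle identifications $\psi_{z,\alpha}M=\GR^{\alpha}_V M = M^{\alpha}$ and $\phi_{z,1}M=\GR^{-1}_V M = M^{-1}$ are then immediate. For the morphisms $\mathrm{can}$ and $\mathrm{var}$, recall that $\mathrm{can}\colon \psi_{z,0}M\to \phi_{z,1}M$ and $\mathrm{var}\colon \phi_{z,1}M\to \psi_{z,0}M$ are by definition (with the standard sign convention) induced on the $V$-graded pieces by $-\partial_z$ and $z$ respectively; under our identifications these become exactly the shift morphisms $-\partial_z\colon M^0\to M^{-1}$ and $z\colon M^{-1}\to M^0$ constructed in the first paragraph, which concludes the argument.
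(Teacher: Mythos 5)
Your proposal is correct and follows the intended route: the paper itself gives no proof here but recalls the statement from Proposition~1.15 of \cite{SaiMon} with the remark that the decomposition $M=\bigoplus_{\beta\in\RR}M^{\beta}$ ``leads to'' it, and your argument — taking $V^{\gamma}_{z}M:=\bigoplus_{\beta\geq\gamma}M^{\beta}$ as a candidate, establishing the shift maps $z\colon M^{\beta}\to M^{\beta+1}$, $\partial_{z}\colon M^{\beta}\to M^{\beta-1}$ and their invertibility off $\beta=-1$, $\beta=0$, verifying the axioms, and invoking uniqueness of the Kashiwara--Malgrange filtration — is exactly that verification. The only step you leave as a sketch, coherence of $V^{\gamma}_{z}M$ over $V^{0}D$, is handled correctly in spirit: using $b(z\partial_{z})m=0$ to project finitely many $D$-module generators onto their finitely many $\beta$-components and then applying suitable powers of $z$ and $\partial_{z}$ does produce a finite $V^{0}D$-generating set, so no genuine gap remains.
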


In this situation, the Hodge filtration is decomposed with respect to the decomposition~(\ref{treasure}).
\begin{prop}[Theorem~2.2 of \cite{SaiMon}]\label{rankoneFdecom}
For $p\in \ZZ$, the Hodge filtration $F_{p}M\subset M$ is decomposed as
\[F_{p}M=\bigoplus_{\beta\in \RR}F_{p}M^{\beta},\]
where $F_{p}M^{\beta}:=F_{p}M\cap M^{\beta}$ and the $O_{X}[z]$-module structure of the right hand side is defined by the morphisms $z\colon F_{p}M^{\beta}\to F_{p}M^{\beta+1}$.
\end{prop}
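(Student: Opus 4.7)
The plan is to derive the decomposition from the strict specializability of the mixed Hodge module $\calM$ along the divisor $\{z=0\}$, together with the explicit description of the Kashiwara--Malgrange filtration afforded by Proposition~\ref{express}. By that proposition, $V_z^\gamma M = \bigoplus_{\beta \geq \gamma} M^\beta$, so $\GR^\gamma_V M$ is canonically identified with $M^\gamma$. Strict specializability provides the three compatibilities I will rely on: $F_p V^\alpha M := F_p M \cap V^\alpha M$ is $V^0 D_E$-coherent, $z \cdot F_p V^\alpha M = F_p V^{\alpha+1} M$ for $\alpha > -1$, and $\partial_z$ induces an isomorphism $F_p \GR^\alpha_V M \simto F_{p+1} \GR^{\alpha - 1}_V M$ for $\alpha \leq 0$, where the filtration on the graded piece is the image of $F_p V^\alpha M$ in the quotient.

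The inclusion $\bigoplus_\beta F_p M^\beta \subseteq F_p M$ is tautological, so only the reverse direction is at issue. An arbitrary $m \in F_p M$ admits a finite decomposition $m = \sum_{i=1}^k m_{\beta_i}$ with $\beta_1 < \cdots < \beta_k$ and $m_{\beta_i} \in M^{\beta_i}$, and I will argue by induction on $k$ that each $m_{\beta_i}$ already lies in $F_p M$. The whole argument reduces to showing that the lowest component $m_{\beta_1}$ belongs to $F_p M$, after which the same analysis applies to $m - m_{\beta_1}$.

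The key intermediate claim, from which this inductive step follows at once, is the identification
\[
F_p \GR^\alpha_V M = F_p M \cap M^\alpha \quad \text{as subspaces of } M^\alpha, \text{ for every } \alpha \in \RR.
\]
Granting it, $m \in F_p M \cap V^{\beta_1} M = F_p V^{\beta_1} M$ maps to $m_{\beta_1}$ under the canonical surjection $V^{\beta_1} M \twoheadrightarrow M^{\beta_1}$, so $m_{\beta_1} \in F_p \GR^{\beta_1}_V M = F_p M \cap M^{\beta_1}$, as needed. I first establish this identification for $\alpha$ in the fundamental strip $(-1, 0]$, and then propagate it to arbitrary $\alpha \in \RR$ by transporting along the actions of $z$ and $\partial_z$: in the monodromic setting, $z \colon M^\beta \to M^{\beta+1}$ and $\partial_z \colon M^\beta \to M^{\beta-1}$ are isomorphisms away from the exceptional values $\beta = -1$ and $\beta = 0$ respectively, and strict specializability makes these isomorphisms compatible with $F_\bullet$ (up to an explicit Hodge shift for $\partial_z$).

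The principal obstacle is the base case of the identification in the fundamental strip. The inclusion $F_p M \cap M^\alpha \subseteq F_p \GR^\alpha_V M$ is automatic, but the reverse requires producing, for any representative $a + b \in F_p V^\alpha M$ with $a \in M^\alpha$ and $b \in V^{>\alpha} M$, a replacement lift lying entirely in $M^\alpha$. The mechanism I expect to exploit is that $z\partial_z - \alpha$ acts nilpotently on $M^\alpha$ but invertibly on each summand $M^\beta \subset V^{>\alpha} M$ (being $(\beta - \alpha)$ plus a nilpotent endomorphism, with $\beta - \alpha > 0$); a carefully chosen polynomial in $z\partial_z$ separates the $M^\alpha$-part from the $V^{>\alpha}$-part, and coupling this separation with the strict-specializability axioms should force $a$ itself to sit in $F_p M$.
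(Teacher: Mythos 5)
Note first that the paper offers no proof of this proposition: it is quoted verbatim from Theorem~2.2 of \cite{SaiMon}, so there is no in-paper argument to compare with, and your proposal has to be judged on its own terms. Your reduction is fine as far as it goes: granting the key claim $F_p\GR^{\alpha}_{V}M=F_{p}M\cap M^{\alpha}$ for every $\alpha$, peeling off the lowest $V$-graded component of an element of $F_{p}M$ and inducting does yield the decomposition. The genuine gap is that this key claim is exactly the content of the theorem, and the toolkit you restrict yourself to --- the description $V^{\gamma}_{z}M=\bigoplus_{\beta\geq\gamma}M^{\beta}$ together with the strict specializability axioms of Definition~\ref{tron} --- is provably insufficient to establish it. Take $X$ a point, $M=\CC[z,z^{-1}]$ (so $M^{k}=\CC z^{k}$), and set $F_{p}=0$ for $p<0$, $F_{0}=\CC[z]\cdot(1+z^{-1})$, $F_{p}=z^{-p-1}\CC[z]$ for $p\geq 1$. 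This is a good filtration; one checks directly that $z(F_{p}\cap V^{\alpha})=F_{p}\cap V^{\alpha+1}$ for $\alpha>-1$ and that $\partial_{z}\colon F_{p}\GR^{\alpha}_{V}\to F_{p+1}\GR^{\alpha-1}_{V}$ is even bijective for $\alpha<0$; yet $F_{0}\cap M^{k}=0$ for all $k$ while $F_{0}\neq 0$, so no decomposition holds. (Of course this filtered module underlies no mixed Hodge module --- that is precisely the point: the theorem needs input beyond strict specializability along $z$, which is why the proof in \cite{SaiMon}, and the alternative argument of \cite{ChenDirks}, require substantially more structure.) In particular the mechanism you sketch for the base case cannot work: a polynomial $P(z\partial_{z})$ does isolate the $M^{\alpha}$-component, but $z\partial_{z}$ lies in $F_{1}D$, so from $a+b\in F_{p}M$ you only conclude $a\in F_{p+d}M$ for some $d>0$, and the example above shows that no amount of coupling with axioms (i), (ii) can recover $a\in F_{p}M$.

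There is also a secondary gap in the propagation step even if the base case on $(-1,0]$ were granted: transporting downward by $\partial_{z}$ only reaches $\alpha\notin\ZZ$, because $\partial_{z}\colon M^{0}\to M^{-1}$ (the map $\mathrm{can}$) is not an isomorphism, so the slots $\alpha=-1,-2,\dots$ --- which certainly occur as the lowest exponent $\beta_{1}$ in your induction --- are never covered. This is not an artifact of bookkeeping: $F_{p}M^{-1}$ is genuinely independent data (cf.\ Lemma~\ref{kitune}, where it appears as a separate direct summand), so it cannot be obtained from the strip $(-1,0]$ by transport. Finally, your assertion that $\partial_{z}$ induces an isomorphism $F_{p}\GR^{\alpha}_{V}M\simeq F_{p+1}\GR^{\alpha-1}_{V}M$ for $\alpha\leq 0$ is false at $\alpha=0$; strict specializability gives surjectivity only for $\alpha<0$.
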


Let us recall the strict specializability for a filtered $D$-module.
See \cite{MHM} and \cite{MHMProj} for details.
Let $(M,F_{\bullet}M)$ be a holonomic $D$-module $M$ with a good filtration on $X\times \CC_{z}$.
We set $F_{p}\GR^{\beta}_{V}M:=F_{p}M\cap V^{\beta}_{z}M/F_{p}M\cap V^{>\beta}_{z}M$.
\begin{defi}\label{tron}
We say that $(M,F_{\bullet}M)$ is strictly $\KK$($=\QQ$ or $\RR$)-specializable along $z$ if $M$ is $\KK$-specializable and for any $p\in \ZZ$
\begin{enumerate}
\item[(i)] for any $\beta>-1$, $z\colon F_{p}\GR^{\beta}_{V}M\to F_{p}\GR^{\beta}_{V}M$\mbox{ is surjective, and}
\item[(ii)] for any $\beta<0$, $\partial_{z}\colon F_{p}\GR^{\beta}_{V}M\to F_{p+1}\GR^{\beta-1}_{V}M$ is surjective.
\end{enumerate}
\end{defi}
This property is one of the important constraints on mixed Hodge modules.
Proposition~\ref{rankoneFdecom} and the strict specializability lead the following.
\begin{lem}[Lemma~ 2.4 of \cite{SaiMon}]\label{kitune}
For the underlying filterd $D$-module $(M,F_{\bullet}M)$ of a monodromic mixed Hodge module on $X\times \CC_{z}$, $l\in \ZZ_{\geq 0}$ and $p\in \ZZ$ we have
\begin{align*}
F_{p}M^{\alpha+l}&=z^{l}F_{p}M^{\alpha}\quad (\alpha\in (-1,0]), \mbox{ and}\\
F_{p}M^{\alpha-l}&=\partial_{z}^{l}F_{p-l}M^{\alpha}\quad (\alpha\in [-1,0)).
\end{align*} 
In particular, we have
\begin{align}
F_{p}M=\left(\bigoplus_{l\geq 1}\bigoplus_{\alpha\in [-1,0)}\partial_{z}^{l}F_{p-l}M^{\alpha}\right)\oplus F_{p}M^{-1}\oplus \left(\bigoplus_{l\geq 0}\bigoplus_{\alpha\in (-1,0]}z^{l}F_{p}M^{\alpha}\right).
\end{align}
\end{lem}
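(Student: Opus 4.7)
The plan is to combine Propositions~\ref{rankoneFdecom} and \ref{express} with the strict specializability of the underlying filtered $D$-module of a mixed Hodge module. First, I would use the description $V^{\gamma}_{z}M = \bigoplus_{\beta \geq \gamma} M^{\beta}$ from Proposition~\ref{express} together with the Hodge decomposition $F_{p}M = \bigoplus_{\beta} F_{p}M^{\beta}$ from Proposition~\ref{rankoneFdecom} to deduce $F_{p}M \cap V^{\beta}_{z}M = \bigoplus_{\beta' \geq \beta} F_{p}M^{\beta'}$ and similarly for $V^{>\beta}_{z}M$, and hence to identify the induced filtration $F_{p}\GR^{\beta}_{V}M$ with the $O_{X}$-submodule $F_{p}M^{\beta} \subset M^{\beta}$.

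Next, since $(M, F_{\bullet}M)$ underlies a mixed Hodge module it is strictly specializable along $z$, and via the identification above Definition~\ref{tron} translates into the statement that $z\colon F_{p}M^{\beta} \to F_{p}M^{\beta+1}$ is surjective for $\beta > -1$ and $\partial_{z}\colon F_{p}M^{\beta} \to F_{p+1}M^{\beta-1}$ is surjective for $\beta < 0$. For the corresponding injectivity, I would use that on $M^{\beta}$ the operators $z\partial_{z}$ and $\partial_{z}z = z\partial_{z} + 1$ act as $\beta + N$ and $(\beta+1) + N$ with $N$ nilpotent; hence $\partial_{z}z$ is invertible on $M^{\beta}$ for $\beta \neq -1$ and $z\partial_{z}$ is invertible for $\beta \neq 0$. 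This gives injectivity of $z\colon M^{\beta} \to M^{\beta+1}$ for $\beta \neq -1$ and of $\partial_{z}\colon M^{\beta} \to M^{\beta-1}$ for $\beta \neq 0$, so in the ranges of interest both maps of filtered pieces are bijective.

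Iterating these bijections yields the two displayed equalities: for $\alpha \in (-1, 0]$ one has $\alpha + k > -1$ for all $k \geq 0$, so $z^{l}$ realizes an isomorphism $F_{p}M^{\alpha} \simto F_{p}M^{\alpha + l}$; for $\alpha \in [-1, 0)$ one has $\alpha - k < 0$ for all $k \geq 0$, giving the corresponding $\partial_{z}^{l}$ isomorphism with the Hodge index shifted by $l$. The ``in particular'' decomposition is then obtained by plugging these formulas into Proposition~\ref{rankoneFdecom} and partitioning $\RR$ as $(-\infty, -1) \sqcup \{-1\} \sqcup (-1, \infty)$: each $\gamma < -1$ is uniquely $\alpha - l$ with $\alpha \in [-1, 0)$, $l \geq 1$, and each $\gamma > -1$ is uniquely $\alpha + l$ with $\alpha \in (-1, 0]$, $l \geq 0$. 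The only subtle point is the interplay between strict specializability (giving surjectivity) and the nilpotent action of $z\partial_{z}$ on $M^{\beta}$ (giving injectivity); once these are combined the rest is index bookkeeping.
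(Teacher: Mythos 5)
Your proposal is correct and follows essentially the same route the paper indicates for this lemma (it is quoted from \cite{SaiMon} with the remark that Proposition~\ref{rankoneFdecom} and strict specializability yield it): identify $F_{p}\GR^{\beta}_{V}M$ with $F_{p}M^{\beta}$ via Propositions~\ref{express} and \ref{rankoneFdecom}, get surjectivity of $z$ and $\partial_{z}$ on the filtered pieces from Definition~\ref{tron}, and get injectivity from the fact that $z\partial_{z}-\beta$ is nilpotent on $M^{\beta}$, so that $z\colon M^{\beta}\to M^{\beta+1}$ and $\partial_{z}\colon M^{\beta}\to M^{\beta-1}$ are bijective for $\beta\neq -1$ and $\beta\neq 0$ respectively. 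The iteration and the index bookkeeping in your final decomposition are also as in the original argument, so no gaps remain.
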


We can describe the category $\mathrm{MHM}^{p}_{\mathrm{mon}}(X\times \CC_{z})$ of monodromic graded polarizable mixed Hodge modules on $X\times \CC_{z}$ as follows.
We consider a tuple $((\calM_{(-1,0]},T_{s},N), \calM_{-1},c,v)$, 
where $\calM_{(-1,0]}$ and $\calM_{-1}$ are graded polarizable mixed Hodge modules on $X$ and
$T_{s}\colon \calM_{(-1,0]}\simeq \calM_{(-1,0]}$, $N\colon \calM_{(-1,0]}\to \calM_{(-1,0]}(-1)$,
$c\colon \calM_{0}(:=\Ker(T_{s}-1)\subset \calM_{(-1,0]})\to \calM_{-1}$ and
$v\colon \calM_{-1}\to \calM_{0}(-1)$ are morphisms in the category of mixed Hodge modules with the following properties:
\begin{enumerate}
\item[(i)] $T_{s}$ commutes with $N$.
\item[(ii)] The underlying $D$-module $M_{(-1,0]}$ of $\calM_{(-1,0]}$ is decomposed as
\[M_{(-1,0]}=\bigoplus_{\alpha\in (-1,0]\cap \QQ}M_{\alpha},\]
where $M_{\alpha}:=\Ker(T_{s}-\exp(-2\pi\sqrt{-1}\alpha))\subset M_{(-1,0]}$.
\item[(iii)] $vc\colon \calM_{0}\to \calM_{0}(-1)$ is $-N$.
\end{enumerate}
We denote by $\mathscr{G}(X)$ the category of such tuples $((\calM_{(-1,0]},T_{s},N), \calM_{-1},c,v)$.

Let $\calM=(M,F_{\bullet}M, K, W_{\bullet}K)$ be a monodromic mixed Hodge module on $X\times \CC_{z}$.
We define $\calM_{(-1,0]}$ as the nearby cycle $\psi_{z}\calM$ of $\calM$ along $z$, $\calM_{-1}$ the unipotent vanishing cycle $\phi_{z,1}\calM$, $T_{s}$ (resp. $N$) the semisimple part (resp. 
$\frac{-1}{2\pi\sqrt{-1}}$ times the logarithm of the unipotent part) of the monodromy automorphism of $\psi_{z}\calM$ and $c$ (resp. $v$) the morphism $\mathrm{can}\colon \psi_{z,0}\calM\to \phi_{z,1}\calM$ (resp. $\mathrm{var}\colon \phi_{z,1}\calM\to \psi_{z,0}\calM(-1)$).
Then, the tuple $((\calM_{(-1,0]},T_{s},N), \calM_{-1},c,v)$ is an object in $\mathscr{G}(X)$.
In this way, we get a functor
\[F\colon \mathrm{MHM}^{p}_{\mathrm{mon}}(X\times \CC_{z})\to \mathscr{G}(X).\]

\begin{prop}[Theorem~3.5 of \cite{SaiMon}]\label{hoshinoouji}
The functor $F$ induces an equivalence of categories.
\end{prop}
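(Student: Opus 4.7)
The plan is to construct an explicit quasi-inverse functor $G\colon \mathscr{G}(X)\to \mathrm{MHM}^{p}_{\mathrm{mon}}(X\times \CC_{z})$ and then to verify $F\circ G\simeq \mathrm{id}$ and $G\circ F\simeq \mathrm{id}$. Given a tuple $((\calM_{(-1,0]},T_{s},N),\calM_{-1},c,v)$, I would first build the underlying $D$-module $M=\bigoplus_{\beta\in\RR}M^{\beta}$ where, for $\alpha\in(-1,0]\cap\QQ$, $M^{\alpha}$ is the generalized eigenspace $\Ker(T_{s}-\exp(-2\pi\sqrt{-1}\alpha))$ inside $M_{(-1,0]}$, and $M^{-1}$ is the underlying $D_{X}$-module of $\calM_{-1}$; for the remaining $\beta$ I would set $M^{\alpha+l}$ (resp. $M^{\alpha-l}$) to be a formal copy of $M^{\alpha}$ for $\alpha\in(-1,0]$ and $l\geq 1$ (resp. $\alpha\in[-1,0)$ and $l\geq 1$). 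The $z,\pa_{z}$-action is then prescribed by Proposition~\ref{express}: $z$ acts as the identity shift $M^{\beta}\to M^{\beta+1}$ except at the jump $M^{-1}\to M^{0}$ where it equals $v$, while $-\pa_{z}$ acts as the identity shift except at the jump $M^{0}\to M^{-1}$ where it equals $c$; the failure of $z\pa_{z}-\beta$ to vanish on $M^{\beta}$ for $\alpha\in(-1,0]$ is transferred from $N$, and the condition $vc=-N$ together with the decomposition guarantees this is a well-defined $D$-module.

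Next I would install the Hodge and weight data. Lemma~\ref{kitune} forces the Hodge filtration: set $F_{p}M^{\alpha}:=F_{p}\calM_{(-1,0]}\cap M^{\alpha}$ for $\alpha\in(-1,0]$, $F_{p}M^{-1}:=F_{p}\calM_{-1}$, and extend by $F_{p}M^{\alpha+l}:=z^{l}F_{p}M^{\alpha}$ and $F_{p}M^{\alpha-l}:=\pa_{z}^{l}F_{p-l}M^{\alpha}$. The weight filtration is constructed analogously from $W_{\bullet}\calM_{(-1,0]}$ and $W_{\bullet}\calM_{-1}$, the compatibility with $N$ providing the relative monodromy filtration data needed along $z=0$. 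The perverse sheaf $K$ and its weight filtration are produced by the parallel gluing description for perverse sheaves on a curve (Beilinson--Verdier / Malgrange--Kashiwara), applied to $(\psi_{z}K_{\calM_{(-1,0]}},\phi_{z,1}K_{\calM_{-1}},T_{s},N,c,v)$, and one checks that the de Rham functor intertwines the two gluings so $(M,F_{\bullet}M,K,W_{\bullet}K)$ is a well-defined quadruple.

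The main obstacle is verifying that this quadruple is genuinely a mixed Hodge module in Saito's sense. The key axioms to check are: (a) strict $\QQ$-specializability along $z=0$, which I would extract directly from the construction of $F_{\bullet}M$ through the formulas $F_{p}M^{\alpha+l}=z^{l}F_{p}M^{\alpha}$ and $F_{p}M^{\alpha-l}=\pa_{z}^{l}F_{p-l}M^{\alpha}$, so that conditions (i) and (ii) of Definition~\ref{tron} hold on the nose; (b) that the mixed-Hodge-module axioms for the nearby and vanishing cycles reduce to the given ones on $\calM_{(-1,0]}$ and $\calM_{-1}$, combined with the compatibility $vc=-N$; and (c) graded polarizability, which I would inherit from polarizations of $\calM_{(-1,0]}$ and $\calM_{-1}$. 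This is the technically delicate step, since it must be checked inductively on the dimension of the support and requires ruling out any additional contribution to the vanishing cycles beyond the one prescribed by $\calM_{-1}$—here Proposition~\ref{express} is essential because it pins down $\psi_{z,\alpha}M=M^{\alpha}$ and $\phi_{z,1}M=M^{-1}$ with no room for extra summands.

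Once $G$ is available, full faithfulness of $F$ follows because any morphism of monodromic mixed Hodge modules respects the decomposition~(\ref{treasure}) and the Hodge filtration decomposes accordingly by Proposition~\ref{rankoneFdecom}, so a morphism is completely reconstructed from its restriction to the $M^{\alpha}$ for $\alpha\in(-1,0]\cup\{-1\}$, i.e. from its image under $F$. Essential surjectivity is witnessed by $G$, and the identifications $F\circ G\simeq\mathrm{id}$, $G\circ F\simeq\mathrm{id}$ are unwound by applying Proposition~\ref{express} and Lemma~\ref{kitune} to see that the construction of $G$ is the unique one compatible with the decomposition and the strict specializability. The hard part is genuinely step (a)--(c) above; everything else is bookkeeping around the eigenspace decomposition.
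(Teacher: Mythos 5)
First, note that this paper does not actually prove Proposition~\ref{hoshinoouji}: it is quoted as Theorem~3.5 of \cite{SaiMon}, so your attempt can only be measured against the natural proof strategy rather than a proof printed here. Your overall plan (build an explicit quasi-inverse $G$ from the tuple, force the Hodge filtration by Lemma~\ref{kitune}, and match nearby/vanishing cycles via Proposition~\ref{express}) is the right shape, but as written it has a genuine gap: everything you label (a)--(c) --- that the glued quadruple $(M,F_{\bullet}M,K,W_{\bullet}K)$ really satisfies Saito's axioms --- is exactly where the content of the theorem sits, and you only name it. Strict specializability ``on the nose'' from the formulas $F_{p}M^{\alpha+l}=z^{l}F_{p}M^{\alpha}$, $F_{p}M^{\alpha-l}=\pa_{z}^{l}F_{p-l}M^{\alpha}$ is not sufficient: one must also produce the weight filtration of $\calM$ from $W_{\bullet}\calM_{(-1,0]}$ and $W_{\bullet}\calM_{-1}$, and this is not ``analogous bookkeeping'' --- in Saito's formalism the weights of $\calM$, $\psi_{z}\calM$ and $\phi_{z,1}\calM$ are related through relative monodromy filtrations, and the inverse problem (admissibility, existence of the relative monodromy filtration at each step, polarizability of the graded pieces, and the inductive axioms on supports) is precisely what Beilinson/Saito gluing was invented to handle. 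The efficient route, and surely the one behind \cite{SaiMon}, is not to re-verify the axioms from scratch but to invoke the gluing equivalence along $z=0$ (recorded in this paper as part (iii) of Proposition~\ref{bongore}, for $n=1$; see \cite{Beilinson}, \cite{MHMProj}) and then to prove the one genuinely new ingredient: that a \emph{monodromic} mixed Hodge module on $X\times\CS_{z}$ is equivalent to the data $(\calM_{(-1,0]},T_{s},N)$, using the decomposition $M=\bigoplus_{\alpha,k}z^{k}M^{\alpha}$ and its filtered refinement (the $\CS$-analogue of Proposition~\ref{rankoneFdecom}). Your proposal replaces this reduction by a direct verification that is never carried out.

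There is also a concrete slip in your construction of the underlying $D$-module: you cannot let $z$ act as the identity shift on the formal copies \emph{and} let $-\pa_{z}$ act as the identity shift away from the jump, because then $z\pa_{z}$ would be $-1$ on every $M^{\beta}$, contradicting the requirement that $z\pa_{z}-\beta$ be nilpotent on $M^{\beta}$. If $z\colon M^{\beta}\to M^{\beta+1}$ is the identity of the copy, then $\pa_{z}\colon M^{\beta}\to M^{\beta-1}$ must be multiplication by $\beta+N_{\beta}$ (with $N_{\beta}$ the nilpotent operator induced from $N$, resp.\ from $cv$ at the $\phi$-level), and the jump maps must be $-\pa_{z}=c$, $z=v$ as in Proposition~\ref{express}, with $vc=-N$ ensuring the commutation relation at $\alpha=0$. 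This is repairable, but in a construction-based proof these normalizations, together with the unexecuted axiom verification above, are not optional details.
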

In particular, we can reconstruct $\calM$ from 
the tuple $((\psi_{z}\calM,T_{s},N),\phi_{z,1}\calM,\mathrm{can},\mathrm{var})$.

\begin{rem}
As stated in Remark~\ref{magical}, we can also consider monodromic mixed Hodge modules on $X\times \CS_{z}$.
Then, we have a similar statement to Proposition~\ref{rankoneFdecom} and Proposition~\ref{hoshinoouji} for a monodromic mixed Hodge modules $\calM$ on $X\times \CS_{z}$ (see \cite{SaiMon}).
In this case,
$M$ is decomposed as
\[M=\bigoplus_{\beta\in \RR}M^{\beta},\]
where $M^{\beta}$ is defined as in the case of that on $X\times \CC_{z}$,
and 
for $\alpha\in (-1,0]$ and $k\in \ZZ$ we have
\[M^{\alpha+k}=z^{k}M^{\alpha}.\]
Moreover, as a corresponding assertion to Lemma~\ref{kitune}, we have
\[F_{\bullet}M=\bigoplus_{\substack{\alpha\in (-1,0]\\ k\in \ZZ}}z^{k}F_{\bullet}M^{\alpha}.\]
\end{rem}

\subsection{{E}xample: normal crossing type}
Let us consider monodromic $D$-modules on $E=X\times \CC^n$ with a stronger condition.
Let $(z_{1},\dots,z_{n})$ be the standard coordinates of $\CC^n$ and $\pi$ the projection $X\times \CC^n\to X$.

\begin{defi}
A $D$-module $M$ on $X\times \CC^n$ is of normal crossing type if 
for any section $m\in \pi_{*}M$ and $1\leq i\leq n$
there exists a polynomial $b(u)\in \CC[u]$ such that $b(z_{i}\partial_{z_{i}})m=0$.
In other words, for any $1\leq i\leq n$, $M$ is monodromic on $(X\times \CC_{z_{1}}\times \dots \CC_{z_{i-1}}\times \CC_{z_{i+1}}\times \dots \CC_{z_{n}})\times \CC_{z_{i}}$, where we regard $(X\times \CC_{z_{1}}\times \dots \CC_{z_{i-1}}\times \CC_{z_{i+1}}\times \dots \CC_{z_{n}})\times \CC_{z_{i}}$ as a rank one vector bundle over $X\times \CC_{z_{1}}\times \dots \CC_{z_{i-1}}\times \CC_{z_{i+1}}\times \dots \CC_{z_{n}}$.
\end{defi}

\begin{rem}
Let $M$ be a regular holonomic $D$-module $M$ of normal crossing type on $\CC^n$ and $K$ be the perverse sheaf corresponding to $M$. 
For $1\leq k\leq n$ and $1\leq i_{1},\dots,i_{k}\leq n$,
we set 
\[V_{i_{1},\dots, i_{k}}:=(\bigcap_{1\leq s\leq k}\{z_{s}=0\})\setminus (\bigcup_{s\notin \{i_{1},\dots,i_{k}\}}\{z_{s}=0\})\subset \CC^n.\]
Then, the restriction of each cohomology of $K$ to $V_{i_{1},\dots,i_{k}}$ or $\CC^n\setminus \bigcup_{1\leq s\leq n}\{z_{s}=0\}$ is locally constant by Remark~\ref{cyuusyo}.
Conversely, if $K$ has this property for a regular holonomic $D$-module on $\CC^n$, $M$ is of normal crossing type.
\end{rem}

For $\blbt=(\beta_{1},\dots,\beta_{n})\in \RR^n$ we set 
\[M^{\blbt}:=\bigcap_{i=1}^{n}\bigcup_{l\geq 0}(\Ker
((z_{i}\partial_{z_{i}}-\beta_{i})^l\colon \pi_{*}M\to \pi_{*}M).\]
Then, we can regard $M^{\blbt}$ as a $D_{X}$-module.

As mentioned, a $D$-module $M$ of normal crossing type is also a monodromic $D$-module with respect to any $z_{i}$-direction.
Therefore, we can apply the results in Subsection~\ref{subrankone} inductively.
For example, we have the following.

\begin{lem}\label{tanba}
$M$ is of normal crossing type if and only if $M$ is decomposed as
\[M=\bigoplus_{\blbt\in \RR^n}M^{\blbt}.\]
\end{lem}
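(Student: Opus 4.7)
\textbf{Proof Proposal for Lemma~\ref{tanba}.} The plan is to reduce everything to the rank-one decomposition established in Proposition~\ref{decomprop} and then iterate, the key point being that the commuting operators $z_1\partial_{z_1},\dots,z_n\partial_{z_n}$ give compatible generalized eigenspace decompositions.

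For the ``only if'' direction, I would proceed by induction on $n$. By the definition of normal crossing type, $M$ is monodromic as a $D$-module on the rank-one bundle $(X\times\CC_{z_{1}}\times\cdots\times\widehat{\CC_{z_{i}}}\times\cdots\times\CC_{z_{n}})\times\CC_{z_{i}}$ in each direction $i$. Applying Proposition~\ref{decomprop} in the $z_{1}$-direction gives
\[M=\bigoplus_{\beta_{1}\in\RR}M^{(1),\beta_{1}},\qquad M^{(1),\beta_{1}}:=\bigcup_{l\geq 0}\Ker((z_{1}\partial_{z_{1}}-\beta_{1})^{l}).\]
Since $z_{i}\partial_{z_{i}}$ and $z_{j}\partial_{z_{j}}$ commute for $i\neq j$, each summand $M^{(1),\beta_{1}}$ is stable under every $z_{i}\partial_{z_{i}}$ for $i\geq 2$, and it remains monodromic in each of those directions (any section of $M^{(1),\beta_{1}}$ is a section of $M$ and therefore annihilated by some polynomial in $z_{i}\partial_{z_{i}}$). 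The inductive hypothesis applied to $M^{(1),\beta_{1}}$, viewed on $X\times\CC^{n-1}_{z_{2},\dots,z_{n}}$ after fixing $\beta_{1}$, yields $M^{(1),\beta_{1}}=\bigoplus_{(\beta_{2},\dots,\beta_{n})}M^{\blbt}$, and assembling these gives $M=\bigoplus_{\blbt\in\RR^{n}}M^{\blbt}$.

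For the ``if'' direction, assume $M=\bigoplus_{\blbt\in\RR^{n}}M^{\blbt}$. Any (local) section $m\in\pi_{*}M$ is a \emph{finite} sum $m=\sum_{\blbt\in S}m_{\blbt}$ with $S\subset\RR^{n}$ finite and $m_{\blbt}\in M^{\blbt}$. For each $i$ and each $\blbt\in S$, by definition of $M^{\blbt}$ there exists $l_{i}(\blbt)\geq 0$ such that $(z_{i}\partial_{z_{i}}-\beta_{i})^{l_{i}(\blbt)}m_{\blbt}=0$. Setting
\[b_{i}(u):=\prod_{\blbt\in S}(u-\beta_{i})^{l_{i}(\blbt)}\in\CC[u],\]
we obtain $b_{i}(z_{i}\partial_{z_{i}})m=0$ for every $i$, which is exactly the normal crossing condition.

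I do not anticipate a serious obstacle: the argument is essentially linear algebra with commuting operators, reducing to Proposition~\ref{decomprop} in each coordinate direction. The only mildly delicate point is verifying that the summands $M^{(1),\beta_{1}}$ produced by the rank-one decomposition really do remain monodromic with respect to the remaining $z_{i}\partial_{z_{i}}$, but this is automatic because monodromicity is a pointwise condition on sections and is inherited by subobjects stable under the Euler operator.
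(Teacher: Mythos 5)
Your proof is correct and follows essentially the route the paper intends: the lemma is stated there as a direct consequence of applying the rank-one monodromic decomposition (Proposition~\ref{decomprop}) inductively in each coordinate direction, which is exactly your argument, and the easy converse via a finite product of annihilating polynomials is the standard one. The only point to phrase carefully is that each summand $M^{(1),\beta_{1}}$ is stable under $D_{X}$, $z_{i}$, $\partial_{z_{i}}$ ($i\geq 2$) but not under $z_{1}$ or $\partial_{z_{1}}$, so it is viewed as a $D$-module on $X\times\CC^{n-1}$ exactly in the spirit of Remark~\ref{basie}, after which your induction goes through.
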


For $\gamma\in \RR$, $M^{\gamma}=\bigcup_{l\geq 0}\Ker(\eu_{E}-\beta)^l\subset \pi_{*}M$ is a $D_{X}$-module (see Remark~\ref{basie}), where $\eu_{E}=\sum_{i=1}^nz_{i}\pa_{z_{i}}$.
Moreover, it is decomposed as follows.

\begin{lem}\label{potato}
We have
\[M^{\gamma}=\bigoplus_{\substack{\blbt\in \RR^n\\ \beta_{1}+\dots+\beta_{n}=\gamma}}M^{\blbt}\]
as a $D_{X}$-module.
\end{lem}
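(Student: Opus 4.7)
The plan is to deduce Lemma~\ref{potato} directly from the finer normal-crossing decomposition given by Lemma~\ref{tanba}, combined with the simultaneous commutativity and local nilpotence of the operators $z_i\partial_{z_i}-\beta_i$ on each $M^{\blbt}$. First I would observe that the operators $z_i\partial_{z_i}$ mutually commute, so in particular they all commute with the Euler field $\eu_E=\sum_{i=1}^n z_i\partial_{z_i}$. Hence each $M^{\blbt}$ is stable under $\eu_E$, and on $M^{\blbt}$ we can write
\[
\eu_E - \gamma \;=\; \sum_{i=1}^n (z_i\partial_{z_i}-\beta_i) \;+\; \bigl(\textstyle\sum_i\beta_i - \gamma\bigr).
\]

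For the inclusion $\bigoplus_{\sum\beta_i=\gamma} M^{\blbt}\subset M^{\gamma}$, I would argue that for $m\in M^{\blbt}$ there exists $l\geq 0$ with $(z_i\partial_{z_i}-\beta_i)^{l}m=0$ for every $i$; then the multinomial expansion of $\bigl(\sum_i (z_i\partial_{z_i}-\beta_i)\bigr)^{nl}$ applied to $m$ vanishes term-by-term, so $(\eu_E-\sum_i\beta_i)^{nl}m=0$, placing $m$ in $M^{\sum_i\beta_i}$. For the reverse inclusion, take $m\in M^{\gamma}$ and decompose $m=\sum_{\blbt}m_{\blbt}$ using Lemma~\ref{tanba}. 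Since $\eu_E$ preserves each summand, $(\eu_E-\gamma)^{L}m=0$ forces $(\eu_E-\gamma)^L m_{\blbt}=0$ for every $\blbt$. On a single such component the scalar part $\sum_i\beta_i-\gamma$ is either zero or nonzero; in the nonzero case the operator $\eu_E-\gamma$ is a scalar plus a locally nilpotent operator on the cyclic piece generated by $m_{\blbt}$, hence invertible there, forcing $m_{\blbt}=0$. This yields exactly the claimed decomposition as $O_X$-modules.

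To upgrade to a $D_X$-module decomposition, I would invoke Remark~\ref{basie}: under the fixed trivialization $E\simeq X\times\CC^n$, each $M^{\blbt}$ (and hence $M^{\gamma}$) carries a natural $D_X$-structure because sections of $D_X$ lift to operators on $E$ that commute with every $z_i\partial_{z_i}$, so they preserve the simultaneous generalized eigenspaces. The same observation shows that the direct-sum decomposition is $D_X$-equivariant.

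The only subtle point is the invertibility step in the reverse inclusion: one must make sure that ``locally nilpotent plus nonzero scalar equals invertible'' is applied on a space where the nilpotence index is uniformly bounded. The safe move is to restrict to the finite-dimensional $\CC$-span of the orbit of a single element $m_{\blbt}$ under the commuting family $\{z_i\partial_{z_i}\}_{i=1}^n$, where all these operators are genuinely nilpotent modulo scalars; then the inverse of $\eu_E-\gamma$ is given by a finite Neumann series. Apart from this careful bookkeeping, the argument is a straightforward joint-generalized-eigenspace computation.
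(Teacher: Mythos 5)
Your proof is correct and follows essentially the same route as the paper: both directions rest on Lemma~\ref{tanba} together with the commuting, locally nilpotent operators $z_i\partial_{z_i}-\beta_i$, with the forward inclusion via the multinomial expansion exactly as in the paper's proof. The only difference is cosmetic: where you finish the reverse inclusion by inverting ``nonzero scalar plus nilpotent'' on the finite-dimensional orbit of each $m_{\blbt}$, the paper instead groups the components by their total weight and appeals to the uniqueness of components in the decomposition~(\ref{macdonald}) of Proposition~\ref{decomprop}; both arguments are sound.
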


\begin{proof}
Let $m$ be a section of $M^{\blbt}$ and
$l_{0}$ an integer large enough so that 
$(z_{i}\partial_{z_{i}}-\beta_{i})^{l_{0}}m=0$ for any $1\leq i\leq n$.
We set $\gamma:=\beta_{1}+\dots+\beta_{n}$.
Then we have
\[(\eu_{E}-\gamma)^{nl_{0}}m=0,\]
and thus obtain 
\begin{align}\label{naniwa}
M^{\blbt}\subset M^{\gamma}.
\end{align}
Conversely, by Lemma~\ref{tanba},
$m\in M^{\gamma}$ is decomposed as
$m=m_{1}+\dots+m_{k}$, where $m_{s}(\neq 0)\in M^{\blbt_{s}}$ and $\blbt_{s}\neq \blbt_{t}$ for $s\neq t$.
By (\ref{naniwa}) and the decomposition~(\ref{macdonald}),
we have the converse inclusion $M^{\gamma}\subset \bigoplus_{\substack{\blbt\in \RR^n\\ \beta_{1}+\dots+\beta_{n}=\gamma}}M^{\blbt}$.
This implies the desired assertion.
\end{proof}

\begin{rem}\label{kochikame}
For $\blbt=(\beta_{1},\dots,\beta_{n})\in \RR^n$ and $1\leq i\leq n$,
it is easy to see that 
\begin{align*}
z_{i}M^{\blbt}&\subset M^{\blbt+\ble_{i}}\mbox{\ and}\\
\partial_{z_{i}}M^{\blbt}&\subset M^{\blbt-\ble_{i}},
\end{align*}
where $\ble_{i}=(0,\dots,0,\stackrel{i}{\hat{1}},0,\dots,0)$.
Moreover, in a similar way to the proof of Proposition~1.10 of \cite{SaiMon},
we can see that the morphism
\begin{align*}
z_{i}\colon M^{\blbt}\to & M^{\blbt+\ble_{i}}\\
(\mbox{resp.\ }\partial_{z_{i}}\colon M^{\blbt}\to &M^{\blbt-\ble_{i}})
\end{align*}
is an isomorphism if $\beta_{i}\neq -1$ (resp. $\beta_{i}\neq 0$).
Therefore, the $D$-module $M$ is determined by the following data:
\begin{enumerate}
\item[(i)] The family of $D_{X}$-modules $\{M^{\blal}\}_{\blal\in [-1,0]^n}$.
\item[(ii)] The nilpotent endomorphisms $z_{i}\partial_{z_{i}}-\alpha_{i}\colon M^{\blal}\to M^{\blal}$ for $\blal\in [-1,0]^n$ and $1\leq i\leq n$.
\item[(iii)] For $1\leq i\leq n$ and $\blal\in [-1,0]^n$ with $\alpha_{i}=-1$ (resp. $\alpha_{i}=0$),
the morphism $z_{i}\colon M^{\blal}\to M^{\blal+\ble_{i}}$ (resp. $\partial_{z_{i}}\colon M^{\blal}\to M^{\blal-\ble_{i}}$) such that the composition $\partial_{z_{i}}\circ z_{i}\colon M^{\blal}\to M^{\blal}$ (resp. $z_{i}\circ \partial_{z_{i}}\colon M^{\blal}\to M^{\blal}$) is equal to $z_{i}\partial_{z_{i}}+1$ (resp. $z_{i}\partial_{z_{i}}$) defined in (ii).
\end{enumerate}

\end{rem}

We assume that $M$ is coherent.
Let $V_{z_{i}}^{\bullet}M$ be the Kashiwara-Malgrange filtration of $M$ along $z_{i}$.
Then, by Proposition~\ref{express}, we have the following.
\begin{lem}
For a coherent $D$-module $M$ of normal crossing type,
the Kashiwara-Malgrange filtration of $M$ along $\ti$ exists and we have
\[V_{\ti}^{\gamma}M=\bigoplus_{\substack{
\blbt=(\beta_{1},\dots,\beta_{n})\in \RR^n\\
\beta_{i}\geq \gamma}}M^{\blbt}.\]
In particular, for $\alpha\in (-1,0]$ the $\alpha$-nearby cycle $\psi_{\ti,\alpha}M:=\GR_{V_{\ti}}^{\alpha}M$ 
 (resp. the unipotent vanishing cycle $\phi_{\ti,1}M:=\GR_{V_{\ti}}^{-1}M$) of $M$
can be described as
\begin{align*}
\psi_{\ti,\alpha}M=\bigoplus_{\substack{
\blbt=(\beta_{1},\dots,\beta_{n})\in \RR^n\\
\beta_{i}= \alpha}}M^{\blbt}\\
(\mbox{resp. }\phi_{\ti,1}M=\bigoplus_{\substack{
\blbt=(\beta_{1},\dots,\beta_{n})\in \RR^n\\
\beta_{i}= -1}}M^{\blbt}).
\end{align*}
\end{lem}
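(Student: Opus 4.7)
The plan is to reduce the multi-variable statement to the rank-one case already handled by Proposition~\ref{express}. By the definition of normal crossing type, $M$ is monodromic when we regard the ambient space as a rank-one vector bundle over $Y_i := X\times \CC_{z_1}\times \cdots \widehat{\CC_{\ti}} \cdots \times \CC_{z_n}$ with fiber coordinate $\ti$. Proposition~\ref{express} therefore applies and gives
\[
V^{\gamma}_{\ti} M = \bigoplus_{\beta \geq \gamma} N^{\beta},
\]
where $N^{\beta} := \bigcup_{l\geq 0}\Ker((\ti\pa_{\ti}-\beta)^l \colon M \to M)$ is the generalized $\beta$-eigenspace of $\ti\pa_{\ti}$ acting on the pushforward to $Y_i$. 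In particular, existence of the Kashiwara-Malgrange filtration along $\ti$ is automatic from the monodromic property in the $\ti$-direction.

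Next I would identify $N^{\beta}$ with $\bigoplus_{\blbt : \beta_i = \beta} M^{\blbt}$. By Lemma~\ref{tanba}, $M = \bigoplus_{\blbt \in \RR^n} M^{\blbt}$, and each summand $M^{\blbt}$ is by construction annihilated by a power of $\ti\pa_{\ti}-\beta_i$, so the inclusion $\bigoplus_{\blbt : \beta_i = \beta} M^{\blbt} \subset N^{\beta}$ is immediate. For the reverse inclusion, take $m \in N^{\beta}$, decompose $m = \sum_{\blbt} m_{\blbt}$ according to Lemma~\ref{tanba}, and observe that $(\ti\pa_{\ti}-\beta)^l$ acts on each $M^{\blbt}$ as $((\ti\pa_{\ti}-\beta_i) + (\beta_i-\beta))^l$, which is invertible on $M^{\blbt}$ whenever $\beta_i \neq \beta$ (since $\ti\pa_{\ti}-\beta_i$ is nilpotent on $M^{\blbt}$ and $\beta_i-\beta \neq 0$); hence only the components with $\beta_i = \beta$ can contribute to $\Ker((\ti\pa_{\ti}-\beta)^l)$. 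Substituting this identification into the formula from Proposition~\ref{express} yields
\[
V^{\gamma}_{\ti} M = \bigoplus_{\beta \geq \gamma} \ \bigoplus_{\blbt : \beta_i = \beta} M^{\blbt} = \bigoplus_{\blbt : \beta_i \geq \gamma} M^{\blbt},
\]
as desired.

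The nearby and vanishing cycle descriptions then follow at once by taking the graded pieces: for $\alpha \in (-1,0]$,
\[
\psi_{\ti,\alpha} M = V^{\alpha}_{\ti}M / V^{>\alpha}_{\ti}M = \bigoplus_{\blbt : \beta_i = \alpha} M^{\blbt},
\]
and the formula for $\phi_{\ti,1}M$ is the same with $\alpha = -1$. There is no real obstacle here; the main point requiring care is the compatibility argument that shows the joint generalized eigenspace decomposition of Lemma~\ref{tanba} refines the single-variable decomposition, which is the elementary nilpotent/invertible dichotomy indicated above.
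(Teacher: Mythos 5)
Your proposal is correct and follows essentially the same route as the paper, which derives this lemma directly from Proposition~\ref{express} applied to the $\ti$-direction; your extra verification that the generalized $\ti\pa_{\ti}$-eigenspaces coincide with $\bigoplus_{\blbt\colon \beta_{i}=\beta}M^{\blbt}$ (via Lemma~\ref{tanba} and the nilpotent-plus-invertible dichotomy) is exactly the implicit step the paper leaves to the reader.
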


The previous lemma implies that the nearby cycle $\psi_{\ti,\alpha}M$ and the vanishing cycle $\phi_{\ti,1}M$ is again a $D$-module of normal crossing type on $X\times \CC_{z_{1}}\times \dots \CC_{z_{i-1}}\times \{0\}\times \CC_{z_{i+1}}\times \dots \CC_{z_{n}}$.
This allows us to prove the following proposition.
\begin{prop}\label{gozzira}
Let $\calM$ be a mixed Hodge module on $X\times \CC^n$ whose underlying $D$-module $M$ is of normal crossing type.
Then, the Hodge filtration $F_{p}M$ is decomposed as
\[F_{p}M=\bigoplus_{\blbt\in \RR^n}F_{p}M^{\blbt},\]
where $F_{p}M^{\blbt}:=F_{p}M\cap M^{\blbt}$ and the $O_{X}[z_{1},\dots,z_{n}]$-module structure of the right hand side is defined by the morphisms $z_{i}\colon F_{p}M^{\blbt}\to F_{p}M^{\blbt+\ble_{i}}$ for $\blbt\in \RR^n$.
\end{prop}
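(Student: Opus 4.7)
I argue by induction on the rank $n$, the base case $n = 1$ being precisely Proposition~\ref{rankoneFdecom}. For the inductive step, I view $X \times \CC^n$ as a rank-one trivial bundle over $X \times \CC^{n-1}$ with fiber coordinate $z_n$. The normal-crossing hypothesis implies that $M$ is monodromic along $z_n$, so $\calM$ becomes a monodromic mixed Hodge module on $(X \times \CC^{n-1}) \times \CC_{z_n}$. Proposition~\ref{rankoneFdecom} then yields
\[
F_p M = \bigoplus_{\beta_n \in \RR} F_p N^{\beta_n},
\]
where $N^{\beta_n} := \bigcup_{l \geq 0} \Ker\bigl((z_n \partial_{z_n} - \beta_n)^l\bigr)$ and $F_p N^{\beta_n} := F_p M \cap N^{\beta_n}$.

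To decompose each $F_p N^{\beta_n}$ further, I apply the induction hypothesis to the nearby and vanishing cycle mixed Hodge modules along $z_n$. For $\beta_n \in (-1, 0]$, the nearby cycle $\psi_{z_n, \beta_n} \calM$ is a graded polarizable mixed Hodge module on $X \times \CC^{n-1}$ whose underlying $D$-module $N^{\beta_n}$ is again of normal crossing type by the lemma preceding this proposition. In the monodromic case, Proposition~\ref{express} gives $V^{\alpha}_{z_n} M = \bigoplus_{\beta_n \geq \alpha} N^{\beta_n}$, so the Hodge filtration on $\psi_{z_n, \beta_n} \calM$ coincides with $\{F_p N^{\beta_n}\}_p$. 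Using the identification $(\psi_{z_n, \beta_n} M)^{\blbt'} = M^{(\blbt', \beta_n)}$, the induction hypothesis then yields
\[
F_p N^{\beta_n} = \bigoplus_{\blbt' \in \RR^{n-1}} F_p M^{(\blbt', \beta_n)}.
\]
The unipotent vanishing cycle $\phi_{z_n, 1} \calM$ handles the index $\beta_n = -1$ in the same way.

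For the remaining indices $\beta_n \notin [-1, 0]$, I invoke Lemma~\ref{kitune}: writing $\beta_n = \alpha + l$ with $\alpha \in (-1, 0]$ (resp.\ $\beta_n = \alpha - l$ with $\alpha \in [-1, 0)$) and $l \geq 1$, one has $F_p N^{\beta_n} = z_n^l F_p N^{\alpha}$ (resp.\ $F_p N^{\beta_n} = \partial_{z_n}^l F_{p - l} N^{\alpha}$). Combining this with the decomposition of $F_p N^{\alpha}$ already established, and using Remark~\ref{kochikame} to see that $z_n$ and $\partial_{z_n}$ respect the multi-index decomposition of $M$, one obtains the required decomposition of $F_p N^{\beta_n}$. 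Summing over $\beta_n \in \RR$ completes the induction.

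The main obstacle I expect lies in verifying that the Hodge filtration on the nearby and vanishing cycle mixed Hodge modules agrees with the induced filtration $F_\bullet M \cap N^{\beta_n}$ from the first step; this identification is what allows the induction hypothesis on $X \times \CC^{n-1}$ to be applied to $\psi_{z_n, \beta_n} \calM$ and $\phi_{z_n, 1} \calM$ directly. It rests on the explicit splitting of the Kashiwara--Malgrange filtration in the monodromic setting, provided by Proposition~\ref{express}, together with the rank-one $z_n$-decomposition of $F_\bullet M$ obtained from Proposition~\ref{rankoneFdecom}.
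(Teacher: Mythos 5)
Your proof is correct and follows essentially the same route as the paper: induct on $n$, split $F_pM$ along the $z_n$-direction via Proposition~\ref{rankoneFdecom}, identify the pieces for $\beta_n\in[-1,0]$ with the Hodge filtrations (up to shift) of the normal-crossing nearby/vanishing cycle modules so that the inductive hypothesis applies, and then propagate to $\beta_n\notin[-1,0]$ by strict specializability. The only difference is cosmetic: you make explicit, via Lemma~\ref{kitune} and Remark~\ref{kochikame}, the final step that the paper compresses into the phrase ``by (\ref{chance}) and the strict specializability along $z_{n}=0$''.
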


\begin{proof}
The proof is by induction on $n$.
The assertion for $n=1$ is Proposition~\ref{rankoneFdecom}.
Suppose that the statement is proved for $n=n_{0}-1$ ($n_{0}\geq 2$) and consider the case where $n=n_{0}$.
We set $M^{\beta}_{z_{n}}:=\bigcup_{l\geq 1}\Ker(z_{n}\partial_{z_{n}}-\beta)^l\subset \pi_{*}M$.
Since $M$ is monodromic with respect to the $z_{n}$-direction on $(X\times \CC^{n-1})\times \CC_{z_{n}}$,
we have
\begin{align}
M&=\bigoplus_{\beta\in \RR}M^{\beta}_{z_{n}} \mbox{\quad and}\\
\label{chance}
F_{p}M&=\bigoplus_{\beta\in \RR}F_{p}M^{\beta}_{z_{n}},
\end{align}
where $F_{p}M^{\beta}_{z_{n}}=F_{p}M\cap M^{\beta}_{z_{n}}$.
Note that for if $\beta$ is in  $(-1,0]$ (resp. $\beta$ is $-1$),
we have $M_{z_{n}}^{\beta}=\psi_{z_{n},\beta}M$ (resp. $M_{z_{n}}^{\beta}=\phi_{z_{n},1}M$).
As mentioned above, the $\alpha$-nearby cycle ($\alpha\in (-1,0]$) and the unipotent vanishing cycle of $M$ are of normal crossing type.
Moreover, they are a direct summand of a mixed Hodge module and their Hodge filtrations are $F_{\bullet}M_{z_{n}}^{\alpha}$ and $F_{\bullet}M_{z_{n}}^{\alpha}$ up to shift.
Therefore, by the induction hypothesis,
we have
\begin{align*}
F_{p}M_{z_{n}}^{\alpha}&=\bigoplus_{\substack{
\blbt=(\beta_{1},\dots,\beta_{n})\in \RR^n\\
\beta_{n}= \alpha}}F_{p}M^{\blbt}\mbox{\quad and,}\\
F_{p}M_{z_{n}}^{-1}&=\bigoplus_{\substack{
\blbt=(\beta_{1},\dots,\beta_{n})\in \RR^n\\
\beta_{n}= -1}}F_{p}M^{\blbt}.
\end{align*}
By (\ref{chance}) and the strict specializability along $z_{n}=0$,
we obtain the desired assertion.
\end{proof}

In particular, combining it with Lemma~\ref{potato},
we have the following.
\begin{cor}
In the situation of Proposition~\ref{gozzira},
we have
\[F_{p}M=\bigoplus_{\gamma\in \RR}F_{p}M^{\gamma},\mbox{ and}\]
\[F_{p}M^{\gamma}=\bigoplus_{\substack{\blbt\in \RR^n\\ \beta_{1}+\dots+\beta_{n}=\gamma}}F_{p}M^{\blbt}.\]
\end{cor}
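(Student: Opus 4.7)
The corollary should follow almost formally by combining Proposition~\ref{gozzira} with Lemma~\ref{potato}. The plan is to show that the decomposition of $F_p M$ indexed by $\blbt\in\RR^n$ already provided by Proposition~\ref{gozzira} simply regroups, via the linear map $\blbt\mapsto \beta_1+\dots+\beta_n$, into the coarser decomposition indexed by $\gamma\in\RR$. Then the identification of the inner sum with $F_p M^{\gamma}=F_p M\cap M^{\gamma}$ will be read off from Lemma~\ref{potato}.

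More concretely, first I would start from Proposition~\ref{gozzira}, which gives
\[
F_p M = \bigoplus_{\blbt\in\RR^n} F_p M^{\blbt},\qquad F_p M^{\blbt}=F_p M\cap M^{\blbt}.
\]
For a fixed $\gamma\in\RR$, by Remark~\ref{kochikame} one has $M^{\blbt}\subset M^{\gamma}$ whenever $\beta_1+\dots+\beta_n=\gamma$ (this was already used in the proof of Lemma~\ref{potato} via the estimate $(\eu_E-\gamma)^{nl_0}m=0$). Hence $F_p M^{\blbt}\subset F_p M\cap M^{\gamma}=F_p M^{\gamma}$ for every such $\blbt$. Summing over all $\blbt$ with $\beta_1+\dots+\beta_n=\gamma$ produces the inclusion
\[
\bigoplus_{\substack{\blbt\in\RR^n\\ \beta_1+\dots+\beta_n=\gamma}} F_p M^{\blbt}\subset F_p M^{\gamma}.
\]

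For the reverse inclusion I would take any $m\in F_p M^{\gamma}\subset M^{\gamma}$ and decompose it, via Lemma~\ref{potato}, as $m=\sum_{\sum\beta_i=\gamma} m_{\blbt}$ with $m_{\blbt}\in M^{\blbt}$. Since $m\in F_p M$ and $F_p M=\bigoplus_{\blbt}F_p M^{\blbt}$ by Proposition~\ref{gozzira}, the uniqueness of this finer decomposition forces each component $m_{\blbt}$ to lie in $F_p M\cap M^{\blbt}=F_p M^{\blbt}$. This yields the second displayed equality of the corollary. Finally, rewriting Proposition~\ref{gozzira} as
\[
F_p M=\bigoplus_{\gamma\in\RR}\Bigl(\,\bigoplus_{\substack{\blbt\in\RR^n\\ \beta_1+\dots+\beta_n=\gamma}} F_p M^{\blbt}\Bigr)
\]
and substituting the identification just established gives $F_p M=\bigoplus_{\gamma\in\RR}F_p M^{\gamma}$, which is the first displayed equality.

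There is really no essential obstacle here; the whole argument is a regrouping of direct summands, with the only non-formal inputs being the inclusion $M^{\blbt}\subset M^{\gamma}$ from Remark~\ref{kochikame}/Lemma~\ref{potato} and the already established decomposition of $F_p M$ along the finer index set from Proposition~\ref{gozzira}. The only point requiring a moment's care is the uniqueness-of-components step, i.e.\ confirming that the $\blbt$-components of an element of $F_p M$ individually belong to $F_p M^{\blbt}$; this however is immediate from the direct sum decomposition of $F_p M$ supplied by Proposition~\ref{gozzira}.
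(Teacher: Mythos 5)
Your argument is correct and is exactly what the paper intends: the paper gives no separate proof, stating only that the corollary follows by combining Proposition~\ref{gozzira} with Lemma~\ref{potato}, and your regrouping of the direct sum (using the inclusion $M^{\blbt}\subset M^{\gamma}$ and uniqueness of components in $M=\bigoplus_{\blbt}M^{\blbt}$) is precisely that combination spelled out.
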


In the next section,
we will generalize this assertion to the case for general monodromic mixed Hodge modules, which is not necessarily of normal crossing type.

\begin{rem}
For a mixed Hodge module of normal crossing type $M$ and $\blbt=(\beta_{1},\dots,\beta_{n})\in (-1,0]^n$,
it is easy to see
\[M^{\blbt}=\psi_{z_{1},\beta_{1}}\cdots \psi_{z_{n},\beta_{n}}M.\]
Moreover,
for example, for $\blbt=(-1,\beta_{2},\dots,\beta_{n})\in \{-1\}\times (-1,0]^{n-1}$,
we have
\[M^{\blbt}=\phi_{z_{1},1}\psi_{z_{2},\beta_{2}}\cdots \psi_{z_{n},\beta_{n}}M.\]
A similar statement holds for any $\blbt\in [-1,0]^n$.
Then, we can generalize the gluing: Proposition~\ref{hoshinoouji} to the normal crossing case.
In particular, 
the mixed Hodge module of normal crossing type $\calM$ can be reconstructed from
the family of mixed Hodge modules $\{\Psi_{1}\cdots \Psi_{n}\calM\}_{(\Psi_{1},\cdots ,\Psi_{n})}$ with some morphisms between them,
where $\Psi_{i}$ is $\psi_{z_{i},\alpha}$ ($\alpha\in (-1,0]$) or $\phi_{z_{i},1}$.
\end{rem}

\section{{T}he Hodge filtration of monodromic mixed Hodge modules}

Let $\calM=(M,F_{\bullet}M,K,W_{\bullet}K)$ be a monodromic mixed Hodge module on a vector bundle $\pi\colon E\to X$ on a smooth algebraic variety $X$.
By Proposition~\ref{decomprop},
we have the decomposition
\[M=\bigoplus_{\beta\in \RR}M^{\beta}.\]
For $p\in \ZZ$ and $\beta\in \RR$,
we define an $O_{X}$-submodule of $M^{\beta}$ as
\[F_{p}M^{\beta}:=\pi_{*}(F_{p}M)\cap M^{\beta} (\subset \pi_{*}M).\]
Then, the direct sum $\bigoplus_{\beta\in \RR}F_{p}M^{\beta}$ is a $\pi_{*}O_{E}$-submodule of $\bigoplus_{\beta\in \RR}M^{\beta}$.
Therefore, by Lemma~\ref{haruni}, we can also regard $\bigoplus_{\beta\in \RR}F_{p}M^{\beta}$ as an $O_{E}$-submodule of $M$.
The purpose of this section is to show the following theorem.

\begin{thm}\label{Fdecomp}
For $p\in \ZZ$, the Hodge filtration $F_{p}M$ is decomposed as
\[F_{p}M=\bigoplus_{\beta\in \RR}F_{p}M^{\beta}.\]
\end{thm}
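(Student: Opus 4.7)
The plan is to reduce Theorem~\ref{Fdecomp} to the rank-one case, Proposition~\ref{rankoneFdecom}, by means of a blow-up of the zero section. The statement being local on $X$, I may assume $X$ is affine and $E=X\times \CC^n$ with coordinates $(z_{1},\dots,z_{n})$ and Euler field $\eu_{E}=\sum_{i}z_{i}\pa_{z_{i}}$. Let $\sigma\colon \wt{E}\to E$ be the blow-up of $E$ along the zero section $X\times\{0\}$; then $\wt{E}$ naturally carries the structure of a line bundle $q\colon \wt{E}\to X\times \PP^{n-1}$, namely the pull-back to $X\times \PP^{n-1}$ of the tautological line bundle $\calO_{\PP^{n-1}}(-1)$. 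Because the blow-up is $\CS$-equivariant for the fiberwise scaling action, the Euler field $\eu_{\wt{E}}$ of $q$ corresponds under $\sigma$ to $\eu_{E}$.

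I then pull back $\calM$ to $\wt{E}$ in the category of mixed Hodge modules (with an appropriate shift so that the resulting complex is concentrated in degree zero), obtaining $\wt{\calM}$ with underlying $D$-module $\wt{M}$. Since the locally finite action of $\eu_{E}$ on $M$ lifts to the locally finite action of $\eu_{\wt{E}}$ on $\wt{M}$, the $D$-module $\wt{M}$ is monodromic on the line bundle $q$. Proposition~\ref{rankoneFdecom} applied to $q$ yields
\[F_{p}\wt{M}=\bigoplus_{\beta\in\RR}F_{p}\wt{M}^{\beta}.\]
Since $\sigma$ is an isomorphism over the open subset $E^{\circ}:=E\setminus(X\times\{0\})$, this decomposition restricts to $F_{p}M|_{E^{\circ}}=\bigoplus_{\beta}F_{p}M^{\beta}|_{E^{\circ}}$, so the desired identity already holds away from the zero section.

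The remaining and principal step is to extend the decomposition across the zero section. My strategy is to exploit the strict $\RR$-specializability of $(\wt{M},F_{\bullet}\wt{M})$ along the exceptional divisor $D:=\sigma^{-1}(X\times\{0\})\simeq X\times\PP^{n-1}\subset\wt{E}$ — a genuine divisor, so Definition~\ref{tron} applies — and to transport the decomposition through the Gauss-Manin pushforward by $\sigma$, using the strictness of filtered direct images under projective morphisms. Each eigenspace $\wt{M}^{\beta}$ is preserved by the $V$-filtration along $D$, hence the decomposition of $F_{p}\wt{M}$ descends to a compatible decomposition of each graded piece $\GR^{\alpha}_{V_{D}}\wt{M}$; combining this with $R\sigma_{*}O_{\wt{E}}=O_{E}$ recovers the decomposition of $F_{p}M$ on all of $E$. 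The main obstacle is precisely this descent: controlling possible contributions to $F_{p}M$ of sections supported on the zero section, which are forced to respect the eigenspace decomposition by strict specializability — an argument formally analogous to the inductive reduction used in the proof of Proposition~\ref{gozzira}, now carried out along the exceptional divisor of a blow-up rather than along a coordinate hyperplane.
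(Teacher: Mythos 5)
Your first half is sound and in fact coincides with what the paper does: identifying the complement of the zero section with (charts of) the line bundle $\wt{\CC^n}\to\PP^{n-1}$, matching the Euler fields ($s\pa_{s}\leftrightarrow\eu_{E}$, Lemma~\ref{eegekai}), and invoking the rank-one result Proposition~\ref{rankoneFdecom} gives the decomposition of $F_{p}M$ over $E\setminus(X\times\{0\})$ (this is Corollary~\ref{hunade} and Proposition~\ref{behomazun}, proved chart by chart over $V_{i}=\{z_{i}\neq 0\}$). Note, however, that even this step should be phrased as the paper does, over the open sets where $\sigma$ is an isomorphism: since the blow-up $\sigma$ is not smooth, $\sigma^{*}\calM$ in the category of mixed Hodge modules is a complex that no single shift makes concentrated in degree zero, and there is no reason that the Hodge filtration of its cohomologies is the $O$-module pullback of $F_{\bullet}M$; the paper sidesteps this entirely by only ever pulling back along the isomorphism $\rho_{1}\colon q^{-1}(U_{1})\simto V_{1}$ (see~(\ref{yasuragi})).

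The genuine gap is the descent across the zero section, which is exactly where the whole difficulty of the theorem sits, and your sketch does not supply an argument. Knowing the decomposition of $F_{p}\wt{M}$ (or of $F_{p}M$ on $E\setminus(X\times\{0\})$) does not determine $F_{p}M$: a section of $F_{p}M$ could have monodromic components that fail to lie in $F_{p}M$ while agreeing with sections of $F_{p}M$ away from the zero section, and the tools you name do not exclude this. Concretely: $H^{0}\sigma_{\dag}\wt{\calM}$ is not $\calM$ (and for mixed, non-pure modules there is no decomposition theorem to extract $\calM$ as a summand), so ``strictness of filtered direct images under projective morphisms'' computes the Hodge filtration of the wrong object; $R\sigma_{*}O_{\wt{E}}=O_{E}$ is an $O$-module statement that says nothing about the filtered $D$-module direct image; and strict specializability along the exceptional divisor of $\wt{M}$ does not by itself transfer information to $F_{p}M$ on $E$. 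This is precisely why the paper's proof, after the open-set step, has to run a separate induction on $n$ along the hyperplane $D_{1}=\{z_{1}=0\}$: it uses $F_{p}V^{>-1}_{z_{1}}M=j_{*}(F_{p}M_{1})\cap V^{>-1}_{z_{1}}M$ (strict specializability, Corollary~\ref{oboro}), the explicit localization formulas for $[*D_{1}]$ and $[!D_{1}]$ (Proposition~\ref{localizfac}, Corollary~\ref{kaniseijin}), Beilinson's maximal extension and the gluing complex (Proposition~\ref{bongore}, Corollary~\ref{patrush}), and the nontrivial fact that $\psi_{z_{1}}M$ and $\phi_{z_{1},1}M$ are again monodromic (Lemma~\ref{rapp}, Corollary~\ref{nobita}) so that the inductive hypothesis applies. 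Your appeal to an analogy with Proposition~\ref{gozzira} does not substitute for this: there the induction is along coordinate hyperplanes for the module $M$ itself, using its own strict specializability, not a descent through a blow-down, and it is available only after one knows how to glue across the divisor. As it stands, the proposal proves the statement only on $E\setminus(X\times\{0\})$.
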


\begin{rem}
This result was shown in a different way in a recent preprint \cite{ChenDirks} by Chen-Dirks.
\end{rem}

Since it is enough to show this theorem locally on $X$, 
we may assume that $E$ is a trivial bundle $X\times \CC^n$ and $X$ is affine (therefore, we identify $M$ with the module of its global sections).
Let $(z_{1},\dots,z_{n})$ be the coordinates of $\CC^n$.
We set $D_{1}:=\{z_{1}=0\}\subset X\times \CC^n$ and $V_{1}:=E\setminus D_{1}$.

Let us recall some basic properties of the localization $\calM[*D_{1}]$ and the dual localization $\calM[!D_{1}]$ of a mixed Hodge module $\calM$. For details, see \cite{Beilinson}, \cite{MHMProj} and \cite{MTM}.
We denote by $M[*D_{1}]$ (resp. $M[!D_{1}]$) the underlying $D$-module of $\calM[*D_{1}]$ (resp. $\calM[!D_{1}]$).
The stupid localization $M(*D_{1})$ (resp. $(M,F_{\bullet}M)(*D_{1})$) along $D_{1}$ of a $D$-module $M$ (resp. a filtered $D$-module $(M,F_{\bullet}M)$) is the $D_{E}(*D_{1})(=D_{E}\otimes_{\CC[z_{1}]}\CC[z_{1}^{\pm 1}])$-module (resp. the filtered $D_{E}(*D_{1})$-module) defined as
\[M\otimes_{\CC[z_{1}]}\CC[z_{1}^{\pm 1}]\]
\[(\mbox{resp.\ } (M\otimes_{\CC[z_{1}]}\CC[z_{1}^{\pm 1}],F_{\bullet}M\otimes_{\CC[z_{1}]}\CC[z_{1}^{\pm 1}])).\]
Let $V_{z_{1}}^{\bullet}M$ be a Kashiwara-Malgrange filtration of a $D$-module $M$ along $z_{1}$.

\begin{prop}[see \cite{Beilinson}, \cite{MHMProj} and \cite{MTM}]\label{localizfac}
Let $\calM=(M,F_{\bullet}M, K, W_{\bullet}M)$ be a mixed Hodge module on $E=X\times \CC^n$.
Then, we have the following.

\begin{enumerate}
\item[(i)]\label{locDmod}  The underlying $D$-modules are as follows:
\begin{align*}
M[*D_{1}]&=M(*D_{1})=M\otimes_{\CC[z_{1}]}\CC[z_{1}^{\pm 1}], \mbox{\quad and}\\
M[!D_{1}]&=\mathbf{D}(\mathbf{D}(M)(*D_{1}))=D_{E}\otimes_{V_{z_{1}}^{0}D_{E}}V^{>-1}_{z_{1}}M,
\end{align*}
where $V_{z_{1}}^{\bullet}D_{E}$ is the $V$-filtration of $D_{E}$ along $z_{1}$ and $\mathbf{D}$ is the duality functor between the category of mixed Hodge modules. 

\item[(ii)] We have an (canonical) isomorphism $\mathbf{D}(\calM[*D_{1}])\simeq (\mathbf{D}\calM)[!D_{1}]$.

\item[(iii)] There is natural morphisms $\calM\to \calM[*D_{1}]$ and $\calM[!D_{1}]\to \calM$ whose restriction to $V_{1}$ are isomorphisms.
In particular, the stupid localizations of the underlying filtered $D$-modules of $\calM[*D_{1}]$ and $\calM[!D_{1}]$ are the stupid localization $(M,F_{\bullet}M)(*D_{1})$ of the underlying filtered $D$-module of $\calM$, and we have 
\[\Vto^{>-1}M(*D_{1})=\Vto^{>-1}M[!D_{1}]=\Vto^{>-1}M.\]

\item[(iv)] \label{elite}We have
\begin{align*} 
\Vto^{\geq -1}M[*D_{1}]&=z_{1}^{-1}\Vto^{\geq 0}M\mbox{, and}\\
F_{p}\Vto^{\geq -1}M[*D_{1}]&=z_{1}^{-1}F_{p}\Vto^{\geq 0}M\quad (p\in \ZZ).
\end{align*}

\item[(v)] \label{hirakata}The Hodge filtrations are described as follows:
\begin{align*}
F_{p}(M[*D_{1}])&=\sum_{k\geq 0}\partial_{z_{1}}^{k}F_{p-k}V^{\geq -1}_{z_{1}}M, \mbox{\quad and}\\
F_{p}(M[!D_{1}])&=\sum_{k\geq 0}\partial_{z_{1}}^k\otimes F_{p-k}V^{>-1}_{z_{1}}M, 
\end{align*}
where $F_{p-k}V^{\geq -1}_{z_{1}}M=F_{p-k}M\cap V^{\geq -1}_{z_{1}}M$ and $F_{p-k}V^{>-1}_{z_{1}}M=F_{p-k}M\cap V^{>-1}_{z_{1}}M$.
With (iv), in particular, the filtered $D$-modules of $\calM[*D_{1}]$ and $\calM[!D_{1}]$ are determined only by the stupid localization $(M,F_{\bullet}M)(*D_{1})$.
\end{enumerate}
\end{prop}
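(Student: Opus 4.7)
The plan is to assemble this statement from the foundational theory of localization of mixed Hodge modules, following Beilinson's maximal extension construction and M.~Saito's realization of it in the $\MHM$ framework, together with Mochizuki's extension to mixed twistor $D$-modules. Concretely, for the open embedding $V_{1}\hookrightarrow E$ one defines $\calM[*D_{1}]$ (respectively $\calM[!D_{1}]$) as the $\MHM$-theoretic $*$-extension (respectively $!$-extension) of $\calM|_{V_{1}}$ across $D_{1}$; existence and uniqueness are guaranteed by the strict specializability of the underlying filtered $D$-module of $\calM$ along $z_{1}=0$, which is one of the basic axioms of $\MHM$.

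For (i), the identity $M[*D_{1}]=M(*D_{1})=M\otimes_{\CC[z_{1}]}\CC[z_{1}^{\pm 1}]$ is the defining property of the $*$-extension at the $D$-module level. The description of $M[!D_{1}]$ as $D_{E}\otimes_{V_{z_{1}}^{0}D_{E}}V^{>-1}_{z_{1}}M$ follows from Beilinson's construction: the $!$-extension is generated as a $D_{E}$-module by $V^{>-1}_{z_{1}}M$, with relations inherited exactly from the $V_{z_{1}}^{0}D_{E}$-action. Item (ii) is the tautological equality $[!D_{1}]=\mathbf{D}\circ[*D_{1}]\circ\mathbf{D}$ built into the definitions.

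For (iii), the canonical morphisms come from the universal properties of the $*$- and $!$-extensions applied to the identity on $V_{1}$, and they restrict to isomorphisms over $V_{1}$ because the localizations do not modify $\calM$ away from $D_{1}$. The identity $\Vto^{>-1}M(*D_{1})=\Vto^{>-1}M$ holds because multiplication by $z_{1}^{-1}$ strictly decreases the $V$-index, so inverting $z_{1}$ only alters the pieces of index $\leq -1$. Item (iv) follows by observing that on $M(*D_{1})$ the $V$-filtration extends through $\Vto^{\geq -1}M(*D_{1})=z_{1}^{-1}\Vto^{\geq 0}M$, and its compatibility with $F_{\bullet}$ is a direct consequence of strict specializability: surjectivity of $z_{1}$ on the relevant $F_{p}\GR_{V}$ pieces forces $F_{p}\Vto^{\geq -1}M[*D_{1}]=z_{1}^{-1}F_{p}\Vto^{\geq 0}M$. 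Item (v) is M.~Saito's standard formula: since $M[*D_{1}]$ is generated over $D_{E}$ by $\Vto^{\geq -1}M[*D_{1}]$ and the filtered $D$-module structure is uniquely determined by requiring strict specializability together with the given generation, the recursive formula for $F_{p}M[*D_{1}]$ is forced; the formula for $F_{p}M[!D_{1}]$ then follows from the duality in (ii).

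The main obstacle is purely bookkeeping: matching the $V$-filtration conventions of this paper (decreasing in $\RR$, with $\GR^{\beta}_{V}$ killed by powers of $z_{1}\partial_{z_{1}}-\beta$) against those of \cite{Beilinson}, \cite{MHMProj}, and \cite{MTM}, and keeping the commutator $[\partial_{z_{1}},z_{1}]=1$ straight across the $*$/$!$ distinction so that the transition between $\Vto^{\geq 0}$ on $M$ and $\Vto^{\geq -1}$ (respectively $\Vto^{>-1}$) on the localization yields the correct bounds in (iv) and (v).
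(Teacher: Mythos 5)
The paper offers no proof of this proposition: it is stated as a recollection of standard facts with the citation ``see \cite{Beilinson}, \cite{MHMProj} and \cite{MTM}'', exactly the sources your sketch draws on. Your proposal correctly summarizes the standard arguments from those references (the $*$/$!$-extension definitions, the generation of $M[!D_{1}]$ by $V^{>-1}_{z_{1}}M$, duality, and Saito's filtration formulas forced by strict specializability), so it matches the paper's treatment and there is nothing further to compare.
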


The following is a simple consequence of (i) of Proposition~\ref{localizfac} 
\begin{lem}
If $M$ is monodromic, then $M(*D_{1})$ and $M[!D_{1}]$ are also monodromic.
\end{lem}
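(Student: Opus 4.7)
The plan is to verify the two claims directly from the explicit descriptions in Proposition~\ref{localizfac}(i), using only the fact that $\eu_E$ commutes nicely with $z_1^{-k}$ and $\partial_{z_1}^k$ up to an integer shift.

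First I would handle $M(*D_1) = M \otimes_{\CC[z_1]} \CC[z_1^{\pm 1}]$. Every local section has the form $z_1^{-k}m$ for some $k \geq 0$ and $m \in M$. Since $[z_1\partial_{z_1}, z_1^{-k}] = -k z_1^{-k}$ and the operators $z_i\partial_{z_i}$ for $i \geq 2$ commute with $z_1^{-k}$, one gets the operator identity
\[
P(\eu_E) \cdot z_1^{-k} = z_1^{-k} \cdot P(\eu_E - k)
\]
for every $P \in \CC[u]$. By monodromicity of $M$, choose $b(u) \in \QQ[u]$ with $b(\eu_E) m = 0$ and set $P(u) := b(u+k)$; then $P(\eu_E)(z_1^{-k} m) = z_1^{-k} b(\eu_E) m = 0$, and the roots of $P$ are again rational. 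Hence $M(*D_1)$ is $\QQ$-monodromic.

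Next I would treat $M[!D_1] = D_E \otimes_{V^0_{z_1} D_E} V^{>-1}_{z_1} M$. Every local section is a finite sum $\sum_{k=0}^{N} \partial_{z_1}^k \otimes m_k$ with $m_k \in V^{>-1}_{z_1} M \subset M$. A short induction on $k$ gives $[z_1\partial_{z_1}, \partial_{z_1}^k] = -k\partial_{z_1}^k$, hence $\eu_E \partial_{z_1}^k = \partial_{z_1}^k(\eu_E - k)$ in $D_E$. The key observation is that $\eu_E - k$ belongs to $V^0_{z_1} D_E$, so it can be moved across the tensor product, yielding
\[
\eu_E \cdot (\partial_{z_1}^k \otimes m_k) = \partial_{z_1}^k \otimes (\eu_E - k) m_k.
\]
Iterating and applying the same shift trick as above, for each $k$ I pick $b_k(u) \in \QQ[u]$ with $b_k(\eu_E) m_k = 0$ and set $P_k(u) := b_k(u+k)$, so that $P_k(\eu_E)$ kills $\partial_{z_1}^k \otimes m_k$. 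The product $\prod_{k=0}^N P_k(\eu_E)$ then kills the whole section, and its roots remain rational.

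I do not anticipate any real obstacle: both arguments reduce to the single commutation identity $\eu_E\, z_1^{a}\partial_{z_1}^{b} = z_1^{a}\partial_{z_1}^{b}(\eu_E + a - b)$ and the trivial observation that shifting the variable by an integer preserves rationality of the roots of a polynomial; the only point requiring a line of care is the passage of $\eu_E - k$ through the tensor product in the $[!D_1]$ case, which is legitimate precisely because $V^0_{z_1}D_E$ contains $\eu_E$.
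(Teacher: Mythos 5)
Your argument is correct and is essentially the paper's own: the paper states this lemma as a direct consequence of the descriptions in Proposition~\ref{localizfac}(i), and your computation with the commutation identity $\eu_{E}\,z_{1}^{a}\partial_{z_{1}}^{b}=z_{1}^{a}\partial_{z_{1}}^{b}(\eu_{E}+a-b)$ (together with the fact that $\eu_{E}-k\in V^{0}_{z_{1}}D_{E}$ passes through the tensor product over $V^{0}_{z_{1}}D_{E}$) simply makes that verification explicit. No gap; nothing further is needed.
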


Let $\rho\colon \wt{\CC^n}\to \CC^n$ be the blowing up of $\CC^n$ at the origin.
Remark that $\wt{\CC^n}$ is a subvariety of $\CC^n\times \PP^{n-1}$ and $\rho$ is the projection to $\CC^n$.
We write $q\colon \wt{\CC^n}\to \PP^{n-1}$ the projection to $\PP^{n-1}$.
Let $[y_{1}\colon \dots\colon y_{n}]$ be the homogeneous coordinates of $\PP^{n-1}$.
Define $U_{1}$ as a local chart $\{y_{1}\neq 0\}\subset \PP^{n-1}$ of $\PP^{n-1}$.
We use the same symbol $(y_{2},\dots,y_{n})$ for the coordinates of $U_{1}$, i.e.
$(y_{2},\dots,y_{n})\in U_{1}$ is the point $[1\colon y_{2}\colon \dots \colon y_{n}]\in \PP^{n-1}$.
Then, we have
\begin{align}\label{kabi}
\CS_{s}\times U_{1}\simto q^{-1}&(U_{1})\simto V_{1}\\
 (\CS_{s}\times U_{1}\ni (s,y_{1},\dots,y_{n})\mapsto &(s,sy_{1},\dots,sy_{n})\in V_{1}).\notag
\end{align}

The following simple lemma reduces a problem for a monodromic $D$-module on a vector bundle to that for a monodromic $D$-module on a line bundle.

\begin{lem}\label{eegekai}
The isomorphism (\ref{kabi}) sends the vector field $s\partial_{s}$ on $\CS_{s}\times U_{1}$
to the vector field $\eu=\sum_{i=1}^nz_{i}\pa_{z_{i}}$ on $V_{1}$.
\end{lem}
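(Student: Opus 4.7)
The plan is to verify this by a direct computation using the chain rule, since the isomorphism~(\ref{kabi}) is given by an explicit formula. On $U_{1}=\{y_{1}\neq 0\}\subset \PP^{n-1}$ we take the affine coordinates $(y_{2},\dots,y_{n})$ corresponding to the point $[1\colon y_{2}\colon \dots\colon y_{n}]$, so that the isomorphism $\phi\colon \CS_{s}\times U_{1}\simto V_{1}$ reads
\[
\phi(s,y_{2},\dots,y_{n})=(sy_{1},sy_{2},\dots,sy_{n})\big|_{y_{1}=1}=(s,\,sy_{2},\,\dots,\,sy_{n}).
\]
Thus under $\phi$ the coordinates of $V_{1}\subset X\times \CC^n$ are $z_{1}=s$ and $z_{i}=sy_{i}$ for $2\leq i\leq n$.

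Next I would compute $\phi_{*}(\partial_{s})$ by the chain rule. From $z_{1}=s$ and $z_{i}=sy_{i}$ we have $\partial z_{1}/\partial s=1$ and $\partial z_{i}/\partial s=y_{i}$ for $i\geq 2$, so
\[
\phi_{*}(\partial_{s})=\partial_{z_{1}}+\sum_{i=2}^{n}y_{i}\,\partial_{z_{i}}.
\]
Multiplying both sides by $s$ and using $s=z_{1}$ together with $sy_{i}=z_{i}$ for $i\geq 2$, I obtain
\[
\phi_{*}(s\partial_{s})=z_{1}\partial_{z_{1}}+\sum_{i=2}^{n}(sy_{i})\partial_{z_{i}}=\sum_{i=1}^{n}z_{i}\partial_{z_{i}}=\eu,
\]
which is the claim.

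There is essentially no obstacle here: once the map $\phi$ is written in coordinates, the identity reduces to a one-line computation. The only thing to be a bit careful about is that the formula given in~(\ref{kabi}) uses the notation $(s,y_{1},\dots,y_{n})$ for the point of $\CS_{s}\times U_{1}$, whereas on the chart $U_{1}$ the relevant coordinates are $(y_{2},\dots,y_{n})$ with $y_{1}=1$; once this is clarified, the chain-rule computation above gives the result immediately.
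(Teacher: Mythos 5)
Your proof is correct and follows essentially the same route as the paper: both write the map~(\ref{kabi}) in the affine coordinates of $U_{1}$ and compute the pushforward of $s\partial_{s}$ by the chain rule, using $z_{1}=s$, $z_{i}=sy_{i}$ to recognize the result as $\sum_{i=1}^{n}z_{i}\partial_{z_{i}}$. Your remark about the coordinate convention $(y_{1}=1)$ is exactly the clarification the paper's own computation implicitly uses.
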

\begin{proof}
Let $G:=(g_{1},\dots,g_{n})$ be the morphism (\ref{kabi}).
Then, 
(\ref{kabi}) sends the vector filed $s\partial_{s}$ to
\begin{align}\label{doragon}
    z_{1}\sum_{k=1}^{n}(\partial g_{k}/\partial s\circ G^{-1})\partial_{z_{k}}.
\end{align}
Since
\[\partial g_{k}/\partial s=\left\{ \begin{array}{ll}1&(k=1)\\ y_{k}&(k\neq 1)\end{array}\right.,\]
we have
\begin{align*}
(\ref{doragon})&=z_{1}\partial_{z_{1}}+z_{1}\sum_{k=2}^{n}(z_{k}/z_{1})\partial_{z_{k}}\\
&=\eu.
\end{align*}
\end{proof}

We write $\rho_{1}$ for the induced isomorphism $ q^{-1}(U_{1})\simto V_{1}$ by $\rho$ and $H^0L\rho^{*}_{1}$ (the $0$-th cohomology of) the pullback functor of the category of $D$-modules.
Since $\rho_{1}\colon q^{-1}(U_{1})\simto V_{1}$ is an isomorphism, $H^0L\rho^{*}_{1}M_{1}$ is just the pullback $\rho^{*}_{1}M_{1}=O_{ q^{-1}(U_{1})}\otimes_{\rho_{1}^{-1}O_{ V_{1}}}\rho^{-1}_{1}M_{1}$ as an $O$-module for a $D$-module $M_{1}$ on $V_{1}$.
We just write $\rho^{*}_{1}M_{1}$ for $H^0L\rho^{*}_{1}M_{1}$.
Note that any section $m\in \rho^{*}_{1}M_{1}$ can be expressed as $m=1\otimes m'$ for some $m'\in M_{1}$.
The morphisms $\rho$ and $\rho_{1}$ induces morphisms $X\times \wt{\CC^n}\to X\times \CC^n$ and $X\times q^{-1}(U_{1})\to X\times V_{1}$, denoted by the same symbols $\rho$ and $\rho_{1}$.
For a monodromic $D$-module $M_{1}$ on $X\times V_{1}$, 
we set $\wt{M_{1}}:=\rho^{*}M_{1}$.
Lemma~\ref{eegekai} immediately deduces the following. 

\begin{cor}\label{hunade}
A $D$-module $M_{1}$ on $X\times V_{1}$ is monodromic (in the sense of Remark~\ref{magical}) if and only if the $D$-module $\wt{M_{1}}(=\rho_{1}^{*}M_{1})$ on $X\times q^{-1}(U_{1})(\simeq X\times  \CS_{s}\times U_{1})$ is monodromic with respect to the $s$-direction. 
\end{cor}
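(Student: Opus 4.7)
The plan is to deduce Corollary~\ref{hunade} directly from Lemma~\ref{eegekai} together with the definition of monodromicity, noting that $\rho_1\colon q^{-1}(U_1)\xrightarrow{\sim} V_1$ is an isomorphism (now extended to $X\times q^{-1}(U_1)\xrightarrow{\sim} X\times V_1$), so pullback of $D$-modules is just pullback as $O$-modules via $\rho_1^{-1}$.

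First I would set up the correspondence of sections: any section $\wt m$ of $\wt M_1=\rho_1^*M_1$ can be written as $1\otimes m'$ for some section $m'$ of $M_1$, and conversely every section of $M_1$ gives rise to such a pulled-back section. Under the isomorphism $\rho_1$, the action of a vector field $\xi$ on $V_1$ (pulled back to $q^{-1}(U_1)$) corresponds via $\rho_1^{-1}$ to the pullback vector field $\rho_1^*\xi$. By Lemma~\ref{eegekai}, the isomorphism (\ref{kabi}) identifies $s\partial_s$ with the Euler vector field $\eu=\sum_{i=1}^n z_i\partial_{z_i}$ on $V_1$; extending trivially along $X$, the same identification holds on the product. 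Consequently, for any polynomial $b(u)\in\CC[u]$, we have $b(s\partial_s)(1\otimes m')=1\otimes b(\eu)m'$ in $\wt M_1$.

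From this identification the equivalence is immediate. If $M_1$ is monodromic on $X\times V_1$, then for any section $m'$ there exists $b(u)\in\CC[u]$ with $b(\eu)m'=0$; applying $1\otimes(-)$ yields $b(s\partial_s)(1\otimes m')=0$, which gives monodromicity of $\wt M_1$ along the $s$-direction for every pulled-back section, hence for every section (since such sections generate $\wt M_1$ as an $O$-module and $s\partial_s$ is an $O_X\otimes O_{U_1}$-linear derivation in the $s$-variable, the annihilating polynomial for a general section can be obtained by taking the product of polynomials annihilating its generators). Conversely, if $\wt M_1$ is monodromic with respect to $s$, then for any $m'$ there is $b(u)$ with $b(s\partial_s)(1\otimes m')=0$, i.e.\ $1\otimes b(\eu)m'=0$; since $\rho_1$ is an isomorphism (so $\rho_1^*$ is faithful), we get $b(\eu)m'=0$.

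The only mild obstacle is the passage from a statement on pulled-back sections to arbitrary sections of $\wt M_1$ in the $(\Leftarrow)$ and $(\Rightarrow)$ directions; this is handled by the faithfulness of pullback along an isomorphism on one side, and on the other by the observation that the monodromic condition is stable under $O$-linear combinations (the minimal polynomial of $s\partial_s$ on a finitely generated submodule exists as soon as it exists on generators). No further computation is needed.
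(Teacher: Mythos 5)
Your argument is correct and is essentially the paper's own: the paper deduces Corollary~\ref{hunade} immediately from Lemma~\ref{eegekai}, exactly by transporting annihilating polynomials of $\eu$ to polynomials of $s\partial_{s}$ through the isomorphism $\rho_{1}$. The only remark is that your final paragraph about passing from pulled-back sections to general sections is unnecessary: since $\rho_{1}$ is an isomorphism, every section of $\wt{M_{1}}=\rho_{1}^{*}M_{1}$ is already of the form $1\otimes m'$ for some section $m'$ of $M_{1}$ (as the paper notes just before Corollary~\ref{hunade}), so faithfulness of $\rho_{1}^{*}$ settles both directions at once.
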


By this corollary, we have a decomposition
\begin{align}\label{harukanaru}
\wt{\Mi}=\bigoplus_{\beta\in \RR}\wt{\Mi}^{\beta},
\end{align}
where $\wt{\Mi}^{\beta}=\bigcup_{l\geq 0}(\Ker((s\pa_{s}-\beta)^l\colon \wt{\Mi}\to \wt{\Mi})$.
We can regard $\wt{\Mi}^{\beta}$ as a $D_{X\times U_{1}}$-module (see Remark~\ref{basie}).
Recall that we also have
\begin{align}\label{porutoga}
M_{1}=\bigoplus_{\beta\in \RR}M_{1}^{\beta},
\end{align}
where $M_{1}^{\beta}=\bigcup_{l\geq 0}(\Ker((\eu-\beta)^l\colon M_{1}\to M_{1})$.
Let us see the raletionship between (\ref{harukanaru}) and (\ref{porutoga}).

\begin{lem}\label{sinpi}
For $\beta\in \RR$ we have
\[\wt{\Mi}^{\beta}=1 \otimes \rho^{-1}_{1}(M_{1}^{\beta})(=O_{X\times U_{1}}\otimes \rho^{-1}_{1}(M_{1}^{\beta})),\]
as $D_{X\times U_{1}}$-modules.
\end{lem}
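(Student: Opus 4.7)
The plan is to exploit that $\rho_{1}$ is an isomorphism, so the $D$-module pullback $\wt{\Mi} = \rho_{1}^{*}\Mi$ coincides, as an $O$-module, with $\rho_{1}^{-1}\Mi$: indeed $\rho_{1}^{-1}O_{X\times V_{1}} = O_{X\times q^{-1}(U_{1})}$, so every local section of $\wt{\Mi}$ has a unique expression $1\otimes m'$ with $m'$ a local section of $\rho_{1}^{-1}\Mi$, as already observed in the text.

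First I would compute how the vector field $s\pa_{s}$ on $X\times q^{-1}(U_{1})$ acts on such a section. Because $\rho_{1}$ is an isomorphism, the general formula for the $D$-module pullback simplifies to pushing vector fields forward along $\rho_{1}$, and by Lemma~\ref{eegekai} this sends $s\pa_{s}$ to $\eu$. Therefore
\[
s\pa_{s}\cdot (1\otimes m') \;=\; 1\otimes \eu\, m',
\]
and by induction $(s\pa_{s}-\beta)^{l}(1\otimes m') = 1\otimes (\eu-\beta)^{l}m'$ for every $\beta\in \RR$ and every $l\geq 0$.

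Since $m'\mapsto 1\otimes m'$ is injective (it is the identity on $\rho_{1}^{-1}\Mi$), the vanishing $(s\pa_{s}-\beta)^{l}(1\otimes m') = 0$ is equivalent to $(\eu-\beta)^{l}m'=0$. Taking the union over $l\geq 0$ yields $1\otimes m' \in \wt{\Mi}^{\beta}$ if and only if $m'\in \rho_{1}^{-1}(\Mi^{\beta})$, which is exactly the asserted equality $\wt{\Mi}^{\beta}=1\otimes \rho_{1}^{-1}(\Mi^{\beta})$. The $D_{X\times U_{1}}$-linearity of the identification is automatic from the definition of $\rho_{1}^{*}$ as $D$-module pullback.

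I do not foresee a real obstacle: the only care needed is to justify the formula $s\pa_{s}\cdot(1\otimes m') = 1\otimes \eu\, m'$ cleanly, which is standard for pullback along an isomorphism combined with the identification of vector fields in Lemma~\ref{eegekai}. Once that compatibility is in place, the proof is a direct comparison of generalized eigenspaces.
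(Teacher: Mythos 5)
Your argument is correct, and its core is the same as the paper's: both proofs hinge on Lemma~\ref{eegekai} giving $s\pa_{s}(1\otimes m)=1\otimes (\eu\, m)$, from which the inclusion $1\otimes\rho_{1}^{-1}(\Mi^{\beta})\subset \wt{\Mi}^{\beta}$ is immediate. Where you diverge is in the reverse inclusion: the paper writes an arbitrary section of $\wt{\Mi}^{\beta}$ as $\sum_{i}(1\otimes m_{i})$ with $m_{i}\in \Mi^{\beta_{i}}$ using the decomposition (\ref{porutoga}), and then invokes the decomposition (\ref{harukanaru}) of $\wt{\Mi}$ (i.e.\ Corollary~\ref{hunade}) to kill the terms with $\beta_{i}\neq\beta$; you instead observe that, because $\rho_{1}$ is an isomorphism, $m'\mapsto 1\otimes m'$ is a bijection from $\rho_{1}^{-1}\Mi$ onto $\wt{\Mi}$, so $(s\pa_{s}-\beta)^{l}(1\otimes m')=1\otimes(\eu-\beta)^{l}m'$ vanishes if and only if $(\eu-\beta)^{l}m'=0$, and the two generalized eigenspaces match section by section. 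Your variant is slightly more economical, since it needs neither the monodromic decomposition of $\wt{\Mi}$ nor that of $\Mi$, only the (correct and easily justified) uniqueness of the expression $1\otimes m'$; the paper's route, on the other hand, reuses machinery (Corollary~\ref{hunade} and Proposition~\ref{decomprop}) that is needed anyway in the surrounding argument. Either way the $D_{X\times U_{1}}$-linearity is, as you say, automatic from the definition of the pullback, so your proof is complete.
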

\begin{proof}
By Lemma~\ref{eegekai},
for a section $m\in M_{1}$ and $1\otimes m\in \wt{M_{1}}$, we have
\[s\pa_{s} (1\otimes m)=1\otimes (\eu m).\]
Therefore,
if the section $m$ is in $M_{1}^{\beta}$,
the section $1\otimes m\in \wt{M_{1}}$ is in $\wt{M_{1}}^{\beta}$.
Hence, we have $1\otimes \rho^{-1}_{1}(M_{1}^{\beta})\subset \wt{\Mi}^{\beta}$.
Let us show the reverse inclusion.
Any section $m\in \wt{\Mi}$ can be expressed as 
$m=\sum_{i=0}^{i_{0}}(1\otimes m_{i})$ with $m_{i}\in M^{\beta_{i}}$ for some $\beta_{i}\in \RR$ by (\ref{porutoga}).
Assume that $m$ is in $\wt{M_{1}}^{\beta}$.
Since $1\otimes m_{i}$ is in $\wt{\Mi}^{\beta_{i}}$ as already shown,
the decomposition (\ref{harukanaru}) implies that $\sum_{\beta_{i}\neq \beta}(1\otimes m_{i})=0$.
Therefore, we have $m=\sum_{\beta_{i}=\beta}(1\otimes m_{i})$
and we thus obtain
$\wt{\Mi}^{\beta}\subset 1\otimes \rho^{-1}_{1}(M_{1}^{\beta})$.
\end{proof}

For a monodromic mixed Hodge module $\calM_{1}$ on $X\times V_{1}$,
we consider the pullback $H^{0}\rho_{1}^{*}\calM_{1}$ of $\calM_{1}$ by $\rho_{1}$ as a mixed Hodge module, whose underlying $D$-module is $\rho_{1}^*M_{1}$. 
We set $\wt{\calM_{1}}:=H^0\rho_{1}^{*}\calM_{1}$ and $(\wt{\Mi},F_{\bullet}\wt{\Mi})$ is the underlying filtered $D$-module on $X\times q^{-1}(U_{1})$.
Since $\rho_{1}$ is an isomorphism,
the Hodge filtration $F_{\bullet}\wt{\Mi}$ is just the pullback of the Hodge filtration $F_{\bullet}\Mi$ as that of $O_{X\times V_{1}}$-modules:
\begin{align}\label{yasuragi}
F_{p}\wt{\Mi}=O_{X\times q^{-1}(U_{1})}\otimes_{\rho_{1}^{-1}O_{X\times V_{1}}}\rho_{1}^{-1}F_{p}\Mi.
\end{align}

In order to prove Proposition~\ref{behomazun} below,
we need the following 
\begin{lem}\label{kaizu}
We have
\begin{align}\label{kouya}
F_{p}\wt{\Mi}\cap \wt{\Mi}^{\beta}=1\otimes \rho_{1}^{-1}(F_{p}M_{1}\cap M_{1}^{\beta}).
\end{align}
\end{lem}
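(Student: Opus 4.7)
The plan is to reduce the equality~(\ref{kouya}) to a formal consequence of the fact that $\rho_{1}$ is an isomorphism, combined with Lemma~\ref{sinpi} and the pullback description~(\ref{yasuragi}).

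First, I would rewrite both sides as $\rho_{1}^{-1}$-pullbacks of submodules of $\Mi$. Since $\rho_{1}\colon X\times q^{-1}(U_{1})\to X\times V_{1}$ is an isomorphism, the canonical morphism $\rho_{1}^{-1}O_{X\times V_{1}}\to O_{X\times q^{-1}(U_{1})}$ is an isomorphism of sheaves of rings. Consequently, the tensor product appearing in~(\ref{yasuragi}) collapses and we obtain
\[F_{p}\wt{\Mi}=1\otimes \rho_{1}^{-1}F_{p}\Mi\]
as an $O$-submodule of $\wt{\Mi}=1\otimes \rho_{1}^{-1}\Mi$. Lemma~\ref{sinpi} then gives the analogous identification $\wt{\Mi}^{\beta}=1\otimes \rho_{1}^{-1}\Mi^{\beta}$ inside the same ambient module.

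Next, I would intersect these two submodules of $\wt{\Mi}$. Since the inverse image functor $\rho_{1}^{-1}$ is exact, it commutes with finite intersections of subsheaves of a common ambient sheaf, giving
\[F_{p}\wt{\Mi}\cap \wt{\Mi}^{\beta}=1\otimes \bigl(\rho_{1}^{-1}F_{p}\Mi\cap \rho_{1}^{-1}\Mi^{\beta}\bigr)=1\otimes \rho_{1}^{-1}\bigl(F_{p}\Mi\cap \Mi^{\beta}\bigr),\]
which is exactly the claimed equality~(\ref{kouya}).

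There is no genuine obstacle here: the entire statement is a formal manipulation, with all the real content already packaged into Lemma~\ref{sinpi} and~(\ref{yasuragi}). The only role played by $\rho_{1}$ being an isomorphism is to make the full pullback $\rho_{1}^{*}$ coincide with the set-theoretic inverse image $\rho_{1}^{-1}$ (up to the trivial $1\otimes$-notation), so that intersections computed in $\wt{\Mi}$ correspond bijectively to intersections computed in $\Mi$.
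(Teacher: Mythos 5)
Your proof is correct and follows essentially the same route as the paper: both reduce the statement to Lemma~\ref{sinpi} and the description~(\ref{yasuragi}), using that $\rho_{1}$ is an isomorphism so that the pullback collapses to the sheaf-theoretic inverse image. The only cosmetic difference is that the paper checks the nontrivial inclusion by writing a section of the left-hand side as $1\otimes m'$ and decomposing $m'$ into its monodromic components, whereas you package the same point as exactness of $\rho_{1}^{-1}$ (hence compatibility with intersections of subsheaves), which is a harmless streamlining.
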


\begin{proof}
By Lemma~\ref{sinpi} and (\ref{yasuragi}), the right hand side of (\ref{kouya}) is contained in the left hand side of (\ref{kouya}).
Let $m$ be a section in the left hand side.
By (\ref{yasuragi}),
we can write $m=1\otimes m'$ for some $m'\in F_{p}M_{1}$.
Let $m'=\sum_{i=0}^{i_{0}}m_{i}'$ be the decomposition, where $m_{i}'\in M^{\beta_{i}}$ for some $\beta_{i}\in \RR$ with $\beta_{i}\neq \beta_{j}$ ($i\neq j$).
By Lemma~\ref{sinpi}, $1\otimes m'_{i}$ is in $\wt{M_{1}}^{\beta_{i}}$.
Since $m\in \wt{M_{1}}^{\beta}$, we have $1\otimes m'_{i}=0$, i.e. $m_{i}'=0$ if $\beta_{i}\neq \beta$.
Hence, $m'\in M^{\beta}_{1}$ and we thus conclude that $m$ is in the right hand side of (\ref{kouya}).
\end{proof}

Combining Corollary~\ref{hunade} and Proposition~\ref{rankoneFdecom},
we have the following.

\begin{prop}\label{behomazun}
For a monodromic mixed Hodge module $\calM_{1}$ on $X\times V_{1}$,
we have a decomposition of the Hodge filtration as
\begin{align}
F_{p}M_{1}=\bigoplus_{\beta\in \RR}F_{p}\Mi^{\beta},
\end{align}
where $(M_{1}, F_{\bullet}\Mi)$ is the underlying filtered $D$-module of $\calM_{1}$ and $F_{p}M^{\beta}=F_{p}\Mi\cap \Mi^{\beta}$.
\end{prop}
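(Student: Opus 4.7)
The plan is to reduce Proposition~\ref{behomazun} to the rank-one case treated in Subsection~\ref{subrankone} by transporting the problem across the blow-up isomorphism $\rho_{1}\colon q^{-1}(U_{1})\simto V_{1}$.

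As a first step, form the pullback $\wt{\calM_{1}}:=H^{0}\rho_{1}^{*}\calM_{1}$, a mixed Hodge module on $X\times q^{-1}(U_{1})\simeq X\times \CS_{s}\times U_{1}$ whose underlying filtered $D$-module is $(\wt{\Mi},F_{\bullet}\wt{\Mi})$. By Corollary~\ref{hunade}, $\wt{\Mi}$ is monodromic with respect to the $s$-direction, so $\wt{\calM_{1}}$ may be regarded as a monodromic mixed Hodge module on the trivial rank-one bundle $(X\times U_{1})\times \CS_{s}\to X\times U_{1}$ in the sense of the remark at the end of Subsection~\ref{subrankone}.

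Next, apply the $\CS$-fiber analogue of Proposition~\ref{rankoneFdecom} (stated in that remark) to obtain
\[F_{p}\wt{\Mi}=\bigoplus_{\beta\in \RR}\bigl(F_{p}\wt{\Mi}\cap \wt{\Mi}^{\beta}\bigr).\]
Finally, transport this decomposition back to $\Mi$ through $\rho_{1}$. Since $\rho_{1}$ is an isomorphism, (\ref{yasuragi}) identifies $F_{p}\wt{\Mi}$ with $1\otimes \rho_{1}^{-1}F_{p}\Mi$, and Lemma~\ref{kaizu} identifies each eigenspace summand $F_{p}\wt{\Mi}\cap \wt{\Mi}^{\beta}$ with $1\otimes \rho_{1}^{-1}(F_{p}\Mi\cap \Mi^{\beta})$. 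The bijectivity of $m\mapsto 1\otimes m$ on sections then converts the displayed decomposition verbatim into $F_{p}\Mi=\bigoplus_{\beta\in \RR}F_{p}\Mi^{\beta}$, as claimed.

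The only real conceptual obstacle is to verify that $\wt{\calM_{1}}$ genuinely falls under the hypotheses of the $\CS$-fiber analogue of Proposition~\ref{rankoneFdecom}. This is guaranteed by Lemma~\ref{eegekai} (which matches the Euler field $\eu$ on $V_{1}$ with $s\pa_{s}$ on $\CS_{s}\times U_{1}$) together with Corollary~\ref{hunade} and Lemma~\ref{sinpi}; once those compatibilities are in place, the rest of the argument is a purely formal transport of a direct-sum decomposition across an isomorphism.
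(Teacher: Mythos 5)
Your proposal is correct and follows essentially the same route as the paper: pull back by $\rho_{1}$, use Corollary~\ref{hunade} to reduce to the rank-one ($\CS_{s}$-fiber) decomposition of $F_{p}\wt{\Mi}$, and transport back through (\ref{yasuragi}) and Lemma~\ref{kaizu}. The only cosmetic difference is that you explicitly invoke the $\CS$-fiber analogue of Proposition~\ref{rankoneFdecom} from the remark after Proposition~\ref{hoshinoouji}, whereas the paper cites Proposition~\ref{rankoneFdecom} directly; your phrasing is, if anything, slightly more precise.
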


\begin{proof}
By Corollary~\ref{hunade}, $\wt{\calM_{1}}$ is monodromic on $X\times q^{-1}(U_{1})\simeq X\times \CC_{s}^*\times U_{1}$ with respect to the $s$-direction.
Therefore, by Proposition~\ref{rankoneFdecom}, we have
\begin{align}\label{tenkuu}
    F_{p}\wt{\Mi}=\bigoplus_{\beta\in \RR}F_{p}\wt{\Mi}^{\beta},
\end{align}
where $F_{p}\wt{\Mi}^{\beta}=F_{p}\wt{\Mi}\cap \wt{\Mi}^{\beta}$.
Moreover, by Lemma~\ref{kaizu}, we have
\begin{align}\notag
F_{p}\wt{\Mi}&=\bigoplus_{\beta\in \RR}1\otimes \rho_{1}^{-1}(F_{p}M_{1}\cap M_{1}^{\beta})\\
&=1\otimes \rho_{1}^{-1}(\bigoplus_{\beta\in \RR}F_{p}M_{1}\cap M_{1}^{\beta}).\label{subarasii}
\end{align}
Note that $\bigoplus_{\beta\in \RR}F_{p}M_{1}\cap M_{1}^{\beta}$ is an $O_{X\times V_{1}}$-submodule of ${\Mi}$.
Then, since $\rho_{1}$ is an isomorphism,
from the equalities (\ref{yasuragi}) and (\ref{subarasii}) we get the desired equality.
\end{proof}

Let $\calM=(M,F_{\bullet}M,K,W_{\bullet}M)$ be a monodromic mixed Hodge module on $X\times \CC^n$.
We set $\calM_{1}:=\calM|_{X\times V_{1}}$.
Its underlying filtered $D$-module is $(M_{1},F_{\bullet}M_{1}):=(M,F_{\bullet}M)|_{X\times V_{1}}$.

\begin{cor}\label{oboro}
For $p\in\ZZ$,
$F_{p}V^{>-1}_{z_{1}}M$ is decomposed with respect to the decomposition $M=\bigoplus_{\beta\in \RR}M^{\beta}$, i.e. we have
\[F_{p}V^{>-1}_{z_{1}}M=\bigoplus_{\beta\in \RR}F_{p}V^{>-1}_{z_{1}}M\cap M^{\beta}.\]
\end{cor}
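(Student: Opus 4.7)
The inclusion $\bigoplus_{\beta}(F_{p}V^{>-1}_{z_{1}}M\cap M^{\beta})\subseteq F_{p}V^{>-1}_{z_{1}}M$ is immediate, so I focus on the reverse one. The plan is to combine Proposition~\ref{behomazun}, applied to the restriction $\calM|_{X\times V_{1}}$ whose underlying filtered $D$-module is the stupid localization $(M(*D_{1}),F_{\bullet}M(*D_{1}))$, with the basic compatibilities between the Kashiwara--Malgrange filtration along $z_{1}$ and both the Euler operator $\eu$ and the Hodge filtration. Proposition~\ref{behomazun} yields $F_{p}M(*D_{1})=\bigoplus_{\beta}F_{p}M(*D_{1})\cap M(*D_{1})^{\beta}$; I then intersect both sides with $V^{>-1}_{z_{1}}M\subseteq M(*D_{1})$.

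Writing $\eu=z_{1}\partial_{z_{1}}+\sum_{i\geq 2}z_{i}\partial_{z_{i}}$, every summand preserves $V^{\bullet}_{z_{1}}M$ (the first because $z_{1}\partial_{z_{1}}$ acts on each Kashiwara--Malgrange stratum, the others because they commute with $z_{1}$ and $\partial_{z_{1}}$), so $V^{>-1}_{z_{1}}M$ is $\eu$-stable and consequently $V^{>-1}_{z_{1}}M\cap M(*D_{1})^{\beta}=V^{>-1}_{z_{1}}M\cap M^{\beta}$. Under this identification, intersecting the decomposition with $V^{>-1}_{z_{1}}M$ becomes
\[F_{p}M(*D_{1})\cap V^{>-1}_{z_{1}}M=\bigoplus_{\beta}F_{p}M(*D_{1})\cap V^{>-1}_{z_{1}}M\cap M^{\beta},\]
so it remains to prove $F_{p}M(*D_{1})\cap V^{>-1}_{z_{1}}M=F_{p}V^{>-1}_{z_{1}}M$.

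The inclusion $\supseteq$ is obvious. For $\subseteq$, I would use Proposition~\ref{localizfac}~(iii), which gives $V^{>-1}_{z_{1}}M=V^{>-1}_{z_{1}}M[*D_{1}]$ and identifies $(M(*D_{1}),F_{\bullet}M(*D_{1}))$ with the stupid localization of $(M[*D_{1}],F_{\bullet}M[*D_{1}])$, so that $F_{p}M(*D_{1})=\bigcup_{k\geq 0}z_{1}^{-k}F_{p}M[*D_{1}]$ (a union, since $F_{p}M[*D_{1}]\subseteq z_{1}^{-1}F_{p}M[*D_{1}]$). Intersecting with $V^{>-1}_{z_{1}}M[*D_{1}]$ and invoking the strict specializability of $\calM[*D_{1}]$ along $D_{1}$, the isomorphisms $z_{1}^{k}\colon F_{p}V^{>-1}_{z_{1}}M[*D_{1}]\simto F_{p}V^{>k-1}_{z_{1}}M[*D_{1}]$ for $k\geq 0$ collapse each term of the union to $F_{p}V^{>-1}_{z_{1}}M[*D_{1}]$. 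Finally, strict specializability also ensures that the Hodge filtrations of $\calM$ and $\calM[*D_{1}]$ agree on the $V^{>-1}$-part, giving $F_{p}V^{>-1}_{z_{1}}M[*D_{1}]=F_{p}V^{>-1}_{z_{1}}M$. The principal obstacle is this last identity, which rests on standard strict specializability properties of the Hodge filtration of a mixed Hodge module; the rest of the argument is formal manipulation of the decomposition provided by Proposition~\ref{behomazun}.
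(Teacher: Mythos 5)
Your argument is correct and is essentially the paper's own proof: both rest on Proposition~\ref{behomazun} applied to $\calM|_{X\times V_{1}}$, on the strict-specializability identity $F_{p}V^{>-1}_{z_{1}}M=j_{*}(F_{p}M_{1})\cap V^{>-1}_{z_{1}}M$ (which the paper simply quotes from the standard references on strictly specializable filtered modules, while you rederive the same fact by routing through $\calM[*D_{1}]$ and Proposition~\ref{localizfac}), and on matching the generalized $\eu$-eigenspaces inside $V^{>-1}_{z_{1}}M$. The only point to make explicit is that your ``consequently'' in the eigenspace identification needs, besides the $\eu$-stability of $V^{>-1}_{z_{1}}M$, the injectivity of $z_{1}$ on $V^{>-1}_{z_{1}}M$ in order to pass from $(\eu-\beta)^{l}m=0$ in $M(*D_{1})$ to $(\eu-\beta)^{l}m=0$ in $M$ --- which is exactly the content of Lemma~\ref{keihan}.
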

\begin{proof}
By the strict specializability along $z_{1}$ of the filtered $D$-module $(M,F_{\bullet}M)$ (Definition~\ref{tron}),
we have (see Proposition~3.2.2 of \cite{HM88} or Exercise~11.1 of \cite{Schnell})
\begin{align}\label{tuki}
F_{p}V^{>-1}_{z_{1}}M=j_{*}(F_{p}M_{1})\cap V_{z_{1}}^{>-1}M,
\end{align}
where $j$ is the inclusion $X\times V_{1}\hookrightarrow X\times \CC^n$ and the intersection in the right hand side is taken in $j_{*}M_{1}=M[z_{1}^{\pm 1}]$.
By Proposition~\ref{behomazun},
we have
\begin{align}\label{taiyou}
F_{p}M_{1}=\bigoplus_{\beta\in \RR}F_{p}M^{\beta}_{1}.
\end{align}
By Lemma~\ref{keihan} below,
we have
\begin{align*}
V^{>-1}_{z_{1}}j_{*}M_{1}\cap (j_{*}M_{1})^{\beta}=V^{>-1}_{z_{1}}j_{*}M_{1}\cap j_{*}(M_{1}^{\beta}),
\end{align*}
where $(j_{*}M_{1})^{\beta}$ is defined similarly to (\ref{gion}) and $j_{*}(M_{1}^{\beta})$ is a $\CC$-submodule of $j_{*}M_{1}$ generated by $\{j_{*}m\in j_{*}M_{1}\ |\ m\in M_{1}^{\beta}\}$.
Therefore, since $V^{>-1}_{z_{1}}j_{*}M_{1}=V^{>-1}_{z_{1}}M$,
we have
\begin{align*}
V^{>-1}_{z_{1}}M\cap M^{\beta}
= V^{>-1}_{z_{1}}j_{*}M_{1}\cap j_{*}(M^{\beta}_{1}).
\end{align*}
Hence, we have
\begin{align}\label{mokusei}
 V_{z_{1}}^{>-1}M\cap j_{*}(F_{p}M_{1}^{\beta})=
V^{>-1}_{z_{1}}M\cap F_{p}M^{\beta}.
\end{align}
Combining (\ref{tuki}), (\ref{taiyou}) and (\ref{mokusei}),
we obtain
\[F_{p}V^{>-1}_{z_{1}}M=\bigoplus_{\beta\in \RR}F_{p}M\cap V^{>-1}_{z_{1}}M\cap M^{\beta}.\]
\end{proof}

The following was used in the proof of Corollary~\ref{oboro}.
\begin{lem}\label{keihan}
We have
\begin{align}\label{kinsei}
V^{>-1}_{z_{1}}j_{*}M_{1}\cap (j_{*}M_{1})^{\beta}=V^{>-1}_{z_{1}}j_{*}M_{1}\cap j_{*}(M_{1}^{\beta}).
\end{align}
\end{lem}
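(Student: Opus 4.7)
The plan is to reduce the statement to the cleaner identity $(j_{*}M_{1})^{\beta}=j_{*}(M_{1}^{\beta})$ as subsheaves of $j_{*}M_{1}$; the lemma follows at once by intersecting both sides with $V_{z_{1}}^{>-1}j_{*}M_{1}$.

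First I would verify that $j_{*}M_{1}$ is itself a monodromic $D$-module on $X\times \CC^{n}$. Indeed, a local section of $j_{*}M_{1}$ over an open $U\subset X\times \CC^{n}$ is a section of $M_{1}$ over $U\cap V_{1}$, and monodromicity of $M_{1}$ on $X\times V_{1}$ (in the sense of Remark~\ref{magical}) furnishes a polynomial $b(\eu_{E})$ killing it; since $\eu_{E}$ acts by the same formula before and after applying $j_{*}$, the same $b$ works on $j_{*}M_{1}$. Thus by the direct-sum decomposition for monodromic modules (Proposition~\ref{decomprop} and the analogous statement on $X\times V_{1}$ from Remark~\ref{magical}), we have two decompositions at our disposal:
$M_{1}=\bigoplus_{\beta\in \RR}M_{1}^{\beta}$ on $X\times V_{1}$ and $j_{*}M_{1}=\bigoplus_{\beta\in \RR}(j_{*}M_{1})^{\beta}$ on $X\times \CC^{n}$.

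Next, since $j$ is an open affine immersion, $j_{*}$ is exact and commutes with arbitrary direct sums, so applying $j_{*}$ to the first decomposition yields $j_{*}M_{1}=\bigoplus_{\beta\in \RR}j_{*}(M_{1}^{\beta})$ as subsheaves of $j_{*}M_{1}$. Every local section of $M_{1}^{\beta}$ is annihilated by some power of $\eu_{E}-\beta$, and this annihilation is preserved by $j_{*}$, so $j_{*}(M_{1}^{\beta})\subset (j_{*}M_{1})^{\beta}$.

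Finally, I would compare the two direct-sum decompositions of $j_{*}M_{1}$. Given $m\in (j_{*}M_{1})^{\beta}$, I decompose $m=\sum_{\gamma}m_{\gamma}$ using the first decomposition, with $m_{\gamma}\in j_{*}(M_{1}^{\gamma})\subset (j_{*}M_{1})^{\gamma}$; the uniqueness of the decomposition in $j_{*}M_{1}=\bigoplus_{\beta}(j_{*}M_{1})^{\beta}$ forces $m_{\gamma}=0$ for $\gamma\neq \beta$, whence $m\in j_{*}(M_{1}^{\beta})$. Hence $(j_{*}M_{1})^{\beta}=j_{*}(M_{1}^{\beta})$, and intersecting with $V_{z_{1}}^{>-1}j_{*}M_{1}$ gives the lemma. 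The only conceptual point, rather than a technical obstacle, is to ensure that the generalized-eigenspace decomposition with respect to $\eu_{E}$ is truly intrinsic and so compatible with pushforward along the open immersion $j$; this is the standard fact that $\eu_{E}-\gamma$ acts invertibly on the generalized $\beta$-eigenspace for $\gamma\neq \beta$.
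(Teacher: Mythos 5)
Your argument is correct for the statement as written, but it takes a genuinely different route from the paper's. You prove the stronger, purely sheaf-level identity $(j_{*}M_{1})^{\beta}=j_{*}(M_{1}^{\beta})$ inside $j_{*}M_{1}$, by noting that $j_{*}M_{1}$ is again monodromic, applying Proposition~\ref{decomprop} to get a second decomposition, and comparing the two direct sums; the constraint $V^{>-1}_{z_{1}}$ plays no role in your proof, and the lemma follows by intersecting at the end. The paper instead uses the identification $V^{>-1}_{z_{1}}j_{*}M_{1}=V^{>-1}_{z_{1}}M$ to regard a section $m$ of the right-hand side as a section of $M$ itself, observes that $(\eu_{E}-\beta)^{l}m$ is then a section of $V^{>-1}_{z_{1}}M$ supported on $\{z_{1}=0\}$, and kills it using the injectivity of multiplication by $z_{1}$ on $V^{>-1}_{z_{1}}M$. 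The distinction is worth keeping in mind because of how the lemma is consumed in Corollary~\ref{oboro}: there one needs $V^{>-1}_{z_{1}}M\cap M^{\beta}=V^{>-1}_{z_{1}}j_{*}M_{1}\cap j_{*}(M_{1}^{\beta})$, i.e.\ that a section of $V^{>-1}_{z_{1}}M$ whose restriction to $X\times V_{1}$ lies in $M_{1}^{\beta}$ is annihilated by a power of $\eu_{E}-\beta$ already in $M$, and this is not automatic since $M\to j_{*}M_{1}$ need not be injective. The paper's support-plus-injectivity argument supplies exactly that transfer; your proof, which lives entirely inside $j_{*}M_{1}$ (where restriction to $V_{1}$ loses nothing), establishes the displayed equality — and even shows the intersection with $V^{>-1}_{z_{1}}$ is superfluous for it — but leaves the passage back to sections of $M$ to be carried out separately, by essentially the same support argument the paper builds into its proof.
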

\begin{proof}
It is obvious that the left hand side contained in the right hand side.
For a section $m$ in the right hand side of (\ref{kinsei}),
since $V^{>-1}_{z_{1}}j_{*}M_{1}=V^{>-1}_{z_{1}}M$, $m$ is a section of $M$ with $((\eu-\beta)^lm)|_{X\times V_{1}}=0$ for some $l\geq 0$.
Therefore, $(\eu-\beta)^lm$ is a section of $V^{>-1}_{z_{1}}j_{*}M_{1}$ whose support is contained in $z_{1}=0$.
Since the multiplication by $z_{1}$ on $V^{>-1}_{z_{1}}j_{*}M_{1}$ is injective,
we have $(\eu-\beta)^lm=0$, i.e. $m$ is in $(j_{*}M_{1})^{\beta}$.
\end{proof}

As mentioned, if $M$ is monodromic, the localizations $M[*D_{1}]$ and $M[!D_{1}]$ are also monodromic.
Corollary~\ref{oboro} deduces the following.
\begin{cor}\label{kaniseijin}
If $\calM \simeq \calM[*D_{1}]$ or $\calM\simeq \calM[!D_{1}]$,
the assertion stated in Theorem~\ref{Fdecomp} is true.
\end{cor}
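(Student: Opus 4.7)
The strategy in both cases is that the explicit formulas from Proposition~\ref{localizfac}(iv),(v) express $F_pM$ in terms of the Kashiwara--Malgrange pieces, and Corollary~\ref{oboro} already decomposes the relevant $V^{>-1}$-piece with respect to $M=\bigoplus_{\beta}M^{\beta}$; I only need to transport that decomposition through the operators $\partial_{z_1}$ and, in the $[*D_1]$-case, $z_1^{-1}$, both of which shift the monodromic index by an integer and preserve the Hodge filtration up to a shift. In both cases, the inclusion $\bigoplus_{\beta}(F_pM\cap M^{\beta})\subset F_pM$ is automatic, so it suffices to show that for any $m\in F_pM$ its $M^{\beta}$-components all lie in $F_pM$.

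\emph{The case $\calM\simeq\calM[!D_1]$.} By Proposition~\ref{localizfac}(v), $F_pM=\sum_{k\geq 0}\partial_{z_1}^{k}F_{p-k}V^{>-1}_{z_1}M$. Corollary~\ref{oboro} decomposes each $F_{p-k}V^{>-1}_{z_1}M$ as $\bigoplus_{\beta}(F_{p-k}V^{>-1}_{z_1}M\cap M^{\beta})$. Because $\partial_{z_1}$ sends $M^{\beta}$ into $M^{\beta-1}$ and $F_q$ into $F_{q+1}$, each term $\partial_{z_1}^{k}(F_{p-k}V^{>-1}_{z_1}M\cap M^{\beta})$ lies in $F_pM\cap M^{\beta-k}$, and reindexing by $\gamma=\beta-k$ yields the decomposition of $F_pM$.

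\emph{The case $\calM\simeq\calM[*D_1]$.} Here $z_1$ acts invertibly on $M$, and combining (v) and (iv) of Proposition~\ref{localizfac} gives
\[
F_pM=\sum_{k\geq 0}\partial_{z_1}^{k}z_1^{-1}F_{p-k}V^{\geq 0}_{z_1}M.
\]
Since $z_1\colon M^{\beta}\to M^{\beta+1}$ is compatible with the monodromic decomposition and invertible, so is $z_1^{-1}$; the same reindexing as before then concludes, \emph{provided} one knows the subclaim
\[
F_qV^{\geq 0}_{z_1}M=\bigoplus_{\beta}(F_qV^{\geq 0}_{z_1}M\cap M^{\beta})\quad (q\in\ZZ).\tag{$\ast$}
\]

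The subclaim $(\ast)$ is the main step, and is the only place where work beyond Corollary~\ref{oboro} is required. The key observation is that $\eu_E=z_1\partial_{z_1}+\sum_{i\geq 2}z_i\partial_{z_i}$ lies in $V^0_{z_1}D_E$, so $V^{\geq 0}_{z_1}M$ is stable under $\eu_E$. Given $m\in F_qV^{\geq 0}_{z_1}M\subset F_qV^{>-1}_{z_1}M$, Corollary~\ref{oboro} gives $m=\sum_{\beta}m_{\beta}$ with $m_{\beta}\in F_qV^{>-1}_{z_1}M\cap M^{\beta}$. Each $m_{\beta}$ is obtained from $m$ by a generalized-eigenspace projector for $\eu_E$, realized as a polynomial in $\eu_E$ applied to $m$ on the finite-dimensional $\eu_E$-invariant subspace $\CC[\eu_E]\cdot m$; by the $\eu_E$-stability of $V^{\geq 0}_{z_1}M$, each $m_{\beta}$ therefore also lies in $V^{\geq 0}_{z_1}M$, which proves $(\ast)$. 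Conceptually, the obstacle is that Corollary~\ref{oboro} only controls the open part $V^{>-1}$, and one must propagate the decomposition across the jump to $V^{\geq 0}$; the horizontal piece $\sum_{i\geq 2}z_i\partial_{z_i}$ of $\eu_E$ sitting inside $V^0_{z_1}D_E$ is exactly what makes this transfer work.
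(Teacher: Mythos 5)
Your proof is correct and follows essentially the same route as the paper: in both cases one decomposes the relevant $F_{p-k}V^{\bullet}_{z_1}M$ via Corollary~\ref{oboro} and transports the decomposition through $z_1^{-1}$ and $\partial_{z_1}^{k}$ using the formulas (iv) and (v) of Proposition~\ref{localizfac}. The only difference is that you explicitly justify the passage from the $V^{>-1}_{z_1}$-statement of Corollary~\ref{oboro} to the needed decomposition of $F_{q}V^{\geq 0}_{z_1}M$ by the projector argument with $\eu_{E}\in V^{0}_{z_1}D_{E}$ (your $(\ast)$), a step the paper leaves implicit by invoking Corollary~\ref{oboro} directly (its proof, resp.\ the later Lemma~\ref{rapp}, gives the same fact), so your version is a correct and slightly more self-contained rendering of the same argument.
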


\begin{proof}
Suppose that $\calM=\calM[*D_{1}]$.
Then, by Corollary~\ref{oboro},
$F_{p}V_{z_{1}}^{\geq 0}M$ is decomposed as
$F_{p}V_{z_{1}}^{\geq 0}M=\bigoplus_{\beta\in \RR}F_{p}V_{z_{1}}^{\geq 0}M\cap M^{\beta}$.
Hence, by (iv) of Proposition~\ref{localizfac},
$F_{p}V_{z_{1}}^{\geq -1}M[*D_{1}]$ is also decomposed.
Since $F_{p}M=\sum_{k\geq 0}\pa_{\ti}^{k}F_{p-k}V^{\geq -1}_{\ti}M$ by (v) of Proposition~\ref{localizfac},
we thus obtain the decomposition of $F_{p}M$ in this case.
The case of $\calM=\calM[!D_{1}]$ can be proved in the same way.
\end{proof}

Let us recall the Beilinson's maximal extension.
See \cite{Beilinson}, \cite{MHMProj}, \cite{MTM}, \cite{ReichKaisetu} and \cite{Morel} for details.
We consider the vector space $I^{\varepsilon ,k}:=\CC e_{\varepsilon}\oplus \dots\oplus \CC e_{k}$ with the basis $e_{\varepsilon},\dots,e_{k}$ for $\varepsilon=0,1$ and $k\in \ZZ_{\geq 1}$.
Moreover, we consider the nilpotent endomorphism $N$ of $I^{\varepsilon,k}$ so that $Ne_{j}=e_{j-1}$ (with $e_{-1}:=0$).
For a $D$-module $M$ on $X\times \CC^n$ with the assumption $M=M(*D_{1})$,
we consider the new $D$-module $M^{\varepsilon,k }:=M\otimes_{\CC}I^{\varepsilon,k}$ on $X\times \CC^n$ so that $z_{1}\pa_{z_{1}}(m\otimes e_{j})=z_{1}\pa_{z_{1}}m\otimes e_{j}+m\otimes e_{j-1}$ and $\pa_{z_{s}}(m\otimes e_{j})=\pa_{z_{s}}m\otimes e_{j}$ ($s\neq 1$).
The morphism $I^{0,k}\to I^{1,k}$ ($e_{j}\mapsto e_{j}$, $e_{0}\mapsto 0$) induces the morphism $M^{0,k}\to M^{1,k}$.
Therefore, we can consider the morphism $M^{0,k}[!D_{1}]\to M^{1,k}[*D_{1}]$ as the composition of the morphisms $M^{0,k}[!D_{1}]\to M^{0,k}[*D_{1}]$ and $M^{0,k}[*D_{1}]\to M^{1,k}[*D_{1}]$.
Then, the kernel of the natural morphism
\[M^{0,k}[!D_{1}]\to M^{1,k}[*D_{1}]\]
does not depend on sufficiently large $k\geq1$, i.e. 
the inductive limit $\varinjlim_{k}\Ker(M^{0,k}[!D_{1}]\to M^{1,k}[*D_{1}])$ exists.
So, we define
\[\Xi_{z_{1}}M:=\varinjlim_{k}\Ker(M^{0,k}[!D_{1}]\to M^{1,k}[*D_{1}]).\]
We can generalize this construction to mixed Hodge modules;
for a mixed Hodge module $\calM$ on $X\times \CC^n$,
we can define a mixed Hodge module $\calM^{\varepsilon,k}$ on $X\times \CC^n$, a morphism $\calM^{0,k}[!D_{1}]\to \calM^{1,k}[*D_{1}](-1)$ and
$\Xi_{z_{1}}\calM=\varinjlim_{k}\Ker(\calM^{0,k}[!D_{1}]\to \calM^{1,k}[*D_{1}])$,
which are compatible with the corresponding objects for the underlying $D$-module of $\calM$.
Note that the filtered $D$-module $(\Xi_{z_{1}}M,F_{\bullet}\Xi_{z_{1}}M)$ depends only on the stupid localization $(M,F_{\bullet}M)(*D_{1})$.

\begin{prop}[see loc. cit.]\label{bongore}
Let $\calM=(M,F_{\bullet}M, K, W_{\bullet}M)$ is a mixed Hodge module on $E=X\times \CC^n$.
Then, we have the following.

\begin{enumerate}
\item[(i)] There are natural morphisms between mixed Hodge modules
\begin{align*}
a\colon \psi_{z_{1},1}\calM\to \Xi_{z_{1}}\calM,\quad \mbox{and}\\
b\colon \Xi_{z_{1}}\calM\to \psi_{z_{1},1}\calM(-1).
\end{align*}

\item[(ii)] \label{kanipasta}Consider the complex:
\begin{align}\label{miito}
\psi_{z_{1},1}\calM\to \Xi_{z_{1}}\calM\oplus \phi_{z_{1},1}\calM\to \psi_{z_{1},1}\calM(-1),
\end{align}
where the first morphisms is $a\oplus \mathrm{can}$ and the second morphism is $b+\mathrm{var}$.
Then, the cohomology in the middle degree of this complex at $\Xi_{z_{1}}\calM\oplus \phi_{z_{1},1}\calM$ is isomorphic to $\calM$.

\item[(iii)] Let $\mathrm{Glue}(X\times \CC^n,D_{1})$ be the category of tuples $(\calM', \calM'', c,v)$, where 
$\calM'$ is a mixed Hodge module on $X\times V_{1}$ which is the restriction of a mixed Hodge module on $X\times \CC^n$, $\calM''$ is a mixed Hodge module on $X\times D_{1}$ and
$c$ (resp. $v$) is a morphism $\psi_{z_{1},1}\calM'\to \calM''$ (resp. $\calM''\to \psi_{z_{1},1}\calM'(-1)$) such that the endomorphism $v\circ c$ of $\psi_{z_{1},1}\calM'$ is the nilpotent endomorphism of $\psi_{z_{1},1}\calM'$.
Then, the functor $\calM\mapsto (\calM|_{X\times V_{1}}, \phi_{z_{1}, 1}\calM, \mathrm{can}, \mathrm{var})$ induces an equivalence of categories between the category of mixed Hodge modules and $\mathrm{Glue}(X\times \CC^n,D_{1})$.
\end{enumerate}

\end{prop}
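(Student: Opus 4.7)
The plan is to establish the three parts in sequence, using the $V$-filtration along $z_{1}$ as the primary technical tool and leveraging Proposition~\ref{localizfac} throughout.

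For part (i), I would construct the morphism $a$ explicitly at the level of underlying $D$-modules and then lift it to mixed Hodge modules. The key observation is that $\psi_{z_{1},1}M \simeq \mathrm{gr}^{-1}_{V_{z_{1}}}M$, so a local section is represented by some $m \in V^{-1}_{z_{1}}M$; for $k$ sufficiently large I would map this to the class of $m \otimes e_{0} \in M^{0,k}[!D_{1}]$. Since $e_{0} \mapsto 0$ under $I^{0,k} \to I^{1,k}$, the image lies in the kernel defining $\Xi_{z_{1}}M$. Well-definedness modulo $V^{>-1}_{z_{1}}M$ and stabilization in $k$ are checked using the nilpotent operator $N(m \otimes e_{j}) = m \otimes e_{j-1}$. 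The morphism $b$ is dual: extract the $e_{k}$-component of a section of $\Xi_{z_{1}}M$, where the Tate twist $(-1)$ encodes the $k$-fold shift by $N$.

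For part (ii), the strategy is to identify the middle cohomology of~(\ref{miito}) with $\calM$ by a direct computation with the $V$-filtration. By (i), (iv) and (v) of Proposition~\ref{localizfac}, the filtered $D$-module $(M, F_{\bullet}M)$ is reconstructed from the $V$-filtered pieces of $M[*D_{1}]$ and $M[!D_{1}]$ together with $\mathrm{can}$ and $\mathrm{var}$. Unpacking the definitions of $M^{0,k}[!D_{1}]$ and $M^{1,k}[*D_{1}]$, the kernel-and-cokernel computation arising from~(\ref{miito}) reproduces precisely this reconstruction; strict specializability and the explicit Hodge filtration formulas in (v) of Proposition~\ref{localizfac} then guarantee that the identification is compatible with $F_{\bullet}$ and $W_{\bullet}$.

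For part (iii), the plan is to exhibit a quasi-inverse to the functor $\calM \mapsto (\calM|_{X \times V_{1}}, \phi_{z_{1},1}\calM, \mathrm{can}, \mathrm{var})$. Given gluing data $(\calM', \calM'', c, v)$, choose a mixed Hodge module extension $\widetilde{\calM'}$ of $\calM'$ to $X \times \CC^{n}$ (for instance, the $[*D_{1}]$-extension), form the analog of~(\ref{miito})
\[\psi_{z_{1},1}\calM' \xrightarrow{a \oplus c} \Xi_{z_{1}}\widetilde{\calM'} \oplus \calM'' \xrightarrow{b + v} \psi_{z_{1},1}\calM'(-1),\]
and take its middle cohomology. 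Independence from the choice of $\widetilde{\calM'}$ is ensured by the fact, recorded before the statement, that the filtered $D$-module $(\Xi_{z_{1}}M, F_{\bullet}\Xi_{z_{1}}M)$ depends only on the stupid localization $(M, F_{\bullet}M)(*D_{1})$. Part (ii) then shows the composition in both directions is naturally isomorphic to the identity. The step I expect to be hardest is verifying that everything is compatible with the Hodge and weight filtrations, in particular that the middle cohomology in (iii) is strict and that $a$ and $b$ are genuine morphisms in $\mathrm{MHM}$ rather than only of underlying filtered $D$-modules. This rests essentially on the strict specializability axiom and on the compatibility of $\psi$, $\phi$, and $\Xi$ with the Hodge-theoretic data developed in the foundational works of M.~Saito, Beilinson, and Mochizuki cited in the excerpt.
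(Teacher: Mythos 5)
The first thing to say is that the paper offers no proof of Proposition~\ref{bongore}: it is recalled from the literature (Beilinson's gluing construction and its mixed-Hodge-module version), the header ``see loc.\ cit.''\ pointing to the references listed just before the statement. Your sketch follows exactly the standard route taken in those references --- build $a$ and $b$ from the maps of Jordan-block systems $I^{\varepsilon,k}$, recover $\calM$ as the middle cohomology of the three-term complex, and obtain (iii) by gluing with an auxiliary extension, using that $\Xi_{z_{1}}$ depends only on the stupid localization --- so there is no divergence of approach to report; and, like the paper, you ultimately delegate the delicate Hodge-theoretic points (that $a,b$ are morphisms of mixed Hodge modules, strictness of the middle cohomology) to the cited foundational works, which is acceptable for a statement the paper itself only quotes.

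Two corrections if you want the sketch to be internally consistent with this paper. In the conventions fixed in Proposition~\ref{express}, the unipotent nearby cycle is $\GR^{0}_{V_{z_{1}}}M$, while $\GR^{-1}_{V_{z_{1}}}M$ is the unipotent vanishing cycle $\phi_{z_{1},1}M$; so your identification $\psi_{z_{1},1}M\simeq \GR^{-1}_{V_{z_{1}}}M$ (and the representative $m\in V^{-1}_{z_{1}}M$) uses the opposite indexing and should be replaced by $\GR^{0}_{V_{z_{1}}}M$ and $m\in V^{0}_{z_{1}}M$ before feeding it into $M^{0,k}[!D_{1}]$ via Proposition~\ref{localizfac}. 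Also, in (iii) part (ii) only gives one of the two natural isomorphisms (restrict-then-glue applied to an object that already comes from a mixed Hodge module); the other direction --- that the glued module has restriction, $\phi_{z_{1},1}$, $\mathrm{can}$ and $\mathrm{var}$ recovering the given tuple $(\calM',\calM'',c,v)$ --- still requires the (standard) computation of nearby and vanishing cycles of the middle cohomology, which is again carried out in the cited references rather than in this paper.
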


If $M$ is monodromic,
it is easy to see that if $M^{\varepsilon ,k}$ is also monodromic.
Hence, $\calM^{\varepsilon, k}[*D_{1}]$ and $\calM^{\varepsilon, k}[!D_{1}]$ are also monodromic.
Therefore, $\Xi_{z_{1}}\calM$ is also monodromic.
Then, we have the following.
\begin{cor}\label{patrush}
For a monodromic mixed Hodge module $\calM$ on $X\times\CC^n$,
the Hodge filtration $F_{\bullet}\Xi_{z_{1}}M$ is decomposed as
$F_{p}\Xi_{z_{1}}M=\bigoplus_{\beta\in \RR}F_{p}\Xi_{z_{1}}M\cap (\Xi_{z_{1}}M)^{\beta}$.
\end{cor}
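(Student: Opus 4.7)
The plan is to realize $\Xi_{z_{1}}\calM$ as the kernel of a morphism between monodromic mixed Hodge modules to which Corollary~\ref{kaniseijin} already applies, and then transfer the Hodge filtration decomposition using strictness of morphisms in $\MHM$ together with the equivariance of that morphism under the Euler field.

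First I would verify that for each $\varepsilon\in\{0,1\}$ and $k\ge 1$ the $D$-module $M^{\varepsilon,k}=M\otimes_{\CC}I^{\varepsilon,k}$ is monodromic, with
\[
(M^{\varepsilon,k})^{\beta}=M^{\beta}\otimes_{\CC}I^{\varepsilon,k}.
\]
Indeed, the Euler field acts on $M^{\varepsilon,k}$ as $\eu\otimes 1+1\otimes N$, where $N$ is the nilpotent endomorphism of $I^{\varepsilon,k}$, so $(\eu-\beta)$ is locally nilpotent on $M^{\beta}\otimes I^{\varepsilon,k}$; the reverse inclusion follows from uniqueness of the decomposition~(\ref{macdonald}). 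In particular the mixed Hodge modules $\calM^{0,k}[!D_{1}]$ and $\calM^{1,k}[*D_{1}]$ are monodromic, and Corollary~\ref{kaniseijin} gives that their Hodge filtrations are decomposed with respect to the associated Euler decompositions.

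The natural morphism $f_{k}\colon \calM^{0,k}[!D_{1}]\to \calM^{1,k}[*D_{1}]$ is a morphism in $\MHM$ whose underlying $D$-module morphism commutes with $\eu$; hence it sends $(M^{0,k}[!D_{1}])^{\beta}$ into $(M^{1,k}[*D_{1}])^{\beta}$ for every $\beta\in\RR$. For $k$ large enough the kernel of $f_{k}$ stabilizes and equals the mixed Hodge module $\Xi_{z_{1}}\calM$, so by strictness of morphisms of mixed Hodge modules one has
\[
F_{p}\Xi_{z_{1}}M=F_{p}M^{0,k}[!D_{1}]\cap \Ker f_{k}.
\]
Decomposing any section of $F_{p}M^{0,k}[!D_{1}]$ into its homogeneous components via the already established decomposition, and using that the decomposition of $M^{1,k}[*D_{1}]$ is a direct sum on which $f_{k}$ acts componentwise, a section lies in $\Ker f_{k}$ iff each of its homogeneous components does. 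Since $(\Xi_{z_{1}}M)^{\beta}=\Ker f_{k}\cap (M^{0,k}[!D_{1}])^{\beta}$, this yields the desired decomposition of $F_{p}\Xi_{z_{1}}M$.

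The main obstacle I expect is the bookkeeping at the inductive-limit step: one needs to know that the Hodge filtration defined on the mixed Hodge module $\Xi_{z_{1}}\calM$ by the Beilinson maximal extension procedure coincides, for $k$ sufficiently large, with the kernel filtration inherited from $\calM^{0,k}[!D_{1}]$. This is essentially built into the construction of $\Xi_{z_{1}}\calM$ as a mixed Hodge module, but should be spelled out cleanly so that the strictness/kernel argument above applies verbatim; once this identification is in place, the decomposition passes through immediately.
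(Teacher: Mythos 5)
Your proposal is correct and follows essentially the same route as the paper: show $M^{\varepsilon,k}$ is monodromic, invoke Corollary~\ref{kaniseijin} to decompose $F_{p}M^{0,k}[!D_{1}]$ and $F_{p}M^{1,k}[*D_{1}]$, and identify $F_{p}\Xi_{z_{1}}M$ with the kernel of the induced filtered morphism (stable for large $k$), which is compatible with the Euler decomposition since the morphism commutes with $\eu$. The strictness point you flag is exactly what the paper tacitly uses (kernels in $\mathrm{MHM}$ carry the induced Hodge filtration), so spelling it out as you do is a harmless elaboration rather than a new ingredient.
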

\begin{proof}
By Corollary~\ref{kaniseijin},
$F_{p}M^{\varepsilon ,k}[!D_{1}]$ and $F_{p}M^{\varepsilon ,k}[*D_{1}]$ are decomposed. 
Therefore, $F_{p}\Xi_{z_{1}}M$, i.e. the kernel of the morphism $F_{p}M^{\varepsilon ,k}[!D_{1}]\to F_{p}M^{\varepsilon ,k}[*D_{1}]$ for sufficiently large $k\geq 1$, is also decomposed.
\end{proof}

For the proof of Theorem~\ref{Fdecomp},
another lemma is needed.

\begin{lem}\label{rapp}
Let $M$ be a monodromic $D$-module on $X\times \CC^n$ and $V_{z_{1}}^{\bullet}M$ the Kashiwara-Malgrange filtration along $z_{1}=0$.
For $\gamma\in \RR$ and a section $m\in V_{z_{1}}^{\gamma}M$,
let $m$ be a decomposition $m=\sum_{k=1}^{k_{0}}m_{k}$, where $m_{k}$ is in $M^{\beta_{k}}$ for some $\beta_{k}\in \RR$ with the condition $\beta_{k_{1}}\neq\beta_{k_{2}}$ for $k_{1}\neq k_{2}$.
Then, we have $m_{k}\in V_{z_{1}}^{\gamma}M$ for any $1\leq k\leq k_{0}$.
\end{lem}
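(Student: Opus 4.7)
The plan is to exploit the fact that the Euler vector field itself lies in $V^{0}_{z_{1}}D_{E}$, so that the Kashiwara–Malgrange filtration along $z_{1}$ is stable under the operator $\eu_{E}$ used to define the decomposition $M=\bigoplus_{\beta}M^{\beta}$. Once that is in place, each component $m_{k}$ can be exhibited as a polynomial in $\eu_{E}$ applied to $m$, so $m_{k}$ automatically inherits membership in $V_{z_{1}}^{\gamma}M$.

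More precisely, first I would verify that $\eu_{E}=z_{1}\partial_{z_{1}}+\sum_{i\geq 2}z_{i}\partial_{z_{i}}$ lies in $V_{z_{1}}^{0}D_{E}$. The summand $z_{1}\partial_{z_{1}}$ is in $V^{0}$ by definition of the $V$-filtration on $D_{E}$, and each $z_{i}\partial_{z_{i}}$ for $i\geq 2$ involves neither $z_{1}$ nor $\partial_{z_{1}}$ and is therefore also in $V^{0}$. Consequently $\eu_{E}$, and hence every polynomial $R(\eu_{E})\in \CC[\eu_{E}]$, preserves $V_{z_{1}}^{\gamma}M$.

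Next, I would use monodromicity of $M$ to pick a polynomial annihilating $m$. Since $m_{k}\in M^{\beta_{k}}$, there exists $N\geq 1$ such that $(\eu_{E}-\beta_{k})^{N}m_{k}=0$ for all $k$; set $P(u):=\prod_{k=1}^{k_{0}}(u-\beta_{k})^{N}$, so $P(\eu_{E})m=0$. Because the $\beta_{k}$ are distinct, the Chinese remainder theorem (equivalently, partial fractions applied to $1/P(u)$) yields polynomials $R_{1},\dots,R_{k_{0}}\in\CC[u]$ with
\[
R_{k}(u)\equiv 1 \pmod{(u-\beta_{k})^{N}},\qquad R_{k}(u)\equiv 0 \pmod{(u-\beta_{j})^{N}}\quad (j\neq k),
\]
and $\sum_{k}R_{k}(u)\equiv 1\pmod{P(u)}$. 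Applying these to $m$, the element $R_{k}(\eu_{E})m$ lies in $M^{\beta_{k}}$ and $\sum_{k}R_{k}(\eu_{E})m=m$, so by uniqueness of the decomposition (\ref{macdonald}) we obtain $R_{k}(\eu_{E})m=m_{k}$.

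Finally, since $\eu_{E}\in V_{z_{1}}^{0}D_{E}$ preserves $V_{z_{1}}^{\gamma}M$ and $m\in V_{z_{1}}^{\gamma}M$ by hypothesis, each $m_{k}=R_{k}(\eu_{E})m$ also lies in $V_{z_{1}}^{\gamma}M$, which is what we wanted. There is no real technical obstacle here; the only point demanding care is the observation that $\eu_{E}$ belongs to $V^{0}_{z_{1}}D_{E}$ and not, say, a higher $V^{k}$ piece, after which everything reduces to elementary linear algebra in $\CC[\eu_{E}]\cdot m$.
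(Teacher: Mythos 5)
Your proof is correct, and it takes a genuinely different route from the paper. You exhibit the spectral projections onto the generalized eigenspaces $M^{\beta_{k}}$ as polynomials $R_{k}(\eu_{E})$ in the Euler field (via the Chinese remainder theorem applied to $\prod_{k}(u-\beta_{k})^{N}$), and then use the single observation that $\eu_{E}=z_{1}\partial_{z_{1}}+\sum_{i\geq 2}z_{i}\partial_{z_{i}}$ lies in $V^{0}_{z_{1}}D_{E}$, so that $\CC[\eu_{E}]$ preserves each $V^{\gamma}_{z_{1}}M$ (which is a $V^{0}_{z_{1}}D_{E}$-submodule by the defining properties of the Kashiwara--Malgrange filtration); hence $m_{k}=R_{k}(\eu_{E})m\in V^{\gamma}_{z_{1}}M$ directly. (Your appeal to uniqueness of the decomposition is not even needed, since $R_{k}(\eu_{E})m_{j}=0$ for $j\neq k$ and $R_{k}(\eu_{E})m_{k}=m_{k}$ by construction.) The paper instead argues by contradiction: it takes $\delta_{1}$ minimal among the maximal $V$-indices of the $m_{k}$, expands $(\eu_{E}-\beta_{k})^{l}$ in terms of $(z_{1}\partial_{z_{1}}-\delta_{1})$ and $\eu'-(\beta_{k}-\delta_{1})$ with $\eu'=\sum_{i\geq 2}z_{i}\partial_{z_{i}}$, and concludes that the classes $[m_{k}]$ lie in distinct generalized $\eu'$-eigenspaces of $\GR^{\delta_{1}}_{V_{z_{1}}}M$ whose sum vanishes, forcing $[m_{1}]=0$, a contradiction. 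Your argument is shorter and more transparent, and it gives slightly more: the projectors $M\to M^{\beta}$ themselves restrict to every $V^{\gamma}_{z_{1}}M$. What the paper's computation buys is the explicit relation expressing $(\eu'-(\beta_{k}-\delta))^{l}m_{k}$ through $(z_{1}\partial_{z_{1}}-\delta)m_{k}$, which is reused immediately afterwards (Corollary~\ref{nobita}) to show that the nearby and vanishing cycles are again monodromic; with your approach that corollary would still need essentially that computation as a separate step.
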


\begin{proof}
For each $1\leq k\leq k_{0}$,
let $\delta_{k}\in \RR$ be the biggest number such that $m_{k}\in V^{\delta_{k}}_{z_{1}}M$.
We may assume that $\delta_{1}\leq \delta_{2}\leq \dots \leq \delta_{k_{0}}$.
In particular, we have $m_{k}\in V^{\delta_{1}}_{z_{1}}M$ for any $k$.
If $\delta_{1}\geq \gamma$, the claim is obvious,
so we suppose $\delta_{1}<\gamma$.
For sufficiently large $l_{1}\geq 0$,
$(z_{1}\pa_{z_{1}}-\delta_{1})^{l_{1}}m_{k}$ is in $V^{>\delta_{1}}_{z_{1}}M$ for any $k$.
On the other hand, since $m_{k}$ is in $M^{\beta_{k}}$,
we can take sufficiently large $l_{2}\geq 0$ such that
$(\eu-\beta_{k})^{l_{2}}m_{k}=0$ for any $k$.
Set $\eu':=\sum_{i\geq 2}\ti\pa_{\ti}$.
Then, we have
\[\sum_{j=0}^{l_{2}}C_{l_{2},j}(z_{1}\pa_{z_{1}}-\delta_{1})^{j}(\eu'-(\beta_{k}-\delta_{1}))^{l_{2}-k}m_{k}=0,
\]
where $C_{l_{2},j}$ are the binomial coefficients.
Therefore, for any $k$,
there exists a polynomial $H_{k}$ in $z_{1}\pa_{1},\dots,z_{n}\pa_{n}$
such that
\[
(\eu'-(\beta_{k}-\delta_{1}))^{l_{2}}m_{k}
=H_{k}(z_{1}\pa_{z_{1}}-\delta_{1})m_{k}.
\]
Hence, there exists a polynomial $H'_{k}$ in $z_{1}\pa_{1},\dots,z_{n}\pa_{n}$
such that
\[
(\eu'-(\beta_{k}-\delta_{1}))^{l_{1}l_{2}}m_{k}
=H_{k}'(z_{1}\pa_{z_{1}}-\delta_{1})^{l_{1}}m_{k}.
\]
Therefore, 
the section $[m_{k}]\in \GR^{\delta_{1}}_{V_{z_{1}}}M$ is in 
$(\GR^{\delta_{1}}_{V_{z_{1}}}M)^{\beta_{k}-\delta_{1}}$,
where we set
\[(\GR^{\delta_{1}}_{V_{z_{1}}}M)^{\beta_{k}-\delta_{1}}=\bigcup_{l\geq 1}\Ker(\eu'-(\beta_{k}-\delta_{1}))^l(\subset \GR^{\delta_{1}}_{V_{z_{1}}}M).\]
Moreover, since $\delta_{1}<\gamma$,
we have
\begin{align}\label{inmemory}
    \sum_{i=1}^{k_{0}}[m_{k}]=[m]=0 \quad (\mbox{in $\GR^{\delta_{1}}_{V_{z_{1}}}M$}).
\end{align}
However, it is easy to check (in the same way as in the proof of Proposition~\ref{decomprop}) that $(\GR^{\delta_{1}}_{V_{z_{1}}}M)^{\beta}\cap (\GR^{\delta_{1}}_{V_{z_{1}}}M)^{\beta'}=0
$ for $\beta\neq \beta'$ (this is true even if we do not yet know that $\GR^{\delta_{1}}_{V_{z_{1}}}M$ is not monodromic).
Combining this fact with (\ref{inmemory}),
we have $[m_{k}]=0$ in $\GR^{\delta_{1}}_{V_{z_{1}}}M$.
However, this contradicts with $[m_{1}]\neq 0$.
This completes the proof.
\end{proof}

\begin{cor}\label{nobita}
For a monodromic $D$-module on $X\times \CC^n$ and $\alpha\in (-1,0]$,
the $\alpha$-nearby cycle $\psi_{z_{1},\alpha}M=\GR^{\alpha}_{V_{z_{1}}}M$ and the unipotent vanishing cycle $\phi_{z_{1},1}M=\GR^{-1}_{V_{z_{1}}}M$ are also monodromic on $X\times \CC^{n-1}(=X\times \{z_{1}=0\}\times \CC_{z_{2}}\times \dots \times \CC_{z_{n}})$.
\end{cor}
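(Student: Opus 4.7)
The plan is to deduce the corollary directly from Lemma~\ref{rapp}, by using the identity $\eu_{E}=\ti\pa_{\ti}+\eu'$ where $\eu'=\sum_{i=2}^{n}z_{i}\pa_{z_{i}}$ is the Euler vector field on $X\times\{z_{1}=0\}\times\CC_{z_{2}}\times\dots\times\CC_{z_{n}}\simeq X\times\CC^{n-1}$.

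First I would observe that Lemma~\ref{rapp} immediately implies
\[
V^{\gamma}_{z_{1}}M=\bigoplus_{\beta\in\RR}\bigl(V^{\gamma}_{z_{1}}M\cap M^{\beta}\bigr),
\]
for every $\gamma\in\RR$, because any $m\in V^{\gamma}_{z_{1}}M$ has all components $m_{k}\in M^{\beta_{k}}$ already lying in $V^{\gamma}_{z_{1}}M$. Passing to the quotient by $V^{>\gamma}_{z_{1}}M=\bigoplus_{\beta}(V^{>\gamma}_{z_{1}}M\cap M^{\beta})$ gives the induced decomposition
\[
\GR^{\gamma}_{V_{z_{1}}}M=\bigoplus_{\beta\in\RR}\bigl(\GR^{\gamma}_{V_{z_{1}}}M\bigr)_{\beta},\qquad \bigl(\GR^{\gamma}_{V_{z_{1}}}M\bigr)_{\beta}:=\frac{V^{\gamma}_{z_{1}}M\cap M^{\beta}}{V^{>\gamma}_{z_{1}}M\cap M^{\beta}}.
\]

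Next I would check that each summand $(\GR^{\gamma}_{V_{z_{1}}}M)_{\beta}$ is a generalized eigenspace of $\eu'$ with eigenvalue $\beta-\gamma$. This is exactly the computation already carried out inside the proof of Lemma~\ref{rapp}: on $M^{\beta}$ one has $(\eu_{E}-\beta)^{l_{2}}m=0$ for some $l_{2}$, while on $\GR^{\gamma}_{V_{z_{1}}}M$ the operator $\ti\pa_{\ti}-\gamma$ is nilpotent. Writing $\eu'=\eu_{E}-\ti\pa_{\ti}$ and expanding $(\eu_{E}-\beta)^{l_{2}}$ in terms of $(\ti\pa_{\ti}-\gamma)$ and $(\eu'-(\beta-\gamma))$ via the binomial formula yields a polynomial identity that, modulo $V^{>\gamma}_{z_{1}}M$, forces a sufficiently high power of $\eu'-(\beta-\gamma)$ to annihilate the class of $m$ in $\GR^{\gamma}_{V_{z_{1}}}M$. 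Hence $(\GR^{\gamma}_{V_{z_{1}}}M)_{\beta}\subset (\GR^{\gamma}_{V_{z_{1}}}M)^{\beta-\gamma}$ in the notation of Proposition~\ref{decomprop} applied to the trivial bundle $X\times\CC^{n-1}$.

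Combining the two steps, for $\gamma=\alpha\in(-1,0]$ (resp.\ $\gamma=-1$) we obtain
\[
\psi_{z_{1},\alpha}M=\bigoplus_{\beta\in\RR}\bigl(\psi_{z_{1},\alpha}M\bigr)^{\beta-\alpha},\qquad \phi_{z_{1},1}M=\bigoplus_{\beta\in\RR}\bigl(\phi_{z_{1},1}M\bigr)^{\beta+1},
\]
as $O_{X\times\CC^{n-1}}$-modules, so each section is annihilated by a polynomial in $\eu'$. By Proposition~\ref{decomprop} this is the definition of monodromicity on $X\times\CC^{n-1}$, giving the corollary. The only delicate point, which is essentially already handled in Lemma~\ref{rapp}, is the bookkeeping of the binomial expansion that transfers the nilpotency of $\eu_{E}-\beta$ on $M^{\beta}$ to nilpotency of $\eu'-(\beta-\gamma)$ on the associated graded; everything else is formal.
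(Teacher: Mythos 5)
Your proposal is correct and follows essentially the same route as the paper: Lemma~\ref{rapp} to split sections of $V^{\gamma}_{z_{1}}M$ according to the generalized $\eu_{E}$-eigenspaces, then the binomial expansion of $\eu_{E}=z_{1}\pa_{z_{1}}+\eu'$ (exactly as inside the proof of Lemma~\ref{rapp}) to transfer nilpotency of $\eu_{E}-\beta$ on $M^{\beta}$ to nilpotency of $\eu'-(\beta-\gamma)$ on $\GR^{\gamma}_{V_{z_{1}}}M$. The only cosmetic difference is that you decompose the whole filtration step $V^{\gamma}_{z_{1}}M$ and pass to quotients, while the paper decomposes a chosen representative of a class; these are equivalent.
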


\begin{proof}
For $\alpha\in [-1,0]$,
let $[m]\in \GR^{\alpha}_{V_{z_{1}}}M$ be a section represented by a section $m\in V^{\alpha}_{z_{1}}M$.
By the previous lemma, we can decompose $m$ as 
\[m=\sum_{k=1}^{k_{0}}m_{k},\]
where $m_{k}$ is in $M^{\beta_{k}}$ for some $\beta_{k}\in \RR$ and $V_{z_{1}}^{\alpha}M$.
Therefore, it is enough to see $[m_{k}]$ is killed by some power of $(\eu'-\delta)$ for some $\delta\in \RR$.
In the same way as in the proof of Lemma~\ref{rapp},
there is an integer $l\geq 0$ and a polynomial $H_{k}$ in 
$z_{1}\pa_{1},\dots,z_{n}\pa_{n}$
such that
\[
(\eu'-(\beta_{k}-\alpha))^{l}m_{k}
=H_{k}'(z_{1}\pa_{z_{1}}-\alpha)m_{k}.
\]
Since $[m_{k}]$ is killed by some power of $z_{1}\pa_{z_{1}}-\alpha$,
this implies that $[m_{k}]$ is in $(\GR^{\alpha}_{V_{z_{1}}}M)^{\beta_{k}-\alpha}$, and the proof is complete.
\end{proof}

\begin{proof}[Proof of Theorem~\ref{Fdecomp}]
If $n=1$, the assertion is true by Propositrion~\ref{rankoneFdecom}.
We use the induction on $n$.
Consider the case $n\geq 2$.
By (ii) of Proposition~\ref{bongore}, $\calM$ is isomorphic to the cohomology in the middle degree of the complex (\ref{miito}).
By Corollary~\ref{patrush} and Corollary~\ref{nobita} with the inductive assumption,
all the terms of (\ref{miito}) are monodromic and the Hodge filtrations are decomposed with respect to the decomposition of the underlying $D$-modules.
Hence, so is its cohomology in the middle degree. 
This completes the proof.
\end{proof}

\section{{T}he Fourier-Laplace transform of a monodromic mixed Hodge module}\label{maboo}

In this section, we consider the Fourier-Laplace transform of a monodromic mixed Hodge module.

\subsection{{T}he Fourier-Laplace transform of a $D$-module}\label{toban}
First, let us recall the notion of the Fourier-Laplace transform of a $D$-module.
We refer to \cite{BryFou}.
Let $X$ be a smooth algebraic variety, $\pi\colon E\to X$ an algebraic vector bundle on $X$, ${\pi}^{\vee}\colon {E^{\vee}}\to X$ the dual vector bundle of $E$ and $\varphi\colon E\times_{X} {E^{\vee}}\to \CC$ the paring between $E$ and ${E^{\vee}}$. 
Moreover, let $\expo^{-\varphi}$ be the integrable connection $(O_{E\times_{X} {E^{\vee}}}, d-d\varphi)$; we regard it as a $D$-module.
We denote by $p$, $q$ the projections $E\times_{X} {E^{\vee}}\to E$ and $E\times_{X} {E^{\vee}}\to {E^{\vee}}$.
For a morphism $f\colon Y\to Z$ between the manifolds $Y$ and $Z$ and a complex of $D$-modules $N_{1}$ (resp. $N_{2}$) on $Y$ (resp. $Z$),
let $f_{\dag}N_{1}$ be the pushforward of $N_{1}$ (which is denoted by $\int_{f}N_{1}$ in \cite{HTT}),
and $f^{\dag}N_{2}$ the pullback of $N_{2}$, which is also expressed as $Lf^{*}N_{2}[\dim{Y}-\dim{Z}]$.
These are objects in the derived categories of $D$-modules.
Recall that $f_{\dag}$ (resp. $f^{\dag}$) corresponds to $Rf_{*}$ (resp. $f^{!}$) under the Riemann-Hilbert correspondence.

\begin{defi}\label{defiFou}
For a $D$-module $M$ on $E$,
we define the $D$-module on $E^{\vee}$ called the Fourier-Laplace transform $\whmunk{M}$ as
\[\whmunk{M}=H^0q_{\dag}(p^{*}M\otimes_{O_{E\times_{X} {E^{\vee}}}}\expo^{-\varphi} ).\]
\end{defi}

It is known that $\whmu{\ \cdot\ }$ defines an exact functor.


\begin{rem}
Since $p$ is a projection,
we have $H^{j}p^{\dag}M=0$ ($j\neq -n$) and
\[
H^{-n}p^{\dag}M=H^0Lp^*M=p^{*}M(=O_{E\times_{X} E^{\vee}}\otimes_{p^{-1}O_{E}}p^{-1}M).\]
\end{rem}

\begin{rem}
For a complex of $D$-modules $M^\bullet$, 
we also define the Fourier-Laplace transform of $\whmunk{(M^\bullet)}$ as
\[\whmunk{(M^\bullet)}=q_{\dag}(p^{*}M^\bullet\otimes_{O_{E\times_{X} {E^{\vee}}}}\expo^{-\varphi} ),\]
which is an object in the derived category of $D$-modules.
Since for a $D$-module (not complex) $M$ the $0$-th cohomology is the only non-trivial cohomology of $q_{\dag}(p^{*}M\otimes_{O_{E\times_{X} {E^{\vee}}}}\expo^{-\varphi})$,
therefore this definition is compatible with Definition~\ref{defiFou}, as identifying a $D$-module with the complex of $D$-modules concentrated in degree $0$.
\end{rem}

Let us consider the projective version of the above definition.
Define $\wt{E}$ (resp. $\wt{E^{\vee}}$) as the projective compactification of $E$ (resp. ${E^{\vee}}$) i.e. the projective bundle of the direct sum of $E$ (resp. $E^{\vee}$) and the trivial bundle over $X$.
We use the same symbol $\pi$ and $\pi^{\vee}$ for their projection to $X$.
Moreover, we denote by $j\colon E\hookrightarrow \wt{E}$ (resp. $j^\vee \colon E^{\vee}\hookrightarrow \wt{E^{\vee}}$) the inclusion of $E$ (resp. $E^{\vee}$) to $\wt{E}$ ($\wt{E^\vee}$) and $D_{\infty}$ (resp. $D_{\infty}^{\vee}$) the divisor $\wt{E}\setminus E$ (resp. $\wt{E^{\vee}}\setminus E^{\vee}$).
We use the same symbol $D_{\infty}$ (resp. $D_{\infty}^{\vee}$) for the divisor $D_{\infty}\times_{X}E^\vee$ (resp. $E\times_{X}D_{\infty}^{\vee}$) of $E\times_{X}E^{\vee}$.
Let $\wt{p}$ (resp. $\wt{q}$) be the projection $\wt{E}\times_{X}\wt{E^{\vee}}\to \wt{E}$ (resp. $\wt{E}\times_{X}\wt{E^{\vee}}\to \wt{E^{\vee}}$) and $\varphi$ the rational function on $\wt{E}\times_{X}\wt{E^{\vee}}$ defined as the pairing of $\wt{E}$ and $\wt{E^{\vee}}$, whose pole divisor is $D_{\infty}\cup D_{\infty}^{\vee}$ (we use the same symbol as $\varphi\colon E\times_{X} E^{\vee}\to \CC$). 
Let $\calE^{-\varphi}$ be the meromorphic connection $(O_{\wt{E}\times_{X}\wt{E^{\vee}}}(*D_{\infty}\cup D_{\infty}^{\vee}), d-d\varphi)$.

\begin{defi}
For a $D$-module $N$ on $\wt{E}$,
we define the Fourier-Laplace transform $\whmunk{N}$ as
\[\whmunk{N}=H^0\wt{q}_{\dag}(\wt{p}^{*}N\otimes \expo^{-\varphi}).\]
\end{defi}

Since our $D$-modules are algebraic,
$\whmunk{N}$ is expressed as follows.

\begin{lem}\label{train}
For a $D$-module $N$ on $\wt{E}$, we have
\[\whmunk{N}\simeq j^{\vee}_{*}({N|_{E}})^{\wedge}.\]
\end{lem}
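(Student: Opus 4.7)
The plan is to use the factorization $\wt{q}\circ \iota=j^{\vee}\circ q$ coming from the commutative diagram
\[
\begin{CD}
E\times_{X}E^{\vee} @>{\iota}>> \wt{E}\times_{X}\wt{E^{\vee}}\\
@V{q}VV @VV{\wt{q}}V\\
E^{\vee} @>>{j^{\vee}}> \wt{E^{\vee}},
\end{CD}
\]
where $\iota$ is the open immersion complementary to the divisor $D_{\infty}\cup D_{\infty}^{\vee}$. Since both $\iota$ and $j^{\vee}$ are affine open immersions, $\iota_{\dag}$ and $j^{\vee}_{\dag}$ are exact on quasi-coherent $D$-modules and coincide with $\iota_{*}$ and $j^{\vee}_{*}$ respectively; moreover the functoriality of $D$-module pushforward gives $\wt{q}_{\dag}\circ \iota_{\dag}=j^{\vee}_{\dag}\circ q_{\dag}$.

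The key technical ingredient is the identification
\[
\wt{p}^{*}N\otimes \calE^{-\varphi}\ \simeq\ \iota_{\dag}\bigl(p^{*}(N|_{E})\otimes \expo^{-\varphi}\bigr)
\]
as $D$-modules on $\wt{E}\times_{X}\wt{E^{\vee}}$. The underlying $O$-module of $\calE^{-\varphi}$ is $O_{\wt{E}\times_{X}\wt{E^{\vee}}}(*D_{\infty}\cup D_{\infty}^{\vee})$, so the left-hand side is automatically $*$-localized along $D_{\infty}\cup D_{\infty}^{\vee}$; and for an affine open immersion such a quasi-coherent $D$-module is recovered as $\iota_{*}$ of its restriction to the open complement. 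The restriction of the left-hand side to $E\times_{X}E^{\vee}$ equals $p^{*}(N|_{E})\otimes \expo^{-\varphi}$, using $\wt{p}\circ \iota=j\circ p$ to get $\iota^{*}\wt{p}^{*}N=p^{*}(N|_{E})$, and the $D$-module structures on both sides agree because each is uniquely determined by extending the twisted connection meromorphically across $D_{\infty}\cup D_{\infty}^{\vee}$.

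Combining these two ingredients gives
\begin{align*}
\wt{q}_{\dag}\bigl(\wt{p}^{*}N\otimes \calE^{-\varphi}\bigr)
&\simeq\wt{q}_{\dag}\iota_{\dag}\bigl(p^{*}(N|_{E})\otimes \expo^{-\varphi}\bigr)\\
&\simeq j_{\dag}^{\vee}q_{\dag}\bigl(p^{*}(N|_{E})\otimes \expo^{-\varphi}\bigr),
\end{align*}
and taking $H^{0}$, together with the exactness of $j_{\dag}^{\vee}=j_{*}^{\vee}$, yields
\[
\whmunk{N}=H^{0}\wt{q}_{\dag}\bigl(\wt{p}^{*}N\otimes \calE^{-\varphi}\bigr)\simeq j_{*}^{\vee}H^{0}q_{\dag}\bigl(p^{*}(N|_{E})\otimes \expo^{-\varphi}\bigr)=j_{*}^{\vee}(N|_{E})^{\wedge},
\]
which is the desired isomorphism. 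The main subtlety is the identification in the second paragraph: one must verify that the twisted $D$-module structure obtained from $\iota_{\dag}$ genuinely coincides with that on $\wt{p}^{*}N\otimes \calE^{-\varphi}$; once this compatibility between the meromorphic extension of the connection and $D$-module pushforward along $\iota$ is granted, the remainder of the argument is pure functoriality of pushforward in the algebraic $D$-module formalism.
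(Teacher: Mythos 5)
Your proof is correct and follows essentially the same route as the paper: the key identification $\wt{p}^{*}N\otimes \calE^{-\varphi}\simeq (j\times j^{\vee})_{*}(p^{*}(N|_{E})\otimes \calE^{-\varphi})$ is exactly the paper's first step (justified there by "the definition of $\calE^{-\varphi}$", i.e. its underlying $O$-module being already $*$-localized along $D_{\infty}\cup D_{\infty}^{\vee}$), and the rest is the same functoriality $\wt{q}_{\dag}\circ(j\times j^{\vee})_{*}\simeq j^{\vee}_{*}\circ q_{\dag}$ together with exactness of $j^{\vee}_{*}$ when passing to $H^{0}$. Your extra remarks on affineness of the open immersions and the compatibility of the twisted connection with the pushforward just make explicit what the paper leaves implicit.
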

\begin{proof}
By the definition of $\calE^{-\varphi}$,
we have 
\begin{align*}
    \wt{p}^{\dag}N\otimes \expo^{-\varphi}
    =
    (j\times j^{\vee})_{*}(p^{*}(N|_{E})\otimes \expo^{-\varphi}).
\end{align*}
Therefore, we obtain
\begin{align*}
    \whmunk{N}\simeq& H^0\wt{q}_{\dag}(    (j\times j^{\vee})_{*}(p^{*}(N|_{E})\otimes \expo^{-\varphi}).
)\\
\simeq &
j_{*}^{\vee}H^0{q}_{\dag}(p^{*}(N|_{E})\otimes \expo^{-\varphi})\\
=& j_{*}^{\vee}({N|_{E}})^{\wedge}.
\end{align*}
\end{proof}

Let us consider the case when $X$ is affine and $E$ is trivial, i.e. $E\simeq X\times \CC^n$. 
Let $(z_{1},\dots,z_{n})$ be the standard coordinates of $\CC^n$ (we sometimes write $\CC^n_{z}$ to emphasize the coordinates), $\CC^n_{\zeta}$ the dual vector space of $\CC^n_{z}$, $(\zeta_{1},\dots,\zeta_{n})$ the dual coordinates of $\CC^n_{\zeta}$.
Then, we have ${E^{\vee}}\simeq X\times \CC^{n}_{\zeta}$.
Remark that we can identify a $D$-module $M$ with the $\Gamma(E;D_{E})$-module $\Gamma(E;M)$.
Recall that since $q$ is a projection, the pushforward $q_{\dag}$ is described in terms of the relative de Rham complex (see Proposition~1.5.28 of \cite{HTT}).
The following is well-known. 
\begin{lem}\label{march}
\begin{enumerate}
    \item[(i)]
There is a ring isomorphism $\Gamma({E^{\vee}},D_{{E^{\vee}}})\simeq \Gamma(E;D_{E})$ which sends
$P\in D_{X}$ to the same element $P$
and $\zeta_{i}$ (resp. $\pa_{\zeta_{i}}$) to $\pa_{\ti}$ (resp. $-\ti$). 

\item[(ii)]
For a $D$-module $M$ on $E$,
the Fourier-Laplace transform $\whmunk{M}$ is $M$ as a $\CC$-module
and its $\Gamma({E^{\vee}},D_{{E^{\vee}}})$-module structure is induced from the original $\Gamma(E;D_{E})$-module structure via the isomorphism 
$\Gamma({E^{\vee}},D_{{E^{\vee}}})\simeq \Gamma(E;D_{E})$.
\end{enumerate}
\end{lem}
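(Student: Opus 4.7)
The plan is to split into (i) and (ii): part (i) is a routine Weyl-algebra check, while part (ii) requires an explicit computation of the direct image via the relative de Rham complex, after which the $D_{E^{\vee}}$-action can simply be read off. For part (i), since $X$ is affine and $E = X \times \CC^n_{z}$, $E^{\vee} = X \times \CC^n_{\zeta}$ are trivial bundles, both $\Gamma(E; D_{E})$ and $\Gamma(E^{\vee}; D_{E^{\vee}})$ are iterated Ore extensions of $\Gamma(X; D_{X})$ with the standard Weyl relations in the $z_i, \partial_{z_i}$ (resp.\ $\zeta_i, \partial_{\zeta_i}$) variables, which commute with $\Gamma(X; D_{X})$. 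I would define the map on generators as prescribed and verify that each defining relation is preserved; the only nontrivial check is that $[\partial_{\zeta_{i}}, \zeta_{j}] = \delta_{ij}$ is sent to $[-z_{i}, \partial_{z_{j}}] = \delta_{ij}$, while the commutation of the $\zeta_{i}$'s among themselves and with $\Gamma(X; D_{X})$ is trivially preserved. Bijectivity is immediate from the symmetric form of the Weyl relations (the inverse sends $\partial_{z_{i}} \mapsto \zeta_{i}$ and $z_{i} \mapsto -\partial_{\zeta_{i}}$).

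For part (ii), I would compute $\whmunk{M}$ directly. Since $q\colon X\times\CC^n_{z}\times\CC^n_{\zeta}\to X\times\CC^n_{\zeta}$ is a trivial projection of relative dimension $n$, the pushforward $H^0 q_{\dag}N$ of any $D$-module $N$ is the cokernel
\[
H^0 q_{\dag} N \;=\; N \,\Big/\, \sum_{i=1}^{n} \partial_{z_{i}} \cdot N
\]
by Proposition~1.5.28 of \cite{HTT}. Applied to $N = p^{*}M \otimes \expo^{-\varphi}$ with $\varphi = \sum_{i} z_{i}\zeta_{i}$, the Leibniz rule together with $\partial_{z_{i}}(e^{-\varphi}) = -\zeta_{i} e^{-\varphi}$ gives
\[
\partial_{z_{i}} (m \otimes e^{-\varphi}) \;=\; (\partial_{z_{i}}m - \zeta_{i} m) \otimes e^{-\varphi},
\]
so in the cokernel $\zeta_{i}\, m \otimes e^{-\varphi} \equiv \partial_{z_{i}} m \otimes e^{-\varphi}$. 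Iterating in each variable, every $m\, P(\zeta) \otimes e^{-\varphi}$ represents $P(\partial_{z}) m \otimes e^{-\varphi}$, producing a $\CC$-linear surjection $\Gamma(E; M) \twoheadrightarrow \whmunk{M}$, $m \mapsto \whmunk{m} := [m \otimes e^{-\varphi}]$. I expect the main (mildly subtle) step to be injectivity: concretely, I would filter $p^{*}M \otimes \expo^{-\varphi} \cong M[\zeta_{1}, \ldots, \zeta_{n}]\otimes e^{-\varphi}$ by total degree in $\zeta$ and check that the associated-graded of $\partial_{z_{i}} - \zeta_{i}$ (which becomes multiplication by $-\zeta_{i}$ on the leading term) is injective on nonzero elements of top degree, so that no further collapsing occurs beyond the substitution $\zeta_{i}\mapsto\partial_{z_{i}}$.

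With the $\CC$-module identification $\whmunk{M} \simeq \Gamma(E; M)$ in hand, the $D_{E^{\vee}}$-action is then obtained by direct computation on the representatives $m \otimes e^{-\varphi}$: multiplication by $\zeta_{i}$ acts directly on the tensor, so $\zeta_{i} \cdot \whmunk{m} = [\zeta_{i} m \otimes e^{-\varphi}] = \whmu{\partial_{z_{i}} m}$ by the identification above; the relation $\partial_{\zeta_{i}}(e^{-\varphi}) = -z_{i} e^{-\varphi}$ yields $\partial_{\zeta_{i}} \cdot \whmunk{m} = \whmu{-z_{i} m}$; and since $\varphi$ does not involve the $X$-variables, any $P \in \Gamma(X; D_{X})$ acts trivially on $e^{-\varphi}$ (one checks inductively on the order of $P$), giving $P \cdot \whmunk{m} = \whmu{P m}$. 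These formulas are exactly the actions transported through the ring isomorphism of (i), completing (ii); the only real obstacle in the entire argument is the injectivity step pinning down $\whmunk{M}\simeq\Gamma(E;M)$.
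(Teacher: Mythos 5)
Your overall route is the same as the paper's: the paper proves this lemma by referring to the proof of its $R$-module analogue (Lemma~\ref{walkin}), which is exactly your computation — write $H^0q_{\dag}$ of $p^{*}M\otimes\expo^{-\varphi}\simeq M[\zeta_{1},\dots,\zeta_{n}]\otimes e^{-\varphi}$ via the relative de Rham complex, identify it with the cokernel of the twisted operators $\partial_{z_{i}}-\zeta_{i}$, and read off the actions; your sign checks for $\zeta_{i}$, $\partial_{\zeta_{i}}$ and $P\in\Gamma(X;D_{X})$ are all correct, as is part (i).

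The one soft spot is the injectivity step, and as sketched it does not quite work for $n\geq 2$. Injectivity of each associated-graded operator $-\zeta_{i}$ on top-degree terms does not prevent cancellation \emph{among different summands}: for instance $u_{1}=\zeta_{2}w$, $u_{2}=-\zeta_{1}w$ have nonzero leading terms but $\zeta_{1}u_{1}+\zeta_{2}u_{2}=0$, so the top degree of $\sum_{i}(\partial_{z_{i}}-\zeta_{i})u_{i}$ can drop below $\max_{i}\deg u_{i}+1$. To run your filtration argument you must handle these relations, i.e. use that the syzygies of $(\zeta_{1},\dots,\zeta_{n})$ on $M[\zeta]$ are the Koszul ones and that the operators $\partial_{z_{i}}-\zeta_{i}$ commute, so the offending tuples can be modified by Koszul-type corrections without changing $\sum_{i}(\partial_{z_{i}}-\zeta_{i})u_{i}$; this is precisely the Koszul-complex exactness the paper invokes when it identifies the relative de Rham complex with the Koszul complex of the "regular sequence'' $\eth_{z_{1}},\dots,\eth_{z_{n}}$. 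Alternatively, injectivity is immediate from a retraction: the $\CC$-linear map $\Phi\colon M[\zeta]\to M$, $\zeta^{a}m\mapsto \partial_{z}^{a}m$, kills $(\partial_{z_{i}}-\zeta_{i})M[\zeta]$ for every $i$ and restricts to the identity on $M$, so $M\cap\sum_{i}(\partial_{z_{i}}-\zeta_{i})M[\zeta]=0$ and the surjection $m\mapsto\whmunk{m}$ is an isomorphism. With either repair your argument is complete and matches the paper's.
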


We will introduce a similar statement for $R$-modules in the next section (Lemma~\ref{walkin}).
We can prove this lemma in the same way as the proof written there.
We write $\whmunk{m}$ for the section of $\whmunk{m}$ corresponding to $m\in M$.
By this lemma, we have 
\begin{align}
\begin{aligned}\label{umiwo}
    \zeta_{i}\cdot\whmunk{m}&=\whmu{\pa_{\ti}m}\quad \mbox{and}\\
    \pa_{\zeta_{i}}\cdot\whmunk{m}&=-\whmu{\ti m}.
\end{aligned}
\end{align}

If we take two trivializations $\varphi_{i}\colon E\simeq X\times \CC^n$ ($i=1,2$) and a section $m\in M$,
the section $\whmunk{m}$ for the trivialization $\varphi_{1}$ (we write $(\whmunk{m})_{1}$ for it) does not coincide with $\whmunk{m}$ for $\varphi_{2}$ (we write $(\whmunk{m})_{2}$ for it), i.e. ``$\whmunk{m}$'' depends on the choice of the trivialization.
However, they are equal up to a multiplicative factor, i.e.
there is a holomorphic function $A(x)\in \Gamma(X;O_{X})(\subset \Gamma(E;O_{E}))$ such that we have
\[(\whmunk{m})_{2}={A(x)}(\whmunk{m})_{1}.\]
Therefore, for an $O_{X}$-submodule $F$ of $\pi_{*}M$,
the $O_{X}$-submodule 
\begin{align}\label{kyousoukyoku}
    \whmunk{F}:=\{\whmunk{m}\in \pi_{*}^{\vee}\whmunk{M}\ |\ m\in F\}
    \end{align}
of $\pi^{\vee}_{*}\whmunk{M}$ does not depend on the choice of the trivialization $E\simeq X\times \CC^n$.
Hence, the following definition is well-defined.

\begin{defi}\label{tetuko}
For a $D$-module $M$ on $E$ ($E$ is not necessary trivial)
and an $O_{X}$-submodule $F$ of $\pi_{*}M$,
we define an $O_{X}$-submodule $\whmunk{F}$ of $\pi_{*}^{\vee}\whmunk{M}$ so that for any local trivialization $\pi^{-1}(U)\simeq U\times \CC^n$ ($U\subset X$ is affine)
we have
\[(\whmunk{F})|_{U}=({F|_{U}})^{\wedge},\]
where the RHS is the one defined by (\ref{kyousoukyoku}).
\end{defi}

\subsection{{T}he Fourier-Laplace transform of a monodromic mixed Hodge module}\label{kanmen}

Next, we consider a monodromic $D$-module $M$ on a (not necessary trivial) vector bundle $E$.
We use the notation defined in the previous subsection.
Recall that $M^{\beta}$ is defined as 
\[M^{\beta}=\bigcup_{l\geq 0}\Ker((\eu-\beta)^l)\subset \pi_{*}M,\]
where $\eu$ is the Euler vector field on $E$.

\begin{prop}\label{fable}
If $M$ is monodromic, then so is $\whmunk{M}$. 
Moreover,
we have
\[(\whmunk{M})^{\beta}=({M^{-\beta-n}})^{\wedge},\]
as $O_{X}$-modules for any $\beta\in \RR$,
where the RHS is defined by Definition~\ref{tetuko}.
\end{prop}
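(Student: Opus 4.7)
The plan is to reduce to a local trivialization and then perform a direct computation with the Fourier dictionary of Lemma~\ref{march}. Since both assertions are local on $X$ and Definition~\ref{tetuko} was designed precisely so that $\whmunk{F}$ glues compatibly across trivializations, I may assume that $X$ is affine and $E=X\times \CC^n_{z}$ is trivial, so that $E^{\vee}=X\times \CC^n_{\zeta}$ with dual coordinates $(\zeta_{1},\dots,\zeta_{n})$, and identify $D$-modules with modules over their global differential operators.

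The heart of the proof is computing how the Euler field on $E^{\vee}$ acts on $\whmunk{M}$ in terms of the Euler field on $E$ acting on $M$. Using the formulas (\ref{umiwo}), for any local section $m\in M$ one has
\[
\zeta_{i}\pa_{\zeta_{i}}\cdot\whmunk{m}
=\zeta_{i}\cdot\bigl(-\whmu{z_{i}m}\bigr)
=-\whmu{\pa_{z_{i}}z_{i}m}
=-\whmu{(z_{i}\pa_{z_{i}}+1)m}.
\]
Summing over $i=1,\dots,n$ yields the key identity
\[
\eu_{E^{\vee}}\cdot \whmunk{m}=-\whmu{(\eu_{E}+n)m},
\]
and hence by induction, for every $\beta\in\RR$ and $l\geq 0$,
\[
(\eu_{E^{\vee}}-\beta)^{l}\whmunk{m}
=(-1)^{l}\whmu{(\eu_{E}-(-\beta-n))^{l}m}.
\]

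From this identity, $\whmunk{m}$ is annihilated by a power of $\eu_{E^{\vee}}-\beta$ if and only if $m$ is annihilated by the corresponding power of $\eu_{E}-(-\beta-n)$. Applied to arbitrary local sections this shows at once that $\whmunk{M}$ is monodromic (and in fact $\QQ$-monodromic, since $M$ is), and it identifies
\[
(\whmunk{M})^{\beta}=\bigl\{\whmunk{m}\in \pi^{\vee}_{*}\whmunk{M}\ \bigl|\ m\in M^{-\beta-n}\bigr\}=(M^{-\beta-n})^{\wedge}
\]
as $O_{X}$-submodules in the chosen trivialization. The independence of trivialization on the right-hand side is exactly Definition~\ref{tetuko}, while the left-hand side is intrinsic, so the identity descends to the general (possibly non-trivial) vector bundle $E$.

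The computation is essentially just a sign-and-normalization bookkeeping exercise; the only mild subtlety is matching the intrinsic definition of $\whmunk{F}$ (Definition~\ref{tetuko}) with the trivialized picture, but this was already arranged when $\whmunk{F}$ was defined, so no real obstacle arises there.
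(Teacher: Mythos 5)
Your proof is correct and follows essentially the same route as the paper: reduce to an affine base with a trivialization, use the Fourier dictionary of Lemma~\ref{march} to get $\eu_{E^{\vee}}\whmunk{m}=-\whmu{(\eu_{E}+n)m}$, and deduce both monodromicity and the identification $(\whmunk{M})^{\beta}=({M^{-\beta-n}})^{\wedge}$. Your version merely spells out the iterated identity and the converse inclusion (via injectivity of $m\mapsto\whmunk{m}$) a bit more explicitly than the paper does.
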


\begin{proof}
We may assume $X$ is affine and $E$ is trivial, i.e. $E\simeq X\times \CC^n_{z}$.
Then, we use the description by Lemma~\ref{march}.
Consider a section $\whmunk{m}\in \whmunk{M}$ for $m\in M^{\beta}$.
We denote by ${\eu^{\vee}}$ the Euler vector field $\sum_{i=1}^{n}\zeta_{i}\pa_{\zeta_{i}}$ on ${E^{\vee}}$.
By (\ref{umiwo}),
we have
\[{\eu^{\vee}}\whmunk{m}=\whmu{(-\eu-n)m}.\]
Therefore, $({\eu^{\vee}}+n+\beta)^l\whmunk{m}$ is zero for some $l\geq 0$.
This implies the assertion.
\end{proof}

If $M$ is a holonomic $D$-module, so is $\whmunk{M}$.
On the other hand,
$\whmunk{M}$ may not be regular in general even if $M$ is regular since $\expo^{-\varphi}$ is not regular at infinity.
Hence, $\whmunk{M}$ may not be the underlying $D$-module of a mixed Hodge module.
In general, it is an underlying $D$-module of a mixed twistor $D$-module (see Subsection~\ref{micromacro}).
Nevertheless, we have the following.

\begin{lem}[Th\'eor\`eme~7.24 of Brylinski~\cite{BryFou}]\label{tobuneko}
If $M$ is monodromic regular holonomic $D$-module, so is $\whmunk{M}$.
\end{lem}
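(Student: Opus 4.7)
The plan is to separate the three assertions (monodromic, holonomic, regular) and reduce the delicate one (regularity) to a topological statement about Fourier--Sato transforms of monodromic sheaves.

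First I would dispose of what is essentially free. Holonomicity of $\whmunk{M}$ is standard: $p^{*}M$ is holonomic because $p$ is smooth, the tensor product with the integrable (meromorphic) connection $\expo^{-\varphi}$ preserves holonomicity, and the $D$-module direct image $q_{\dag}$ preserves holonomicity for morphisms between algebraic varieties. Taking $H^0$ then gives a holonomic $D$-module on $E^{\vee}$. Monodromicity of $\whmunk{M}$ has already been shown by Proposition~\ref{fable}, which in fact identifies the decomposition $(\whmunk{M})^{\beta}=(M^{-\beta-n})^{\wedge}$ explicitly. Thus everything reduces to proving that $\whmunk{M}$ is \emph{regular}.

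For regularity I would pass to the topological side via the Riemann--Hilbert correspondence. By Remark~\ref{cyuusyo}, the monodromicity of a regular holonomic $D$-module is equivalent to the perverse sheaf $K=\DR(M)$ being cohomologically locally constant on each $\CS$-orbit of $E$; call such a perverse sheaf monodromic. On the topological side there is the Fourier--Sato transform, which is defined purely in terms of the $\CS$-action (using open cones and half-spaces) and which induces an equivalence between the derived category of monodromic constructible sheaves on $E$ and that on $E^{\vee}$. In particular, the Fourier--Sato transform of a monodromic perverse sheaf is again a monodromic perverse sheaf, hence constructible.

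The crucial input is then the comparison theorem (due to Malgrange, Brylinski, and Kashiwara--Schapira) stating that on monodromic regular holonomic $D$-modules, the de Rham functor intertwines the algebraic Fourier--Laplace transform $\whmunk{(\cdot)}$ with the topological Fourier--Sato transform (up to a standard shift/twist). Granting this, $\DR(\whmunk{M})$ is a monodromic perverse sheaf on $E^{\vee}$, hence $\CC$-constructible. Since $\whmunk{M}$ is already holonomic and its de Rham complex is constructible, the Riemann--Hilbert correspondence forces $\whmunk{M}$ to be regular, which is the desired conclusion.

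The main obstacle is precisely this comparison: it requires controlling the oscillatory kernel $\expo^{-\varphi}$, which is irregular at infinity on $\wt{E}\times_{X}\wt{E^{\vee}}$, and showing that after integrating along $q$ the irregular contributions cancel for monodromic $M$. One standard way to see this is to reduce to the case of a single $\CS$-orbit, where $\expo^{-\varphi}$ restricted to the orbit times the dual orbit gives rise, after integration, to Bessel-type kernels whose Fourier--Laplace transform is controlled by the decomposition $M=\bigoplus_{\beta}M^{\beta}$ and hence remains regular. I would treat this comparison as a black box (citing \cite{BryFou}); everything else in the proof is then formal.
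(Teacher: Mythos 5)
The first thing to note is that the paper does not prove this lemma at all: it is quoted as Th\'eor\`eme~7.24 of Brylinski \cite{BryFou} (with a pointer to Proposition~1.24 of \cite{SaiMon}), so the comparison is really with Brylinski's theorem itself. Your overall route --- holonomicity of $\whmunk{M}$ is formal, monodromicity is Proposition~\ref{fable}, and the whole content is regularity, to be obtained from the comparison with the Fourier--Sato transform of the monodromic perverse sheaf $K$ (Remark~\ref{cyuusyo}) --- is indeed the standard route behind that citation.

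However, the one step you actually argue is broken. You conclude with: since $\whmunk{M}$ is holonomic and its de Rham complex is constructible, Riemann--Hilbert forces $\whmunk{M}$ to be regular. That implication is false: by Kashiwara's constructibility theorem the de Rham complex of \emph{every} holonomic $D$-module is $\CC$-constructible (indeed perverse), regular or not; Riemann--Hilbert says $\mathrm{DR}$ is an equivalence on the regular subcategory, not that constructibility of $\mathrm{DR}(N)$ detects regularity of $N$. Note also that this final inference uses no monodromicity at all, so it would show that $\whmunk{M}$ is regular for an arbitrary regular holonomic $M$, contradicting the remark made in the paper just before the lemma: e.g.\ for $M=\delta_{1}$ on $E=\CC_{z}$ (regular, not monodromic) one has $\whmunk{M}\simeq(\CC[\zeta],\,d-d\zeta)$, which is holonomic with constructible (shifted constant) de Rham complex but irregular at infinity. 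Consequently your proof is either circular --- if the ``comparison theorem'' you black-box is taken in the strong form ``$\whmunk{M}$ is regular holonomic and $\mathrm{DR}(\whmunk{M})$ is the Fourier--Sato transform of $K$'', that \emph{is} Th\'eor\`eme~7.24, i.e.\ the lemma being proved --- or invalid, if only the weaker identification of $\mathrm{DR}(\whmunk{M})$ is assumed. A genuinely independent argument has to establish regularity of $\whmunk{M}$ (in particular along $D_{\infty}^{\vee}$) directly from the monodromic structure, e.g.\ using the decomposition $M=\bigoplus_{\beta}M^{\beta}$ together with the explicit description of the transform in Lemma~\ref{march} and the behaviour at infinity as in Lemma~\ref{kinomi}, rather than deducing it from constructibility of the de Rham complex.
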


See also Proposition~1.24 of \cite{SaiMon}.
From this lemma, it may be possible to endow $\whmunk{M}$ with a mixed Hodge module structure.
In fact, when $E$ is of rank $1$, we constructed a mixed Hodge module whose underlying $D$-module is $\whmunk{M}$ in Subsection~3.7 of \cite{SaiMon}.
\begin{lem}[Subsection~3.7 of \cite{SaiMon}]\label{soutou}
Let $\calM=(M,F_{\bullet}M,K,W_{\bullet}K)$ be a monodromic mixed Hodge module.
Assume that $E$ is of rank $1$ and $E$ is trivialized by an isomorphism $E\simeq X\times \CC_{z}$.
Then, we can endow $\whmunk{M}$ with a natural mixed Hodge module structure, i.e. we can define a good filtration $F_{p}\whmunk{M}$ of $\whmunk{M}$ and
$(\whmunk{M},F_{\bullet}\whmunk{M})$ is the underlying filtered $D$-module of a mixed Hodge module $\whmunk{\calM}$,
with the following property:
for $\beta\in \RR$ we have
\begin{align}\label{dance}
F_{p}(\whmunk{M})^{\beta}(:=F_{p}\whmunk{M}\cap (\whmunk{M})^{\beta})=\whmu{F_{p+1+\lfloor\beta\rfloor}M^{-\beta-1}},
\end{align}
under the isomorphism $(\whmunk{M})^{\beta}=\whmu{M^{-\beta-1}}$ (Proposition~\ref{fable}).
\end{lem}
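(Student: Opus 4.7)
The plan is to use the gluing equivalence of Proposition~\ref{hoshinoouji} to reduce the construction of the mixed Hodge module $\whmunk{\calM}$ to producing a suitable tuple in $\mathscr{G}(X)$. First I would identify what the nearby and vanishing cycles of $\whmunk{M}$ along $\zeta=0$ should be: Proposition~\ref{fable} (with $n=1$) gives $(\whmunk{M})^{\beta}=\whmu{M^{-\beta-1}}$, so by Proposition~\ref{express} one has $\psi_{\zeta,\alpha}\whmunk{M}=M^{-\alpha-1}$ for $\alpha\in(-1,0]$ and $\phi_{\zeta,1}\whmunk{M}=M^{0}$. Via Lemma~\ref{march}, the morphism $\mathrm{can}^{\vee}=-\partial_{\zeta}\colon \psi_{\zeta,0}\whmunk{M}\to \phi_{\zeta,1}\whmunk{M}$ corresponds to $z\colon M^{-1}\to M^{0}$, which is $\mathrm{var}$ for $\calM$, and dually $\mathrm{var}^{\vee}=\zeta$ corresponds to $\partial_{z}$, i.e.\ $-\mathrm{can}$ for $\calM$.

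Starting from the tuple $((\psi_{z}\calM,T_{s},N),\phi_{z,1}\calM,\mathrm{can},\mathrm{var})$ associated to $\calM$, I would construct a new tuple in $\mathscr{G}(X)$ whose $(-1,0]$ part is $\bigl(\bigoplus_{\alpha\in(-1,0)}\psi_{z,-\alpha-1}\calM\bigr)\oplus \phi_{z,1}\calM(\ast)$ and whose $-1$ part is $\psi_{z,0}\calM(\ast)$, the Tate twists $(\ast)$ being chosen so that the Hodge filtration shifts dictated by (\ref{dance}) are produced on the ``fundamental'' pieces. The operators $T_{s}^{\vee},N^{\vee}$ are transported from $T_{s},N$ componentwise, while $c^{\vee}$ and $v^{\vee}$ are built from $\mathrm{var}$ and $-\mathrm{can}$ of $\calM$; the compatibility $v^{\vee}c^{\vee}=-N^{\vee}$ required by $\mathscr{G}(X)$ translates to $\mathrm{var}\circ\mathrm{can}=-N$ on $\calM$, which holds. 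Applying Proposition~\ref{hoshinoouji} to this tuple produces a monodromic mixed Hodge module $\whmunk{\calM}$, whose underlying $D$-module is $\whmunk{M}$ by Proposition~\ref{express} applied on the Fourier side.

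Finally I would verify formula (\ref{dance}). By construction, on the fundamental pieces $(\whmunk{M})^{\alpha}$ with $\alpha\in(-1,0)$ the Hodge filtration agrees with $\whmu{F_{p}M^{-\alpha-1}}$, matching $\lfloor\alpha\rfloor=-1$; on $(\whmunk{M})^{-1}=\phi_{\zeta,1}\whmunk{M}$ it agrees with $\whmu{F_{p}M^{0}}$; and on $(\whmunk{M})^{0}=\psi_{\zeta,0}\whmunk{M}$ the chosen Tate twist produces the shift $F_{p+1}$ required by $\lfloor 0\rfloor=0$. For the remaining $\beta=\alpha+k$ with $\alpha\in(-1,0]$ and $k\in\ZZ$, the formula follows from Lemma~\ref{kitune} applied to $(\whmunk{M},F_{\bullet}\whmunk{M})$: the identities $F_{p}(\whmunk{M})^{\alpha+k}=\zeta^{k}F_{p}(\whmunk{M})^{\alpha}$ for $k\geq 0$ and $F_{p}(\whmunk{M})^{\alpha-k}=\partial_{\zeta}^{k}F_{p-k}(\whmunk{M})^{\alpha}$ for $k\geq 1$ transfer via $\zeta=\partial_{z}$ and $\partial_{\zeta}=-z$ to the right-hand side of (\ref{dance}), using the mirror identities of Lemma~\ref{kitune} on $\calM$ to absorb the resulting powers of $\partial_{z}$ and $z$ into the indices of $F_{\bullet}M^{-\beta-1}$.

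The main obstacle is the bookkeeping of Tate twists and signs in the construction of the tuple above, which must be arranged so that the Hodge filtration on $(\whmunk{M})^{\alpha}$ at $\alpha=0$ is shifted by one relative to that on $M^{-1}$, and simultaneously so that $(\whmunk{M},F_{\bullet}\whmunk{M})$ is strictly specializable along $\zeta=0$; this is precisely the verification carried out in Subsection~3.7 of \cite{SaiMon}.
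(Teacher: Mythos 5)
Your proposal is correct and follows essentially the same route as the paper: the paper (recalling Subsection~3.7 of \cite{SaiMon}) also constructs $\whmunk{\calM}$ by feeding the tuple $((\phi_{z,1}\calM\oplus\psi_{z,\neq 0}\calM,\,1\oplus T_{s}^{-1},\,\mathrm{can}\circ\mathrm{var}\oplus N),\,\psi_{z,0}\calM(-1),\,-\mathrm{var},\,\mathrm{can})$ into the gluing equivalence of Proposition~\ref{hoshinoouji}, identifies the underlying $D$-module with $\whmunk{M}$ via Lemma~\ref{march} and Proposition~\ref{express}, obtains (\ref{dance}) on the fundamental range from Proposition~3.25 of \cite{SaiMon}, and extends it to all $\beta\in\RR$ by strict specializability exactly as you do with Lemma~\ref{kitune}. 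Your sign/Tate-twist bookkeeping differs from the paper's tuple only by an overall sign on $(c,v)$, which yields an isomorphic object, so there is no substantive difference.
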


Let us recall the idea of this result and describe the Hodge filtration explicitly.
Assume that $E$ is of rank $1$ and $E$ is trivialized by an isomorphism $E\simeq X\times \CC_{z}$.
Let us consider the object in $\mathscr{G}(X)$ (defined in Subsection~\ref{subrankone}):
\begin{align}
    \label{pogy}
((\phi_{z,1}\calM\oplus \psi_{z,\neq 0}\calM, 1\oplus T_{s}^{-1}, \mathrm{can}\circ \mathrm{var}\oplus N), \psi_{z,0}\calM(-1),-\mathrm{var}, \mathrm{can}),
\end{align}
where $T_{s}$ (resp. $N$) is the semisimple part (resp. $\frac{-1}{2\pi\sqrt{-1}}$ times the logarithm of the unipotent part) of the monodromy automorphism.
By Proposition~\ref{hoshinoouji}, we get a mixed Hodge module which will be denoted by $\whmunk{\calM}$ on $X\times \CC_{\zeta}$. 
One can see that the underlying $D$-module of $\whmunk{\calM}$ is $\whmunk{M}$.
We set $\whmunk{\calM}=(\whmunk{M},F_{\bullet}\whmunk{M},\whmunk{K},W_{\bullet}\whmunk{K})$.
The perverse sheaf $\whmunk{K}$ is the Fourier-Sato transform of $K$.
In the setting of Lemma~\ref{soutou}, 
let $M=\bigoplus_{\beta\in \RR}M^{\beta}$ and
$\whmunk{M}=\bigoplus_{\beta\in \RR}(\whmunk{M})^{\beta}$ 
be the decompositions.
By Proposition~\ref{rankoneFdecom},
we have
$F_{\bullet}M=\bigoplus_{\beta\in \RR}F_{\bullet}M^{\beta}$ and
$F_{\bullet}\whmunk{M}=\bigoplus_{\beta\in \RR}F_{\bullet}(\whmunk{M})^{\beta}$.
By Proposition~3.25 of \cite{SaiMon}, 
we have (\ref{dance}) for $\beta\in \RR$.

\begin{rem}
In Proposition~3.25 of \cite{SaiMon},
only (\ref{dance}) for $\beta\in [-1,0]$ is stated.
However, it is easy to verify that (\ref{dance}) holds for any $\beta\in \RR$ by the strict specializability. 
\end{rem}

\begin{rem}
There are other possible mixed Hodge modules whose underlying $D$-modules are $\whmunk{M}$.
In this paper, we always take the one which corresponds to (\ref{pogy})
so that it coincides with the irregular Hodge filtration (see Theorem~\ref{takarajima}).
\end{rem}

\begin{rem}
If we take two trivializations $\varphi_{i}\colon E\simeq X\times \CC_{z}$ ($i=1,2$), we obtain two mixed Hodge modules
$\whmu{H^0(\varphi_{i})_{\dag}\calM}$ by Lemma~\ref{soutou} and get two Hodge module structures on $\whmunk{M}=H^0({}^t\varphi_{i})^\dag\whmu{H^0(\varphi_{i})_{\dag}M}$ ($i=1,2$).
However, one can see that these coincide by using Remark~\ref{sikan} below.
As a consequence, we can generalize Lemma~\ref{soutou} to the case of a (not necessary trivial) line bundle.
\end{rem}

\begin{rem}
The correspondence $\calM\mapsto \whmunk{\calM}$ defines an exact functor between the categories of mixed Hodge modules on the line bundles $E$ and $E^{\vee}$.
Moreover, this induces a functor between their derived categories.
\end{rem}

We will generalize Lemma~\ref{soutou} to the case of a (not necessary trivial) line bundle.
To that end, we will express the Fourier-Laplace transformation on a vector bundle of any rank in terms of the Fourier-Laplace transform on some vector bundle of rank $1$ with some functors between the categories of $D$-modules.
Note that we want to take those that fit the theory of mixed Hodge modules as those functors.
Therefore, for example, as a pullback functor for a morphism $f$, we use $f^{\dag}$ not $f^*$ in the following.

We will use the following lemmas.

\begin{lem}[Corollaire~6.7 of Brylinski~\cite{BryFou}]\label{suza}
Let $E$ and $F$ be a vector bundle over $X$ and $f\colon E\to F$ a morphism of vector bundles.
We denote by ${}^{t}f$ its transpose morphism ${}^{t}f\colon F^{\vee}\to E^{\vee}$ between the dual vector bundles.
Then, for a $D$-module $M$ on $E$
there is a natural isomorphism in the derived category of $D$-modules
\[\whmu{f_{\dag}M}\simeq ({}^tf)^{\dag}\whmunk{M}[n_{F}-n_{E}],\]
where $n_{E} $ (resp. $n_{F}$) is the rank of $E$ (resp. $F$).
\end{lem}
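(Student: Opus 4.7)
My approach would be to realize both sides of the claimed isomorphism as derived pushforwards from a common ``correspondence'' space, and then read off the isomorphism using base change and the projection formula. Let $s\colon E\times_{X}F^{\vee}\to E$ and $r\colon E\times_{X}F^{\vee}\to F^{\vee}$ denote the two projections, and define the mixed pairing
\[\psi\colon E\times_{X}F^{\vee}\to \CC,\qquad \psi(e,\eta):=\langle f(e),\eta\rangle=\langle e,{}^{t}f(\eta)\rangle.\]
The crucial observation is that $\psi$ is the common pull-back of both original pairings: setting $g_{1}:=\mathrm{id}\times {}^{t}f\colon E\times_{X}F^{\vee}\to E\times_{X}E^{\vee}$ and $g_{2}:=f\times \mathrm{id}\colon E\times_{X}F^{\vee}\to F\times_{X}F^{\vee}$, one has $\psi=\varphi\circ g_{1}=\varphi\circ g_{2}$, and hence $g_{1}^{*}\expo^{-\varphi}\simeq \expo^{-\psi}\simeq g_{2}^{*}\expo^{-\varphi}$. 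Both $g_{1}$ and $g_{2}$ fit into Cartesian squares, the first with legs ${}^{t}f$ and $q_{E}$, the second with legs $f$ and $p_{F}$.

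First, for the left-hand side, base change along the second Cartesian square gives $p_{F}^{*}(f_{\dag}M)\simeq g_{2,\dag}(s^{*}M)$, and the projection formula combined with $g_{2}^{*}\expo^{-\varphi}\simeq \expo^{-\psi}$ yields
\[\whmu{f_{\dag}M}=H^{0}q_{F,\dag}\bigl(p_{F}^{*}(f_{\dag}M)\otimes \expo^{-\varphi}\bigr)\simeq H^{0}r_{\dag}\bigl(s^{*}M\otimes \expo^{-\psi}\bigr).\]
Dually, for the right-hand side, base change along the first Cartesian square gives $({}^{t}f)^{\dag}q_{E,\dag}\simeq r_{\dag}g_{1}^{\dag}$, and coupled with $g_{1}^{*}\expo^{-\varphi}\simeq \expo^{-\psi}$ one arrives again at $r_{\dag}(s^{*}M\otimes \expo^{-\psi})$, this time shifted by $[n_{F}-n_{E}]$ because the convention $g^{\dag}=Lg^{*}[\dim Y-\dim Z]$ produces an extra shift of $n_{F}-n_{E}$ when pulling back along $g_{1}$ (the rank discrepancy between $F^{\vee}$ and $E^{\vee}$). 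Comparing the two expressions after taking $H^{0}$ yields the claimed isomorphism.

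The main obstacle requiring care is the justification of base change for $D$-modules along a morphism of vector bundles $f\colon E\to F$, which in general is neither flat nor proper. I would handle this by factoring $f$ through its graph as $E\hookrightarrow E\oplus F\twoheadrightarrow F$, i.e.\ as the composition of a closed embedding of vector bundles with a vector bundle projection. For these two elementary types of morphisms, the base change isomorphism in the derived category of $D$-modules is standard (Kashiwara's equivalence in the case of the closed embedding, and the smooth base change for the projection), and the general statement follows by composing the two cases; the projection formula required for each step is likewise standard because the object we tensor with, $\expo^{-\varphi}$, is a flat connection, hence $O$-locally free.
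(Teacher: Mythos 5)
The paper offers no proof of this lemma at all — it is imported verbatim from Brylinski (Corollaire~6.7) — so there is no internal argument to compare with; your correspondence-space proof (pulling both exponential kernels back to $E\times_{X}F^{\vee}$ via $g_{1}$ and $g_{2}$, then applying base change and the projection formula) is the natural and essentially standard route, and its individual steps are sound: both squares are Cartesian with one smooth leg, so the fibre products are smooth and base change applies (your factorization of $f$ through its graph is a legitimate, if not strictly necessary, way to justify it), and the projection formula is unproblematic since $\expo^{-\varphi}$ is a rank-one flat connection.

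The genuine problem is your last sentence. Your own bookkeeping gives $\whmu{f_{\dag}M}\simeq r_{\dag}(s^{*}M\otimes\expo^{-\psi})$ and $({}^{t}f)^{\dag}\whmunk{M}\simeq r_{\dag}(s^{*}M\otimes\expo^{-\psi})[n_{F}-n_{E}]$; comparing these yields $\whmu{f_{\dag}M}\simeq({}^{t}f)^{\dag}\whmunk{M}[n_{E}-n_{F}]$, i.e.\ the shift \emph{opposite} to the one printed in the statement, and no passage to $H^{0}$ can absorb a shift (the isomorphism lives in the derived category, where both sides are honest complexes). A quick test confirms that $[n_{E}-n_{F}]$ is the correct shift under this paper's conventions: for the first projection $f\colon\CC^{2}\to\CC$ and $M=O_{\CC^{2}}$ one has $\whmu{f_{\dag}M}\simeq\delta_{0}[1]$, while $({}^{t}f)^{\dag}\whmunk{M}\simeq\delta_{0}$, so a further shift by $[+1]=[n_{E}-n_{F}]$ is needed. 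Moreover the paper itself uses the lemma in this corrected form: in the proof of Lemma~\ref{scaret} the step $\iota^{\dag}\whmu{\omega_{\dag}p^{\dag}M}[1-n]\simeq\iota^{\dag}({}^{t}\omega)^{\dag}\whmu{p^{\dag}M}$ amounts to $\whmu{\omega_{\dag}p^{\dag}M}\simeq({}^{t}\omega)^{\dag}\whmu{p^{\dag}M}[n-1]$ with $n_{E}=n$, $n_{F}=1$, and the $R$-module analogue, Lemma~\ref{abare}, carries the shift $n_{f}=n_{E}-n_{F}$. So the printed $[n_{F}-n_{E}]$ appears to be a typo in the statement; your derivation actually establishes the consistent version, but as written you assert agreement with the printed shift without noticing the discrepancy — you should conclude with $[n_{E}-n_{F}]$ and flag the mismatch rather than claim the formula as stated.
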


\begin{rem}\label{sikan}
Let $\varphi_{i}\colon E\simeq X\times \CC^n$ ($i=1,2$) be two trivializations of a trivial vector bundle $E$.
Then, $\varphi_{1}\circ \varphi_{2}^{-1}$ is an isomorphism between vector bundles.
The $D$-module structure of $\whmu{H^0(\varphi_{i})_{\dag}M}$ is described by Lemma~\ref{march}.
In this case, 
the $D$-module $\whmu{H^0(\varphi_{1})_{\dag}M}$ is isomorphic to $H^0({}^t(\varphi_{1}\circ \varphi_{2}^{-1}))^{\dag}\whmu{H^0(\varphi_{2})_{\dag}M}$ through the natural morphism in Lemma~\ref{suza}.
\end{rem}

\begin{lem}[Corollaire~6.7 of Brylinski~\cite{BryFou}]\label{donbee}
Let $X$ and $Y$ be smooth algebraic varieties, $f\colon Y\to X$ a morphism and $E$ a vector bundle over $X$.
We denote by $u$ (resp. $u^{\vee}$) the natural morphism from the pullback vector bundle $f^*E$ (resp. $f^{*}E^{\vee}(=(f^{*}E)^{\vee})$) of $E$ (resp. $E^{\vee}$) by $f$ to $E$ (resp. $E^{\vee}$).
Then, for a $D$-module $M$ on $X$ we have a natural isomorphism in the derived category of $D$-modules
\[\whmu{u^{\dag}M}\simeq (u^{\vee})^{\dag}\whmunk{M}.\]
\end{lem}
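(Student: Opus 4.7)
The plan is to reduce Lemma~\ref{donbee} to a base change identity in a Cartesian square of $D$-modules. Set $Z := f^{*}E \times_{Y} f^{*}E^{\vee}$, and denote by $p', q'$ its projections to $f^{*}E$ and $f^{*}E^{\vee}$. Since $f^{*}E = E\times_{X}Y$ (and similarly for $E^{\vee}$), the square with vertices $Z$, $f^{*}E$, $E\times_{X}E^{\vee}$, $E$ along the $p$-direction is Cartesian, and so is the analogous square in the $q$-direction. Writing $\wt{u}\colon Z \to E\times_{X}E^{\vee}$ for the induced morphism, we have $u\circ p' = p\circ \wt{u}$ and $u^{\vee}\circ q' = q\circ \wt{u}$.

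The first step is to identify the Fourier kernel upstairs with the pullback of the Fourier kernel downstairs. By construction of the fiberwise pairing, the function $\varphi_{f}\colon Z\to \CC$ factors as $\varphi\circ \wt{u}$, so the integrable connection $\expo^{-\varphi_{f}}$ is canonically $\wt{u}^{*}\expo^{-\varphi}$.

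Next I would rewrite the defining complex of $\whmu{u^{\dag}M}$. Using $u\circ p'=p\circ \wt{u}$ one has $p'^{*}u^{*} \simeq \wt{u}^{*}p^{*}$, and the shift converting $\wt{u}^{*}$ to $\wt{u}^{\dag}$ (namely $\dim Y - \dim X$, which equals both $\dim f^{*}E-\dim E$ and $\dim Z - \dim (E\times_{X}E^{\vee})$) matches the shift converting $u^{*}M$ to $u^{\dag}M$. Combined with the previous step, this gives
\[
p'^{*}(u^{\dag}M)\otimes \expo^{-\varphi_{f}} \simeq \wt{u}^{\dag}\left( p^{*}M \otimes \expo^{-\varphi}\right)
\]
as complexes of $D$-modules on $Z$.

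Finally, applying the base change isomorphism $q'_{\dag}\wt{u}^{\dag} \simeq (u^{\vee})^{\dag} q_{\dag}$ for the Cartesian square with corners $Z, f^{*}E^{\vee}, E\times_{X}E^{\vee}, E^{\vee}$ yields
\[
\whmu{u^{\dag}M} \simeq q'_{\dag}\wt{u}^{\dag}(p^{*}M\otimes \expo^{-\varphi}) \simeq (u^{\vee})^{\dag}q_{\dag}(p^{*}M\otimes \expo^{-\varphi}) = (u^{\vee})^{\dag}\whmunk{M}.
\]
The argument is formal; the only delicate points are bookkeeping the degree shifts between $(-)^{*}$ and $(-)^{\dag}$, and invoking base change in a form valid for an arbitrary morphism $f\colon Y\to X$ of smooth algebraic varieties. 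The latter is the main technical input and is standard in the theory of algebraic $D$-modules.
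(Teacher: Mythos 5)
Your argument is correct. Note that the paper does not prove Lemma~\ref{donbee} at all: it is quoted from Corollaire~6.7 of Brylinski~\cite{BryFou}, so there is no internal proof to compare with, but your reduction is the standard kernel/base-change argument and is in the spirit of Brylinski's. All three ingredients hold in the stated generality: (i) $Z:=f^{*}E\times_{Y}f^{*}E^{\vee}\simeq (E\times_{X}E^{\vee})\times_{X}Y$, and the fibrewise pairing is compatible with base change, so $\varphi_{f}=\varphi\circ\wt{u}$ and $\expo^{-\varphi_{f}}\simeq \wt{u}^{*}\expo^{-\varphi}$; (ii) the exchange $p'^{*}\,u^{\dag}\simeq \wt{u}^{\dag}\,p^{*}$ works because $p$ and $p'$ are smooth projections (so their $O$-module pullbacks are already derived), the shifts $\dim f^{*}E-\dim E$ and $\dim Z-\dim(E\times_{X}E^{\vee})$ both equal $\dim Y-\dim X$, and tensoring with the rank-one integrable connection commutes with pullback; (iii) the base change $q'_{\dag}\,\wt{u}^{\dag}\simeq (u^{\vee})^{\dag}\,q_{\dag}$ is valid for an arbitrary Cartesian square of smooth algebraic varieties in the algebraic theory (Theorem~1.7.3 of \cite{HTT}), with no properness or smoothness hypothesis on $f$, and your square is indeed Cartesian since $f^{*}E^{\vee}=E^{\vee}\times_{X}Y$. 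Two small bookkeeping remarks: the ``$D$-module $M$ on $X$'' in the statement should of course be a $D$-module on $E$, and since $u^{\dag}M$ is in general a complex, the left-hand side has to be read with the paper's definition of the Fourier--Laplace transform of a complex; your computation is carried out exactly at that level, so the conclusion $\whmu{u^{\dag}M}\simeq (u^{\vee})^{\dag}\whmunk{M}$ in the derived category follows as you state.
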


Let us consider the vector bundles $E\times_{X} E^{\vee}$ and $ \CC\times E^{\vee}$ over $E^{\vee}$.
We define the morphism $\omega$ between these vector bundles as
\begin{align}
\omega\colon E\times_{X} E^{\vee}\to \CC\times E^{\vee}\\
(\bld{z},\bld{\zeta})\to (\langle \bld{z},\bld{\zeta}\rangle,\bld{\zeta}),
\end{align}
where $\bld{z}$ (resp. $\bld{\zeta}$) is a section of $E$ (resp. $E^{\vee}$) and $\langle \bld{z},\bld{\zeta}\rangle$ is the pairing of $\bld{z}$ and $\bld{\zeta}$.
We can regard the projection $p\colon E\times_{X} E^{\vee}\to E$ as the base change of $E$ by the morphism $E^{\vee}\to X$:
\[
\xymatrix{
E\times_{X}E^{\vee}\ar[r]^-{p}\ar[d]& E\ar[d]\\
E^{\vee}\ar[r]& X.
}
\]
Similarly, we can regard the second projection $ E^{\vee}\times_{X}E^{\vee}\to E^{\vee}$ as the base change of $E^{\vee}$ by $E^{\vee}\to X$:
\[
\xymatrix{
E^{\vee}\times_{X}E^{\vee}\ar[r]\ar[d]& E^{\vee}\ar[d]\\
E^{\vee}\ar[r]& X.
}
\]
This morphism $E^{\vee}\times_{X}E^{\vee}\to E^{\vee}$ is $p^{\vee}$.
Let $\iota$ be the inclusion 
\[\iota\colon E^{\vee}\simeq \{1\}\times E^{\vee}\hookrightarrow \CC^{\vee}\times E^{\vee}.\]
Then, we have the following.

\begin{lem}[Proposition~6.11 of Brylinski~\cite{BryFou}]\label{scaret}
For a $D$-module $M$ on $E$ we have a natural isomorphism
\[\whmunk{M}\simeq H^{1}\iota^{\dag}(\whmu{H^{0}\omega_{\dag}H^{-n}p^{\dag}M}).\]
\end{lem}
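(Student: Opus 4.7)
The approach is to factor the Fourier-Laplace transform on $E$ through operations involving the rank-one bundle $\CC\times E^{\vee}\to E^{\vee}$ via $\omega$, and then invoke base change to express the result as a restriction at $s=1$ of a one-dimensional Fourier-Laplace transform. The crucial observation is that both $q\colon E\times_{X}E^{\vee}\to E^{\vee}$ and $\varphi\colon E\times_{X}E^{\vee}\to\CC$ factor through $\omega$: namely, $q=\bar{q}\circ\omega$ where $\bar{q}\colon \CC\times E^{\vee}\to E^{\vee}$ is the projection, and $\varphi=\mathrm{pr}_{1}\circ\omega$ where $\mathrm{pr}_{1}\colon \CC\times E^{\vee}\to\CC$. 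Setting $\calF:=(O_{\CC\times E^{\vee}},d-dt)=\mathrm{pr}_{1}^{*}\expo^{-t}$, one has $\expo^{-\varphi}\simeq\omega^{*}\calF$ as $O$-modules with integrable connection.

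The first step is the projection formula for $\omega$: since $\calF$ is $O$-flat (being a line bundle),
\[
\omega_{\dag}\bigl(p^{*}M\otimes_{O}\omega^{*}\calF\bigr)\simeq\bigl(\omega_{\dag}p^{*}M\bigr)\otimes_{O}\calF
\]
in the derived category. Combined with $q_{\dag}=\bar{q}_{\dag}\circ\omega_{\dag}$ this gives
\[
\whmunk{M}=H^{0}q_{\dag}(p^{*}M\otimes\expo^{-\varphi})\simeq H^{0}\bar{q}_{\dag}\bigl(\omega_{\dag}p^{*}M\otimes\calF\bigr).
\]

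The second step establishes, for a $D$-module $N$ on $\CC\times E^{\vee}$, the identity $H^{1}\iota^{\dag}\whmu{N}\simeq H^{0}\bar{q}_{\dag}(N\otimes\calF)$. Recall that on the line bundle $\CC\times E^{\vee}\to E^{\vee}$ one has $\whmu{N}=H^{0}\tilde{q}_{\dag}(\tilde{p}^{*}N\otimes\expo^{-ts})$ where $\tilde{p},\tilde{q}$ are the projections from $\CC\times\CC^{\vee}\times E^{\vee}$; by the remark after Definition~\ref{defiFou}, this derived pushforward is concentrated in degree $0$. Consider the Cartesian square
\[
\begin{CD}
\CC\times E^{\vee} @>\bar{q}>> E^{\vee}\\
@V\tilde{\iota}VV @VV\iota V\\
\CC\times\CC^{\vee}\times E^{\vee} @>\tilde{q}>> \CC^{\vee}\times E^{\vee}
\end{CD}
\]
where $\tilde{\iota}(t,\zeta)=(t,1,\zeta)$. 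Base change yields $\iota^{\dag}\tilde{q}_{\dag}\simeq\bar{q}_{\dag}\tilde{\iota}^{\dag}$. Since $\tilde{p}\circ\tilde{\iota}=\mathrm{id}$ and pulling back $\expo^{-ts}$ along $\tilde{\iota}$ yields exactly the twisted $D$-module structure of $\expo^{-t}$, a computation using the Koszul resolution of the codimension-one closed immersion $\tilde{\iota}$ shows $\tilde{\iota}^{\dag}(\tilde{p}^{*}N\otimes\expo^{-ts})\simeq(N\otimes\calF)[-1]$. Consequently $\iota^{\dag}\whmu{N}\simeq\bar{q}_{\dag}(N\otimes\calF)[-1]$, and taking $H^{1}$ yields the claimed identification.

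Applying the second step with $N=H^{0}\omega_{\dag}H^{-n}p^{\dag}M=H^{0}\omega_{\dag}p^{*}M$ and combining with the first step completes the proof. The main technical obstacle is justifying the replacement of the complex $\omega_{\dag}p^{*}M$ by its $H^{0}$ piece inside $H^{0}\bar{q}_{\dag}(\cdot\otimes\calF)$: the complex $\omega_{\dag}p^{*}M$ has cohomology in degrees $[-(n-1),0]$ since $\omega$ is affine with $(n-1)$-dimensional fibers, and the negative-degree contributions to $H^{0}\bar{q}_{\dag}(\cdot\otimes\calF)$ must be shown to vanish. This is controlled by the hypercohomology spectral sequence $E_{2}^{i,j}=H^{i}\bar{q}_{\dag}(H^{j}\omega_{\dag}p^{*}M\otimes\calF)\Rightarrow H^{i+j}\bar{q}_{\dag}(\omega_{\dag}p^{*}M\otimes\calF)$ together with the already-established fact that $q_{\dag}(p^{*}M\otimes\expo^{-\varphi})$ is concentrated in degree $0$, which forces the off-diagonal contributions in the spectral sequence to cancel and isolates the $(0,0)$ position.
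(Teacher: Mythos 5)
Your proof is correct, but it takes a genuinely different route from the paper's. The paper argues entirely in the derived category by quoting Brylinski's compatibility lemmas: Lemma~\ref{suza} applied to the bundle morphism $\omega$ over $E^{\vee}$ and Lemma~\ref{donbee} applied to $p$ viewed as a base-change projection give $\iota^{\dag}\whmu{\omega_{\dag}p^{\dag}M}[1-n]\simeq\iota^{\dag}({}^{t}\omega)^{\dag}(p^{\vee})^{\dag}\whmunk{M}\simeq\whmunk{M}$, since $p^{\vee}\circ{}^{t}\omega\circ\iota$ is the identity of $E^{\vee}$, and the stated formula then follows from a degree count (the cohomology of $({}^{t}\omega)^{\dag}\whmu{p^{\dag}M}[n-1]$ sits in degrees $\leq -n$). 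You instead manipulate the integral kernel directly: from $\expo^{-\varphi}\simeq\omega^{*}\calF$ and $q=\bar{q}\circ\omega$ the projection formula gives $\whmunk{M}\simeq H^{0}\bar{q}_{\dag}(\omega_{\dag}p^{*}M\otimes\calF)$, and base change over your Cartesian square plus restriction to $s=1$ identifies $H^{1}\iota^{\dag}\whmu{N}$ with $H^{0}\bar{q}_{\dag}(N\otimes\calF)$ — in effect you re-prove by hand exactly the special cases of Brylinski's functoriality that the paper simply cites. The paper's route buys brevity and automatic naturality and avoids any truncation issue, since the amplitude bookkeeping is done once at the very end; your route buys self-containedness and makes the geometric mechanism (the kernel factors through $\omega$, so only a rank-one transform in the $t$-variable remains) transparent, at the cost of the truncation step. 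On that last step, your spectral sequence does close the gap, but the mechanism is cleaner than ``cancellation forced by the abutment'': since $\bar{q}$ is affine of relative dimension one, $E_{2}^{i,j}=H^{i}\bar{q}_{\dag}(H^{j}(\omega_{\dag}p^{*}M)\otimes\calF)$ lives in the two columns $i\in\{-1,0\}$, so every differential $d_{r}$ with $r\geq 2$ vanishes and $E_{\infty}^{0,0}=E_{2}^{0,0}$; the only other graded piece of total degree zero is $E_{2}^{-1,1}$, which vanishes outright because $\omega$ is affine, so $H^{1}(\omega_{\dag}p^{*}M)=0$. In particular you do not even need the concentration of $q_{\dag}(p^{*}M\otimes\expo^{-\varphi})$ in degree zero for this identification.
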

\begin{proof}

In the derived category of $D$-modules, we have
\begin{align}
\iota^{\dag}\whmu{\omega_{\dag}p^{\dag}M}[1-n]
&\simeq
\iota^{\dag}({}^t\omega)^{\dag}\whmu{p^{\dag}M}\quad (\mbox{by Lemma~\ref{suza}})\notag\\
&\simeq \iota^{\dag}({}^t\omega)^{\dag}(p^{\vee})^{\dag}\whmunk{M}\quad (\mbox{by Lemma~\ref{donbee}})\notag\\
&\simeq \whmunk{M}. \label{kousin}
\end{align}
Note that we have $\whmunk{M}\simeq H^0\whmunk{M}$ and $p^{\dag}M\simeq H^{-n}p^{\dag}M[n]$.
Therefore, the $j$-th cohomology of the complex $\whmu{\omega_{\dag}p^{\dag}M}\simeq ({}^t\omega)^{\dag}\whmu{p^{\dag}M}[n-1]$ is $0$ for $j> -n$,
and hence by (\ref{kousin}) we have
\begin{align*}
\whmunk{M}=&H^0\whmunk{M}\\
\simeq &H^{1-n}\iota^\dag\whmu{\omega_{\dag}p^{\dag}M}\\
\simeq &H^{1}\iota^{\dag}H^{-n}\whmu{\omega_{\dag}p^{\dag}M}\\
\simeq &H^{1}\iota^{\dag}\whmu{H^{0}\omega_{\dag}H^{-n}p^{\dag}M}.
\end{align*}

\end{proof}

By Proposition~\ref{scaret}, the Fourier-Laplace transformation on a vector bundle of any rank can always be expressed in terms of the Fourier-Laplace transform on some vector bundle of rank $1$.
The following lemma was essentially shown in \cite{BryFou}.
For convenience, we present a proof.

\begin{lem}[Brylinski~\cite{BryFou}]\label{borero}
If $M$ is monodromic,
so is $H^{0}\omega_{\dag}H^{-n}p^{\dag}M$ on $\CC\times E^{\vee}$.
In particular, $\whmu{H^{0}\omega_{\dag}H^{-n}p^{\dag}M}$ is monodromic on $\CC^{\vee}\times E^{\vee}$.
\end{lem}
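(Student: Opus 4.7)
The plan is to exploit the $\CS$-equivariance of $\omega$ with respect to natural actions on source and target. Working locally, assume $X$ is affine and $E \simeq X \times \CC^{n}_{z}$ is trivial, with dual coordinates $(\zeta_{1},\dots,\zeta_{n})$ on $E^{\vee}$ and the coordinate $t$ on the $\CC$-factor of $\CC \times E^{\vee}$. Equip $E\times_{X}E^{\vee}$ with the $\CS$-action scaling only the $z$-variables, infinitesimally generated by $\xi := \sum_{i=1}^{n}z_{i}\pa_{z_{i}}$, and $\CC \times E^{\vee}$ with the $\CS$-action scaling only $t$, generated by $t\pa_{t}$. A direct computation using $\omega^{*}t = \sum_{i}z_{i}\zeta_{i}$ yields $\xi(\omega^{*}g)=\omega^{*}(t\pa_{t}\cdot g)$ for every function $g$ on $\CC\times E^{\vee}$; equivalently, $\xi$ is $\omega$-related to $t\pa_{t}$ and $\omega$ is $\CS$-equivariant.

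First I would check that $p^{*}M = H^{-n}p^{\dag}M$ is $\xi$-monodromic. Under the identification $p^{*}M = M\otimes_{O_{X}}O_{X}[\zeta_{1},\dots,\zeta_{n}]$, the field $\xi$ does not involve $\zeta$ and acts on $m\otimes \zeta^{\alpha}$ as $(\eu_{E}m)\otimes \zeta^{\alpha}$. Since $\eu_{E}$ acts locally finitely on $M$ by hypothesis, $\xi$ acts locally finitely on $p^{*}M$; more precisely, $p^{*}M = \bigoplus_{\beta\in \RR}p^{*}(M^{\beta})$ and $(\xi-\beta)$ is locally nilpotent on $p^{*}(M^{\beta})$.

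The key step is to transport this local finiteness along $\omega_{\dag}$. Using the relative de Rham description
\[\omega_{\dag}(p^{*}M)\simeq R\omega_{*}\bigl(\Omega^{\bullet}_{\omega}\otimes_{O}p^{*}M\bigr)[n-1],\]
the Lie derivative $L_{\xi}$ acts on each term $\Omega^{k}_{\omega}\otimes p^{*}M$ (locally finitely, since the relative forms are polynomial in $z,\zeta$ and $\xi$ is locally finite on $p^{*}M$), commutes with the relative de Rham differential by Cartan's magic formula $L_{\xi}=\iota_{\xi}d+d\iota_{\xi}$, and therefore descends to a locally finite endomorphism of each cohomology sheaf. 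Because $\xi$ is $\omega$-related to $t\pa_{t}$, the induced action on $H^{0}\omega_{\dag}(p^{*}M)$ coincides with the action of $t\pa_{t}$, which is thus locally finite; hence $H^{0}\omega_{\dag}H^{-n}p^{\dag}M$ is monodromic on $\CC\times E^{\vee}$. The main obstacle is the bookkeeping that confirms this identification of actions: one must check that $L_{\xi}$ indeed descends from $\Omega^{\bullet}_{E\times_{X}E^{\vee}}$ to the relative complex $\Omega^{\bullet}_{\omega}$ (which is granted by the $\omega$-projectability of $\xi$) and that the resulting action on $\omega_{\dag}(p^{*}M)$ is genuinely that of $t\pa_{t}$ (which is granted by the $\omega$-relatedness).

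The second assertion follows at once from Proposition~\ref{fable} applied to the trivial line bundle $\CC\times E^{\vee}\to E^{\vee}$ (with $E^{\vee}$ as the base): the Fourier-Laplace transform of a monodromic module is again monodromic on the dual bundle, so $\whmu{H^{0}\omega_{\dag}H^{-n}p^{\dag}M}$ is monodromic on $\CC^{\vee}\times E^{\vee}$.
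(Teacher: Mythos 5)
Your equivariance idea is the right heuristic, but the step where you actually transport local finiteness across $\omega_{\dag}$ has a genuine gap: the map $\omega\colon E\times_{X}E^{\vee}\to \CC\times E^{\vee}$, $(\bld{z},\bld{\zeta})\mapsto(\langle\bld{z},\bld{\zeta}\rangle,\bld{\zeta})$, is \emph{not} a smooth morphism. Along $\{\bld{\zeta}=0\}$ the differential of $\langle\bld{z},\bld{\zeta}\rangle$ in the $\bld{z}$-directions is $\bld{\zeta}=0$, so $\omega$ fails to be submersive there (the fibre dimension jumps from $n-1$ to $n$). Consequently the formula $\omega_{\dag}(p^{*}M)\simeq R\omega_{*}(\Omega^{\bullet}_{\omega}\otimes p^{*}M)[n-1]$ you invoke is not available: the relative de Rham description of the $D$-module pushforward is only valid for smooth morphisms, and for a non-smooth map one must factor through the graph. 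Restricting to the smooth locus $\{\bld{\zeta}\neq 0\}$ does not rescue the argument either, since monodromicity has to be checked on sections over open sets meeting $\{\bld{\zeta}=0\}$.

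This is exactly where the paper's proof does its real work: it factors $\omega=p_{\omega}\circ i_{\omega}$ with $i_{\omega}$ the graph embedding into $\CC_{s}\times E\times E^{\vee}$ and $p_{\omega}$ the projection, so that $N:=H^{0}(i_{\omega})_{\dag}H^{-n}p^{\dag}M\simeq(\CC[\bld{z},\bld{\zeta}]\otimes_{\CC[\bld{z}]}M)\otimes_{\CC}\CC[\pa_{s}]$ acquires the extra $\CC[\pa_{s}]$ factor, and only then uses the relative de Rham complex (for the genuine projection $p_{\omega}$). The monodromicity of the result is then \emph{not} a formal consequence of Lie-derivative descent: one needs the cohomological relation $[\bld{dz}\otimes f(\bld{\zeta})\otimes \eu m\otimes\pa_{s}^{l}]=[\bld{dz}\otimes\langle\bld{z},\bld{\zeta}\rangle(f(\bld{\zeta})\otimes m\otimes\pa_{s}^{l})]$ coming from killing total $\pa_{z_{k}}$-derivatives, combined with $(s\pa_{s}+l+1)\pa_{s}^{l}=\pa_{s}^{l+1}s$, to conclude that $b(s\pa_{s}+l+1)$ annihilates the class whenever $b(\eu)m=0$. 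Your proposal asserts that "the induced action on $H^{0}\omega_{\dag}$ coincides with the action of $t\pa_{t}$ and is locally finite" without ever confronting this shift by $l+1$ and the exchange of $\eu$ against multiplication by $\langle\bld{z},\bld{\zeta}\rangle$, which is precisely the content that makes the lemma true. (Your reduction showing $H^{-n}p^{\dag}M$ is $\xi$-monodromic, the $\omega$-relatedness computation, and the final deduction of the second assertion from Proposition~\ref{fable} are all fine.)
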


\begin{proof}
It is enough to show the assertion under the assumption that  $E$ is trivial and $X$ is one point: $E\simeq \CC^n$.
We express any object in an algebraic way.
Let $\bld{z}=(z_{1},\dots,z_{n})$ be the coordinates of $\CC^n$ and $\bld{\zeta}=(\zeta_{1},\dots,\zeta_{n})$ its dual coordinates.
Note that we have 
\begin{align*}
    H^{-n}p^{\dag}M\simeq \CC[\bld{z},\bld{\zeta}]\otimes_{\CC[\bld{z}]}M.
\end{align*}
We decompose $\omega$ into 
\begin{align*}
    i_{\omega}\colon E\times E\hookrightarrow& \CC_{s}\times E\times E^{\vee}\\
    (\bld{z},\bld{\zeta})\mapsto& (\langle\bld{z},\bld{\zeta}\rangle,\bld{z},\bld{\zeta})
\end{align*}
and 
\begin{align*}
    p_{\omega}\colon \CC_{s}\times E\times E\to & \CC_{s}\times E^{\vee}\\
    (s,\bld{z},\bld{\zeta})\mapsto& (s,\bld{\zeta}).
\end{align*}
Then, we have
\[H^{0}\omega_{\dag}H^{-n}p^{\dag}M
\simeq H^{0}(p_{\omega})_{\dag}H^{0}(i_{\omega})_{\dag}H^{-n}p^{\dag}M.
\]
Set $N:=H^{0}(i_{\omega})_{\dag}H^{-n}p^{\dag}M$.
Then, we can express $N$ as
\begin{align*}
    N\simeq (\CC[\bld{z},\bld{\zeta}]\otimes_{\CC[\bld{z}]}M)\otimes_{\CC}\CC[\partial_{s}].
\end{align*}
Let $\mathrm{DR}_{\CC_{s}\times E\times E^{\vee}/\CC_{s}\times E^{\vee}}(N)$ be the relative de Rham complex:
\[N\to \Omega^{1}_{\CC_{s}\times E\times E^{\vee}/\CC_{s}\times E^{\vee}}\otimes N\to 
\dots
\to \Omega^{n}_{\CC_{s}\times E\times E^{\vee}/\CC_{s}\times E^{\vee}}\otimes N,
\]
where $\Omega^{i}_{\CC_{s}\times E\times E^{\vee}/\CC_{s}\times E^{\vee}}\otimes N$ is in degree $0$.
Then, we can express $H^{0}(p_{\omega})_{\dag}N$ as
\begin{align*}
    H^{0}(p_{\omega})_{\dag}N=H^{0}(p_{\omega})_{*}(\mathrm{DR}_{\CC_{s}\times E\times E^{\vee}/\CC_{s}\times E^{\vee}}(N)),
\end{align*}
i.e. the cokernel of the morphism
\[\Omega_{\CC_{s}\times E\times E^{\vee}/\CC\times E^{\vee}}^{n-1}\otimes N\to \Omega^{n}_{\CC_{s}\times E\times E^{\vee}/\CC_{s}\times E^{\vee}}\otimes N.\]
Set $\bld{dz}:=dz_{1}\wedge\dots \wedge dz_{n}$.
We fix the isomorphism $\Omega^{n}_{\CC_{s}\times E\times E^{\vee}/\CC_{s}\times E^{\vee}}\simeq O_{\CC_{s}\times E\times E^{\vee}}\bld{dz}$.
Then, a section of $H^{0}(p_{\omega})_{\dag}N$ can be represented by
a sum of some sections in the form:
\[\bld{dz}\otimes f(\bld{\zeta})\otimes m\otimes \pa_{s}^{l},\]
where $f(\bld{\zeta})\in \CC[\bld{\zeta}]$, $m\in M$ and $l\in \ZZ_{\geq 0}$.
Note that a section of $H^{0}(p_{\omega})_{\dag}N$ in the form:
\begin{align*}
    \bld{dz}\otimes\pa_{z_{k}}(f(\bld{\zeta})\otimes m\otimes \pa_{s}^{l})
    =\bld{dz}\otimes f(\bld{\zeta})\otimes \pa_{z_{k}}m\otimes \pa_{s}^{l}+ \bld{dz}\otimes (-\zeta_{k})f(\bld{\zeta})\otimes m\otimes \pa^{l+1}_{s}
\end{align*}
is zero.
Therefore,
we have
\begin{align}\label{wario}
 [ \bld{dz}\otimes f(\bld{\zeta})\otimes \eu m\otimes \pa_{s}^{l}]
= [ \bld{dz}\otimes\langle{\bld{z},\bld{\zeta}}\rangle(f(\bld{\zeta})\otimes  m\otimes \pa_{s}^{l})],  
\end{align}
in $H^{0}(p_{\omega})_{\dag}N$, where $\eu=\sum_{i=1}^{n}z_{i}\pa_{z_{i}}$.
Moreover,
since  
\[(s\pa_{s}+l+1)\pa_{s}^l=\pa_{s}^{l+1}s,\]
we have
\begin{align*}
    (s\pa_{s}+l+1)[\bld{dz}\otimes f(\bld{\zeta})\otimes  m\otimes \pa_{s}^{l}]&=
    \bld{dz}\otimes \langle\bld{z},\bld{\zeta}\rangle (f(\bld{\zeta})\otimes  m\otimes \pa_{s}^{l+1})\\
    &=\bld{dz}\otimes f(\bld{\zeta})\otimes \calE m\otimes \pa_{s}^{l}\quad (\mbox{by (\ref{wario})}).
\end{align*}
By the assumption that $M$ is monodromic,
there exists a polynomial $b(u)\in \CC[u]$ such that $b(\eu)m=0$.
Hence,
we obtain
\begin{align*}
    b(s\pa_{s}+l+1)[\bld{dz}\otimes f(\bld{\zeta})\otimes  m\otimes \pa_{s}^{l}]=&[\bld{dz}\otimes f(\bld{\zeta})\otimes  b(\eu)m\otimes \pa_{s}^{l}]\\
    =&0,
\end{align*}
in $H^{0}(p_{\omega})_{\dag}N$.
We thus conclude that $H^{0}\omega_{\dag}H^{-n}p^{\dag}M$ is monodromic on $\CC_{s}\times E^{\vee}$ and this completes the proof.
\end{proof}

$H^{0}\omega_{\dag}H^{-n}p^{\dag}\calM$ is an object in the category of mixed Hodge modules on a line bundle $\CC\times E^{\vee}$ over $E^{\vee}$,
where
we use the same symbols $H^0\omega_{\dag}$ and $H^{-n}p^{\dag}$ as the functors between the categories of mixed Hodge modules.
Moreover, by Lemma~\ref{borero},
$H^0\omega_{\dag}H^{-n}p^{\dag}\calM$ is a monodromic mixed Hodge module.
Therefore,
by Lemma~\ref{soutou},
we can define a mixed Hodge module $\whmu{H^0\omega_{\dag}H^{-n}p^{\dag}\calM}$
on $\CC^{\vee}\times E^{\vee}$ whose underlying $D$-module is $\whmu{H^0\omega_{\dag}H^{-n}p^{\dag}M}$.
Applying the functor $H^1\iota^{\dag}$ to it,
we obtain a mixed Hodge module $H^1\iota^{\dag}(\whmu{H^0\omega_{\dag}H^{-n}p^{\dag}\calM})$ on $E^{\vee}$ whose underlying $D$-module is
$H^1\iota^{\dag}(\whmu{H^0\omega_{\dag}H^{-n}p^{\dag}M})$.
By Lemma~\ref{scaret}, we have
$\whmunk{M}\simeq H^1\iota^{\dag}(\whmu{H^0\omega_{\dag}H^{-n}p^{\dag}M})$.

\begin{defi}\label{ganbaruzo}
Let $p\colon E\to X$ be a vector bundle whose rank is greater than or equal to $2$ and $\calM$ a monodromic mixed Hodge module on $E$.
Then, we define a mixed Hodge module $\whmunk{\calM}$ whose underlying $D$-module is $\whmunk{M}$ as
\begin{align}\label{letsgogo}
    \whmunk{\calM}:=H^1\iota^{\dag}(\whmu{H^0\omega_{\dag}H^{-n}p^{\dag}\calM})(1).
\end{align}
\end{defi}

\begin{rem}
The Tate twist ``$(1)$'' is needed so that Theorem~\ref{takarajima} below holds.
\end{rem}

It is not easy to compute the Hodge filtration of $\whmunk{M}$ directly from the definition because the pushforward of the mixed Hodge module is complicated object in general.
However, in the next section,
we will compare the Hodge filtration $F_{\bullet}\whmunk{M}$ with the irregular Hodge filtration (see Theorem~\ref{takarajima}),
and by virtue of it, we will get a concrete description of the Hodge filtration of $\whmunk{M}$ (Corollary~\ref{nenmatu}).



\section{{I}rregular Hodge filtrations}
\subsection{{I}rregular Hodge filtrations}\label{subirrH}
As mentioned in the previous section,
the exponentially twisted module $\calE^{-\varphi}$,
in particular the Fourier-Laplace transform of a regular holonomic $D$-module, is not always regular,
and the Fourier-Laplace transform is not always equipped with any mixed Hodge module structure
since the underlying $D$-module of a mixed Hodge module is regular.
Nevertheless, we endowed a natural mixed Hodge module structure on the Fourier-Laplace transform of the underlying $D$-module of a monodromic mixed Hodge module (Lemma~\ref{soutou} and Definition~\ref{ganbaruzo}).
On the other hand, Esnault-Sabbah-Yu~\cite{ESY}, Sabbah-Yu~\cite{SabYuIrrFil} defined a natural filtration called the irregular Hodge filtration on the exponentially twisted module, in particular the Fourier-Laplace transform of the underlying $D$-module of a mixed Hodge module,
which are a generalization of the filtration on the twisted de Rham cohomologies defined by Deligne~\cite{DelIrr}, Yu~\cite{YuIrr} and Sabbah~\cite{SabFoutwo}.
Moreover, Sabbah~\cite{IrrHodge} established the category of irregular Hodge modules as a full subcategory of integrable mixed twistor $D$-modules to handle such filtrations in more functorial way like the theory of mixed Hodge module.
In this section, we will review the irregular Hodge theory briefly.

Let us recall the notion of of $R$-modules.
For details, see \cite{SabPolar}, \cite{MTM} and \cite{MHMProj}.
Let $X$ be a smooth algebraic variety and $p_{\lambda}$ the projection $X\times \CC_{\lambda}\to X$, where $\CC_{\lambda}$ is $\CC$ with the coordinates $\lambda$.
$R_{X}$ denote the sheaf of subalgebras in $D_{X\times \CC_{\lambda}}$ generated by $\lambda p_{\lambda}^*\Theta_{X}$ over $O_{X\times\CC_{\lambda}}$, where $\Theta_{X}$ is the sheaf of vector fields on $X$.
If we identify ${p_{\lambda}}_{*}O_{X\times \CC_{\lam}}$-modules with $O_{X}\otimes_{\CC} \CC[\lam]$-modules,
$R_{X}$ is the sheaf of ring associated to the Rees module $R_{F}D_{X}:=\bigoplus_{p\in \ZZ}F_{p}D_{X}\lambda^{p}\subset D_{X}\otimes_{\CC}\CC[\lam^{\pm 1}]$ of the filtered ring $(D_{X},F_{\bullet}D_{X})$, where $F_{\bullet}D_X$ is the order (with respect to differentials) filtration of $D_{X}$. 
For a local chart $(x_{1},\dots,x_{n})$ of $X$, we set $\eth_{x_{i}}:=\lambda \pa_{x_{i}}$, which is a section of $R_{X}$.
Moreover, we denote by $R_{X}^{\integ}$ the sheaf of subalgebras in $D_{X\times \CC_{\lambda}}$ generated by $R_{X}$ and $\lam^2\pa_{\lam}$.
A $R_{X}^{\integ}$-module $\scrM$ is called an integrable $R_{X}$-module.

\begin{ex}\label{hatou}
Let $(M,F_{\bullet}M)$ be a filtered $D$-module on $X$.
We set 
\[R_{F}M:=\sum_{p\in \ZZ}F_{p}M\lam^{p}\subset M\otimes \CC[\lam^{\pm 1}].\]
Then, $R_{F}M$ is an $R_{X}$-module.
An $R_{X}$-module is called strict if it has no $\CC[\lam]$-torsion.
$R_{F}M$ is strict.
Moreover, $R_{F}M$ has a natural $R_{X}^{\integ}$-module structure, i.e. $R_{F}M$ is an integrable $R_{X}$-module.
Remark that we have
\[R_{F}M/(\lambda-1)R_{F}M\simeq M.\]
\end{ex}

We can generalize the thoery of $D$-modules to the theory of $R$-modules.
For example, we can define the $6$-operations, the Kashiwara-Malgarange filtrations, the nearby-vanishing functors, localizations and Beilinson's gluing, even for $R$-modules, which are denoted by the same symbols as in the theory of $D$-modules (for example, like $f_{\dag}$ and $f^{\dag}$).
In particular, for an $R_{X}$-module $\scrM$ and a divisor $D\subset X$,
we can define a localization (resp. dual localization) $\scrM[*D]$ (resp. $\scrM[!D]$) of $\scrM$ along $D$, which has the same properties as described in Proposition~\ref{localizfac} (see loc. cit.).
Remark that $\scrM[*D]$ is not equal to the naive localization $\scrM(*D)=\scrM\otimes_{O_{X\times \CC_{\lam}}} O_{{X\times \CC_{\lam}}}(*D)$ in general.
If $\scrM$ is the Rees module $R_{F}M$ of a filtered $D$-module $(M,F_{\bullet}M)$ (see example~\ref{hatou}),
$R_{F}M[*D]$ (resp. $R_{F}M[!D]$) coincides with the Rees module of the filtered $D$-module $(M[*D], F_{\bullet}M[*D])$ (resp. $(M[!D], F_{\bullet}M[!D])$).
Moreover, the strict ($\QQ$ or $\RR$-)specializability explained in Definition~\ref{tron} can also be generalized for $R$-modules.
Moreover, we have the notion of holonomicity for $R$-modules.

The category $\mathrm{MTM}^{\mathrm{int}}_{\mathrm{good}}(X;\QQ)$ of integrable mixed twistor $D$-modules with good $\QQ$-structures, introduced by Mochizuki~\cite{MTM}, contains the category $\mathrm{MHM}(X)$ of mixed Hodge modules as a full subcategory,
which is a generalization of the category of pure twistor $D$-modules introduced by Sabbah~\cite{SabPolar} and the category of mixed twistor structures introduced by Simpson~\cite{SimTwistor}.
Moreover, Sabbah~\cite{IrrHodge} defined an abelian full subcategory 
$\mathrm{IrrMHM}(X;\QQ)$, called the category of irregular mixed Hodge modues (with $\QQ$-structures),
of $\mathrm{MTM}^{\mathrm{int}}_{\mathrm{good}}(X;\QQ)$ which contains $\mathrm{MHM}(X)$.
We can write
\[\mathrm{MHM}(X)\subset \mathrm{IrrMHM}(X;\QQ) \subset \mathrm{MTM}^{\mathrm{int}}_{\mathrm{good}}(X;\QQ).\]
We will explain them in a little more detail.

A mixed twistor $D$-module $\scrT\in \mathrm{MTM}^{\mathrm{int}}_{\mathrm{good}}(X;\QQ)$ is a pair of $R^{\integ}_{X}$-modules $\scrM_{1}$, $\scrM_{2}$, a sesqui-linear pairing $C$ of $\scrM_{1}$ and $\scrM_{2}$ and a weight filtration with a $\QQ$-structure satisfying some conditions.
For a mixed Hodge module $\calM=(M,F_{\bullet}M,K,W_{\bullet}K)$,
we can construct a natural mixed twistor $D$-module $\scrT=(\scrM_{1},\scrM_{2},C)\in \mathrm{MTM}^{\mathrm{int}}_{\mathrm{good}}(X;\QQ)$ such that $\scrM_{2}=R_{F}M$ (see Proposition~13.5.4 of \cite{MTM}),
and this construction defines the inclusion $\mathrm{MHM}(X)\subset \mathrm{MTM}^{\mathrm{int}}_{\mathrm{good}}(X;\QQ)$ above (i.e. a fully faithful exact functor $\mathrm{MHM}(X)\hookrightarrow \mathrm{MTM}^{\mathrm{int}}_{\mathrm{good}}(X;\QQ)$).

\begin{rem}
In \cite{MTM}, the ``underlying $R$-module'' of an algebraic mixed twistor $D$-module $\scrT$ on $X$ is an $R(*H)$-module $\wtkai{\scrM}$ on a compactification $\ov{X}$ of $X$, where we set $H:=\ov{X}\setminus X$ (see Definition~14.1.1).
However, since we believe the difference in the terminology will not cause any confusion,
we call $\scrM:={\wtkai{\scrM}}|_{X}$ the underlying $R$-module in this paper.
\end{rem}

\begin{rem}
In the following, we do not consider the weight filtrations and the $\QQ$-structures of mixed twistor $D$-modules.
So, we forget them and treat an object in $\mathrm{MTM}^{\mathrm{int}}_{\mathrm{good}}(X;\QQ)$ as a $R$-triple $(\scrM_{1},\scrM_{2},C)$ with some conditions.
\end{rem}

\begin{notation}
Let $Y$ be another smooth algebraic variety and $f\colon X\to Y$ be a morphism.
In Section~14 of \cite{MTM}, the functors
\begin{align*}
    \TO{f}_{*}, \TO{f}_{!}\colon \DBkai\mathrm{MTM}^{\mathrm{int}}_{\mathrm{good}}(X;\QQ)\to \DBkai\mathrm{MTM}^{\mathrm{int}}_{\mathrm{good}}(Y;\QQ)\\
    \TO{f}^{*}, \TO{f}^{!}\colon \DBkai\mathrm{MTM}^{\mathrm{int}}_{\mathrm{good}}(Y;\QQ)\to \DBkai\mathrm{MTM}^{\mathrm{int}}_{\mathrm{good}}(X;\QQ)
\end{align*}
are defined, each of which is compatible with the corresponding functor in the theory of mixed Hodge modules.
For an underlying integrable $R$-module $\scrM$ of a mixed twistor $D$-module $\scrT$ on $X$,
we denote by $\TO{f}_{*}\scrM$ the underlying complex of integrable $R$-modules of $\TO{f}_{*}\scrT$. 
$\TO{f}_{!}\scrM$, $\TO{g}^{*}\scrN$ and $\TO{g}^{!}\scrN$ are defined in the same way for an underlying $R$-module of a mixed twistor $D$-module $\scrN$ on $Y$.
\end{notation}

\begin{rem}\label{wakeittemo}
For a morphism $f\colon X\to Y$ and an underlying $R$-module of a mixed twistor $D$-module on $X$,
the object $\TO{f}_{*}\scrM$ is not the same ``the $D$-module theoretical pushforward'' $f_{\dag}\scrM$ in general, even though the underlying (complex of) $D$-modules are equal.
If $f$ is projective, we have $\TO{f}_{*}\scrM=f_{\dag}\scrM$.
In the general case,
we first take a smooth variety $\ov{X}$ (resp. $\ov{Y}$) containing $X$ (resp. $Y$) such that $H_{X}:=\ov{X}\setminus X$ (resp. $H_{Y}:=\ov{Y}\setminus Y$) is a divisor,
and a proper morphism $\ov{f}\colon \ov{X}\to \ov{Y}$ which induces $f\colon X\to Y$.
Moreover, let $\wtkai{\scrM}$ be the underlying $R$-module of a mixed twistor $D$-module on $\ov{X}$ whose restriction $\wtkai{\scrM}|_{X}$ is $\scrM$.
Then, $\TO{f}_{*}\scrM$ is expressed as
\[\TO{f}_{*}(\scrM)=\ov{f}_{\dag}(\wtkai{\scrM}[*H_{X}])|_{Y}.\]
The same result holds for $\TO{f}_{!}\scrM$.
Similarly, $\TO{f}^{*}$ is neither ``the scheme theoretical pullback'' $f^{*}$ or ``the $D$-module theoretical pullback'' $f^{\dag}$ in general.
\end{rem}

Next, we review about the rescaling of $R$-modules.
We consider a complex plane $\CC_{\tau}$ with the coordinates $\tau$ and set $\RES{X}:=X\times \CC_{\tau}$ and $\RES{X_{0}}:=X\times \{\tau=0\}$.
Let $j\colon X\times \CC_{\tau}^*\times \CC_{\lambda}\hookrightarrow \RES{X}\times \CC_{\lambda}$ be the inclusion,
$q\colon X\times \CC_{\tau}^*\times \CC_{\lam}\to X\times \CC_{\lam}$ the projection,
and $\mu$ a morphism 
\[\mu\colon X\times \CC_{\tau}^*\times \CC_{\lambda}\to X\times \CC_{\lambda}\quad ((x,\tau,\lambda)\mapsto (x,\lambda/\tau)).\]

Then, for an (algebraic) $R_{X}^{\integ}$-module $\scrM$ (an object on $X\times \CC_{\lambda}$)
we consider the pullback $\mu^*\scrM=O_{X\times \CC_{\tau}^*\times \CC_{\lambda}}\otimes \mu^{-1}\scrM$ as $O$-module and its pushforward (as an algebaric object) $j_{*}\mu^*\scrM$ by $j$.
The object $j_{*}\mu^*\scrM$ is an $O_{X\times \CC_{\tau}\times \CC_{\lam}}(*\{\tau=0\})$-module and denoted by $\RES{\scrM}$.
Remark that in the analytic setting we have to modify the definition a bit not to use the pushforward by the open embedding $j$ (2.2.a of \cite{IrrHodge}), but in the algebraic setting our definition is enough.
The $O_{X\times \CC_{\tau}\times \CC_{\lam}}(*\{\tau=0\})$-module $\RES{\scrM}$ can be endowed with a natural $R^{\integ}_{{\RES{ X}}}(*\{\tau=0\})$-module structure so that
for a section $m\in \scrM$ and a vector field $\theta$ on $X$, we have
\begin{align}
\begin{aligned}\label{wingspan}
\lam(1\otimes m)&=\tau\otimes \lam m,\\
\lam\theta(1\otimes m)&=\tau\otimes \lam\theta m,\\
\eth_{\tau}(1\otimes m)&=-1\otimes \lambda^2\pa_{\lam}m,\quad \mbox{and}\\
\lam^2\pa_{\lam}(1\otimes m)&=\tau\otimes {\lam}^2\pa_{\lam}m.
\end{aligned}
\end{align}
This $R^{\integ}_{{\RES{X}}}(*\{\tau=0\})$-module $\RES{\scrM}$ is called the rescaling of $\scrM$.
We say that $\scrM$ is well-rescalable if the $R^{\integ}_{{\RES{X}}}(*\{\tau=0\})$-module $\RES{\scrM}$ is strictly $\RR$-specializable and regular along $\tau=0$ (Definition~2.19 of \cite{IrrHodge}).

The notions of rescaling and well-rescalablity generalize to $R$-triples and filtered $R$-triples (2.3.d of \cite{IrrHodge}).
Then, the category of irregular mixed Hodge modules $\mathrm{IrrMHM}(X;\QQ)$ (with $\QQ$-structure) 
is defined as the full subcategory of $\mathrm{MTM}^{\mathrm{int}}_{\mathrm{good}}(X;\QQ)$, which consists of graded well-rescalable filtered $R$-triples whose rescaling are also in $\mathrm{MTM}^{\mathrm{int}}_{\mathrm{good}}(\RES{X};\QQ)$ (see Definitions~2.50 and 2.52 of \cite{IrrHodge}).
By Proposition~2.68 of \cite{IrrHodge}, the subcategory $\mathrm{MHM}(X)\subset \mathrm{MTM}^{\mathrm{int}}_{\mathrm{good}}(X;\QQ)$ is contained in $\mathrm{IrrMHM}(X;\QQ)$.

\begin{prop}[Theorem~2.62 and Proposition 2.67 of \cite{SabPolar}] \label{freeway}
The (cohomology of) projective pushforward and smooth pullback preserves the well-rescalability property for filtered $R$-triples. Therefore, each induces the functor between the category of irregular Hodge modules.
More generally,
the same holds for a non-characteristic inverse image
(Proposition~6.64 of \cite{MochiResc}).
\end{prop}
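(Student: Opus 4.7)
The plan is to reduce both statements to two independent facts: (a) the rescaling construction $\scrM \mapsto \RES{\scrM}$ commutes, up to a natural isomorphism, with the relevant functor ($\TO{f}_{*}$ or $\TO{f}^{*}$), and (b) strict $\RR$-specializability together with regularity along $\{\tau = 0\}$ are preserved under the corresponding functor on $R^{\integ}$-triples on $\RES{X}$. Granting (a) and (b), the conclusion is immediate from the definition of $\mathrm{IrrMHM}(X;\QQ)$ as the subcategory of well-rescalable filtered $R$-triples in $\mathrm{MTM}^{\mathrm{int}}_{\mathrm{good}}(X;\QQ)$.

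For the commutation (a), I would work with the auxiliary morphism $\wt{f} := f\times \iden_{\CC_{\tau}}\colon \RES{X}\to \RES{Y}$ and the rescaling map $\mu$ used to define $\RES{(-)}$. Since $\mu$ commutes strictly with $\wt{f}$ (it acts only on the $\lam/\tau$ variable, which is untouched by $f$), a base-change computation using the defining formulas~(\ref{wingspan}) yields a canonical isomorphism $\RES{(\TO{f}_{*}\scrM)}\simeq \TO{\wt{f}_{*}}\RES{\scrM}$ in the projective case, and the dual isomorphism $\RES{(\TO{f}^{*}\scrN)}\simeq \TO{\wt{f}^{*}}\RES{\scrN}$ in the smooth-pullback case. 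At the level of the underlying $R$-triples this is a standard compatibility of the $6$-operations with the operations induced by the product with $\CC_{\tau}^{*}$; one must also check that the natural sesqui-linear pairing, the weight filtration, and the $\QQ$-structure are transported correctly, which follows from the general functoriality of the operations in $\mathrm{MTM}^{\mathrm{int}}_{\mathrm{good}}$.

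For (b), the content is that $\TO{\wt{f}_{*}}$ and $\TO{\wt{f}^{*}}$ preserve the strict $\RR$-specializability and the regularity along the divisor $\RES{X_{0}} = X\times\{\tau=0\}$. In the smooth-pullback case, $\wt{f}$ is transverse to $\{\tau=0\}$, so strict specializability and regularity are preserved essentially tautologically (the Kashiwara-Malgrange filtration pulls back to itself and the near-by cycle functor commutes with smooth pullback). For the non-characteristic pullback this is the content of Proposition~6.64 of \cite{MochiResc}: the non-characteristic hypothesis ensures that $\TO{\wt{f}^{*}}$ does not introduce higher cohomology sheaves and is compatible with the $V$-filtration along $\{\tau = 0\}$. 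In the projective pushforward case, the preservation of strict $\RR$-specializability is the deepest input: it is the $R$-module analogue of Saito's strictness of the filtered direct image, which requires the degeneration of the spectral sequence associated to the $V$-filtration along $\tau$; regularity is then inherited because projective pushforward of a regular holonomic $R^{\integ}$-module which is specializable along a smooth divisor remains so.

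The main obstacle is the strictness statement in the projective case: the issue is not the formal commutation of functors, but the verification that applying $\TO{f}_{*}$ does not destroy the strictness of $V^{\bullet}_{\tau}\RES{\scrM}$ along $\{\tau=0\}$. This is exactly where one has to invoke the strictness of the filtered pushforward in the theory of mixed twistor $D$-modules (Mochizuki) together with the compatibility with nearby cycles; the rescaling-specific content is that well-rescalability, a condition defined on the single divisor $\{\tau = 0\}$, is inherited because projective pushforward commutes with $\psi_{\tau}$ up to strict isomorphism.
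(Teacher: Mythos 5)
First, note that the paper does not prove this proposition at all: it is imported as a black box, with the projective pushforward and smooth pullback cases attributed to Theorem~2.62 and Proposition~2.67 of Sabbah's irregular Hodge theory and the non-characteristic case to Proposition~6.64 of Mochizuki's rescaling paper. So there is no internal argument to compare yours against; the only question is whether your sketch amounts to an independent proof, and it does not. At the decisive points you appeal to exactly the results being cited: preservation of strict $\RR$-specializability along $\{\tau=0\}$ under projective pushforward is deferred to ``Saito-type strictness of the filtered direct image'' in the mixed twistor setting, and the non-characteristic pullback case is deferred verbatim to Proposition~6.64 of \cite{MochiResc}. That is a roadmap, not a proof; the hard analytic/strictness content, which is the whole point of the cited theorems, is left untouched.

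Beyond this, two specific weaknesses in the reduction. First, your steps (a) and (b) are not independent in the projective case: to even state the commutation $\RES{(\TO{f}_{*}\scrM)}\simeq \TO{\wt{f}}_{*}\RES{\scrM}$ at the level of individual cohomologies of the pushforward, you need the strictness (and the degeneration along the $V_{\tau}$-filtration) that you only invoke in (b), so the ``base-change computation using~(\ref{wingspan})'' cannot be carried out first and in isolation. Second, membership in $\mathrm{IrrMHM}(X;\QQ)$ as used in this paper is stronger than ``the rescaled module is strictly $\RR$-specializable and regular along $\tau=0$'': one needs the filtered $R$-triple to be \emph{graded} well-rescalable and the rescaled triple to again underlie an object of $\mathrm{MTM}^{\mathrm{int}}_{\mathrm{good}}(\RES{X};\QQ)$, with the grading, pairing and $\QQ$-structure transported compatibly. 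You acknowledge the pairing and $\QQ$-structure in passing, but the graded condition and the MTM membership of the rescaled pushforward are precisely where the cited proofs do real work, and your sketch gives no argument for them. If you want a genuine proof rather than a pointer, you would have to reproduce Sabbah's and Mochizuki's strictness arguments; as it stands, citing those references directly, as the paper does, is the honest formulation.
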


The important fact is that ``the exponential twist'' is contained in $\mathrm{IrrMHM}(X;\QQ)$ although not in $\mathrm{MHM}(X)$, as explained below.
Let $\varphi$ be a rational functioin on $X$ and $P(\subset X)$ its pole divisor.
Then, 
we can define a $R_{X}^{\integ}(*P\times \CC_{\lam})$-module $O_{X\times \CC_{\lam}}(*P\times \CC_{\lam})\cdot e^{\varphi/\lam}$ so that
$O_{X\times \CC_{\lam}}(*P\times \CC_{\lam})\cdot e^{\varphi/\lam}$ is $O_{X\times \CC_{\lam}}(*P\times \CC_{\lam})$ as $O_{X\times \CC_{\lam}}$-module
and
\begin{align*}
\lam\theta \cdot (m e^{\varphi/\lam})&=(\lam\theta m)e^{\varphi/\lam}+\theta(\varphi)e^{\varphi/\lam},\quad \mbox{and}\\
\lam^2\pa_{\lam}\cdot me^{\varphi/\lam}&=-(\varphi m)e^{\varphi/\lam}
\end{align*}
for $m\in O_{X\times \CC_{\lam}}(*P\times \CC_{\lam})$ and a vector field $\theta$ on $X$.
This module is twistor-specializable along $P\times \CC_{\lam}$ and we define $\calE^{\varphi/\lam}_{X}:=( O_{X\times \CC_{\lam}}(*P\times \CC_{\lam})e^{\varphi/\lam})[*(P\times \CC_{\lam})]$ (see Proposition~3.3 of \cite{SabYuIrrFil}) .

\begin{lem}[Proposition~3.3 of \cite{SabYuIrrFil}, Thoerem~0.2 and 2.4.g of \cite{IrrHodge}]\label{tabako}
The object $\calE^{\varphi/\lam}_{X}$ underlies an object of $\mathrm{MTM}^{\mathrm{int}}_{\mathrm{good}}(X;\QQ)$.
More strongly,
this belongs to $\mathrm{IrrMHM}(X;\QQ)$.
Moreover, for an $R^{\integ}$-module $\scrM$ which underlies an object of $\mathrm{MHM}(X)\subset \mathrm{MTM}^{\mathrm{int}}_{\mathrm{good}}(X;\QQ)$, i.e. 
the Rees module of an underlying filtered $D$-module of a mixed Hodge module on $X$,
the $R^{\integ}$-module $\scrM\otimes_{O_{X\times \CC_{\lam}}}\calE^{\varphi/\lam}_{X}$ underlies an object of 
$\mathrm{IrrMHM}(X;\QQ)$.
\end{lem}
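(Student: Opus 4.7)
The plan is essentially to assemble the cited results of Sabbah--Yu and Sabbah rather than to prove the lemma from scratch. The statement has two layers: an existence-of-MTM-structure assertion (which is \cite{SabYuIrrFil}, Proposition 3.3) and an upgrade to $\mathrm{IrrMHM}(X;\QQ)$ (which is established in \cite{IrrHodge}). So the proof I would write is a road map for the reader, pointing to the exact inputs used from those papers and checking that $\scrM\otimes_{O_{X\times\CC_\lam}}\calE^{\varphi/\lam}_X$ actually satisfies the defining conditions of the IrrMHM category.

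For the first assertion, the construction I would recall proceeds by resolving the indeterminacy locus of $\varphi$: choose a proper modification $\pi\colon X'\to X$ such that the pullback of $\varphi$ extends as a morphism $\varphi'\colon X'\to\PP^1$, and form $\calE^{\varphi'/\lam}_{X'}$. On $X'$ this is the twist of $O_{X'\times\CC_\lam}(*P')$ by $e^{\varphi'/\lam}$, and by \cite{SabYuIrrFil}, Proposition~3.3 it underlies an object of $\mathrm{MTM}^{\mathrm{int}}_{\mathrm{good}}(X';\QQ)$ (one checks twistor-specializability along $P'$ and the required good properties using the explicit resolution provided by Kedlaya--Mochizuki). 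Then I would push forward by $\pi$ using $\TO{\pi}_*$ and localize at $P$ to get the object on $X$; compatibility with the functors of Remark~\ref{wakeittemo} gives $\calE^{\varphi/\lam}_X$.

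For the upgrade to $\mathrm{IrrMHM}(X;\QQ)$, the key step is to exhibit well-rescalability: one has to check that the rescaled object $\RES{\calE^{\varphi/\lam}_X}$ is strictly $\RR$-specializable and regular along $\tau=0$ on $\RES{X}\times\CC_\lam$. This is precisely the content of Theorem~0.2 of \cite{IrrHodge}, whose proof rests on the explicit formula $\RES{\calE^{\varphi/\lam}_X}=\calE^{\tau\varphi/\lam}_X$ modulo the $\tau=0$ behavior, so the singularity structure at $\tau=0$ is tame in the required sense. I would reproduce the formula giving the rescaling explicitly in the local model where $\varphi=1/x$, and observe that the $V$-filtration along $\tau=0$ can be written down by hand; this is the main verification step and, in my writeup, the one I would be most careful about, since the jump from ``lies in $\mathrm{MTM}^{\mathrm{int}}_{\mathrm{good}}$'' to ``lies in $\mathrm{IrrMHM}$'' is exactly what distinguishes the two categories.

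Finally, for the tensor product $\scrM\otimes_{O_{X\times\CC_\lam}}\calE^{\varphi/\lam}_X$, I would invoke the fact that $\mathrm{IrrMHM}(X;\QQ)$ is stable by tensor product with $\calE^{\varphi/\lam}_X$, established in 2.4.g of \cite{IrrHodge}. Concretely: since $\scrM$ comes from a mixed Hodge module it is already in $\mathrm{IrrMHM}(X;\QQ)$ by Proposition~2.68 of loc.\ cit., well-rescalability is preserved under tensoring with $\calE^{\varphi/\lam}_X$ by the explicit rescaling formula $\RES{(\scrM\otimes\calE^{\varphi/\lam}_X)}=\RES{\scrM}\otimes\calE^{\tau\varphi/\lam}_X$, and the strict $\RR$-specializability of the result along $\tau=0$ follows by combining the two strict specializability statements. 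The main obstacle throughout is precisely this rescaling compatibility: once one accepts the formula for $\RES{\calE^{\varphi/\lam}_X}$ and the strict specializability inputs from \cite{IrrHodge}, the rest is bookkeeping.
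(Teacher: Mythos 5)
Your proposal is correct and matches the paper's treatment: the paper gives no independent argument for this lemma but simply quotes Proposition~3.3 of Sabbah--Yu and Theorem~0.2 together with 2.4.g of Sabbah's irregular Hodge theory, which are exactly the inputs you assemble. Your additional sketch of how those cited results are proved (resolution of the pole/indeterminacy locus, the rescaling formula identifying $\RES{\calE^{\varphi/\lam}_{X}}$ with an exponential of $\tau\varphi/\lam$, and strict $\RR$-specializability along $\tau=0$) is a reasonable gloss, but the substantive verification lives in the cited works, just as in the paper.
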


For an $R_{X}$-module $\scrM$, we set 
\[\Xi_{\mathrm{DR}}(\scrM):=\scrM/(\lam-1)\scrM\in \mathrm{Mod}(D_{X}),\]
and call it the underlying $D$-module of $\scrM$.
An important feature of well-rescalable good $R^{\integ}_{X}$-module is that we can define a natural good filtration on $\Xi_{\mathrm{DR}}(\scrM)$ called the irregular Hodge filtration.
Let us recall the definition.

Through the identification $X\times \CC^*_{\lam}$ with the image of the diagonal embedding
$X\times \CC^*_{\lam}\hookrightarrow X\times \CC_{\tau}\times \CC_{\lam}$ ($\tau$ is the parameter for the rescaling),
we have
\begin{align}\label{kacyou}
i^*_{\tau=\lam}R_{X\times \CC_{\tau}/\CC_{\tau}}(*\{\tau=0\})=R_{X}\otimes_{\CC[\lambda]}\CC[\lam^{\pm 1}],
\end{align}
where $i_{\tau=\lam}$ is the inclusion $\{\tau=\lam\}\hookrightarrow X\times \CC_{\tau}\times \CC_{\lam}$ and
$R_{X\times \CC_{\tau}/\CC_{\tau}}$ is the subalgebra of $R_{X\times \CC_{\tau}}$ generated by $R_{X}$ and $O_{X\times \CC_{\tau}\times \CC_{\lam}}$ (which does not contain ``$\eth_{\tau}$'').

Let $\pi^{\circ}\colon X\times \CC_{\lam}^*\to X$ be the projection,
$\scrM$ a well-rescalable good $R_{X}^{\integ}$-module and $M$ its underlying $D$-module $\Xi_{\mathrm{DR}}(\scrM)$.

\begin{lem}[Remark~2.20 of \cite{IrrHodge}]\label{effel}
We have an isomorphism between $R_{X}\otimes_{\CC[\lambda]}\CC[\lam^{\pm 1}]$-modules (not $R_{X}^{\integ}$-modules)
\begin{align}\label{kougen}
i_{\tau=\lam}^*\RES{\scrM}\simto {\pi^{\circ}}^*M
\end{align}
which sends
a section $1\otimes (1\otimes m) \in O_{X\times \CC_{\lam}^*}\otimes i_{\tau=\lam}^{-1}\RES{\scrM}$
to 
$1\otimes [m]\in O_{X\times \CC_{\lam}^*}\otimes {\pi^{\circ}}^{-1}M$
where $m$ is a section of $\scrM$,
$1\otimes m$ is the one in $\RES{\scrM}$,
and $[m]$ is the image of $m$ under $\scrM\to \Xi_{\mathrm{DR}}(\scrM)$.
Moreover, 
under the isomorphism~(\ref{kougen}),
the natural action $\lam^2\pa_{\lam}$ on ${\pi^{\circ}}^*M$
corresponds to the action $\lam^2\pa_{\lam}+\tau\eth_{\tau}$ on $i_{\tau=\lam}^*\RES{\scrM}$.
More precisely,
for $k\in \ZZ$ and a section $m\in \scrM$,
the section $1\otimes (\lam^2\pa_{\lam}+\tau\eth_{\tau})(\lam^k\otimes m)$ of $i_{\tau=\lam}^*\RES{\scrM}$
corresponds to
$(\lam^2\pa_{\lam})(\lam^k\otimes m)(=k\lam^{k+1}\otimes m)$ under the isomorphism (\ref{kougen}).
\end{lem}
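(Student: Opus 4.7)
The strategy is a direct computation based on the defining relations (\ref{wingspan}) of the rescaling. The key geometric observation is that the composition
\begin{align*}
\mu \circ i_{\tau=\lam}\colon X \times \CC_\lam^* \xrightarrow{(x,\lam)\mapsto (x,\lam,\lam)} X \times \CC_\tau^* \times \CC_\lam \xrightarrow{\mu} X \times \CC_\lam
\end{align*}
sends $(x,\lam)$ to $(x, \lam/\lam) = (x,1)$, and hence factors as $i_1 \circ \pi^\circ$, where $i_1\colon X \hookrightarrow X \times \CC_\lam$ is the inclusion at $\lam = 1$. Since by definition $i_1^*\scrM = \scrM/(\lam-1)\scrM = M$, and since the image of $i_{\tau=\lam}$ restricted to $\{\lam\neq 0\}$ lies in the domain of $\mu$ on which $\RES{\scrM}$ coincides with $\mu^*\scrM$, I obtain the chain of $O$-module isomorphisms
\begin{align*}
i_{\tau=\lam}^*\RES{\scrM} \simeq i_{\tau=\lam}^* \mu^*\scrM = (\mu\circ i_{\tau=\lam})^*\scrM = (\pi^\circ)^*i_1^*\scrM = (\pi^\circ)^*M,
\end{align*}
under which $1 \otimes (1 \otimes m) \mapsto 1 \otimes [m]$.

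To verify that this map respects the $R_X \otimes_{\CC[\lam]}\CC[\lam^{\pm 1}]$-module structures via the identification (\ref{kacyou}), I will check the actions of $\lam$ and $\lam\theta$ (for vector fields $\theta$ on $X$), which generate the ring. For $\lam\theta$: by (\ref{wingspan}), $(\lam\theta)(1 \otimes m) = \tau \otimes \lam\theta m$ in $\RES{\scrM}$, which restricts at $\tau = \lam$ to $\lam \otimes [\lam\theta m]$; by the very definition of the $D_X$-structure on $M = \Xi_{\mathrm{DR}}(\scrM)$, where $\theta$ acts as $\lam\theta$ modulo $\lam = 1$, the class $[\lam\theta m]$ equals $\theta [m]$, matching the $\lam\theta$-action on $(\pi^\circ)^*M$. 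The action of the source $\lam$ is handled identically.

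For the last assertion, I first note that the operator $\lam^2\pa_\lam + \tau\eth_\tau$ descends to $i_{\tau=\lam}^*\RES{\scrM}$ since $[\lam^2\pa_\lam + \tau\eth_\tau, \tau - \lam] = -\lam^2 + \tau\lam = \lam(\tau-\lam) \equiv 0$ modulo $(\tau-\lam)$. The essential identity then comes from combining the last two relations of (\ref{wingspan}):
\begin{align*}
(\lam^2 \pa_\lam + \tau \eth_\tau)(1 \otimes m) = \tau \otimes \lam^2\pa_\lam m - \tau \otimes \lam^2\pa_\lam m = 0.
\end{align*}
Combined with the Leibniz rule (noting $\eth_\tau(\lam^k) = 0$), I conclude
\begin{align*}
(\lam^2 \pa_\lam + \tau \eth_\tau)(\lam^k \otimes m) = k \lam^{k+1} \otimes m,
\end{align*}
which corresponds under the isomorphism to $k\lam^{k+1}\otimes [m] = (\lam^2 \pa_\lam)(\lam^k \otimes [m])$, completing the verification.

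The only real obstacle is notational: one must carefully separate the several roles of $\lam$ as (i) the ambient coordinate on the source of $\mu$, (ii) the parameter on the target $X\times\CC_\lam$ along which $\scrM$ is defined, and (iii) the coordinate on $X \times \CC_\lam^*$ on which $(\pi^\circ)^*M$ lives. Once these are disentangled through the formulas (\ref{wingspan}), all verifications reduce to one-line computations.
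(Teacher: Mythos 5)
Your proposal is correct and follows essentially the same route as the paper: the key observation that $\mu\circ i_{\tau=\lam}$ is the map $(x,\lam)\mapsto(x,1)$, which together with (\ref{kacyou}) yields (\ref{kougen}), and then a direct computation with the relations (\ref{wingspan}) giving $(\lam^2\pa_\lam+\tau\eth_\tau)(\lam^k\otimes m)=k\lam^{k+1}\otimes m$. Your extra verifications (that $\lam^2\pa_\lam+\tau\eth_\tau$ preserves the ideal $(\tau-\lam)$ and that the $\lam\theta$-actions match) are details the paper leaves implicit, but the argument is the same.
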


\begin{proof}
Since $\mu\circ i_{\tau=\lam}\colon X\times \CC_{\lam}\simeq X\times \{\tau=\lam\}\to X\times \CC_{\lam}$ is the morphism $(x,\lam)\to (x,1)$,
by (\ref{kacyou}) we have (\ref{kougen}).
We remark that
$1\otimes (\lam^k\otimes m)$ corresponds to $\lam^k\otimes m$ under (\ref{kougen}).
Then, the second statement follows from the definition of the actions (\ref{wingspan}) of $\RES{\scrM}$.
\end{proof}

${\pi^{\circ}}^*M$ has a natural grading induced by the $\lam$-adic filtration 
\[\lam^{k}O_{X\times \CC_{\lam}^*}\otimes M\subset {\pi^{\circ}}^*M.\]
Here, for brevity we write $\lam^{k}O_{X\times \CC_{\lam}^*}\otimes M$ for $\lam^{k}O_{X\times \CC_{\lam}^*}\otimes {\pi^{\circ}}^{-1}M$.
Then, the $k$-th graded piece is $\GR^{k}{\pi^{\circ}}^*M=\lam^k\otimes M$.
The corresponding graded module is denoted by $\GR({\pi^{\circ}}^*M)(=\bigoplus_{k\in \ZZ}\GR^{k}{\pi^{\circ}}^*M)$, which is $O_{X}[\lam^{\pm 1}]\otimes M$.
We can regard it as a $R_{F}D_{X}$-module.
Remark that by the definition of well-rescalability we can consider the Kashiwara-Malgrange filtration $V^{\bullet}_{\tau}(\RES{\scrM})$ along $\tau=0$ of $\RES{\scrM}$.

\begin{lem}[Lemma~2.21 of \cite{IrrHodge}]
For $\beta\in \RR$,
we have $(\tau-\lam)V^{\beta}_{\tau}(\RES{\scrM})=(\tau-\lam)\RES{\scrM}\cap V_{\tau}^{\beta}(\RES{\scrM})$.
Therefore, we obtain an inclusion
\[i^*_{\tau=\lam}V^{\beta}_{\tau}(\RES{\scrM})\hookrightarrow i^*_{\tau=\lam}\RES{\scrM}.\]
In particular, we can regard
$i^*_{\tau=\lam}V^{\beta}_{\tau}(\RES{\scrM})$ as a submodule of ${\pi^{\circ}}^{*}M$ by Lemma~\ref{effel}.
\end{lem}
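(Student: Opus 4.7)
The plan is to derive the asserted equality from the strict $\RR$-specializability of $\RES{\scrM}$ along $\tau=0$, which is guaranteed by the well-rescalability assumption on $\scrM$. The inclusion $(\tau-\lam)V^{\beta}_{\tau}(\RES{\scrM})\subset (\tau-\lam)\RES{\scrM}\cap V_{\tau}^{\beta}(\RES{\scrM})$ will be immediate, since multiplication by $\lam$ respects the $V$-filtration and the $V$-filtration axiom $\tau\cdot V^{\gamma}_{\tau}(\RES{\scrM})\subset V^{\gamma+1}_{\tau}(\RES{\scrM})\subset V^{\gamma}_{\tau}(\RES{\scrM})$ yields $(\tau-\lam)V^{\beta}_{\tau}(\RES{\scrM})\subset V^{\beta}_{\tau}(\RES{\scrM})$. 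The real content therefore lies in the reverse inclusion.

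For that inclusion, I would suppose $n\in\RES{\scrM}$ satisfies $(\tau-\lam)n\in V^{\beta}_{\tau}(\RES{\scrM})$ and show $n\in V^{\beta}_{\tau}(\RES{\scrM})$. Strict $\RR$-specializability provides an exhaustive $V$-filtration with discrete jumps and with each $\GR^{\gamma}_{V_{\tau}}(\RES{\scrM})$ having no $\CC[\lam]$-torsion. Set
\[\gamma:=\sup\{\delta\in\RR\mid n\in V^{\delta}_{\tau}(\RES{\scrM})\};\]
the case $\gamma\ge\beta$ is nothing to prove, so I would assume for contradiction that $\gamma<\beta$, so that $[n]\neq 0$ in $\GR^{\gamma}_{V_{\tau}}(\RES{\scrM})$. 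Because $\tau$ sends $V^{\gamma}_{\tau}(\RES{\scrM})$ into $V^{\gamma+1}_{\tau}(\RES{\scrM})\subset V^{>\gamma}_{\tau}(\RES{\scrM})$, its induced action on the graded piece vanishes, and hence
\[[(\tau-\lam)n]=-\lam[n]\quad \mbox{in}\quad \GR^{\gamma}_{V_{\tau}}(\RES{\scrM}).\]
Strictness forces $\lam[n]\neq 0$, contradicting $(\tau-\lam)n\in V^{\beta}_{\tau}(\RES{\scrM})\subset V^{>\gamma}_{\tau}(\RES{\scrM})$.

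Once the equality is in hand, the second assertion will follow formally: pulling back by the closed immersion $i_{\tau=\lam}$ amounts to quotienting by the regular element $\tau-\lam$, and the equality says exactly that the natural map $V^{\beta}_{\tau}(\RES{\scrM})/(\tau-\lam)V^{\beta}_{\tau}(\RES{\scrM})\to \RES{\scrM}/(\tau-\lam)\RES{\scrM}$ has zero kernel. The main delicacy I expect is making explicit the two consequences of strict $\RR$-specializability actually used here---that the supremum defining $\gamma$ is attained (so that $[n]$ really is a nonzero class in a single graded piece), and that each $\GR^{\gamma}_{V_{\tau}}(\RES{\scrM})$ has no $\CC[\lam]$-torsion---both of which are built into the definition of well-rescalability recalled in Subsection~\ref{subirrH}.
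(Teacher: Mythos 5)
Your argument is correct: the two ingredients you isolate (that $\tau$ acts as zero on each $\GR^{\gamma}_{V_{\tau}}(\RES{\scrM})$ because $\tau V^{\gamma}_{\tau}\subset V^{\gamma+1}_{\tau}\subset V^{>\gamma}_{\tau}$, and that strictness kills the resulting $\lambda$-torsion class $\lambda[n]$) are exactly what make the reverse inclusion work, and the attainment of your supremum is indeed guaranteed by the discreteness of the jump indices coming from strict $\RR$-specializability. The paper gives no proof of its own here --- it simply quotes Lemma~2.21 of \cite{IrrHodge} --- and your proof is essentially the standard argument behind that cited lemma, so there is nothing to compare beyond noting that you have supplied the details the paper delegates to the reference.
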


For $\alpha\in [0,1)$, the $\lam$-adic filtration $\lam^k\otimes M\subset {\pi^{\circ}}^{*}M$ induces a filtration on $i^*_{\tau=\lam}V^{-\alpha}_{\tau}(\RES{\scrM})$.
The corresponding graded module $\GR(i^*_{\tau=\lam}V^{-\alpha}_{\tau}(\RES{\scrM}))$ is a graded $R_{F}D_{X}$-submodule of $\GR({\pi^{\circ}}^{*}M)=O_{X}[\lam^{\pm 1}]\otimes M$.
Since $\GR({\pi^{\circ}}^{*}M)$ is a strict graded $R_{F}D_{X}$-module,
so is $\GR(i^*_{\tau=\lam}V^{-\alpha}_{\tau}(\RES{\scrM}))$.
Therefore, $\GR(i^*_{\tau=\lam}V^{-\alpha}_{\tau}(\RES{\scrM}))$ comes from the Rees module associated to a filtration of $M$.

\begin{defi}[Definition~2.22 of \cite{IrrHodge}]\label{soten}
For $\alpha\in [0,1)$, the irregular Hodge filtration $\irrF_{\alpha+\bullet}M$ is the unique good filtration of the $D$-module $M$ indexed by $\ZZ$ such that the corresponding Rees module $R_{\irrF_{\alpha+\bullet}}M=\bigoplus_{p\in \ZZ}\irrF_{\alpha+p}M\lam^{p}(\subset M[\lam^{\pm 1}])$ is equal to $\GR(i^*_{\tau=\lam}V^{-\alpha}_{\tau}(\RES{\scrM}))$.
\end{defi}

We can regard the family $\{\irrF_{\alpha+p}M\}_{\alpha\in [0,1), p\in \ZZ}$ as a filtration of $M$ indexed by $\RR$.
If $\scrM$ comes from a filtered $D$-module,
the irregular Hodge filtration is equal to the original one as follows.

\begin{prop}[Proposition~2.40 of \cite{IrrHodge}]
For a filtered $D$-module $(M,F_{\bullet}M)$, 
the corresponding Rees module $\scrM=R_{F}M$ is a well-rescalable good $R_{X}^{\integ}$-module.
Moreover, the irregular Hodge filtration $\irrF_{\bullet}M$ is equal to the original filtration $F_{\bullet}M$.
In particular, $\irrF_{\bullet}M$ jumps only at the integers.
\end{prop}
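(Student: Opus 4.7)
My plan is to work out the rescaled module $\RES{\scrM}$ of $\scrM=R_{F}M$ explicitly; once this description is in hand, both well-rescalability and the identification $\irrF_{\bullet}M=F_{\bullet}M$ follow by bookkeeping.

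First, I would identify $\RES{\scrM}$. Since $R_{F}M$ sits inside $M\otimes_{\CC}\CC[\lam^{\pm 1}]$ and $\mu$ sends $\lam$ to $\lam/\tau$, an element of the form $m\lam^{p}$ with $m\in F_{p}M$ pulls back to $m\lam^{p}\tau^{-p}$; taking the $O_{\RES{X}\times\CC_{\lam}}(*\{\tau=0\})$-span inside $M\otimes_{\CC}\CC[\tau^{\pm 1},\lam^{\pm 1}]$ yields
\[
\RES{\scrM} \;=\; \bigoplus_{n\in\ZZ}\tau^{n}\cdot R_{F}M.
\]
A direct calculation using (\ref{wingspan}) together with $\eth_{\tau}=\lam\pa_{\tau}$ shows that on the summand $\tau^{n}R_{F}M$ one has the exact identity $\tau\eth_{\tau}=n\lam$, with no nilpotent remainder.

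Second, because $\tau\eth_{\tau}$ acts by the scalar $n\lam$ on each degree-$n$ component, the Kashiwara-Malgrange filtration along $\tau=0$ must be
\[
V^{\beta}_{\tau}\RES{\scrM} \;=\; \bigoplus_{\substack{n\in\ZZ\\ n\geq\beta}}\tau^{n}R_{F}M.
\]
In particular the jumps occur only at integers, each graded piece $\GR^{n}_{V_{\tau}}\RES{\scrM}\simeq R_{F}M$ is strict (the Rees module of a filtered module carries no $\lam$-torsion), multiplication by $\tau$ is the evident degree-shift isomorphism, and $\eth_{\tau}$ lowers the degree by one. Strict $\RR$-specializability and regularity along $\tau=0$ are thus manifest, proving well-rescalability.

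Third, I would read off the irregular Hodge filtration. Because the $V$-jumps sit at integers, for every $\alpha\in[0,1)$ we have $V^{-\alpha}_{\tau}\RES{\scrM}=V^{0}_{\tau}\RES{\scrM}=\bigoplus_{n\geq 0}\tau^{n}R_{F}M$. Applying $i^{*}_{\tau=\lam}$ substitutes $\tau=\lam$ and identifies this submodule with $R_{F}M\subset M\otimes\CC[\lam^{\pm 1}]={\pi^{\circ}}^{*}M$. Intersecting with the $\lam$-adic filtration gives
\[
R_{F}M\cap \lam^{k}M[\lam] \;=\; \bigoplus_{p\geq k}F_{p}M\cdot\lam^{p},
\]
whose associated graded in $\lam$-degree $k$ is $F_{k}M\cdot\lam^{k}$. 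Therefore $\GR(i^{*}_{\tau=\lam}V^{-\alpha}_{\tau}\RES{\scrM})=R_{F}M$ as graded $R_{F}D_{X}$-modules, and unpacking Definition~\ref{soten} produces $\irrF_{\alpha+\bullet}M=F_{\bullet}M$ for every $\alpha\in[0,1)$; in particular $\irrF_{\bullet}M$ jumps only at integers.

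The main obstacle is step one: the formulas (\ref{wingspan}) mix the ``ambient'' $\lam$ of the $R^{\integ}$-module structure on $\RES{\scrM}$ with the ``internal'' $\lam$ recorded by the Rees construction of $R_{F}M$, and one must carefully verify that, after the substitution $\lam\mapsto\lam/\tau$, the identity $\tau\eth_{\tau}=n\lam$ on the degree-$n$ piece genuinely holds. Once that identification is secured, the remaining arguments are essentially formal.
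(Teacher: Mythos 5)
The paper does not actually prove this proposition: it is quoted verbatim from Proposition~2.40 of \cite{IrrHodge}, so there is no internal argument to compare with. Judged on its own, your computation is correct and is essentially the standard verification; it also runs parallel to the computations this paper performs later in the twisted situation (Lemma~\ref{golgo} and Theorem~\ref{prosyuto}), so it fits the paper's toolkit well. Your identification $\RES{\scrM}=\bigoplus_{n}\tau^{n}R_{F}M$ is right once one notes that your $\tau^{n}$-grading is the total $\tau$-degree after the substitution $\lam\mapsto\lam/\tau$ (in the tensor notation of the paper, $\tau^{i}\otimes m\lam^{p}$ sits in degree $i-p$), and on that graded piece $\tau\eth_{\tau}$ is exactly $n\lam$; the image $i^{*}_{\tau=\lam}V^{-\alpha}_{\tau}(\RES{\scrM})=R_{F}M$ and hence $\irrF_{\alpha+p}M=F_{p}M$ for all $\alpha\in[0,1)$ then follow as you say.

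Two points should be tightened. First, you cannot assert that the Kashiwara--Malgrange filtration ``must be'' the naive one before its existence is known: the content of well-rescalability is precisely that your candidate filtration satisfies the axioms, so you should verify them explicitly, including $V^{0}$-coherence of each step (this is where goodness of $F_{\bullet}M$ enters) and regularity; uniqueness is then automatic. Second, ``strict specializability is manifest'' hides the relevant subtlety: the conditions to check are those for $R(*\{\tau=0\})$-modules (strict graded pieces, nilpotency of $\tau\eth_{\tau}-n\lam$, bijectivity of $\tau$ between consecutive steps). An $\eth_{\tau}$-surjectivity requirement on negative graded pieces, as in the analogue of Definition~\ref{tron}(ii), would in fact fail here, since $\eth_{\tau}\colon \GR^{n}\to\GR^{n-1}$ is multiplication by $n\lam$ on $R_{F}M$; it matters that $\RES{\scrM}$ is by construction $\tau$-localized, so no such condition is imposed.
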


For an irregular Hodge module $\scrT$,
the underlying $R^{\integ}_{X}$-module $\scrM$ is well-rescalable and good.
So, we can consider the irregular Hodge filtration on $\Xi_{\mathrm{DR}}(\scrM)$.
As already mentioned,
for a mixed Hodge module $\calM=(M,F_{\bullet}M, K, W_{\bullet}M)$,
we can regard it as an irregular Hodge module whose underlying $R^{\integ}_{X}$-module is the Rees module $R_{F}M$.
Therefore, by the proposition above, the irregular Hodge filtration on $\Xi_{\mathrm{DR}}(\scrM)=M$ is the original Hodge filtration $F_{\bullet}M$.

\subsection{{T}he Fourier-Laplace transforms of $R$-modules and the irregular Hodge filtrations}\label{micromacro}
In Section~\ref{maboo},
we introduced the Fourier-Laplace transform of a $D$-module on a vector bundle (or the projective compactification of a vector bundle).
For a monodromic mixed Hodge module, we endowed the Fourier-Laplace transform of its underlying $D$-module with a structure of mixed Hodge module (Definition~\ref{ganbaruzo}). 
As explained there, in general, we can not define ``the Fourier-Laplace transform of a (non-monodromic) mixed Hodge module'' in the category of mixed Hodge modules.
However, 
for a (not necessary monodromic) mixed Hodge module, if we regard it as an integrable mixed twistor $D$-module as explained in Subsection~\ref{subirrH}, we can naturally define
``the Fourier-Laplace transform'' of it in the category of irregular mixed Hodge module.
To explain it, we first recall the definition of the Fourier-Laplace transform of an $R$-module and its basic properties.

The following is the list of references for this subsection.
In \cite{SabPolar}, \cite{SabFouRiem}, \cite{SabMonInftyTwo}, \cite{SabFouOne} and \cite{SabFoutwo},
Sabbah considered the Fourier-Laplace transformation (as an $R$-triples) of a variation of Hodge structure or a twistor $D$-module on a complex line and proved that the $R$-triple is an integrable twistor $D$-module.
In \cite{ESY} and \cite{SabYuIrrFil},
they introduced and studied exponential $R$-module ``$\calE^{f}$'' as a twistor $D$-module.
Moreover, in Dom\'ingez-Reichelt-Sevenheck~\cite{DRS} and Mochizuki~\cite{MochiResc},
they defined (with a slightly different formulation in each paper) the Fourier-Laplace transform of an integrable $R$-module or an integrable mixed twistor $D$-module on $\CC^n$.
The content of this subsection is a review and restatement of these papers, so there are essentially no original contents.


Let $\pi\colon E\to X$ be a vector bundle on a smooth algebraic variety $X$ and $\pi^{\vee}\colon E^{\vee}\to X$ its dual bundle.
We use the notations defined in Section~\ref{maboo} for a vector bundle $E$.
For example, $\wt{E}$ (resp. $\wt{E^{\vee}}$) is the projective compactification of $E$ (resp. ${E^{\vee}}$).
Moreover, $\varphi$ the rational function on $\wt{E}\times_{X}\wt{E^{\vee}}$ defined as the pairing of $\wt{E}$ and $\wt{E^{\vee}}$, whose pole divisor is $D_{\infty}\cup D_{\infty}^{\vee}$. 
Recall that the $R^{\integ}$-modules
$\calE^{\varphi/\lam}_{{E}\times_{X}{E^{\vee}}}$ and
$\calE^{\varphi/\lam}_{\wt{E}\times_{X}\wt{E^{\vee}}}$ are the underlying $R$-modules of mixed twistor $D$-modules (Lemma~\ref{tabako}).

\begin{defi}
For the underlying $R^{\integ}$-module $\scrM$ (resp. ${\scrN}$) of a mixed twistor $D$-module on ${E}$ (resp. $\wt{E}$),
we define the Fourier-Laplace transform $\whmunk{\scrM}$ (resp. $\whmunk{\scrN}$) as
\begin{align}
\notag
\whmunk{\scrM}=&H^0\TO{q}_{*}(p^{*}\scrM\otimes \calE^{-\varphi/\lam}_{{E}\times_{X}{E^{\vee}}})
\\
(\mbox{resp.\ }
\label{kani2}\whmunk{\scrN}=&H^0\TO{\wt{q}}_{*}(\wt{p}^{*}\scrN\otimes \calE^{-\varphi/\lam}_{\wt{E}\times_{X}\wt{E^{\vee}}}[*(D_{\infty}\cup D_{\infty}^{\vee})])
).
\end{align}
\end{defi}

\begin{lem}\label{santa}
Let $\scrN$ be the underlying $R^{\integ}$-module of a mixed twistor $D$-module on $\wt{E}$.
Then, we have
\[(\whmunk{\scrN})|_{E^{\vee}}=\whmu{\scrN|_{E}}.\]
\end{lem}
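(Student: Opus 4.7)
The plan is to mimic the proof of Lemma~\ref{train} in the $R$-module setting, computing both sides via the compactification description of $\TO{(-)}_{*}$ from Remark~\ref{wakeittemo} and comparing them through proper base change.

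First, I would use that $\wt{q}\colon \wt{E}\times_{X}\wt{E^{\vee}}\to \wt{E^{\vee}}$ is projective over $X$, so by Remark~\ref{wakeittemo} we have $\TO{\wt{q}}_{*}=\wt{q}_{\dag}$ and the definition~(\ref{kani2}) is already a plain $R$-module pushforward. Writing $\wt{p}'\colon \wt{E}\times_{X}E^{\vee}\to \wt{E}$ and $\wt{q}'\colon \wt{E}\times_{X}E^{\vee}\to E^{\vee}$ for the restrictions of $\wt{p}$ and $\wt{q}$ over $j^{\vee}\colon E^{\vee}\hookrightarrow \wt{E^{\vee}}$, I would apply proper base change to the Cartesian square
\[
\xymatrix{
\wt{E}\times_{X}E^{\vee}\ar[r]\ar[d]_{\wt{q}'}& \wt{E}\times_{X}\wt{E^{\vee}}\ar[d]^{\wt{q}}\\
E^{\vee}\ar[r]^{j^{\vee}}& \wt{E^{\vee}}
}
\]
and note that $D_{\infty}^{\vee}$ is disjoint from $\wt{E}\times_{X}E^{\vee}$, so the $[*D_{\infty}^{\vee}]$ part of the localization becomes trivial after restriction. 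This gives
\[(\whmunk{\scrN})|_{E^{\vee}}=H^{0}\wt{q}'_{\dag}\bigl(\wt{p}'^{*}\scrN\otimes \calE^{-\varphi/\lam}_{\wt{E}\times_{X}E^{\vee}}[*D_{\infty}]\bigr),\]
where $D_{\infty}$ now abbreviates $D_{\infty}\times_{X}E^{\vee}$.

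Next I would compute the right hand side $\whmu{\scrN|_{E}}$ using Remark~\ref{wakeittemo}, taking $\wt{q}'$ as the partial compactification of the non-proper $q$, with divisor at infinity $D_{\infty}$. The object $\wt{p}'^{*}\scrN\otimes \calE^{-\varphi/\lam}_{\wt{E}\times_{X}E^{\vee}}[*D_{\infty}]$ restricts on $E\times_{X}E^{\vee}$ to $p^{*}(\scrN|_{E})\otimes \calE^{-\varphi/\lam}_{E\times_{X}E^{\vee}}$ and is automatically $[*D_{\infty}]$-localized because its second factor is. It therefore serves as the extension $\wtkai{\scrF}[*H_{X}]$ appearing in the formula of Remark~\ref{wakeittemo}, yielding
\[\whmu{\scrN|_{E}}=H^{0}\wt{q}'_{\dag}\bigl(\wt{p}'^{*}\scrN\otimes \calE^{-\varphi/\lam}_{\wt{E}\times_{X}E^{\vee}}[*D_{\infty}]\bigr),\]
which matches the expression obtained for $(\whmunk{\scrN})|_{E^{\vee}}$.

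The main obstacle will be verifying, at the level of mixed twistor $D$-modules, (i) proper base change for the $R^{\integ}$-module pushforward along the open immersion, and (ii) that tensoring any $R$-module with a $[*D]$-localized one yields a $[*D]$-localized module (so that extra localization is redundant). Both are standard in Mochizuki's framework~\cite{MTM}, but care is needed to check that they interact compatibly with the $\TO{(-)}_{*}$ formalism and not merely with the underlying $R$-modules.
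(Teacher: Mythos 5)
Your overall route is essentially the paper's: the printed proof of Lemma~\ref{santa} is nothing but the unwinding of Remark~\ref{wakeittemo}, taking the proper model $\wt{q}\colon\wt{E}\times_{X}\wt{E^{\vee}}\to\wt{E^{\vee}}$ and the extension $\wt{p}^{*}\scrN\otimes\calE^{-\varphi/\lam}$, so that
\[\whmu{\scrN|_{E}}=H^{0}\wt{q}_{\dag}\bigl((\wt{p}^{*}\scrN\otimes\calE^{-\varphi/\lam})[*(D_{\infty}\cup D_{\infty}^{\vee})]\bigr)\big|_{E^{\vee}},\]
which is literally the restriction of definition~(\ref{kani2}). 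Your variant — first restricting $\whmunk{\scrN}$ to $E^{\vee}$ by base change along the open immersion $j^{\vee}$ (legitimate, and the disappearance of $[*D_{\infty}^{\vee}]$ over $\wt{E}\times_{X}E^{\vee}$ is correct), then applying Remark~\ref{wakeittemo} with the partial compactification $\wt{q}'\colon\wt{E}\times_{X}E^{\vee}\to E^{\vee}$ — is a slightly longer way of saying the same thing; it additionally invokes the independence of the formula in Remark~\ref{wakeittemo} from the choice of compactification, which is part of the well-definedness of $\TO{q}_{*}$.

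The one point you should repair is your ingredient (ii). As a general principle, ``tensoring with a $[*D]$-localized module yields a $[*D]$-localized module'' is not true: the tensor product is a naive operation and at best produces the stupid localization $\scrM(*D)$, which the paper explicitly warns is different from $\scrM[*D]$ in general. In the situation at hand, the statement that $\wt{p}'^{*}\scrN\otimes\calE^{-\varphi/\lam}$ needs no further $[*D_{\infty}]$ is exactly the nontrivial content of Lemma~\ref{asyura} (the identity $(\wt{p}^{*}\scrN\otimes\calE^{-\varphi/\lam})[*D_{\infty}]=(\wt{p}^{*}\scrN\otimes\calE^{-\varphi/\lam})(*D_{\infty})$), which the paper proves later by a $V$-filtration computation and uses only for Corollary~\ref{siawase}, not for Lemma~\ref{santa}. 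Fortunately your proof does not need it: read your displayed object as $(\wt{p}'^{*}\scrN\otimes\calE^{-\varphi/\lam}_{\wt{E}\times_{X}E^{\vee}})[*D_{\infty}]$, i.e.\ apply the mixed-twistor localization after the tensor product. Then it is $\wtkai{\scrF}[*H_{X}]$ by definition (with $\wtkai{\scrF}=\wt{p}'^{*}\scrN\otimes\calE^{-\varphi/\lam}$, which underlies a mixed twistor $D$-module by the same results, cf.\ Lemma~\ref{tabako}, that make (\ref{kani2}) meaningful), its restriction to $E\times_{X}E^{\vee}$ is $p^{*}(\scrN|_{E})\otimes\calE^{-\varphi/\lam}$, and the comparison with the restriction of (\ref{kani2}) only uses that $[*(D_{\infty}\cup D_{\infty}^{\vee})]$ commutes with restriction to the open set $\wt{E}\times_{X}E^{\vee}$, where $[*D_{\infty}^{\vee}]$ is trivial. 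With that reading your argument is complete and coincides in substance with the paper's.
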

\begin{proof}
By the definition (see Remark~\ref{wakeittemo}),
we have
\begin{align*}
    \whmu{\scrN|_{E}}&=H^{0}\TO{q}_{*}(p^{*}(\scrN|_{E})\otimes \calE^{-\varphi/\lam})\\
    &=
    H^{0}{\wt{q}}_{\dag}(\wt{p}^{*}\scrN\otimes \calE^{-\varphi/\lam}[*D_{\infty}\cup D_{\infty}^{\vee}])|_{\wt{E^{\vee}}}\\
    &=(\whmunk{\scrN})|_{E^{\vee}}.
\end{align*}
\end{proof}

\begin{cor}\label{hiphop}
In the setting of Lemma~\ref{santa},
we have
\[\whmunk{\scrN}\simeq (j^{\vee}_{\dag}\whmu{\scrN|_{E}})[*D_{\infty}^{\vee}].\]
\end{cor}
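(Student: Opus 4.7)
The plan is to exploit the fact that $\whmunk{\scrN}$ is, by construction, already localized along $D_{\infty}^{\vee}$, so that it can be reconstructed from its restriction to $E^{\vee}$ by the standard recipe, and the latter has been identified in Lemma~\ref{santa}.

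First I would observe that in the defining formula~(\ref{kani2}) the factor $\calE^{-\varphi/\lam}_{\wt{E}\times_{X}\wt{E^{\vee}}}[*(D_{\infty}\cup D_{\infty}^{\vee})]$ is by construction its own $[*D_{\infty}^{\vee}]$-localization; consequently so is $\wt{p}^{*}\scrN\otimes \calE^{-\varphi/\lam}_{\wt{E}\times_{X}\wt{E^{\vee}}}[*(D_{\infty}\cup D_{\infty}^{\vee})]$. Because $\wt{q}$ is proper and the divisor $D_{\infty}^{\vee}$ lies on the target $\wt{E^{\vee}}$, the cohomological pushforward $H^{0}\TO{\wt{q}}_{*}$ commutes with $[*D_{\infty}^{\vee}]$, and hence
\[\whmunk{\scrN}=\whmunk{\scrN}[*D_{\infty}^{\vee}].\]

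Next I would invoke the $R$-module analogue of Proposition~\ref{localizfac} (valid also in the twistor setting): any object $\scrT$ on $\wt{E^{\vee}}$ with $\scrT=\scrT[*D_{\infty}^{\vee}]$ is canonically isomorphic to $(j^{\vee}_{\dag}(\scrT|_{E^{\vee}}))[*D_{\infty}^{\vee}]$, since the functor $[*D_{\infty}^{\vee}]$ depends only on the stupid localization along $D_{\infty}^{\vee}$, which in turn depends only on the restriction to $E^{\vee}$. Combining this principle with Lemma~\ref{santa}, which identifies $\whmunk{\scrN}|_{E^{\vee}}$ with $\whmu{\scrN|_{E}}$, produces the asserted isomorphism $\whmunk{\scrN}\simeq (j^{\vee}_{\dag}\whmu{\scrN|_{E}})[*D_{\infty}^{\vee}]$.

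The main obstacle will be to justify the commutation of the proper pushforward $H^{0}\TO{\wt{q}}_{*}$ with the operation $[*D_{\infty}^{\vee}]$ in the category of integrable mixed twistor $D$-modules. This is unsurprising because $\wt{q}$ is proper and $D_{\infty}^{\vee}$ lies on the target, but a formal check requires unpacking Mochizuki's definitions of these functors. A clean alternative is to argue directly that both $\whmunk{\scrN}$ and $(j^{\vee}_{\dag}\whmu{\scrN|_{E}})[*D_{\infty}^{\vee}]$ are $[*D_{\infty}^{\vee}]$-localized objects on $\wt{E^{\vee}}$ with the same restriction to $E^{\vee}$ (by Lemma~\ref{santa}), and to conclude by the uniqueness part of the reconstruction principle recalled above.
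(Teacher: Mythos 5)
Your proposal is correct and follows essentially the same route as the paper: one first notes via $[*(D_{\infty}\cup D_{\infty}^{\vee})]=[*D_{\infty}][*D_{\infty}^{\vee}]$ that the object being pushed forward is its own $[*D_{\infty}^{\vee}]$-localization, then commutes the proper pushforward $H^{0}\wt{q}_{\dag}$ with $[*D_{\infty}^{\vee}]$ (the divisor being pulled back from the target), and finally reconstructs the localized object from its restriction to $E^{\vee}$ via Lemma~\ref{santa}. The commutation step you flag as the main obstacle is exactly what the paper disposes of by citing Lemma~3.2.12 of \cite{MTM} (or Corollary~9.7.1 of \cite{MHMProj}), so no new argument is needed there.
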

\begin{proof}
Since $\scrN$ is the underlying $R$-module of a mixed twistor $D$-module, we have
\begin{align}\label{jingle}(\wt{p}^{*}\scrN\otimes \calE^{-\varphi/\lam})[*(D_{\infty}\cup D_{\infty}^{\vee})]=(\wt{p}^{*}\scrN\otimes \calE^{-\varphi/\lam})[*D_{\infty}][*D_{\infty}^{\vee}].
\end{align}
By Lemma~3.2.12 of \cite{MTM} or Corollary~9.7.1 of \cite{MHMProj},
we have
\begin{align*}
\whmunk{\scrN}=&
H^0\wt{q}_{\dag}((\wt{p}^{*}\scrN\otimes \calE^{-\varphi/\lam})[*D_{\infty}\cup *D_{\infty}^{\vee}])    
\\
=&H^0\wt{q}_{\dag}((\wt{p}^{*}\scrN\otimes \calE^{-\varphi/\lam})[*D_{\infty}\cup D_{\infty}^{\vee}])[*D_{\infty}^{\vee}].   
\end{align*}
Since $H^0\wt{q}_{\dag}((\wt{p}^{*}\scrN\otimes \calE^{-\varphi/\lam})[*D_{\infty}\cup D_{\infty}^{\vee}])|_{E^{\vee}}$ is $\whmu{\scrN|_{E}}$,
we have the desired assertion.
\end{proof}

\begin{rem}
If $\scrM$ (resp. $\scrN$) is the Rees module of a filtered $D$-module $(M,F_{\bullet}M)$ (resp. $(N,F_{\bullet}N)$),
the underlying $D$-module of $\whmunk{\scrM}$ (resp. $\whmunk{\scrN}$) is the Fourier-Laplace transform $\whmunk{M}$ (resp. $\whmunk{N}$) defined in Section~\ref{maboo}.
\end{rem}

By Lemma~\ref{tabako} and Proposition~\ref{freeway},
we obtain the following.
\begin{prop}\label{hyakusyo}
If $\scrM$ (resp. $\scrN$) is the Rees module of the underlying filtered $D$-module of a mixed Hodge module,
$\whmunk{\scrM}$ (resp. $\whmunk{\scrN}$) is the underlying $R^{\integ}$-module of an irregular mixed Hodge module.
\end{prop}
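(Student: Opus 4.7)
The plan is to exhibit the Fourier-Laplace transform as a composition of functors that each preserve $\mathrm{IrrMHM}$. I would first settle the compactified case ($\scrN$), and then deduce the open case ($\scrM$) by restriction to $E^{\vee}$.

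Consider the morphisms $\wt{p},\wt{q}$ associated with $\wt{E}\times_{X}\wt{E^{\vee}}$: the projection $\wt{p}$ is smooth, while $\wt{q}$ is projective because its fibres are the projective compactifications of the fibres of $\pi$. The proof for $\scrN$ would proceed in three steps. First, $\wt{p}^{*}\scrN$ underlies a mixed Hodge module and \emph{a fortiori} an irregular mixed Hodge module on $\wt{E}\times_{X}\wt{E^{\vee}}$, because smooth pullback preserves $\mathrm{IrrMHM}$ by Proposition~\ref{freeway}. Second, by Lemma~\ref{tabako}, tensoring the mixed Hodge module $\wt{p}^{*}\scrN$ with $\calE^{-\varphi/\lam}_{\wt{E}\times_{X}\wt{E^{\vee}}}$ yields an irregular mixed Hodge module, and the subsequent localization $[*(D_{\infty}\cup D_{\infty}^{\vee})]$ stays inside $\mathrm{IrrMHM}$ by the stability of this category under Beilinson's localization. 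Third, since $\wt{q}$ is projective, Proposition~\ref{freeway} tells us that $\TO{\wt{q}}_{*}$ preserves $\mathrm{IrrMHM}$, so its zeroth cohomology $\whmunk{\scrN}$ (see (\ref{kani2})) underlies an irregular mixed Hodge module on $\wt{E^{\vee}}$.

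For the statement about $\scrM$, I would use the fact that an algebraic mixed Hodge module $\calM$ on $E$ admits an extension $\calN$ to $\wt{E}$; letting $\scrN$ be the Rees module of the underlying filtered $D$-module of $\calN$, the previous step yields $\whmunk{\scrN}\in\mathrm{IrrMHM}(\wt{E^{\vee}};\QQ)$. By Lemma~\ref{santa}, $\whmunk{\scrM}=\whmunk{\scrN}|_{E^{\vee}}$, and the restriction along the open immersion $j^{\vee}\colon E^{\vee}\hookrightarrow\wt{E^{\vee}}$ is a non-characteristic pullback, which preserves $\mathrm{IrrMHM}$ by the last clause of Proposition~\ref{freeway}. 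Hence $\whmunk{\scrM}$ is the underlying $R^{\integ}$-module of an irregular mixed Hodge module on $E^{\vee}$.

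The main obstacle is the second step, namely verifying that $\mathrm{IrrMHM}$ is stable under the operation ``tensor with $\calE^{-\varphi/\lam}_{\wt{E}\times_{X}\wt{E^{\vee}}}$ followed by localization along $D_{\infty}\cup D_{\infty}^{\vee}$''. Lemma~\ref{tabako} provides exactly this assertion when one tensors with $\calE^{\varphi/\lam}_{X}$, but here the exponential is a meromorphic connection on a product with an explicit pole divisor; one needs to know that the resulting localized tensor product is itself an object of $\mathrm{IrrMHM}$ and not merely of $\mathrm{MTM}^{\mathrm{int}}_{\mathrm{good}}$. This compatibility is part of Sabbah's general framework for irregular Hodge modules in \cite{IrrHodge}, rather than something that follows from Proposition~\ref{freeway} alone; the remaining two operations, smooth pullback and projective pushforward, are supplied directly by that proposition.
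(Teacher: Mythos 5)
Your argument is essentially the paper's own proof, which consists precisely of the two ingredients you name: Lemma~\ref{tabako} for the exponential twist of the (pulled-back) mixed Hodge module and Proposition~\ref{freeway} for the smooth pullback, the projective pushforward $\TO{\wt{q}}_{*}$, and the restriction to $E^{\vee}$ that yields the non-compactified case via Lemma~\ref{santa} (a necessary detour, since $q$ itself is not projective). The subtlety you flag about the extra localization $[*(D_{\infty}\cup D_{\infty}^{\vee})]$ is treated the same way in the paper, namely implicitly, as part of the cited framework of \cite{IrrHodge} (see also \cite{MochiResc}), so your proposal matches the intended argument.
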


\begin{rem}
For a monodromic mixed Hodge module, we defined a mixed Hodge module whose underlying $D$-module is the Fourier-Laplace transform of its underlying $D$-module (Definition~\ref{ganbaruzo}).
As explained in Lemma~\ref{tabako}, we can regard it as an irregular mixed Hodge module.
On the other hand, we have another ``Fourier-Laplace transform'' made from a monodromic mixed Hodge module, which appeared in Proposition~\ref{hyakusyo}.  
So we have two definitions of ``the Fourier-Laplace transform of a monodromic mixed Hodge module'' in the category of irregular mixed Hodge modules.
In general, the two are different, but they are related to each other.
We will observe it in Subsection~\ref{doronokawa}.
\end{rem}

Let us see $\whmunk{\scrM}$ and $\whmunk{\scrN}$ have better descriptions.
We need the following lemma.

\begin{lem}\label{asyura}
For the underlying $R^{\integ}$-module $\scrN$ on $\wt{E}$ of a mixed twistor $D$-module,
we have
\[(\wt{p}^{*}\scrN\otimes \calE^{-\varphi/\lam})[*D_{\infty}\cup D_{\infty}^{\vee}]
=(\wt{p}^{*}\scrN\otimes \calE^{-\varphi/\lam})(*D_{\infty})[*D_{\infty}^{\vee}].\]
\end{lem}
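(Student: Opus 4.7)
The approach is to split the double localization $[*(D_\infty\cup D_\infty^\vee)]$ into two successive one-divisor localizations and then to show that the one along $D_\infty$ reduces to the naive localization, because of the irregularity of the exponential factor. Writing $\scrT:=\wt{p}^{*}\scrN\otimes \calE^{-\varphi/\lam}$, I would first invoke the compatibility of the $[*D]$-operation with a normal-crossing union of divisors for the underlying $R$-module of a mixed twistor $D$-module (see Section~3.2 of \cite{MTM}) to obtain
\[\scrT[*(D_\infty\cup D_\infty^\vee)] \simeq \bigl(\scrT[*D_\infty]\bigr)[*D_\infty^\vee].\]
The lemma is thus reduced to the identity
\[\scrT[*D_\infty] = \scrT(*D_\infty).\]

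The second step is a local analysis near $D_\infty$. Working near a smooth point with local defining equation $u$ for $D_\infty$, the pairing $\varphi$ has a pole of order $\geq 1$ along $\{u=0\}$, so $\calE^{-\varphi/\lam}$ is essentially irregular along $D_\infty$. The $R$-module computations of Sabbah-Yu (Proposition~3.3 of \cite{SabYuIrrFil}), extended and systematized in Chapter~11 of \cite{MTM}, show that for an exponential twist of this type the naive localization $\scrT(*D_\infty)$ is already strictly $\RR$-specializable along $D_\infty$ and that its $[*D_\infty]$-localization coincides with it -- essentially because the nearby cycles along $u=0$ are controlled by the exponential factor, which forces the relevant $V$-graded pieces to be of ``purely irregular'' type and hence absorbed into the stupid localization $(*D_\infty)$.

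The main obstacle will be the rigorous treatment near the crossing locus $D_\infty\cap D_\infty^\vee$, where $\varphi$ has poles in two transverse directions simultaneously; here both the factorization of $[*(D_\infty\cup D_\infty^\vee)]$ and the identification $\scrT[*D_\infty]=\scrT(*D_\infty)$ require Mochizuki's normal-crossing strict specializability machinery for mixed twistor $D$-modules. Once these local specializability statements are in place, applying $[*D_\infty^\vee]$ to both sides of the identity $\scrT[*D_\infty]=\scrT(*D_\infty)$ and using the first display yields the asserted equality.
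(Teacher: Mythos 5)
Your opening reduction is exactly the one the paper makes (its equality~(\ref{jingle})): since $\scrN$ underlies a mixed twistor $D$-module, the localization along $D_{\infty}\cup D_{\infty}^{\vee}$ may be computed as $[*D_{\infty}]$ followed by $[*D_{\infty}^{\vee}]$, so the whole lemma hinges on the single identity $(\wt{p}^{*}\scrN\otimes \calE^{-\varphi/\lam})[*D_{\infty}]=(\wt{p}^{*}\scrN\otimes \calE^{-\varphi/\lam})(*D_{\infty})$. The genuine gap is that you never prove this identity: you delegate it to Proposition~3.3 of \cite{SabYuIrrFil} and to general normal-crossing specializability machinery, but those results do not give it. Proposition~3.3 of \cite{SabYuIrrFil} concerns the bare exponential object and in fact \emph{defines} $\calE^{\varphi/\lam}$ as the $[*P]$-localization of the naive module; it says nothing about the tensor product of $\calE^{-\varphi/\lam}$ with the pullback of an arbitrary mixed twistor module being already localized along the pole divisor. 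Moreover, your heuristic --- ``$\varphi$ has a pole of order $\geq 1$ along $D_{\infty}$, so the twist is essentially irregular there'' --- fails precisely where the statement is delicate: in the chart $U_{1}\times U_{0}^{\vee}$ one has $\varphi=(\zeta_{1}+z_{2}\zeta_{2})/z_{0}$, so the zero locus $\{\zeta_{1}+z_{2}\zeta_{2}=0\}$ meets $D_{\infty}=\{z_{0}=0\}$, and along that locus (and along the crossing $D_{\infty}\cap D_{\infty}^{\vee}$, where $\varphi=(1+z_{2}\zeta_{2})/(z_{0}\zeta_{0})$) the exponential factor is not ``purely irregular'' in any naive pointwise sense. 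You acknowledge this obstacle yourself, but you do not resolve it, so the central step remains asserted rather than proved.

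What closes the gap in the paper is a short explicit operator argument that works uniformly at those bad points. For a local section $m\otimes 1$ of $\wt{p}^{*}\scrN\otimes\calE^{-\varphi/\lam}$, let $k_{0}\geq 0$ be minimal with $z_{0}^{k_{0}}(m\otimes 1)\in V^{>-1}_{z_{0}}$. On $U_{1}\times U_{0}^{\vee}$ the operator $\eth_{\zeta_{1}}$, and on $U_{1}\times U_{1}^{\vee}$ the operator $\zeta_{0}^{2}\eth_{\zeta_{0}}+\zeta_{2}\zeta_{0}\eth_{\zeta_{2}}$, contains no $z_{0}$ or $\pa_{z_{0}}$, hence preserves the $V$-filtration along $z_{0}$; through the twist it multiplies $m\otimes 1$ by $\mp 1/z_{0}$, so applying it to $z_{0}^{k_{0}}(m\otimes 1)$ produces $z_{0}^{k_{0}-1}(m\otimes 1)\in V^{>-1}_{z_{0}}$, contradicting minimality unless $k_{0}=0$. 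Thus the $V$-filtration along $z_{0}$ is constant, every section already lies in $V^{>-1}_{z_{0}}$, and $[*D_{\infty}]$ coincides with the stupid localization $(*D_{\infty})$. To make your route work you must either reproduce an argument of this kind or cite a precise statement applying to $\wt{p}^{*}\scrN\otimes\calE^{-\varphi/\lam}$ despite the indeterminacy locus of $\varphi$ on $D_{\infty}\cup D_{\infty}^{\vee}$; as written, the proposal is missing its key step.
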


\begin{proof}
This proof is inspired by the proof of Proposition~A.2.7 of \cite{SabPolar} and the one of Lemma~3.1 of \cite{SabYuIrrFil}.
We assume that $X$ is one point variety.
We can prove in the general case in the same way.
In this case, $E$ and $E^{\vee}$ are vector spaces of rank $n$.
Let $(z_{1},\dots,z_{n})$ be the coordinates of $E$
and $(\zeta_{1},\dots,\zeta_{n})$ its dual coordinates of $E^{\vee}$.
We write $\CC^n_{z}$ (resp. $\CC^n_{\zeta}$) for $E$ (resp. $E^{\vee}$) with the coordinates $(z_{1},\dots,z_{n})$ (resp. $(\zeta_{1},\dots,\zeta_{n})$).
Moreover,
$\PP^n_{z}$ (resp. $\PP^n_{\zeta}$) is the projective compactification of $E=\CC^n_{z}$ (resp. $E=\CC^n_{\zeta}$).
Remark that $D_{\infty}$ (resp. $D_{\infty}^{\vee}$) is the divisor $\PP^n_{z}\setminus \CC^n_{z}$ (resp. $\PP^n_{\zeta}\setminus \CC^n_{\zeta}$).

By the equality (\ref{jingle}),
it is enough to show
\begin{align}\label{denji}(\wt{p}^{*}\scrN\otimes \calE^{-\varphi/\lam})[*D_{\infty}]=(\wt{p}^{*}\scrN\otimes \calE^{-\varphi/\lam})(*D_{\infty}).
\end{align}

Let $[z_{0}:z_{1}:\dots :z_{n}]$ (resp. $[\zeta_{0}:\zeta_{1}:\dots:\zeta_{n}]$) be the homogeneous coordinates of $\Pnt$ (resp. $\Pnx$) and
$U_{i}:=\{z_{i}=1\}(\simeq \CC^n)$ (resp. $U_{j}^{\vee}:=\{\zeta_{j}=1\}$) an open subset of $\Pnt$ (resp. $\Pnx$) with the coordinates $(z_{0},\dots,z_{i-1},z_{i+1},\dots,z_{n})$ (resp. $(\zeta_{0},\dots,\zeta_{j-1},\zeta_{j+1},\dots,\zeta_{n})$) for $i,j=0,\dots, n$.
Then, $\{U_{i}\}_{i}$ (resp. $\{U_{j}^{\vee}\}_{j}$) is a covering of $\Pnt$ (resp. $\Pnx$).
Note that $D_{\infty}$ (resp. $D_{\infty}^{\vee}$) is defined by $z_{0}$ (resp. $\zeta_{0}$).
Therefore, the assertion is clear on $U_{0}\times \Pnx$.
So, it is enough to prove the equality (\ref{denji}) on $U_{1}\times U_{0}^{\vee}$, $U_{1}\times U_{1}^{\vee}$ and $U_{1}\times U_{2}^{\vee}$.
We may assume $n=2$.

\noindent \textit{On $U_{1}\times U_{0}^{\vee}$.}\quad
Let $V^{\bullet}_{z_{0}}(\wt{p}^{*}\scrN\otimes \calE^{-\varphi/\lam})$ be the Kashiwara-Malgarange filtration along $z_{0}$.  
For a section $m\in \scrN|_{U_{1}}$ and $m\otimes 1\in \wt{p}^{*}\scrN\otimes\calE^{-\varphi/\lam}|_{U_{1}\times U_{0}^{\vee}}$,
we have $z_{0}^k(m\times 1)\in V^{>-1}_{z_{0}}(\wt{p}^{*}\scrN\otimes_{\Pnt\times \Pnx}\calE^{-\varphi/\lam})$ for some $k\geq 0$ by an standard property of the $V$-filtration.
Let $k_{0}\geq 0$ be the smallest $k$ and assume $k_{0}\geq 1$. 
Remark that on $U_{1}\times U_{0}^{\vee}$ (with the coordinates $(z_{0},z_{2},\zeta_{1},\zeta_{2})$), we have $\varphi=(1/z_{0})(\zeta_{1}+z_{2}\zeta_{2})$.
Then, we have
\begin{align*}
(\eth_{\zeta_{1}})z^{k_{0}}_{0} ( m\otimes 1)
=&-(1/z_{0})\cdot z_{0}^{k_{0}} ( m\otimes 1)\\
=&-z_{0}^{k_{0}-1}(m\otimes 1).
\end{align*}
Since the operators $\eth_{\zeta_{1}}$ preserves the filtration $V^{\bullet}_{z_{0}}$,
the section $z_{0}^{k_{0}-1}(m\otimes 1)$ is also in $V^{>-1}_{z_{0}}$.
This contradicts the definition of $k_{0}$.
Therefore, we have $k_{0}=0$, i.e.
$m\otimes 1$ is in $V^{>-1}_{z_{0}}(\wt{p}^{*}\scrN\otimes \calE^{-\varphi/\lam})$.
This implies that $V^{\bullet}_{z_{0}}(\wt{p}^{*}\scrN\otimes \calE^{-\varphi/\lam})$ is constant on $U_{1}\times U_{0}^{\vee}$ and
we thus obtain the equality (\ref{denji}).

\noindent \textit{On $U_{1}\times U_{1}^{\vee}$.}\quad 
Similarly to the previous case, for a section $m\in \scrN|_{U_{1}}$,
we take the minimum $k_{0}$ such that $z_{0}^{k_{0}}(m\otimes 1)\in V^{>-1}_{z_{0}}(\wt{p}^{*}\scrN\otimes \calE^{-\varphi/\lam})$ and assume $k_{0}\geq 1$. 
We have $\varphi=(1/(z_{0}\zeta_{0}))(1+z_{2}\zeta_{2})$ on $U_{1}\times U_{1}^{\vee}$.
Then, 
we have
\begin{align*}
(\zeta_{0}^2\eth_{\zeta_{0}}+\zeta_{2}\zeta_{0}\eth_{\zeta_{2}})z^{k_{0}}_{0} ( m\otimes 1)
= z_{0}^{k_{0}-1}(m\otimes 1).
\end{align*}
Therefore, $k_{0}$ is $0$ and thus by the same argument in the previous case, we obtain the equality (\ref{denji}).

\noindent \textit{On $U_{1}\times U_{2}^{\vee}$.}\quad
We can prove it in the same way.

\end{proof}

As mentioned in Remark~\ref{wakeittemo},
$\TO{q}_{*}$ is not $q_{\dag}$ in general.
Nevertheless, the following holds.

\begin{cor}\label{siawase}
For the underlying $R^{\integ}$-module $\scrM$ of a mixed twistor $D$-module on $E$, we have
\[\whmunk{\scrM}=H^{0}q_{\dag}(p^{*}\scrM\otimes \calE^{-\varphi/\lam}).\]
\end{cor}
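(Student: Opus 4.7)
The plan is to reduce to the Fourier-Laplace transform on the compactification $\wt{E}$ via Lemma~\ref{santa}, apply Lemma~\ref{asyura} to rewrite the localization, and then use proper base change together with the identification of naive algebraic localization with the $\dag$-pushforward along an affine open immersion. The content of the statement is that although $\TO{q}_{*}$ differs from the $D$-module pushforward $q_{\dag}$ in general (Remark~\ref{wakeittemo}), the exponential factor $\calE^{-\varphi/\lam}$ is sufficiently tame in the $E$-direction that the two coincide in this case.

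Concretely, I would choose an extension $\scrN$ of $\scrM$ to an $R^{\integ}$-module on $\wt{E}$ underlying a mixed twistor $D$-module (which exists by the algebraicity of mixed twistor $D$-modules used implicitly throughout the paper). By Lemma~\ref{santa} we have $\whmunk{\scrM}=(\whmunk{\scrN})|_{E^{\vee}}$, and combining the definition of $\whmunk{\scrN}$ with Lemma~\ref{asyura},
\[\whmunk{\scrN}=H^{0}\wt{q}_{\dag}\bigl((\wt{p}^{*}\scrN\otimes \calE^{-\varphi/\lam})(*D_{\infty})[*D_{\infty}^{\vee}]\bigr).\]
Next, applying base change for the Cartesian square obtained by restricting $\wt{q}$ over $E^{\vee}\hookrightarrow \wt{E^{\vee}}$ (valid because $\wt{q}$ is proper), and noting that $D_{\infty}^{\vee}$ is disjoint from $\wt{E}\times_{X}E^{\vee}$ so that the $[*D_{\infty}^{\vee}]$ becomes trivial after restriction, I would obtain
\[\whmunk{\scrM}=H^{0}(\wt{q}_{E})_{\dag}\bigl((\wt{p}^{*}\scrN\otimes \calE^{-\varphi/\lam})(*D_{\infty})|_{\wt{E}\times_{X}E^{\vee}}\bigr),\]
where $\wt{q}_{E}\colon \wt{E}\times_{X}E^{\vee}\to E^{\vee}$ is the projection. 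Finally, for the affine open immersion $\iota_{1}\colon E\times_{X}E^{\vee}\hookrightarrow \wt{E}\times_{X}E^{\vee}$ the naive algebraic localization $(*D_{\infty})$ coincides with $\iota_{1,\dag}$, and using $\wt{q}_{E}\circ \iota_{1}=q$ one concludes $\whmunk{\scrM}=H^{0}q_{\dag}(p^{*}\scrM\otimes \calE^{-\varphi/\lam})$.

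The main obstacle is ensuring that each identification continues to hold at the level of integrable $R$-modules rather than merely $D$-modules; in particular, checking that the naive localization $(*D_{\infty})$ agrees with $\iota_{1,\dag}$ in the $R$-module sense. This is essentially the phenomenon already exploited in the proof of Lemma~\ref{asyura}, and follows from the strict specializability of $\wt{p}^{*}\scrN\otimes \calE^{-\varphi/\lam}$ along $D_{\infty}$ together with the coherence of $\scrN$.
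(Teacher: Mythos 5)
Your proposal is correct and follows essentially the same route as the paper's own proof: extend $\scrM$ to $\scrN$ on $\wt{E}$, apply Lemma~\ref{santa} and Lemma~\ref{asyura}, dispose of $[*D_{\infty}^{\vee}]$ after restricting over $E^{\vee}$ (the paper instead commutes it with $\wt{q}_{\dag}$ via Lemma~3.2.12 of \cite{MTM} or Corollary~9.7.1 of \cite{MHMProj} before restricting, which amounts to the same thing), and then identify the naive localization $(*D_{\infty})$ with the pushforward along the affine open immersion $\iota_{1}$ so that $\wt{q}_{E}\circ\iota_{1}=q$ finishes the computation. The only inaccuracy is your justification of this last identification: for quasi-coherent algebraic $R$-modules it is the projection formula for the affine open immersion (so purely a quasi-coherence statement), not a consequence of strict specializability along $D_{\infty}$, which enters only through Lemma~\ref{asyura} that you have already invoked.
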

\begin{proof}
Let $\scrN$ be the underlying $R^{\integ}$-module of a mixed twistor $D$-module on $\wt{E}$ whose restriction $\scrN|_{E}$ is $\scrM$.
Then, by Lemma~\ref{asyura} we have 
\begin{align*}
    \whmunk{\scrM}&=H^{0}\wt{q}_{\dag}(\wt{p}^{*}\scrN\otimes \calE^{-\varphi/\lam}[*D_{\infty}\cup D_{\infty}^{\vee}])|_{E^{\vee}}\\
&=H^{0}\wt{q}_{\dag}(\wt{p}^{*}\scrN\otimes \calE^{-\varphi/\lam}(*D_{\infty})[* D_{\infty}^{\vee}])|_{E^{\vee}}.
\end{align*}
By Lemma~3.2.12 of \cite{MTM} or Corollary~9.7.1 of \cite{MHMProj} again,
the last term is equal to
\begin{align*}
    H^{0}\wt{q}_{\dag}((\wt{p}^{*}\scrN\otimes \calE^{-\varphi/\lam})(*D_{\infty}))[*D_{\infty}^{\vee}]|_{E^{\vee}},
\end{align*}
i.e. 
\begin{align*}\label{diamond}
    H^{0}\wt{q}_{\dag}((\wt{p}^{*}\scrN\otimes \calE^{-\varphi/\lam})(*D_{\infty}))|_{E^{\vee}}.
\end{align*}
This is equal to $H^{0}{q}_{\dag}(({p}^{*}\scrM\otimes \calE^{-\varphi/\lam})$, which completes the proof.
\end{proof}


Finally, let us describe the Fourier-Laplace transform when $X$ is affine and $E$ is trivial.
Fix the trivialization $E\simeq X\times \CC^n$.
As above, we write $\CC^n_{z}$ (resp. $\CC^n_{\zeta}$) for $\CC^n$ with the coordinates $(z_{1},\dots,z_{n})$ (resp. the dual coordinates $(\zeta_{1},\dots,\zeta_{n})$).
Due to Corollary~\ref{hiphop},
in order to know $\whmunk{\scrN}$,
we need to know $\whmu{\scrN|_{E}}$.
Under the assumption above, since $E$ and $E^{\vee}$ are affine,
we can identify the $R^{\integ}$-modules on them with the modules of their global sections.
Therefore, we sometimes write $\whmu{\scrN|_{E}}$ for $\Gamma(E;\whmu{\scrN|_{E}})$.

\begin{lem}\label{walkin}
Let $\scrM$ be the underlying $R^{\integ}$-module of an mixed twistor $D$-module on $E=X\times \CC^n_{z}$ for a smooth affine variety $X$.
Then, as a $\CC[\lam]$-module,
$\Gamma(X\times \CC^n_{\zeta}; \whmunk{\scrM})$ is isomorphic to $\Gamma(X\times\CC^n_{z};\scrM)$
and under this identification,
the vector field $\theta$ on $X$ acts as the same $\theta$,
$\zeta_{i}$ acts as $\eth_{z_{i}}$
and $\eth_{\zeta_{i}}$ acts as $-z_{i}$.
Moreover, $\lam^2\pa_{\lam}$ acts as $\lam^2\pa_{\lam}+\lam\eu_{\Cnt}$,
where $\eu_{\Cnt}=\sum_{i=1}^nz_{i}\pa_{z_{i}}$.
\end{lem}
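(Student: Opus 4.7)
The plan is to reduce to the relative de Rham complex of the affine projection $q\colon X\times\CC^n_z\times\CC^n_\zeta\to X\times\CC^n_\zeta$ and read off each action explicitly. By Corollary~\ref{siawase}, $\whmunk{\scrM}=H^0q_{\dag}(p^{*}\scrM\otimes\calE^{-\varphi/\lam}_{E\times_X E^\vee})$, and since everything is affine and quasi-coherent, this pushforward is the cokernel of the top differential in the Koszul/de Rham complex in the $z$-direction. A global section is therefore represented by a class of the form $\omega\otimes m\otimes e^{-\varphi/\lam}$ with $m\in\Gamma(X\times\CC^n_z;\scrM)$, modulo the subspace generated by $\eth_{z_i}$-images. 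The Leibniz rule, combined with $\lam\pa_{z_i}(e^{-\varphi/\lam})=-\zeta_i e^{-\varphi/\lam}$, yields the basic relation $\overline{\eth_{z_i}m\otimes e^{-\varphi/\lam}}=\overline{\zeta_i m\otimes e^{-\varphi/\lam}}$ in the cokernel, which simultaneously identifies the quotient with $\Gamma(X\times\CC^n_z;\scrM)$ as a $\CC[\lam]$-module and shows that $\zeta_i$ acts as $\eth_{z_i}$.

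The remaining non-integrable actions follow by direct computation: any vector field $\theta$ on $X$ commutes with $e^{-\varphi/\lam}$, so it acts as the original $\theta$; and $\eth_{\zeta_i}=\lam\pa_{\zeta_i}$ applied to $m\otimes e^{-\varphi/\lam}$ produces $m\otimes(-z_i)e^{-\varphi/\lam}$, so $\eth_{\zeta_i}$ acts as $-z_i$. For the integrable action I would use the defining property $\lam^2\pa_\lam\cdot e^{-\varphi/\lam}=\varphi e^{-\varphi/\lam}$ of the exponential twist; the Leibniz rule then gives a contribution $\varphi m=\sum_i z_i\zeta_i m$ on top of $\lam^2\pa_\lam m$. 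Pulling $\zeta_i$ out and then pushing it through via the basic relation, this contribution becomes $\sum_i\eth_{z_i}(z_i m)=\sum_i z_i\eth_{z_i}m+n\lam m=\lam\eu_{\Cnt}m+n\lam m$, using $[\eth_{z_i},z_i]=\lam$ in $R_X$.

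The main obstacle is that the naive computation thus produces an extra $n\lam m$, and the clean formula $\lam^2\pa_\lam+\lam\eu_{\Cnt}$ requires this to cancel. It does cancel, but only after using the correct $R$-module convention for the top form in the relative de Rham complex: the proper normalization is $\eth_{z_1}^{*}\wedge\cdots\wedge\eth_{z_n}^{*}=\lam^{-n}dz_1\wedge\cdots\wedge dz_n$, and the Leibniz rule for $\lam^2\pa_\lam$ acting on this form contributes the extra factor $\lam^2\pa_\lam(\lam^{-n})=-n\lam\cdot\lam^{-n}$, which precisely cancels the stray $n\lam m$. Fixing this $\lam^{-n}$ twist so that the asserted identification is literally as $\CC[\lam]$-modules (and not merely up to inverting $\lam$) is the only real technical point; once set in the conventions of \cite{MTM} or \cite{DRS}, all four actions in the statement reduce to an unwinding of the Leibniz rule, and strictness is automatic from the quasi-coherence of $\scrM$ and the affineness of $q$.
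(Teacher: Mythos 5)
Your proposal is correct and takes essentially the same route as the paper's proof: reduce via Corollary~\ref{siawase} to the relative de Rham (Koszul) complex in the $z$-direction, identify $\whmunk{\scrM}$ with the cokernel $\scrM[\zeta]/\sum_{i}\eth_{z_{i}}\scrM[\zeta]$ of the twisted $\eth_{z_{i}}$-actions, and read off the four actions from the representatives $m$. In particular, your $\lam^{-n}$ normalization of the top form is exactly the paper's convention $\scrA^{k}=\Omega^{k}_{\Cnt\times\Cnx/\Cnx}\otimes\CC[\lam]\lam^{-k}$ (and Definition~\ref{konpeitou}), which is what cancels the stray $n\lam m$ and yields $\lam^{2}\pa_{\lam}+\lam\eu_{\Cnt}$.
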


\begin{proof}
We may assume $X$ is one point variety, i.e. $E=\CC^n_{z}$
We set 
$\scrM_{1}:={p}^{*}\scrM\otimes \calE^{-\varphi/\lam}$ and
$\scrA^k:=\Omega_{\Cnt\times \Cnx/\Cnx}^k\otimes \CC[\lam]\lam^{-k}$, where $\Omega_{\Cnt\times \Cnx/\Cnx}^k$ is the sheaf of relative holomorphic $k$-forms.
Then, the $R$-module structure of $\scrM_{1}$ defines the connection
$\scrM_{1}\to \scrA^1\otimes \scrM_{1}$ ($m\mapsto \sum_{i=1}^{n}dz_{i}/\lam\otimes \eth_{z_{i}}m$).
This morphism is naturally extended to the relative de Rham complex
\[\scrM_{1}\to \scrA^1\otimes \scrM_{1} \to \dots\to \scrA^n\otimes \scrM_{1},\]
where the rightmost term is of degree $0$.
Fixing the isomorphisms ${\scrA^1}\simeq \bigoplus_{i=1}^{n}O_{\Cnt\times \Cnx}dz_{i}/\lam$ and $\scrA^k\simeq \bigwedge^k(\bigoplus_{i=1}^{n}O_{\Cnt\times \Cnx}dz_{i}/\lam)$,
one can see that this complex is the Koszul complex of the $R$-module $\scrM$ with respect to the regular sequence $\eth_{z_{1}},\dots,\eth_{z_{n}}$.
Therefore, the only non-trivial cohomology is the $0$-th one and
that is
\[\scrM_{1}/\sum_{i=1}^{n}\eth_{i}\scrM_{1}\]
by the identification $\scrA^n\simeq O_{\Cnt\times \Cnx}dz_{1}\wedge\dots \wedge dz_{n}/\lam^n$.
The pushforward of an $R$-module by a projection can be expressed as a relative de Rham cohomology.
Therefore, by Corollary~\ref{siawase} we have
\begin{align*}
\whmunk{\scrM}\simeq& H^0q_{*}(\scrM_{1}\to \scrA^1\otimes \scrM_{1} \to \dots\to \scrA^n\otimes \scrM_{1})\\
\simeq& q_{*}(\scrM_{1}/\sum_{i=1}^{n}\eth_{i}\scrM_{1})
\end{align*}
in the category of $R^{\integ}$-modules.
Taking the global sections of $q_{*}(\scrM_{1}/\sum_{i=1}^{n}\eth_{i}\scrM_{1})$,
since we have $\Gamma(\Cnt\times\Cnx;\scrM_{1})=\Gamma(\Cnt;\scrM)[\zeta]$ (with the twisted actions by $\calE^{-\varphi/\lam}$),
we obtain
\[\Gamma(\Cnx;\whmunk{\scrM})\simeq \Gamma(\Cnt;\scrM)[\zeta]/\sum_{i=1}^n\eth_{z_{i}}\Gamma(\Cnt;\scrM)[\zeta].\]
By looking at the actions on $\Gamma(\Cnt;\scrM)[\zeta]$,
we thus get the second assertion.
\end{proof}

Combining Lemma~\ref{walkin} and Corollary~\ref{hiphop}, 
we now understand the $R^{\integ}$-module structure of $\whmunk{\scrN}$.

\begin{defi}\label{konpeitou}
In the setting of Lemma~\ref{walkin},
for a section $m\in \Gamma(X\times \CC^n_{z};{\scrM})$
we denote by $\whmunk{m}$ the corresponding section of $\whmunk{\scrM}$ under the isomorphism $\Gamma(X\times \CC^n_{z};\scrM)\simeq \Gamma(X\times \CC_{\zeta}^n;\whmunk{\scrM})$.
\end{defi}

In terms of the terminology in the proof of Lemma~\ref{walkin}, $\whmunk{m}$ is the class represented by a section $(dz_{1}\wedge \dots \wedge dz_{n}/\lam^n)\otimes m\in \scrA^n\otimes \scrM_{1}$.
Then, by Lemma~\ref{walkin}, we have
\begin{align}
\begin{aligned}\label{gideon}
\lam\cdot\whmunk{m}=&\whmu{\lam m},\\
\zeta_{i}\cdot\whmunk{m}=&\whmu{\eth_{z_{i}}m},\\
\eth_{\zeta_{i}}\cdot\whmunk{m}=&-\whmu{z_{i}m},\quad \mbox{and}\\
\lam^2\pa_{\lam}\cdot\whmunk{m}=&\whmu{(\lam^2\pa_{\lam}+\lam\eu_{\Cnt})m}.
\end{aligned}
\end{align}

\begin{rem}
The section $\whmunk{m}$ depends on the choice of the trivialization of $E$.
However, similarly to Definition~\ref{tetuko},
we can define an $O_{X\times \CC_{\lam}}$-module $\whmunk{\mathscr{F}}$ of $\pi_{*}^{\vee}\whmunk{\scrM}$ for an $O_{X\times \CC_{\lam}}$-submodule $\mathscr{F}$ of $\pi_{*}\scrM$.
\end{rem}

\subsection{Fourier-Laplace transforms of a monodromic mixed Hodge modules} \label{otukaresamadesu}
In this subsection, we will compute the irregular Hodge filtration of the Fourier-Laplace transform of a monodromic mixed Hodge module.
To simplify the description, we will consider the Fourier-Laplace transforms of mixed Hodge modules on $\CC^n_{z}$ (with the coordinates $(z_{1},\dots,z_{n})$).
However, we can generalize the results to the Fourier-Laplace transform on a general vector bundle (see Remark~\ref{pikmin}).

We will use the notation defined in the previous subsection.
Let $\calM=(M,F_{\bullet}M,K,W_{\bullet}K)$ be a mixed Hodge module on $\Cnt$ and $\calN=(N,F_{\bullet}N,K',W_{\bullet}K')$ the pushforward of $\calM$ by the inclusion $j\colon \Cnt\hookrightarrow \Pnt$ (since our module is always algebraic, we can consider such an object).
We denote by $\scrM$ (resp. $\scrN$) the corresponding $R^{\integ}_{\Cnt}$-module (resp. $R^{\integ}_{\Pnt}$) of $(M,F_{\bullet}M)$ (resp. $(N,F_{\bullet}N)$) (see Example~\ref{hatou}). 
Remark that we have $\whmunk{\scrN}|_{\Cnx}=\whmunk{\scrM}$ and $\whmunk{N}|_{\Cnx}=\whmunk{M}$ (see Corollary~\ref{hiphop}). 
By Proposition~\ref{hyakusyo},
$\whmunk{\scrN}$ is also the underlying $R^{\integ}$-module of an irregular mixed Hodge module. 
Then, as explained in subsection~\ref{subirrH},
$\Xi_{\mathrm{DR}}(\whmunk{\scrN})=\whmunk{N}$ is equipped with the irregular Hodge filtration $\irrF_{\bullet}\whmunk{N}$ (Definition~\ref{soten}).

In the following, we assume that $M$ is monodromic.
Then, by Proposition~\ref{decomprop} and Theorem~\ref{Fdecomp} we have the decompositions 
\begin{align*}
M=\bigoplus_{\beta\in \RR}M^{\beta}\quad \mbox{and}\quad F_{\bullet}M=\bigoplus_{\beta\in \RR}F_{\bullet}M^{\beta},
\end{align*}
where $M^{\beta}=\bigcup_{l\geq 0}\Ker(\eu_{\Cnt}-\beta)^l$.
Therefore, we have 
\[\scrM=\bigoplus_{\substack{\beta\in \RR\\ p\in \ZZ}}F_{p}M^{\beta}\lam^{p}.\]
$\CC^n_{\zeta}$ is the dual space of $\CC^n_{z}$ with the dual coordinates $(\zeta_{1},\dots,\zeta_{n})$.
Let $[\zeta_{0}:\dots:\zeta_{n}]$ be the homogeneous coordinates of $\Pnx(=\PP(\CC^n_{\zeta}))$ and $\{U_{i}^{\vee}\}_{i=0}^n$ ($U_{i}^{\vee}=\{\zeta_{i}\neq 0\}(\simeq \CC^n)\subset \Pnx$) the affine open covering of $\Pnx$.
Note that $U_{0}^{\vee}=\Cnx$.
To understand the irregular Hodge filtration $\irrF_{\bullet}\whmunk{N}$,
we will compute the restriction of $\irrF_{\bullet}\whmunk{N}$ to each affine open subset $U_{i}^{\vee}$ respectively.

\subsubsection{{The irregular Hodge filtration on $\whmunk{M}$}}
First, we compute $\irrF_{\bullet}\whmunk{N}|_{\Cnx}(=\irrF_{\bullet}\whmunk{M})$.
In order to do that, we need to compute $V_{\tau}^{-\alpha}(\RES{(\whmunk{\scrM}}))$, 
where $\RES{(\whmunk{\scrM})}$ is the rescaled module of $\whmunk{\scrM}$ and $\tau$ the rescaling parameter (see Subsection~\ref{subirrH}).
Since $\Cnx\times \CC_{\tau}\times \CC_{\lam}$ is affine, 
we identify the sheaves on it with the modules of global sections of them and they are represented by the same symbol by abuse of notation.
Then, $\RES{(\whmunk{\scrM})}$ is $O_{\Cnx}[\tau^{\pm 1},\lam]\otimes_{O_{\Cnx}[\lam]} \whmunk{\scrM}$ as an $O_{\Cnx\times \CC_{\tau}\times \CC_{\lam}}(*\{\tau=0\})$-module with an $R^{\integ}$-module action defined as (\ref{wingspan}).
Moreover, recall that (a global section of) $\whmunk{\scrM}$ can be expressed as $\whmunk{m}$ for $m\in \scrM$ (see Definition~\ref{konpeitou}).

\begin{lem}\label{golgo}
In this setting, we have
\begin{align}\label{yoccyan}
V_{\tau}^{\gamma}(\RES{(\whmunk{\scrM})})=\bigoplus_{\substack{i,p\in \ZZ, \beta\in \RR\\i-p-\beta\geq \gamma}}\tau^{i}\otimes \whmu{F_{p}M^{\beta}\lam^{p}}.
\end{align}
\end{lem}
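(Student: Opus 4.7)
The plan is to denote the right-hand side of~\eqref{yoccyan} by $U^\gamma$ and verify that $\{U^\gamma\}_{\gamma\in \RR}$ satisfies the characterizing properties of the Kashiwara-Malgrange filtration of $\RES{(\whmunk{\scrM})}$ along $\tau=0$. By Theorem~\ref{Fdecomp} the sum on the right is a direct-sum decomposition, so each $U^\gamma$ is a well-defined $O_{\Cnx\times\CC_\tau\times\CC_\lam}(*\{\tau=0\})$-submodule of $\RES{(\whmunk{\scrM})}$. What remains is (i) stability of $U^\gamma$ under $V^0_\tau R^{\integ}_{\RES{\Cnx}}$, (ii) the eigenvalue structure of $\tau\eth_\tau$ on $\GR^\gamma_U$, and (iii) the bijectivity of $\tau$ and $\eth_\tau$ on graded pieces in the appropriate ranges.

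The core computation is the action of $\eth_\tau$ on a generator $\tau^i \otimes \whmu{m_0\lam^p}$ with $m_0\in F_pM^\beta$. Combining the rescaling formulas~\eqref{wingspan} with the Fourier formulas~\eqref{gideon}, a direct calculation gives
\[\eth_\tau\bigl(\tau^i \otimes \whmu{m_0\lam^p}\bigr) = (i-p-\beta)\,\tau^i\otimes \whmu{m_0\lam^{p+1}} - \tau^i \otimes \whmu{(\eu_\Cnt-\beta)m_0\cdot\lam^{p+1}}.\]
Both terms on the right have index $i-(p+1)-\beta = \gamma-1$: the first by inspection, and the second because $(\eu_\Cnt-\beta)m_0$ lies in $F_{p+1}M^\beta$ (the operator $\eu_\Cnt$ raises the Hodge index by $1$ while preserving the monodromic summand $M^\beta$). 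Hence $\eth_\tau U^\gamma \subset U^{\gamma-1}$. Multiplying on the left by $\tau$ and using $\lam\cdot(\tau^i \otimes \whmu{m_0\lam^p}) = \tau^{i+1}\otimes \whmu{m_0\lam^{p+1}}$, one finds that $\tau\eth_\tau$ equals $(i-p-\beta)\lam$ plus an ``error'' coming from $(\eu_\Cnt-\beta)$; iterating and using that $\eu_\Cnt-\beta$ is locally nilpotent on $M^\beta$ shows $\tau\eth_\tau - \gamma\lam$ acts locally nilpotently on $\GR^\gamma_U$.

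Stability under the remaining generators of $V^0_\tau R^{\integ}_{\RES{\Cnx}}$ is verified in the same spirit. The identities $\zeta_j\cdot = \whmu{\eth_{z_j}\cdot}$, $\eth_{\zeta_j}\cdot = -\whmu{z_j\cdot}$, and $\lam^2\pa_\lam\cdot = \whmu{(\lam^2\pa_\lam+\lam\eu_\Cnt)\cdot}$ coming from~\eqref{gideon}, together with the monodromic Hodge shifts $z_j F_pM^\beta \subset F_pM^{\beta+1}$ and $\pa_{z_j}F_pM^\beta \subset F_{p+1}M^{\beta-1}$ (Remark~\ref{kochikame} with Theorem~\ref{Fdecomp}), each imply that the corresponding operator preserves the integer $i-p-\beta$; multiplication by $\tau$ shifts it by $+1$. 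Thus $\tau U^\gamma \subset U^{\gamma+1}$ and $U^\gamma$ is a $V^0_\tau R^{\integ}_{\RES{\Cnx}}$-submodule of the rescaled module.

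To finish, I would check the bijectivity of $\tau\colon \GR^\gamma_U \to \GR^{\gamma+1}_U$ for $\gamma > -1$ and $\eth_\tau\colon \GR^\gamma_U \to \GR^{\gamma-1}_U$ for $\gamma < 0$; the former is immediate from the $\tau^i$-indexing, whereas the latter reduces, after unwinding the direct-sum decomposition, to the strict $\RR$-specializability of $(M,F_\bullet M)$ in each $z_j$-direction, encoded by Lemma~\ref{kitune}. I expect this last step to be the main obstacle: one must carefully reconcile the monodromic indexing by $\beta$ with the Hodge indexing by $p$ under the Fourier-Laplace shift $(p,\beta)\mapsto(p+1,\beta-1)$ induced by $\pa_{z_j}$, and verify surjectivity on the graded pieces. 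Once this is in place, the uniqueness of the Kashiwara-Malgrange filtration along $\tau=0$ forces $U^\bullet = V^\bullet_\tau(\RES{(\whmunk{\scrM})})$, which is the desired equality~\eqref{yoccyan}.
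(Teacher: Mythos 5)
Your core computations agree with the paper's: the identity $\eth_{\tau}(\tau^{i}\otimes\whmu{m\lam^{p}})=\tau^{i}\otimes\whmu{(i-p-\eu_{\Cnt})m\lam^{p+1}}$, the resulting local nilpotency of $\tau\eth_{\tau}-(i-p-\beta)\lam$ on each summand, and the stability of the candidate $U^{\gamma}$ under $V^{0}_{\tau}R^{\integ}_{\RES{\Cnx}}$ are exactly the computations in the paper. The gap is in your final step, and it is a real one. The surjectivity of $\eth_{\tau}\colon \GR^{\gamma}_{U}\to\GR^{\gamma-1}_{U}$ that you propose to verify is false in general: $\eth_{\tau}$ sends the $(i,p,\beta)$-summand onto $\tau^{i}\otimes\whmu{\bigl((i-p-\beta)-(\eu_{\Cnt}-\beta)\bigr)F_{p}M^{\beta}\lam^{p+1}}$, which sits inside the $(i,p+1,\beta)$-summand $\tau^{i}\otimes\whmu{F_{p+1}M^{\beta}\lam^{p+1}}$ and is a proper subspace whenever $F_{p}M^{\beta}\subsetneq F_{p+1}M^{\beta}$ (already for the constant Hodge module on $\CC_{z}$, where $\eu_{\Cnt}-\beta=0$ on each $M^{\beta}$, no graded piece with a nonzero $F_{p+1}/F_{p}$ is hit). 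Lemma~\ref{kitune} cannot supply the missing surjectivity: it is a rank-one statement in the $z$-variable and gives no relation of the form $F_{p+1}M^{\beta}=(\gamma-(\eu_{\Cnt}-\beta))F_{p}M^{\beta}$ along the rescaling direction $\tau$. Fortunately, no such condition is part of what characterizes the Kashiwara--Malgrange filtration; conditions of this type belong to the strict specializability package, which for $\RES{(\whmunk{\scrM})}$ is an input from the general theory (Proposition~\ref{hyakusyo} together with the definition of irregular mixed Hodge modules), not something to re-derive for your candidate.

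The uniqueness you invoke at the end is also not free: uniqueness of the $V$-filtration is a statement about filtrations that are in particular coherent over $V^{0}_{\tau}R^{\integ}_{\RES{\Cnx}}$, and for $R$-modules its proof uses the strictness of the graded pieces of the true $V$-filtration to rule out torsion phenomena in $\lam$; you check neither coherence of $U^{\gamma}$ nor do you use strictness anywhere. The efficient repair, which is the paper's route, is to exploit the a priori strict specializability directly: since each $\GR^{\gamma}_{V_{\tau}}(\RES{(\whmunk{\scrM})})$ is strict, a nonzero section killed by a power of $\tau\eth_{\tau}-\gamma\lam$ lies in $V^{\gamma}_{\tau}$ and not in $V^{>\gamma}_{\tau}$. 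Your computation then gives at once that each $\tau^{i}\otimes\whmu{F_{p}M^{\beta}\lam^{p}}$ with $i-p-\beta\geq\gamma$ lies in $V^{\gamma}_{\tau}$, and conversely any section of $V^{\gamma}_{\tau}$ decomposes into its $(i,p,\beta)$-components, and the component with the smallest value of $i-p-\beta$ would contradict the criterion if that value were less than $\gamma$. Replacing your step (iii) and the appeal to uniqueness by this two-line argument closes the gap.
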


\begin{proof}
We remark that since $\RES{(\whmunk{\scrM})}$ is strictly $\RR$-specializable along $\tau$ (by the definitions of the well-rescalability and the irregular Hodge module),
Kashiwara-Malgrange filtration $V_{\tau}^{\bullet}(\RES{(\whmunk{\scrM})})$ exists and each graded piece $\GR_{V}^{\gamma}(\RES{(\whmunk{\scrM})})$ is strict.
Therefore, if a non-zero section $s\in \RES{(\whmunk{\scrM})}$ is killed by $(\tau\eth_{\tau}-\gamma\lam)^l$ for some $l\geq 0$,
the section $s$ is in $V^{\gamma}_{\tau}(\RES{(\whmunk{\scrM})})$ and
not in $V^{>\gamma}_{\tau}(\RES{(\whmunk{\scrM})})$.

Let $m$ be a section of $F_{p}M^{\beta}$.
Then, $m\lam^p$ is in $\scrM$.
By (\ref{gideon}),
we have
\[\lam^2\pa_{\lam}\whmu{m\lam^p}=\whmu{(\eu_{\Cnt}+p)m\lam^{p+1}}.\]
Therefore, by (\ref{wingspan}), for $i\geq 0$ we have
\begin{align}
\tau\eth_{\tau}(\tau^i\otimes \whmu{m\lam^p})
&=(i\lam\tau^i+\tau^{i+1}\eth_{\tau})(1\otimes \whmu{m\lam^p})\notag\\
&=i\tau^{i+1}\otimes \whmu{m\lam^{p+1}}
-\tau^{i+1}\otimes \lam^2\pa_{\lam}(\whmu{m\lam^p})\notag\\
&=
i\tau^{i+1}\otimes \whmu{m\lam^{p+1}}
-\tau^{i+1}\otimes \whmu{(p+\eu_{\Cnt}) m\lam^{p+1}}.
\end{align}
Hence, we have
\[(\tau\eth_{\tau}-(i-p-\beta)\lam)(\tau^i\otimes \whmu{m\lam^p})
=-\tau^{i+1}\otimes \whmu{(\eu_{\Cnt}-\beta)m\lam^{p+1}}.\]
By induction, for $l\geq 1$ we obtain
\[(\tau\eth_{\tau}-(i-p-\beta)\lam)^l(1\otimes \whmu{m\lam^p})=(-1)^l\tau^{i+l}\otimes \whmu{(\eu_{\Cnt}-\beta)^lm\lam^{p+l}}.\]
Therefore, $\tau^i\otimes \whmu{m\lam^p}\in \RES{(\whmunk{\scrM})}$ is killed by 
$(\tau\eth_{\tau}-(i-p-\beta)\lam)^l$ for sufficiently large $l\geq 0$
and hence $\tau^i\otimes \whmu{m\lam^p}$ is in $V^{i-p-\beta}_{\tau}(\RES{(\whmunk{\scrM})})$ for the reason stated at the beginning of this proof.
We thus conclude that the RHS of (\ref{yoccyan}) is contained in the LHS.

Any section $s\in V^{\gamma}_{\tau}(\RES{(\whmunk{\scrM})})$ is a sum of some sections $\tau^{i}\otimes \whmu{m\lam^p}$ for some $i\in \NN$, $p\in \ZZ$ and $m\in F_{p}M^{\beta}$.
Let $s=s_{\gamma_{1}}+\dots+s_{\gamma_{k}}$ be the decomposition of $s$ such that $s_{\gamma_{j}}(\neq 0)\in \sum_{i-p-\beta=\gamma_{j}}\tau^i\otimes \whmu{F_{p}M^{\beta}\lam^p}$ and $\gamma_{1}\leq \dots \leq \gamma_{k}$.
As we proved, $s_{\gamma_{1}}$ is in $V^{\gamma_{1}}_{\tau}(\RES{(\whmunk{\scrM})})$ and not in $V^{>\gamma_{1}}_{\tau}(\RES{(\whmunk{\scrM})})$.
Hence, $\gamma_{1}$ is greater than $\gamma$.
This implies that the LHS of (\ref{yoccyan}) is contained in the RHS.

This completes the proof.
\end{proof}

Recall that
for $\alpha\in [0,1)$
we can regard $\GR(i_{\tau=\lam}^{*}V^{-\alpha}_{\tau}(\RES{(\whmunk{\scrM})}))$ as a submodule of
$\GR(i_{\tau=\lam}^{*}(\RES{(\whmunk{\scrM})}))=\GR({\pi^{\circ}}^{*}\whmunk{M})\simeq O_{\Cnx}[\lam^{\pm 1}]\otimes_{O_{\Cnx}} \whmunk{M}$ (see Subsection~\ref{subirrH}).
Note that the isomorphism
\begin{align}\label{orutega}
\GR(i_{\tau=\lam}^{*}(\RES{(\whmunk{\scrM})}))\simeq O_{\Cnx}[\lam^{\pm 1}]\otimes_{O_{\Cnx}} \whmunk{M}
\end{align}
is defined so that (a section of the LHS represented by) the section $1\otimes \whmu{m\lam^p}\in \RES{(\whmunk{\scrM})}$ corresponds to $1\otimes \whmunk{m}$.
Moreover, the Rees module of the irregular Hodge filtration $\irrF_{\alpha+\bullet}\whmunk{M}$ is equal to $\GR(i_{\tau=\lam}^{*}V^{-\alpha}_{\tau}(\RES{(\whmunk{\scrM})}))$ (see Definition~\ref{soten}). 
Therefore, in order to know $\irrF_{\alpha+\bullet}\whmunk{M}$ it remains to see the image of 
$\GR(i_{\tau=\lam}^{*}V^{-\alpha}_{\tau}(\RES{(\whmunk{\scrM})}))$ under the isomorphism (\ref{orutega}).

\begin{thm}\label{prosyuto}
For $\alpha\in [0,1)$ and $\beta\in \RR$,
we have
\[\irrF_{\alpha+p}\whmunk{M}=\bigoplus_{\beta\in \RR}\whmu{F_{p+\lfloor\alpha-\beta\rfloor}M^{\beta}}.\]
\end{thm}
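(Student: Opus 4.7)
The plan is to combine Lemma~\ref{golgo} with Definition~\ref{soten} and to transport the description of $V_{\tau}^{-\alpha}(\RES{(\whmunk{\scrM})})$ through the isomorphism (\ref{orutega}), so that the $\lam$-adic grading yields the Rees module of $\irrF_{\alpha+\bullet}\whmunk{M}$. Specializing Lemma~\ref{golgo} at $\gamma=-\alpha$ gives
\[V_{\tau}^{-\alpha}(\RES{(\whmunk{\scrM})})=\bigoplus_{\substack{i,q\in\ZZ,\beta\in\RR\\i-q-\beta\geq-\alpha}}\tau^{i}\otimes\whmu{F_{q}M^{\beta}\lam^{q}}.\]
I would then pull back along $i_{\tau=\lam}$: by Lemma~\ref{effel}, the generator $\tau^{i}\otimes\whmu{m\lam^{q}}$ is sent to $\lam^{i}\otimes\whmunk{m}$ inside $O_{\Cnx}[\lam^{\pm 1}]\otimes_{O_{\Cnx}}\whmunk{M}$, so the image $\tilde{L}$ of $i_{\tau=\lam}^{*}V_{\tau}^{-\alpha}(\RES{(\whmunk{\scrM})})$ under (\ref{orutega}) is the span of these images.

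Since $\tilde{L}$ is homogeneous with respect to the $\lam$-adic grading, its degree-$k$ piece is obtained by restricting to $i=k$, yielding
\[\tilde{L}^{k}=\lam^{k}\otimes \sum_{\substack{q\in\ZZ,\beta\in\RR\\k-q-\beta\geq-\alpha}}\whmu{F_{q}M^{\beta}}.\]
The constraint $k-q-\beta\geq-\alpha$ with $q\in\ZZ$ rewrites as $q\leq k+\lfloor\alpha-\beta\rfloor$, and since $F_{\bullet}M^{\beta}$ is increasing the inner sum collapses to $\whmu{F_{k+\lfloor\alpha-\beta\rfloor}M^{\beta}}$. By Proposition~\ref{fable} the subspaces $\whmu{M^{\beta}}$ for distinct $\beta$ lie in distinct summands of the decomposition $\whmunk{M}=\bigoplus_{\beta}(\whmunk{M})^{\beta}$, so the sum over $\beta$ is direct. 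Applying Definition~\ref{soten}, which identifies $\tilde{L}$ with the Rees module $\bigoplus_{k}\lam^{k}\otimes\irrF_{\alpha+k}\whmunk{M}$, would then yield the asserted formula.

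The main point to verify carefully is the compatibility of (\ref{orutega}) with the $\lam$-adic filtration, i.e.\ that $\tau^{i}\otimes\whmu{m\lam^{q}}$ indeed transports to the degree-$i$ element $\lam^{i}\otimes\whmunk{m}$ and not to some other $\lam$-shift; this is exactly the content of Lemma~\ref{effel}, so no new difficulty arises. Once this is in place, the argument reduces to the bookkeeping with the inequality $i-q-\beta\geq-\alpha$, relying on Theorem~\ref{Fdecomp} (which underlies the $\beta$-decomposition used in Lemma~\ref{golgo}) and Proposition~\ref{fable} (which ensures the directness of the final sum).
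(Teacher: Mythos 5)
Your proposal is correct and follows essentially the same route as the paper: specialize Lemma~\ref{golgo} at $\gamma=-\alpha$, transport through the identification (\ref{orutega}) via Lemma~\ref{effel}, rewrite the inequality $i-p-\beta\geq-\alpha$ as $p\leq i+\lfloor\alpha-\beta\rfloor$ to collapse the sum, and read off the Rees module per Definition~\ref{soten}. The only cosmetic difference is that you make the directness over $\beta$ explicit via Proposition~\ref{fable}, which the paper leaves implicit.
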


\begin{proof}
By Lemma~\ref{golgo},
we have
\begin{align*}
i_{\tau=\lam}^{*}V^{-\alpha}_{\tau}(\RES{(\whmunk{\scrM})})
=
i_{\tau=\lam}^{*}(\bigoplus_{\substack{i,p\in \ZZ, \beta\in \RR\\ i-p-\beta\geq -\alpha}}\tau^i\otimes \whmu{F_{p}M^{\beta}\lam^p}).
\end{align*}
Under the identification (\ref{orutega}),
the RHS is (as a subset of $O_{\Cnx}[\lam^{\pm 1}]\otimes \whmunk{M}$)
\begin{align}\label{smasher}
\bigoplus_{i\in \ZZ}\sum_{\substack{p\in \ZZ, \beta\in \RR\\ i-p-\beta\geq -\alpha}}\lam^i\otimes \whmu{F_{p}M^{\beta}}.
\end{align}
Note that the condition $i-p-\beta\geq -\alpha$ is equivalent to
\[i+\lfloor\alpha-\beta\rfloor\geq p,\]
where $\lfloor\alpha-\beta\rfloor$ is the largest integer less than or equal to $\alpha-\beta$.
Therefore, (\ref{smasher}) is equal to
\[\bigoplus_{{i\in \ZZ, \beta\in \RR}}\lam^i\otimes \whmu{F_{i+\lfloor\alpha-\beta\rfloor}M^{\beta}}.\]
This implies the desired result.
\end{proof}

\subsubsection{The irregular Hodge filtration of $\whmunk{M}$ at infinity}
Next, let us consider the irregular Hodge filtration on $\whmunk{N}|_{U_{i}^{\vee}}$ for $i=1,\dots,n$.
Since they can all be computed in the same way, we will consider the case where $i=n$.
In this subsection, we assume that $n\geq 2$.
However this assumption is not essential; the argument proceeds in exactly the same way also for the case $n=1$.
Actually, all the results hold also in that case (after changing the notations appropriately).
Let $(\zeta_{0}',\zeta_{1}',\dots,\zeta_{n-1}')$ be the coordinates of $\UV_{n}(\simeq \CC^n)$
so that the point of $\Pnx$ corresponding to $(\zeta_{0}',\zeta_{1}',\dots,\zeta_{n-1}')$ is $[\zeta_{0}':\zeta_{1}':\dots:\zeta_{n-1}':1]$.
Then, 
we have
$\UV_0\cap \UV_n=\{\zeta_{0}'\neq 0\}=\{\zeta_{n}\neq 0\}$, $D_{\infty}^{\vee}\cap \UV_n=\{\zeta_{0}'=0\}$,
$\zeta_{0}'=1/\zeta_{n}$ and $\zeta_{i}'=\zeta_{i}/\zeta_{n}$ for $i=1,\dots,n-1$ on $\UV_0\cap \UV_n$.
Moreover, we have 
\begin{align}
\eu_{\UV_0}(=\sum_{i=1}^n\zeta_{i}\pa_{\zeta_{i}})=-\zeta_{0}'\pa_{\zeta_{0}'}.
\end{align}

Let $j_{n}^{\vee}$ be the inclusion $\UV_0\cap \UV_n\hookrightarrow \UV_n$.
By Lemma~\ref{hiphop},
we have
\begin{align}\label{macd}
(\whmunk{\scrN})|_{\UV_n}\simeq (j^{\vee}_{n})_{\dag}((\whmunk{\scrM})|_{\UV_{0}\cap \UV_n})[*\{\zeta_{0}'=0\}].
\end{align}
Since $\UV_{n}\cap \UV_{0}$ (resp. $\UV_{0}$) is affine, 
we may identify $(\whmunk{\scrM})|_{\UV_{0}\cap \UV_n}$ (resp. $(j^{\vee}_{n})_{\dag}((\whmunk{\scrM})|_{\UV_{0}\cap \UV_n})$) 
with the module of its global sections
and regard it as a $\Gamma(\UV_{0}\cap \UV_n; R_{\UV_{0}\cap \UV_n})$-module
(resp. a $\Gamma(\UV_n; R_{\UV_n})$-module).
Then,
we can write $(j^{\vee}_{n})_{\dag}(\whmunk{\scrM}|_{\UV_{0}\cap \UV_n})$ in an algebraic way as
\[\whmunk{\scrM}\otimes_{\CC[\zeta_{n}]}\CC[\zeta_{n}^{\pm 1}].\]
Moreover, we can write the RHS of (\ref{macd}) as
\[\whmunk{\scrM}\otimes_{\CC[\zeta_{n}]}\CC[\zeta_{n}^{\pm 1}][*\{\xizp=0\}].\]
Its underlying $D$-module is $\whmunk{M}\otimes_{\CC[\zetn]}\CC[\zetn^{\pm 1}]$.
Remark that 
the underlying $D$-module of $(\whmunk{\scrM})|_{\UV_{0}\cap \UV_n}$
is also expressed as the same $\whmunk{M}\otimes_{\CC[\zetn]}\CC[\zetn^{\pm 1}]$ (under the identification of the sheaf of module and the module of its global sections).
However, in the following, we always regard $\whmunk{M}\otimes_{\CC[\zetn]}\CC[\zetn^{\pm 1}]$ as the underlying $D$-module of $(j^{\vee}_{n})_{\dag}((\whmunk{\scrM})|_{\UV_{0}\cap \UV_n})$,
i.e. $\whmunk{M}\otimes_{\CC[\zetn]}\CC[\zetn^{\pm 1}]$ is a $D$-module on $\UV_{n}$ (not $\UV_{0}\cap \UV_{n}$).
Moreover, for a section $m\in M$,
the section $\whmunk{m}\otimes 1\in \whmunk{M}\otimes_{\CC[\zetn]}\CC[\zetn^{\pm 1}]$ is simply denoted by $\whmunk{m}$ if there is no confusion. 

\begin{lem}\label{kinomi}
If we regard $\UV_n$ as a trivial line bundle $\CC_{\zeta_{0}'}\times (\CC_{\zeta_{1}'}\times\dots \times \CC_{\zeta_{n-1}'})$ over $(\CC_{\zeta_{1}'}\times \dots \times \CC_{\zeta_{n-1}'})$,
the $D$-module $\whmunk{M}\otimes_{\CC[\zetn]}\CC[\zetn^{\pm 1}]$ is monodromic on this line bundle, i.e.
we have
\[\whmunk{M}\otimes_{\CC[\zetn]}\CC[\zetn^{\pm 1}]=\bigoplus_{\beta\in \RR}(\whmunk{M}\otimes_{\CC[\zetn]}\CC[\zetn^{\pm 1}])^{\beta}_{\xizp},\]
where we set 
\[(\whmunk{M}\otimes_{\CC[\zetn]}\CC[\zetn^{\pm 1}])^{\beta}_{\zeta'_{0}}:=\bigcup_{l\geq 0}\Ker(\zeta_{0}'\pa_{\zeta_{0}'}-\beta)^{l}(\subset \whmunk{M}\otimes_{\CC[\zetn]}\CC[\zetn^{\pm 1}]).\]
In particular,
for $\beta\in \RR$ we have
\[\GR^{\beta}_{V}(\whmunk{M}\otimes_{\CC[\zetn]}\CC[\zetn^{\pm 1}])\simeq (\whmunk{M}\otimes_{\CC[\zetn]}\CC[\zetn^{\pm 1}])^{\beta}_{\xizp},\]
as $\Gamma(\CC_{\zeta_{1}'}\times\dots \times \CC_{\zeta_{n-1}'};O)$-modules,
where $\GR^{\beta}_{V}$ is the graded piece of the Kashiwara-Malgrange filtration $V^{\bullet}_{\xizp}$ along $\xizp=0$.
Moreover, for $\beta\in \RR$ we have
\begin{align}\label{ohaka}
(\whmunk{M}\otimes_{\CC[\zetn]}\CC[\zetn^{\pm 1}])^{\beta}_{\zeta'_{0}}=\sum_{\substack{j\in \ZZ,\gamma\in \RR\\ j+\gamma+n=\beta}}{\zeta_{0}'}^{j}\whmu{M^{\gamma}},
\end{align}
where 
${\zeta_{0}'}^{j}\whmu{M^{\gamma}}$ is the subset of $\whmunk{M}\otimes_{\CC[\zetn]}\CC[\zetn^{\pm 1}]$ generated by $\{\xizp^j\whmunk{m}(=\whmunk{m}\otimes \xizp^j)\in \whmunk{M}\otimes_{\CC[\zetn]}\CC[\zetn^{\pm 1}]\ |\ m\in M^{\gamma}\}$
and the RHS of (\ref{ohaka}) is the subset
$\{\sum_{i=1}^{k}s_{i}\ |\ s_{i}\in \xizp^{j_{i}}\whmu{M^{\gamma_{i}}}\ (j_{i}\in \ZZ, \gamma_{i}\in \ZZ , j_{i}+\gamma_{i}+n=\beta)\}$
of $\whmunk{M}\otimes_{\CC[\zetn]}\CC[\zetn^{\pm 1}]$
(in other words, it is not the $\Gamma(U_{n}^{\vee};O)$-module generated by $\{\xizp^j\whmu{M^{\gamma}}\}$, but the $\Gamma(\CC_{\zeta_{1}'}\times\dots \times \CC_{\zeta_{n-1}'}; O)$-module generated by them).
\end{lem}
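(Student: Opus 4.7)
The plan is to deduce everything by transferring the monodromic decomposition of $\whmunk{M}$ on $\Cnx$ (Proposition~\ref{fable}) to the $\xizp$-direction on $\UV_n$ via the identity $\eu_{\UV_0} = -\xizp\pa_{\xizp}$, which holds on the overlap $\UV_0 \cap \UV_n$.

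First I would compute, for $m \in M^{\gamma}$, the generalized eigenvalue of $\whmu{m}$ under $\xizp\pa_{\xizp}$. By Proposition~\ref{fable}, $\whmu{m}$ lies in $(\whmunk{M})^{-\gamma-n}$, hence is annihilated by a power of $\eu_{\UV_0} + \gamma + n$. Using $\eu_{\UV_0} = -\xizp\pa_{\xizp}$, this is a power of $\xizp\pa_{\xizp} - (\gamma+n)$. The commutator $[\xizp\pa_{\xizp}, \xizp^{j}] = j\xizp^{j}$ then shows that $\xizp^{j}\whmu{m}$ lies in the generalized $(j+\gamma+n)$-eigenspace of $\xizp\pa_{\xizp}$, giving the inclusion $\supseteq$ in (\ref{ohaka}).

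For the reverse inclusion and the monodromicity along $\xizp$, I note that every section of $\whmunk{M} \otimes_{\CC[\zetn]} \CC[\zetn^{\pm 1}]$ can be written as a finite sum $\sum \xizp^{j}\whmu{m_{j,\gamma}}$ with $m_{j,\gamma} \in M^{\gamma}$, using the decomposition $M = \bigoplus_{\gamma} M^{\gamma}$ of Proposition~\ref{decomprop}. Each summand then belongs to a specific generalized eigenspace, and generalized eigenspaces for distinct eigenvalues form a direct sum (verified exactly as in the proof of Proposition~\ref{decomprop}). This simultaneously establishes the direct sum decomposition, the monodromicity along $\xizp$, and the inclusion $\subseteq$ in (\ref{ohaka}). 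The final identification $\GR^{\beta}_{V}(\whmunk{M} \otimes_{\CC[\zetn]} \CC[\zetn^{\pm 1}]) \simeq (\whmunk{M} \otimes_{\CC[\zetn]} \CC[\zetn^{\pm 1}])^{\beta}_{\xizp}$ then follows from Proposition~\ref{express} applied to this monodromic module, viewed on the trivial line bundle $\UV_n \simeq \CC_{\xizp} \times \CC^{n-1}_{\zeta_1', \ldots, \zeta_{n-1}'}$. The main subtlety is the bookkeeping in translating between the two Euler operators across $\UV_0 \cap \UV_n$ and verifying that the elementary generators $\xizp^{j}\whmu{m}$ span the whole localized module, but both steps are routine given Propositions~\ref{fable} and~\ref{express}.
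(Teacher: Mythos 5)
Your proposal is correct and follows essentially the same route as the paper: the paper's proof is exactly the computation $\xizp\pa_{\xizp}(\xizp^{j}\whmunk{m})=\xizp^{j}\whmu{(j+\eu_{\Cnt}+n)m}$ (i.e.\ your combination of Proposition~\ref{fable}, the identity $\eu_{\UV_0}=-\xizp\pa_{\xizp}$, and the commutator $[\xizp\pa_{\xizp},\xizp^{j}]=j\xizp^{j}$), from which the spanning of the localized module by the sections $\xizp^{j}\whmunk{m}$, the directness of distinct generalized eigenspaces, and Proposition~\ref{express} for the graded pieces yield all the assertions, just as you describe.
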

\begin{proof}
As we already remarked, 
for a section $m\in M$,
we write $\whmunk{m}$ for the section $\whmunk{m}\otimes 1\in \whmunk{M}\otimes_{\CC[\zetn]}\CC[\zetn^{\pm 1}]$.
For $m\in M^{\gamma}$ and $j\in \ZZ$, consider a section ${\xizp}^j\whmunk{m}$.
Then, we have
\begin{align*}
\xizp\pa_{\xizp}({\xizp}^j\whmunk{m})&=
\xizp^j(j+\xizp\pa_{\xizp})\whmunk{m}\\
&=\xizp^j(j-\eu_{\UV_0})\whmunk{m}\\
&=\xizp^j\whmu{(j+\eu_{\Cnt}+n)m}.
\end{align*}
This implies the desired assertions.
\end{proof}

\begin{rem}\label{together}
Similary to $\sum_{\substack{j\in \ZZ,\gamma\in \RR\\ j+\gamma+n=\beta}}{\zeta_{0}'}^{j}\whmu{M^{\gamma}}$ above,
for a family $\{A_{i}\}_{i}$ of subsets of $(\whmunk{M}\otimes_{\CC[\zetn]}\CC[\zetn^{\pm 1}])^{\beta}_{\zeta'_{0}}$
we denote by $\sum_{i}A_{i}$ the $\Gamma(\CC_{\zeta_{1}'}\times\dots \times \CC_{\zeta_{n-1}'}; O)$-submodule of 
$(\whmunk{M}\otimes_{\CC[\zetn]}\CC[\zetn^{\pm 1}])^{\beta}_{\zeta'_{0}}$
generated by $\{A_{i}\}_{i}$, not the 
$\Gamma(U_{n}^{\vee};O)$-module generated by them,
when no confusion arises.
\end{rem}

For $\beta\in\RR$, we define a positive integer $j_{\beta}\in \ZZ_{\geq 0}$ by
\[j_{\beta}:=\max\{\lceil-\beta\rceil -n-1,0\}.\]
We will use the following elementary lemma.
\begin{lem}\label{mikkiikai}
\begin{enumerate} 
    \item[(i)] For any $\beta\in \RR$ and $j\in \ZZ_{\geq 0}$, the inequality $j+\beta+n\geq -1$ holds if and only if the inequality $j\geq j_{\beta}$ holds.
    \item[(ii)] For any $\beta\in \RR$, we have 
    \[j_{\beta}+\beta+n\geq -1.\]
    \item[(iii)] For $\beta\in \RR$ and $r\geq 0$, 
if $j_{\beta}+\beta+n\geq r$,
we have $j_{\beta}=j_{\beta-1}=\cdots =j_{\beta-r-1}=0$.
\end{enumerate}

\end{lem}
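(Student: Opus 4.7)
The three claims are purely arithmetic statements about the function $\beta \mapsto j_{\beta}=\max\{\lceil-\beta\rceil-n-1,0\}$, so the plan is short and computational rather than conceptual.

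For part (i), I would simply unwind the definitions. The inequality $j+\beta+n\geq -1$ is equivalent to $j\geq -\beta-n-1$; since $j\in\ZZ$, this is equivalent to $j\geq \lceil-\beta-n-1\rceil=\lceil-\beta\rceil-n-1$. Combining this with the built-in constraint $j\geq 0$ gives $j\geq j_{\beta}$, and the reverse direction is the same chain of equivalences read backwards.

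Part (ii) is immediate from (i) applied to $j=j_{\beta}$, since $j_{\beta}\geq 0$ by construction and $j_{\beta}\geq \lceil-\beta\rceil-n-1$ trivially.

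For part (iii), I would argue by contradiction/case analysis on which term defines $j_{\beta}$. Suppose $j_{\beta}+\beta+n\geq r\geq 0$. If $j_{\beta}=\lceil-\beta\rceil-n-1>0$ (the ``non-trivial'' case), then
\[
j_{\beta}+\beta+n \;=\; \lceil-\beta\rceil+\beta-1 \;\in\; [-1,0),
\]
since $\lceil-\beta\rceil+\beta=\lceil-\beta\rceil-(-\beta)\in[0,1)$. This contradicts $j_{\beta}+\beta+n\geq r\geq 0$. Hence $j_{\beta}=0$, which forces $\beta+n\geq r$, i.e.\ $\beta\geq r-n$. Then for any integer $k$ with $0\leq k\leq r+1$, we have $\beta-k\geq r-n-(r+1)=-n-1$, so $-(\beta-k)\leq n+1$ and hence $\lceil-(\beta-k)\rceil\leq n+1$, giving $\lceil-(\beta-k)\rceil-n-1\leq 0$ and therefore $j_{\beta-k}=0$. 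This covers $j_{\beta}=j_{\beta-1}=\cdots=j_{\beta-r-1}=0$, as required.

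There is no real obstacle here; the only minor subtlety is recognizing in the argument for (iii) that $\lceil-\beta\rceil+\beta$ always lies in $[0,1)$, which is what forces the ``non-trivial'' case to be excluded once $r\geq 0$. The whole lemma is a bookkeeping statement designed to feed into the subsequent description of $V^{\bullet}_{\xizp}$ at infinity, so I would keep the write-up as brief as the three short paragraphs sketched above.
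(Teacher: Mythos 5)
Your proof is correct, and it follows essentially the same elementary route as the paper: unwind the definition of $j_{\beta}$ using the fact that $\lceil-\beta\rceil+\beta\in[0,1)$, rule out the case $j_{\beta}=\lceil-\beta\rceil-n-1>0$ under the hypothesis of (iii), and then check directly that $j_{\beta-k}=0$ for $0\leq k\leq r+1$ (the paper only checks $j_{\beta-r-1}=0$ and invokes monotonicity of $\beta\mapsto j_{\beta}$, an immaterial difference). No gaps.
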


\begin{cor}\label{wakuseinine}
We have
\begin{align}\label{soutai}
    V^{-1}_{\xizp}(\whmunk{\scrM}\otimes_{\CC[\zetn]}\CC[\zetn^{\pm 1}][*\{\xizp=0\}])
=
\sum_{\substack{j\geq 0, p\in \ZZ, \beta\in \RR\\ j\geq j_{\beta}}}\xizp^{j}\whmu{F_{p}M^{\beta}\lam^{p}}.
\end{align}
\end{cor}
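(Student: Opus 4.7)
Plan. My plan is to combine the $R$-module analog of Proposition~\ref{localizfac}(iv) --- which reduces $V^{-1}$ of a $[*D]$-localization to $\xizp^{-1}V^{0}$ of the unlocalized module --- with a Rees-level refinement of the monodromic decomposition from Lemma~\ref{kinomi}. First, I would observe that by Proposition~\ref{hyakusyo}, $\whmunk{\scrN}$ underlies an irregular mixed Hodge module, and in particular is strictly $\RR$-specializable along $D_\infty^{\vee}$, so the $R$-module analog of Proposition~\ref{localizfac}(iv) gives
\[
V^{-1}_{\xizp}\bigl(\whmunk{\scrM}\otimes_{\CC[\zetn]}\CC[\zetn^{\pm 1}][*\{\xizp=0\}]\bigr) = \xizp^{-1}V^{0}_{\xizp}\bigl(\whmunk{\scrM}\otimes_{\CC[\zetn]}\CC[\zetn^{\pm 1}]\bigr).
\]
This reduces the task to computing $V^{0}_{\xizp}$ of the right-hand unlocalized module.

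For that computation, I would refine the Euler-eigenvalue argument in the proof of Lemma~\ref{kinomi} to the Rees level. Using the formulas (\ref{gideon}) in place of the corresponding $D$-module rules, one verifies that for $m\in F_{p}M^{\gamma}$ the section $\xizp^{j}\whmu{m\lam^{p}}$ satisfies $(\xizp\eth_{\xizp}-(j+\gamma+n)\lam)^{l}\cdot \xizp^{j}\whmu{m\lam^{p}}=0$ for $l$ sufficiently large, so it lies in the Rees-level $\xizp$-eigenspace with eigenvalue $(j+\gamma+n)\lam$. Combining this with Theorem~\ref{Fdecomp}, which ensures $F_{p}M=\bigoplus_{\gamma}F_{p}M^{\gamma}$, the Rees analog of Proposition~\ref{express} yields
\[
V^{0}_{\xizp}\bigl(\whmunk{\scrM}\otimes_{\CC[\zetn]}\CC[\zetn^{\pm 1}]\bigr) = \sum_{\substack{j\in\ZZ,\,p\in\ZZ,\,\gamma\in\RR\\ j+\gamma+n\geq 0}}\xizp^{j}\whmu{F_{p}M^{\gamma}\lam^{p}},
\]
and multiplying by $\xizp^{-1}$ gives the analogous sum with the condition $j+\gamma+n\geq -1$.

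The final step is to restrict the range of $j$ to $j\geq 0$. The Fourier relation $\zetn\cdot\whmu{m\lam^{p}}=\whmu{\eth_{z_{n}}m\lam^{p}}$ from (\ref{gideon}) rewrites as $\xizp^{-1}\whmu{m\lam^{p}}=\whmu{(\pa_{z_{n}}m)\lam^{p+1}}$ in the localized module, and since $\pa_{z_{n}}\colon F_{p}M^{\gamma}\to F_{p+1}M^{\gamma-1}$ by strictness of the Hodge filtration and the monodromic grading, iterating shows $\xizp^{-k}\whmu{F_{p}M^{\gamma}\lam^{p}}\subseteq\whmu{F_{p+k}M^{\gamma-k}\lam^{p+k}}$. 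This is a $j=0$ term with shifted $(p,\gamma)$ and unchanged eigenvalue $j+\gamma+n$; hence every negative-$j$ contribution already sits inside the $j\geq 0$ part of the sum. Finally, Lemma~\ref{mikkiikai}(i) identifies the combined conditions $j\geq 0$ and $j+\gamma+n\geq -1$ with $j\geq j_{\gamma}$, giving the desired formula.

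The main obstacle will be the first step: establishing the $R$-module formula $V^{-1}(\mathscr{L}[*\{\xizp=0\}])=\xizp^{-1}V^{0}(\mathscr{L})$ for $\mathscr{L}=\whmunk{\scrM}\otimes_{\CC[\zetn]}\CC[\zetn^{\pm 1}]$ underlying the restriction of the irregular Hodge module $\whmunk{\scrN}$ to $\UV_{n}$. This is the transposition of Proposition~\ref{localizfac}(iv) to the integrable mixed twistor $D$-module framework; once it is available, the remaining ingredients --- the Euler-eigenvalue computation refined by Theorem~\ref{Fdecomp} and the combinatorics of Lemma~\ref{mikkiikai}(i) --- are routine.
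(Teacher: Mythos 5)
Your proposal is correct and follows essentially the same route as the paper: reduce $V^{-1}$ of the localized module to $\xizp^{-1}V^{0}$ via the localization properties of the Kashiwara--Malgrange filtration, identify the $V$-filtration of the unlocalized module through the $\xizp\eth_{\xizp}$-eigenvalue decomposition at the Rees level (the paper cites Lemma~\ref{kinomi} here; your explicit computation with (\ref{gideon}) just fills in that step), absorb the negative powers of $\xizp$ using $\xizp^{-1}\whmu{m\lam^{p}}=\whmu{\eth_{z_{n}}m\lam^{p}}$ and $\eth_{z_{n}}F_{p}M^{\beta}\lam^{p}\subset F_{p+1}M^{\beta-1}\lam^{p+1}$, and conclude with Lemma~\ref{mikkiikai}(i). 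No gaps.
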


\begin{proof}
By Proposition~\ref{localizfac}, for $\gamma>-1$ we have 
\begin{align*}
V^{\gamma}_{\xizp}(\whmunk{\scrM}\otimes_{\CC[\zetn]}\CC[\zetn^{\pm 1}][*\{\xizp=0\}])
=&V^{\gamma}_{\xizp}(\whmunk{\scrM}\otimes_{\CC[\zetn]}\CC[\zetn^{\pm 1}]),
\end{align*}
By Lemma~\ref{kinomi}, the RHS is equal to
\[\sum_{\substack{j,p\in \ZZ, \beta\in \RR\\ j+\beta+n\geq \gamma}}\xizp^{j}\whmu{F_{p}M^{\beta}\lam^{p}}.
\]
Since $V^{-1}_{\xizp}(\whmunk{\scrM}\otimes_{\CC[\zetn]}\CC[\zetn^{\pm 1}][*\{\xizp=0\}])
$ is $\xizp^{-1}V^{0}_{\xizp}(\whmunk{\scrM}\otimes_{\CC[\zetn]}\CC[\zetn^{\pm 1}][*\{\xizp=0\}])$ (see Proposition~\ref{localizfac}),
we have
\begin{align}\label{osumou}
V^{-1}_{\xizp}(\whmunk{\scrM}\otimes_{\CC[\zetn]}\CC[\zetn^{\pm 1}][*\{\xizp=0\}])
=
\sum_{\substack{j,p\in \ZZ, \beta\in \RR\\ j+\beta+n\geq -1}}\xizp^{j}\whmu{F_{p}M^{\beta}\lam^{p}}.
\end{align}
Moreover, for $j<0$, $\beta\in \RR$ with $j+\beta+n\geq -1$ and $p\in \ZZ$, 
we have
\begin{align*}
\xizp^{j}\whmu{F_{p}M^{\beta}\lam^{p}}
=&
\zeta_n^{-j}\whmu{F_{p}M^{\beta}\lam^{p}}\\
=&
\whmu{\eth_{z_{n}}^{-j}F_{p}M^{\beta}\lam^{p}}\\
\subset &
\whmu{F_{p-j}M^{\beta+j}\lam^{p-j}}\\
= &
\xizp^{0}\whmu{F_{p-j}M^{\beta+j}\lam^{p-j}}.
\end{align*}
The last term is contained in
the RHS of (\ref{soutai}).
Therefore, together with (i) of Lemma~\ref{mikkiikai}, the RHS of (\ref{osumou}) is equal to the RHS of (\ref{soutai}).
This completes the proof.
\end{proof}


Recall again that $\whmunk{\scrM}\otimes_{\CC[\zetn]}\CC[\zetn^{\pm 1}][*\{\xizp=0\}]$ is a submodule of $\whmunk{\scrM}\otimes_{\CC[\zetn]}\CC[\zetn^{\pm 1}]$ generated by $V^{-1}_{\xizp}(\whmunk{\scrM}\otimes_{\CC[\zetn]}\CC[\zetn^{\pm 1}])$ (Proposition~\ref{localizfac}).
Therefore, by Corollary~\ref{wakuseinine}, we have the following.

\begin{cor}\label{meikyu}
We have
\begin{align*}
\whmunk{\scrM}\otimes_{\CC[\zetn]}\CC[\zetn^{\pm 1}][*\{\xizp=0\}]
=\sum_{\substack{k, j\geq 0, p\in \ZZ, \beta\in \RR\\ j\geq j_{\beta}}}\eth_{\xizp}^k\xizp^{j}\whmu{F_{p}M^{\gamma}\lam^{p}}.
\end{align*}
\end{cor}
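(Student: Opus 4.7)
The plan is to deduce the statement from Corollary~\ref{wakuseinine} via the Rees-module analog of the localization formula in Proposition~\ref{localizfac}(v). The argument has two steps.

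First, I would translate the Hodge-filtration formula
\[F_{p}(N[*D]) = \sum_{k\geq 0}\partial_{\xizp}^{k}F_{p-k}V^{\geq -1}_{\xizp}N\]
from Proposition~\ref{localizfac}(v) into the Rees-module identity
\[\whmunk{\scrM}\otimes_{\CC[\zetn]}\CC[\zetn^{\pm 1}][*\{\xizp=0\}]
=\sum_{k\geq 0}\eth_{\xizp}^{k}\,V^{-1}_{\xizp}\bigl(\whmunk{\scrM}\otimes_{\CC[\zetn]}\CC[\zetn^{\pm 1}][*\{\xizp=0\}]\bigr),\]
using the paper's convention that $V^{-1}=V^{\geq -1}$. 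Since $\eth_{\xizp}=\lam\partial_{\xizp}$, a generator $\partial_{\xizp}^{k}m\lam^{p}$ of the right-hand side of the filtered formula with $m\in F_{p-k}V^{\geq -1}_{\xizp}N$ equals $\eth_{\xizp}^{k}(m\lam^{p-k})$ on the Rees side, so the translation is entirely mechanical bookkeeping.

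Second, I would plug in the explicit formula from Corollary~\ref{wakuseinine},
\[V^{-1}_{\xizp}\bigl(\whmunk{\scrM}\otimes_{\CC[\zetn]}\CC[\zetn^{\pm 1}][*\{\xizp=0\}]\bigr)
=\sum_{\substack{j\geq 0,\, p\in\ZZ,\, \beta\in\RR\\ j\geq j_{\beta}}}\xizp^{j}\whmu{F_{p}M^{\beta}\lam^{p}}.\]
Distributing $\eth_{\xizp}^{k}$ over the sum (interpreted in the sense of Remark~\ref{together}), this yields
\[\whmunk{\scrM}\otimes_{\CC[\zetn]}\CC[\zetn^{\pm 1}][*\{\xizp=0\}]
=\sum_{\substack{k,j\geq 0,\, p\in\ZZ,\, \beta\in\RR\\ j\geq j_{\beta}}}\eth_{\xizp}^{k}\xizp^{j}\whmu{F_{p}M^{\beta}\lam^{p}},\]
which is the statement of Corollary~\ref{meikyu}.

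I do not anticipate a serious obstacle, since the heavy lifting (the rescaling calculation of Lemma~\ref{golgo}, the description of $V^{-1}_{\xizp}$ in Corollary~\ref{wakuseinine}, and the general localization structure of Proposition~\ref{localizfac}) is already in place. The only conceptual point worth spelling out is the Rees-module interpretation of the localization: namely, that $\scrM[*D]$ is generated over $\CC[\eth_{\xizp}]$ by its $V^{-1}_{\xizp}$ part, which is simply the Rees-side reading of Proposition~\ref{localizfac}(v). One might also wish to verify that when $\eth_{\xizp}$ is expanded via Leibniz across $\xizp^{j}$ the resulting lower-degree terms still belong to the index set $\{j\geq j_{\beta}\}$, but this is immediate from (i) of Lemma~\ref{mikkiikai}.
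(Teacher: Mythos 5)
Your proposal is correct and follows essentially the same route as the paper: the paper obtains Corollary~\ref{meikyu} precisely by noting that $\whmunk{\scrM}\otimes_{\CC[\zetn]}\CC[\zetn^{\pm 1}][*\{\xizp=0\}]$ is generated under powers of $\eth_{\xizp}$ by its $V^{-1}_{\xizp}$-part (the Rees-module reading of Proposition~\ref{localizfac}) and then substituting the formula of Corollary~\ref{wakuseinine}. Your extra remark about Leibniz expansion is harmless but not needed, since the generators are taken exactly in the form $\eth_{\xizp}^{k}\xizp^{j}\whmu{F_{p}M^{\beta}\lam^{p}}$ as written.
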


Note that the rescaled module $\RESB{(\whmunk{\scrM}\otimes_{\CC[\zetn]}\CC[\zetn^{\pm 1}][*\{\xizp=0\}])}$ 
is 
\[O_{U_{n}^{\vee}}[\tau^{\pm 1},\lam]\otimes_{O_{U_{n}^{\vee}}[\lam]}(\whmunk{\scrM}\otimes_{\CC[\zetn]}\CC[\zetn^{\pm 1}][*\{\xizp=0\}]),\]
as an $O_{U_{n}^{\vee}\times \CC_{\tau}\times \CC_{\lam}}(*\{\tau=0\})$-module with an $R^{\integ}$-module action (\ref{wingspan}).

\begin{lem}\label{faiman}
We have
\begin{align}\label{ryosan}V_{\tau}^{\gamma}(\RESB{(\whmunk{\scrM}\otimes_{\CC[\zetn]}\CC[\zetn^{\pm 1}][*\{\xizp=0\}])})
=
\bigoplus_{i\in \ZZ}
\sum_{\substack{
k,j\geq 0, p\in \ZZ, \beta\in \RR
\\
j\geq j_{\beta}\\
i-k-p-\beta\geq \gamma}
}\tau^i\otimes \eth_{\xizp}^k\xizp^{j}\whmu{F_{p}M^{\beta}\lam^{p}}.
\end{align}
\end{lem}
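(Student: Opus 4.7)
The plan is to mimic the proof of Lemma~\ref{golgo}, keeping careful track of the extra factor $\eth_{\xizp}^{k}\xizp^{j}$. By Corollary~\ref{meikyu} and the definition of the rescaling, every section of $\RESB{(\whmunk{\scrM}\otimes_{\CC[\zetn]}\CC[\zetn^{\pm 1}][*\{\xizp=0\}])}$ is a finite sum of ``generators'' of the form $\tau^{i}\otimes\eth_{\xizp}^{k}\xizp^{j}\whmu{m\lam^{p}}$ with $i\in\ZZ$, $k,j\geq 0$, $p\in\ZZ$, $m\in F_{p}M^{\beta}$ and $j\geq j_{\beta}$. By Proposition~\ref{hyakusyo}, $\whmunk{\scrN}$ is an irregular mixed Hodge module, so its underlying $R$-module is well-rescalable; consequently, the rescaling above is strictly $\RR$-specializable along $\tau=0$, and any nonzero section annihilated by a power of $(\tau\eth_{\tau}-\delta\lam)$ lies in $V^{\delta}_{\tau}\setminus V^{>\delta}_{\tau}$.

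For the inclusion $\supseteq$ in (\ref{ryosan}), I would compute the action of $\tau\eth_{\tau}$ on a single generator using (\ref{wingspan}), (\ref{gideon}) and the commutation relation $[\lam^{2}\pa_{\lam},\eth_{\xizp}]=\lam\eth_{\xizp}$, which follows from $[\lam^{2}\pa_{\lam},\lam]=\lam^{2}$. A direct computation then yields
\[
(\tau\eth_{\tau}-(i-k-p-\beta)\lam)\bigl(\tau^{i}\otimes\eth_{\xizp}^{k}\xizp^{j}\whmu{m\lam^{p}}\bigr)=-\tau^{i+1}\otimes\eth_{\xizp}^{k}\xizp^{j}\whmu{(\eu_{\Cnt}-\beta)m\lam^{p+1}},
\]
and since $m\in M^{\beta}$ iterating this annihilates the generator. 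Hence, by the observation at the end of the previous paragraph, each generator with $i-k-p-\beta\geq\gamma$ lies in $V^{i-k-p-\beta}_{\tau}\subset V^{\gamma}_{\tau}$, which proves the inclusion.

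For the inclusion $\subseteq$, the argument parallels that of Lemma~\ref{golgo}. Given $s\in V^{\gamma}_{\tau}$, write it as a finite sum of generators and regroup them according to the weight $\delta:=i-k-p-\beta$, obtaining $s=s_{\delta_{1}}+\dots+s_{\delta_{r}}$ with $\delta_{1}<\dots<\delta_{r}$ and $s_{\delta_{q}}\neq 0$ for each $q$. Each $s_{\delta_{q}}$ is annihilated by a common power of $(\tau\eth_{\tau}-\delta_{q}\lam)$, so $s_{\delta_{q}}\in V^{\delta_{q}}_{\tau}\setminus V^{>\delta_{q}}_{\tau}$ by strict $\RR$-specializability. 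If $\delta_{1}<\gamma$, then $s\in V^{\gamma}_{\tau}\subset V^{>\delta_{1}}_{\tau}$ and $\sum_{q\geq 2}s_{\delta_{q}}\in V^{\delta_{2}}_{\tau}\subset V^{>\delta_{1}}_{\tau}$, forcing $s_{\delta_{1}}\in V^{>\delta_{1}}_{\tau}$, a contradiction. Hence $\delta_{1}\geq\gamma$ and every summand satisfies the weight constraint of (\ref{ryosan}).

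The main technical hurdle is the commutation identity $[\lam^{2}\pa_{\lam},\eth_{\xizp}]=\lam\eth_{\xizp}$, which is responsible for the extra shift $-k$ in the eigenvalue (changing the weight $i-p-\beta$ of Lemma~\ref{golgo} into $i-k-p-\beta$ here); once this is correctly handled, the rest of the argument transfers verbatim from the line-bundle case.
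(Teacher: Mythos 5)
Your proposal is correct and follows essentially the same route as the paper: the inclusion $\supseteq$ is proved by exactly the same computation of $(\tau\eth_{\tau}-(i-k-p-\beta)\lam)$ on a generator (the paper obtains the shift by $k$ by writing $\eth_{\xizp}=-\lam\zeta_{n}\eu$ on the overlap chart, which is equivalent to your commutator $[\lam^{2}\pa_{\lam},\eth_{\xizp}]=\lam\eth_{\xizp}$), and the inclusion $\subseteq$ is the weight-decomposition argument that the paper delegates to the proof of Lemma~\ref{golgo}, which you have simply written out. No gap.
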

\begin{proof}
As in the proof of Lemma~\ref{golgo},
if a section $s\in \RESB{(\whmunk{\scrM}\otimes_{\CC[\zetn]}\CC[\zetn^{\pm 1}][*\{\xizp=0\}])}$ is killed by $(\tau\eth_{\tau}-\gamma\lam)^l$ for some $l\geq 0$, $s$ is in $V_{\tau}^{\gamma}(\RESB{(\whmunk{\scrM}\otimes_{\CC[\zetn]}\CC[\zetn^{\pm 1}][*\{\xizp=0\}])})$.

Let $\beta$ be a real number and $m$ a section of $F_{p}M^{\beta}$.
For $j,k\in \ZZ_{\geq 0}$ with $j+\beta+n\geq -1$ and $i\in \ZZ$,
we consider a section $\tau^i\otimes \eth_{\xizp}^k\xizp^j\whmu{m\lam^p}\in \RES{(\whmunk{\scrM}\otimes_{\CC[\zetn]}\CC[\zetn^{\pm 1}][*\{\xizp=0\}])}$. 
Recall that we have
\begin{align*}
\eth_{\xizp}&=-\lam\zeta_{n}\eu_{\Cnx}\quad \mbox{and}\\
\xizp&=\zeta_{n}^{-1},
\end{align*}
on $U_{0}^{\vee}\cap U_{n}^{\vee}$ and
\[\lam^2\pa_{\lam}\whmu{m\lam^p}=\whmu{(\lam^2\pa_{\lam}+\lam\eu_{\Cnt})m\lam^p}
(=\whmu{(p+\eu_{\Cnt})m\lam^{p+1}}).\]
Therefore,
we have
\begin{align*}
\lam^2\pa_{\lam}(\eth_{\xizp}^k\xizp^j\whmu{m\lam^p})
&=\lam^2\pa_{\lam}((-\lam\zeta_{n}\eu_{\Cnx})^k\zeta_{n}^{-j}\whmu{m\lam^p})\notag\\
&=(k\lam(-\lam\zeta_{n}\eu_{\Cnx})^k\zeta_{n}^{-j}+(-\lam\zeta_{n}\eu_{\Cnx})^k\zeta_{n}^{-j}\lam^2\pa_{\lam})\whmu{m\lam^p}\notag\\
&=k(-\lam\zeta_{n}\eu_{\Cnx})^k\zeta_{n}^{-j}\whmu{m\lam^{p+1}}
+(-\lam\zeta_{n}\eu_{\Cnx})^k\zeta_{n}^{-j}
\whmu{(p+\eu_{\Cnt})m\lam^{p+1}}\notag\\
&=
\eth_{\xizp}^k\xizp^{j}
\whmu{(k+p+\eu_{\Cnt})m\lam^{p+1}}.
\end{align*}
By using this, we obtain
\begin{align*}
\tau\eth_{\tau}(\tau^i\otimes \eth_{\xizp}^k\xizp^j\whmu{m\lam^p})
&=i\tau^{i+1}\otimes \eth_{\xizp}^k\xizp^j\whmu{m\lam^{p+1}}
-\tau^{i+1}\otimes \lam^2\pa_{\lam}(\eth_{\xizp}^k\xizp^j\whmu{m\lam^{p}})\\
&=
i\tau^{i+1}\otimes \eth_{\xizp}^k\xizp^j\whmu{m\lam^{p+1}}
-\tau^{i+1}\otimes \eth_{\xizp}^k\xizp^{j}
\whmu{(k+p+\eu_{\Cnt})m\lam^{p+1}}\\
&=\tau^{i+1}\otimes \eth_{\xizp}^k\xizp^j\whmu{(i-k-p-\eu_{\Cnt})m\lam^{p+1}}.
\end{align*}
Therefore, we have 
\[
(\tau\eth_{\tau}-(i-k-p-\beta)\lam)(\tau^i\otimes \eth_{\xizp}^k\xizp^j\whmu{m\lam^p})
=-\tau^{i+1}\otimes \eth_{\xizp}^k\xizp^j\whmu{(\eu_{\Cnt}-\beta)m\lam^{p+1}},
\]
and hence
\[
(\tau\eth_{\tau}-(i-k-p-\beta)\lam)^l(\tau^i\otimes \eth_{\xizp}^k\xizp^j\whmu{m\lam^p})
=(-1)^l\tau^{i+l}\otimes \eth_{\xizp}^k\xizp^j\whmu{(\eu_{\Cnt}-\beta)^lm\lam^{p+l}},
\]
for $l\geq 0$.
Since the RHS is zero for sufficiently large $l\geq 0$,
we conclude that
$\tau^i\otimes \eth_{\xizp}^k\xizp^j\whmu{m\lam^p}$ is killed by $(\tau\eth_{\tau}-(i-k-p-\beta)\lam)^l$ for some $l\geq 0$
and hence 
$\tau^i\otimes \eth_{\xizp}^k\xizp^j\whmu{m\lam^p}$ is in
$V^{i-k-p-\beta}_{\tau}(\RESB{(\whmunk{\scrM}\otimes_{\CC[\zetn]}\CC[\zetn^{\pm 1}][*\{\xizp=0\}])})$.
We thus conclude that the RHS of (\ref{ryosan}) is contained in the LHS.

In the same way as the proof of Lemma~\ref{golgo},
we can show the LHS of (\ref{ryosan}) is contained in the RHS.
\end{proof}

Now, we can compute the irregular Hodge filtration $\irrF_{\bullet}(\whmunk{M}\otimes_{\CC[\zetn]}\CC[\zetn^{\pm 1}])(=\irrF_{\alpha+p}\whmunk{N}|_{U_{n}^{\vee}})$ in the same way as for $\irrF_{\bullet}\whmunk{M}$.

\begin{thm}\label{dengaku}
For $\alpha\in [0,1)$ and $p\in \ZZ$ we have
\begin{align}
\begin{aligned}
\label{saikoro}
\irrF_{\alpha+p}(\whmunk{M}\otimes_{\CC[\zetn]}\CC[\zetn^{\pm 1}])
=
(\sum_{\substack{
j\geq 0, \beta\in \RR
}
}\xizp^{j_{\beta}+j}&\whmu{F_{
p+\lfloor \alpha-\beta\rfloor
}M^{\beta}})\\
&+
(
\sum_{\substack{
k\geq 0, \beta\in \RR
}
}\pa_{\xizp}^{k}\xizp^{j_{\beta}}\whmu{F_{
p-k+\lfloor\alpha-\beta\rfloor
}M^{\beta}}).
\end{aligned}
\end{align}
\end{thm}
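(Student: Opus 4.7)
The plan is to mirror the proof of Theorem~\ref{prosyuto}, now using Lemma~\ref{faiman} as the explicit input for the Kashiwara-Malgrange filtration of the rescaled module. Specializing at $\gamma=-\alpha$ yields
\[
V_\tau^{-\alpha}\RES{(\whmunk{\scrM}\otimes_{\CC[\zetn]}\CC[\zetn^{\pm 1}][*\{\xizp=0\}])}=\bigoplus_{i\in\ZZ}\sum_{\substack{k,j\geq 0,\,p\in\ZZ,\,\beta\in\RR\\ j\geq j_\beta,\ i-k-p-\beta\geq-\alpha}}\tau^i\otimes\eth_{\xizp}^k\xizp^j\whmu{F_pM^\beta\lam^p}.
\]
Restricting along the diagonal inclusion $i_{\tau=\lam}$ and invoking Lemma~\ref{effel}, the generator $\tau^i\otimes\eth_{\xizp}^k\xizp^j\whmu{m\lam^p}$ (with $m\in F_pM^\beta$) corresponds to $\lam^i\otimes\pa_{\xizp}^k\xizp^j\whmunk{m}$ in ${\pi^\circ}^*(\whmunk{M}\otimes_{\CC[\zetn]}\CC[\zetn^{\pm 1}])$; here I use that $\eth_{\xizp}=\lam\pa_{\xizp}$ and that the image of $\whmu{m\lam^p}$ under the quotient $\whmunk{\scrM}\to\Xi_{\mathrm{DR}}(\whmunk{\scrM})=\whmunk{M}$ is $\whmunk{m}$.

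I then pass to the $\lam$-adic graded, which by Definition~\ref{soten} yields the Rees module of the irregular Hodge filtration. Setting $i=p$ and observing that, for integer $p'$, the inequality $p-k-p'-\beta\geq-\alpha$ is equivalent to $p'\leq p-k+\lfloor\alpha-\beta\rfloor$, the grade-$p$ piece takes the form
\[
\irrF_{\alpha+p}(\whmunk{M}\otimes_{\CC[\zetn]}\CC[\zetn^{\pm 1}])=\sum_{\substack{k,j\geq 0,\,\beta\in\RR\\ j\geq j_\beta}}\pa_{\xizp}^k\xizp^j\whmu{F_{p-k+\lfloor\alpha-\beta\rfloor}M^\beta}.
\]

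It then remains to identify this expression with the two-piece sum in (\ref{saikoro}). The inclusion ``$\supset$'' is immediate, obtained by the specializations $k=0$, $j=j_\beta+j'$ with $j'\geq 0$ (first summand), and $j=j_\beta$, $k\geq 0$ (second summand). The reverse inclusion, applied to a term $\pa_{\xizp}^k\xizp^j\whmu{F_{p-k+\lfloor\alpha-\beta\rfloor}M^\beta}$ with $k\geq 1$ and $j>j_\beta$, is handled by the Weyl commutation $\pa_{\xizp}\xizp=\xizp\pa_{\xizp}+1$ together with the $D$-module identity $\pa_{\xizp}\whmu{m'}=\xizp^{-2}\whmu{z_nm'}$ (which follows from $\pa_{\xizp}=-\zeta_n^2\pa_{\zeta_n}$ on $\UV_0\cap\UV_n$ and~(\ref{umiwo})), combined with the filtration shift $z_n(F_rM^\beta)\subset F_rM^{\beta+1}$ and the recursion $j_{\beta+1}=\max(j_\beta-1,0)$, which is a direct consequence of the definition of $j_\beta$ and Lemma~\ref{mikkiikai}. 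Used together, these identities enable a double induction on $k$ and on $j-j_\beta$, during which the filtration degree and the $\beta$-index shift in a compensating fashion and the term is rewritten in the form prescribed by~(\ref{saikoro}). This last combinatorial simplification is the main obstacle, as one must track the $\beta$-shifts produced by successive uses of $\pa_{\xizp}\whmu{\,\cdot\,}=\xizp^{-2}\whmu{z_n\,\cdot\,}$ and verify that the resulting filtration indices always land in the allowed range.
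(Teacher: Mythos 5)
The first half of your proposal coincides with the paper's argument: computing $V_{\tau}^{-\alpha}$ of the rescaled module from Lemma~\ref{faiman}, restricting along $i_{\tau=\lam}$, and taking the $\lam$-adic graded module to obtain
$\irrF_{\alpha+p}(\whmunk{M}\otimes_{\CC[\zetn]}\CC[\zetn^{\pm 1}])=\sum_{k,j\geq 0,\ j\geq j_{\beta},\ \beta\in\RR}\pa_{\xizp}^{k}\xizp^{j}\whmu{F_{p-k+\lfloor\alpha-\beta\rfloor}M^{\beta}}$,
and the easy inclusion of the right-hand side of (\ref{saikoro}) into this sum by specializing $k=0$ or $j=j_{\beta}$ is also fine. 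The genuine gap is in the remaining inclusion. Your reduction rests on the identity $\pa_{\xizp}=-\zeta_{n}^{2}\pa_{\zeta_{n}}$, hence on $\pa_{\xizp}\whmunk{m}=\xizp^{-2}\whmu{z_{n}m}$. This is false for $n\geq 2$ (the standing assumption of this subsection): on $\UV_{n}$ the remaining affine coordinates are $\zeta_{i}'=\zeta_{i}/\zeta_{n}$, so the partial derivative in $\xizp$ with the $\zeta_{i}'$ held fixed is $\pa_{\xizp}=-\zeta_{n}\eu_{\Cnx}$, and with (\ref{umiwo}) this gives $\pa_{\xizp}\whmunk{m}=\whmu{\pa_{z_{n}}(\eu_{\Cnt}+n)m}$ — exactly the formula the paper uses in Lemma~\ref{faiman} and Corollary~\ref{kenmei2cor}. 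The correct operator sends $F_{r}M^{\beta}$ into $F_{r+2}M^{\beta-1}$ and does not lower the $\xizp$-exponent, so the bookkeeping you describe (filtration degree unchanged, $\beta\mapsto\beta+1$, $\xizp$-power dropping by $2$, recursion $j_{\beta+1}=\max(j_{\beta}-1,0)$) does not correspond to any valid rewriting in $\whmunk{M}\otimes_{\CC[\zetn]}\CC[\zetn^{\pm 1}]$. Moreover, the double induction that is supposed to absorb the terms with $k\geq 1$, $j>j_{\beta}$ into (\ref{saikoro}) is only announced, not carried out — you yourself flag it as ``the main obstacle'' — so the crucial step is both unproven and based on a wrong identity.

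For comparison, the paper closes this step with an elementary case split on $j_{0}':=j_{0}-j_{\beta_{0}}$ versus $k_{0}$. When $j_{0}'\geq k_{0}$, one commutes, $\pa_{\xizp}^{k_{0}}\xizp^{j_{0}}\whmunk{m}=\sum_{l=0}^{k_{0}}a_{l}\,\xizp^{j_{0}-l}\pa_{\xizp}^{k_{0}-l}\whmunk{m}$, and converts $\pa_{\xizp}^{k_{0}-l}\whmunk{m}=\whmu{(\pa_{z_{n}}(\eu_{\Cnt}+n))^{k_{0}-l}m}$, which lands in the first summand of (\ref{saikoro}) after the shift $F_{\bullet}\mapsto F_{\bullet+2(k_{0}-l)}$, $\beta_{0}\mapsto\beta_{0}-(k_{0}-l)$. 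When $1\leq j_{0}'<k_{0}$, one factors $\pa_{\xizp}^{k_{0}}=\pa_{\xizp}^{k_{0}-j_{0}'}\pa_{\xizp}^{j_{0}'}$, applies the first case to the inner factor, and concludes by induction on the exponent of $\pa_{\xizp}$. If you want to keep your overall plan, you should replace your identity by $\pa_{\xizp}\whmunk{m}=\whmu{\pa_{z_{n}}(\eu_{\Cnt}+n)m}$ and redo the induction along these lines, now tracking that $\beta$ decreases (so $j_{\beta}$ may increase) while the filtration index increases by two per application.
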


\begin{proof}
The proof is similar to that of Theorem~\ref{prosyuto}.
By Lemma~\ref{faiman}, we have
\[i^{*}_{\tau=\lam}V^{-\alpha}_{\tau}(\RESB{(\whmunk{\scrM}\otimes_{\CC[\zetn]}\CC[\zetn^{\pm 1}][*\{\xizp=0\}])})
=i^{*}_{\tau=\lam}
(\bigoplus_{i\in \ZZ}\sum_{\substack{
k,j\geq 0, p\in \ZZ,\beta\in \RR
\\
j\geq j_{\beta}\\
i-k-p-\beta\geq -\alpha}
}\tau^i\otimes \eth_{\xizp}^k\xizp^{j}\whmu{F_{p}M^{\beta}\lam^{p}}).
\]
Therefore, the Rees module of the irregular Hodge filtration (a submodule of $O_{U_{n}^{\vee}}[\lam^{\pm 1}]\otimes (\whmunk{M}\otimes_{\CC[\zetn]}\CC[\zetn^{\pm 1}]))$ is
\[
\bigoplus_{i\in \ZZ}\sum_{\substack{
k,j\geq 0, p\in \ZZ,\beta\in \RR
\\
j\geq j_{\beta}\\
i-k-p-\beta\geq -\alpha}
}\lam^i\otimes \pa_{\xizp}^k\xizp^{j}\whmu{F_{p}M^{\beta}}.
\]
Hence, after replacing $i$ by $p$
we have
\begin{align}\label{beach}
\irrF_{\alpha+p}(\whmunk{M}\otimes_{\CC[\zetn]}\CC[\zetn^{\pm 1}])
=
\sum_{\substack{
k,j\geq 0, \beta\in \RR
\\
j\geq j_{\beta}}
}\pa_{\xizp}^k\xizp^{j}\whmu{F_{
\lfloor p-k-\beta+\alpha\rfloor
}M^{\beta}}.
\end{align}
Therefore, by (i) and (ii) of Lemma~\ref{mikkiikai}, the RHS of (\ref{saikoro}) is contained in the RHS of (\ref{beach}).

We show the converse inclusion.
For $k_{0},j_{0}\in \ZZ_{\geq 0}$, $\beta_{0}\in \RR$ with $j_{0}\geq j_{\beta_{0}}$ ($\iff$ $j_{0}+\beta_{0}+n\geq -1$) and $m\in F_{\lfloor p-k_{0}-\beta_{0}+\alpha\rfloor} M^{\beta_{0}}$,
we consider a section $\pa_{\xizp}^{k_{0}}\xizp^{j_{0}}\whmunk{m}$, which is in the RHS of (\ref{beach}).
By (i) of Lemma~\ref{mikkiikai}, we have $j_{0}\geq j_{\beta_{0}}$.
We set $j_{0}':=j_{0}-j_{\beta_{0}}\in \ZZ_{\geq 0}$ so that $j_{\beta_{0}}+j_{0}'=j_{0}$.

\noindent \textit{The case: $j_{0}'\geq k_{0}$}
\quad In this case, we have
\begin{align}
\begin{aligned}\label{nintai}
\pa_{\xizp}^{k_{0}}\xizp^{j_{\beta_{0}}+j_{0}'}\whmunk{m}
&=\sum_{l=0}^{k_{0}}a_{s}\xizp^{j_{\beta_{0}}+j_{0}'-l}\pa_{\xizp}^{k_{0}-l}\whmunk{m}\quad (\mbox{for some $a_{s}\in \ZZ$})\\
&=\sum_{l=0}^{k_{0}}a_{s}\xizp^{j_{\beta_{0}}+j_{0}'-l}\whmu{(\pa_{z_{n}}(\eu_{\Cnt}+n))^{k_{0}-l}m}\\
&\in \sum_{l=0}^{k_{0}}\xizp^{j_{\beta_{0}}+j_{0}'-l}\whmu{F_{\lfloor p-k_{0}-\beta_{0}+\alpha\rfloor+2(k_{0}-l)} M^{\beta_{0}-(k_{0}-l)}}\\
&\subset \sum_{l=0}^{k_{0}}\xizp^{j_{\beta_{0}}+j_{0}'-l}\whmu{F_{\lfloor p-(\beta_{0}-(k_{0}-l))+\alpha\rfloor} M^{\beta_{0}-(k_{0}-l)}}.
\end{aligned}
\end{align}
The last term is contained in the first part of RHS of (\ref{saikoro}) since $j_{0}'-l\geq j_{0}'-k_{0}\geq 0$.

\noindent \textit{The case: $j_{0}'\leq k_{0}$}
\quad 
If $j_{0}'=0$, $\pa_{\xizp}^{k_{0}}\xizp^{j_{\beta_{0}}+j_{0}'}\whmunk{m}$ is in the RHS of (\ref{saikoro}).
So, we assume $j_{0}'\geq 1$.
In this case, 
we ``divide'' $\pa_{\xizp}^{k_{0}}$ as
\begin{align*}
\pa_{\xizp}^{k_{0}}\xizp^{j_{\beta_{0}}+j_{0}'}\whmunk{m}
=\pa_{\xizp}^{k_{0}-j_{0}'}\pa_{\xizp}^{j_{0}'}\xizp^{j_{\beta_{0}}+j_{0}'}\whmunk{m}.
\end{align*}
Then, by (\ref{nintai}) for $\pa_{\xizp}^{j_{0}'}\xizp^{j_{\beta_{0}}+j_{0}'}\whmunk{m}$, the section $\pa_{\xizp}^{k_{0}-j_{0}'}\pa_{\xizp}^{j_{0}'}\xizp^{j_{\beta_{0}}+j_{0}'}\whmunk{m}$ is contained in 
\begin{align}\label{gogogo}
\sum_{l=0}^{j_{0}'}\pa_{\xizp}^{k_{0}-j_{0}'}\xizp^{j_{\beta_{0}}+j_{0}'-l}\whmu{F_{\lfloor p-(\beta_{0}-(k_{0}-l))+\alpha\rfloor} M^{\beta_{0}-(k_{0}-l)}}.
\end{align}
Then, by induction on the exponent of $\pa_{\xizp}$ (remark that $k_{0}-j_{0}'< k_{0}$ since $j_{0}'\geq 1$),
we conclude that 
the RHS of (\ref{gogogo}) is contained in the RHS of (\ref{saikoro}).
\end{proof}

Moreover, we have the following.
\begin{cor}\label{kenmei2cor}
\begin{enumerate}
    \item[(i)]
For $\alpha\in [0,1)$, $p\in \ZZ$ and $\gamma\in \RR_{\geq -1}$,
we have
\begin{align}\label{vino}
\irrF_{\alpha+p}V^{\gamma}_{\xizp}(\whmunk{M}\otimes_{\CC[\zetn]}\CC[\zetn^{\pm 1}])
=\sum_{\substack{j\geq 0, \beta\in \RR\\ j_{\beta}+\beta+n+j\geq \gamma}}
\xizp^{j_{\beta}+j}\whmu{F_{p+\lfloor \alpha-\beta\rfloor}M^{\beta}}.
\end{align}
\item[(ii)] For $\alpha\in [0,1)$, $p\in \ZZ$ and $\gamma\in \RR_{< 0}$,
we have
\begin{align}\label{ukurere}
\irrF_{\alpha+p}\GR^{\gamma}_{V}(\whmunk{M}\otimes_{\CC[\zetn]}\CC[\zetn^{\pm 1}])
=\sum_{\substack{k \geq 0, \beta\in \RR\\ j_{\beta}+\beta+n-k=\gamma}}\pa_{\xizp}^{k}\xizp^{j_{\beta}}\whmu{F_{p-k+\lfloor\alpha-\beta\rfloor}M^{\beta}},
\end{align}
where the sum $\sum$ in the RHS is the one defined in Remark~\ref{together}.
\end{enumerate}
\end{cor}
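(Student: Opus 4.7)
Both statements follow by combining the explicit description of the irregular Hodge filtration $\irrF_{\alpha+p}(\whmunk{M}\otimes_{\CC[\zetn]}\CC[\zetn^{\pm 1}])$ provided by Theorem~\ref{dengaku} with the description of the Kashiwara--Malgrange filtration along $\xizp=0$ coming from Lemma~\ref{kinomi}. Recall that the $D$-module $\whmunk{M}\otimes_{\CC[\zetn]}\CC[\zetn^{\pm 1}]$ is monodromic in the $\xizp$-direction, so by Proposition~\ref{express} its $V$-filtration is simply the direct sum of the $\xizp$-eigenblocks of weight $\geq \gamma$. Thus (i) amounts to intersecting the right-hand side of (\ref{saikoro}) with $V^{\gamma}_{\xizp}$, and (ii) to extracting the $\xizp$-eigenblock of weight $\gamma$ from it.

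For (i), the inclusion $\supset$ is immediate, since each summand $\xizp^{j_{\beta}+j}\whmu{F_{p+\lfloor\alpha-\beta\rfloor}M^{\beta}}$ on the right-hand side of (\ref{vino}) is a ``first-part'' term from (\ref{saikoro}) and, by Lemma~\ref{kinomi}, lies in the $\xizp$-eigenblock of weight $j_{\beta}+j+\beta+n\geq \gamma$. For the reverse inclusion, I would write any element of the left-hand side via Theorem~\ref{dengaku} as a sum of first-part and second-part terms. First-part terms in $V^{\gamma}_{\xizp}$ are precisely those on the right. For a second-part term $\pa_{\xizp}^{k}\xizp^{j_{\beta}}\whmu{F_{p-k+\lfloor\alpha-\beta\rfloor}M^{\beta}}$ with $k\geq 1$ lying in $V^{\gamma}_{\xizp}\subset V^{-1}_{\xizp}$, the $V$-level is $j_{\beta}+\beta+n-k\geq -1$; a case analysis on $j_{\beta}$ forces $j_{\beta}=0$ and $k\leq \beta+n+1$, whence Lemma~\ref{mikkiikai}(iii) yields $j_{\beta-k}=0$. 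Using the formula $\pa_{\xizp}\whmunk{m}=\whmu{\pa_{z_{n}}(\eu_{\Cnt}+n)m}$ derived from $\pa_{\xizp}=-\zeta_{n}\eu_{\Cnx}$ and Lemma~\ref{march}, together with strict specializability of $(M,F_{\bullet}M)$ along $z_{n}$, one gets inductively
\[\pa_{\xizp}^{k}\whmu{F_{p-k+\lfloor\alpha-\beta\rfloor}M^{\beta}}\subset \whmu{F_{p+\lfloor\alpha-(\beta-k)\rfloor}M^{\beta-k}},\]
which is exactly a first-part term for $\beta-k$ with $j=0$, lying in $V^{\beta-k+n}_{\xizp}\subset V^{\gamma}_{\xizp}$. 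This gives the reverse inclusion.

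For (ii), I would pass to the $\xizp$-eigenblock of weight $\gamma$ summand-by-summand in Theorem~\ref{dengaku}. First-part terms contribute only when their weight $j_{\beta}+j+\beta+n$ equals $\gamma$; since $j_{\beta}+\beta+n\geq -1$ by Lemma~\ref{mikkiikai}(ii) and $\gamma<0$, this forces $j=0$ and $j_{\beta}+\beta+n=\gamma$, which coincides with the $k=0$ case of the right-hand side of (\ref{ukurere}). Second-part terms contribute when $j_{\beta}+\beta+n-k=\gamma$, supplying the remaining summands. Taking the union yields the claim.

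The main technical obstacle is the $\subset$ direction of (i): one must show that the $\pa_{\xizp}^{k}$-summands with $k\geq 1$ that happen to land in $V^{\geq -1}_{\xizp}$ are never genuinely new contributions, but can always be absorbed into first-part summands. The two decisive inputs are Lemma~\ref{mikkiikai}(iii), which controls when $j_{\beta-k}=0$, and the conversion of $\pa_{\xizp}$-derivations on $\whmunk{M}$ into $\pa_{z_{n}}$-derivations on $M$ via Lemma~\ref{march}, which in combination with the strict specializability along $z_{n}=0$ rewrites any admissible $\pa_{\xizp}^{k}$-term as a first-part term at shifted index $\beta-k$.
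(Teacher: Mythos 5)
Your proposal is correct and follows essentially the same route as the paper: intersect the two families of terms in Theorem~\ref{dengaku} with $V^{\gamma}_{\xizp}$ eigenblock-wise via Lemma~\ref{kinomi}, absorb the $\pa_{\xizp}^{k}$-terms ($k\geq 1$) into first-part terms using Lemma~\ref{mikkiikai}(iii) and the identity $\pa_{\xizp}\whmunk{m}=\whmu{\pa_{z_{n}}(\eu_{\Cnt}+n)m}$ for (i), and extract the weight-$\gamma$ eigenblock for (ii). The only cosmetic remark is that your appeal to strict specializability along $z_{n}$ is unnecessary: the containment $\pa_{\xizp}^{k}\whmu{F_{p-k+\lfloor\alpha-\beta\rfloor}M^{\beta}}\subset\whmu{F_{p+\lfloor\alpha-(\beta-k)\rfloor}M^{\beta-k}}$ already follows from the $F_{\bullet}D$-stability of the Hodge filtration together with the decomposition of Theorem~\ref{Fdecomp}, which is exactly what the paper uses.
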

\begin{proof}
Recall that $M^{\wedge}\otimes_{\CC[\zetn]}\CC[\zetn^{\pm 1}]$ is monodromic with respect to the $\xizp$-direction by Lemma~\ref{kinomi}
and we have
\[(M^{\wedge}\otimes_{\CC[\zetn]}\CC[\zetn^{\pm 1}])^{\beta}_{\xizp}=
\sum_{\substack{j\in \ZZ,\gamma\in \RR\\ j+\gamma+n=\beta}}{\zeta_{0}'}^{j}\whmu{M^{\gamma}},
\]
where the sum $\sum$ is the one defined in Lemma~\ref{kinomi} or Remark~\ref{together}.
Therefore,
the term $\xizp^{j_{\beta}+j}\whmu{F_{
p+\lfloor \alpha-\beta\rfloor
}M^{\beta}}$ 
(resp. $\pa_{\xizp}^{k}\xizp^{j_{\beta}}\whmu{F_{
p-k+\lfloor\alpha-\beta\rfloor
}M^{\beta}}$)
in the first (resp. second) term of (\ref{saikoro})
is contained in $V^{\gamma}_{\xizp}(\whmunk{M}\otimes_{\CC[\zetn]}\CC[\zetn^{\pm 1}])$ if and only if $j_{\beta}+\beta+n+j\geq \gamma$ (resp. $j_{\beta}+\beta+n-k\geq \gamma$).
Hence, by Theorem~\ref{dengaku}, we have
\begin{align}\begin{aligned}\label{konomichi2}
\irrF_{\alpha+p}V^{\gamma}_{\xizp}(\whmunk{M}\otimes_{\CC[\zetn]}\CC[\zetn^{\pm 1}])
=
(\sum_{\substack{
j\geq 0, \beta\in \RR\\
j_{\beta}+\beta+n+j\geq \gamma
}
}&\xizp^{j_{\beta}+j}\whmu{F_{
p+\lfloor \alpha-\beta\rfloor
}M^{\beta}})
\\&+
(
\sum_{\substack{
k\geq 0, \beta\in \RR\\
j_{\beta}+\beta+n-k\geq \gamma
}
}\pa_{\xizp}^{k}\xizp^{j_{\beta}}\whmu{F_{
p-k+\lfloor\alpha-\beta\rfloor
}M^{\beta}}).
\end{aligned}
\end{align}

To prove (i), we assume that $\gamma\in \RR_{\geq -1}$.
Then,
for $k\geq 1$ (not $0$) with
$j_{\beta}+\beta+n-k\geq \gamma$,
we have
$j_{\beta}+\beta+n\geq k-1(\geq 0)$.
Therefore, 
we get $j_{\beta}=\dots =j_{\beta-k}=0$ by (iii) of Lemma~\ref{mikkiikai}.
Therefore, we have
\begin{align*}
\pa^{k}_{\xizp}\xizp^{j_{\beta}}\whmu{F_{ p-k+\lfloor\alpha-\beta\rfloor}M^{\beta}}
=&\pa^{k}_{\xizp}\whmu{F_{p-k+\lfloor\alpha-\beta\rfloor}M^{\beta}}\\
=&\whmu{(\pa_{z_{n}}(\eu_{\CC^n}+n))^kF_{ p-k+\lfloor\alpha-\beta\rfloor}M^{\beta}}\\
\subset &\whmu{F_{ p+k+\lfloor\alpha-\beta\rfloor}M^{\beta-k}}\\
=&\xizp^{j_{\beta-k}}\whmu{F_{ p+k+\lfloor\alpha-\beta\rfloor}M^{\beta-k}}\quad (\mbox{since $j_{\beta-k}=0$}).
\end{align*}
Hence, the term $\pa^{k}_{\xizp}\xizp^{j_{\beta}}\whmu{F_{ p-k+\lfloor\alpha-\beta\rfloor}M^{\beta}}$ for $k\geq 1$ in the second part of the RHS of (\ref{konomichi2}) 
is contained in its first part
(for $s=0$).
This completes the proof of (i).

To prove (ii), we assume that $\gamma\in \RR_{< 0}$.
The first term of (\ref{konomichi2}) is contained in $V^{0}_{\xizp}(\whmunk{M}\otimes_{\CC[\zetn]}\CC[\zetn^{\pm 1}])$ unless $s=0$.
Hence, for $\gamma<0$, we have
\begin{align}\label{atosukosida}
\irrF_{\alpha+p}\GR^{\gamma}_{V}(\whmunk{M}\otimes_{\CC[\zetn]}\CC[\zetn^{\pm 1}])
\simeq \sum_{\substack{k\geq 0, \beta\in \RR\\ j_{\beta}+\beta+n-k=\gamma}}\pa_{\xizp}^{k}\xizp^{j_{\beta}}\whmu{F_{p-k+\lfloor\alpha-\beta\rfloor}M^{\beta}}.
\end{align}
\end{proof}

\begin{cor}\label{sincyan}
For $\alpha\in [0,1)$, the irregular Hodge filtration $\irrF_{\alpha+\bullet}(\whmunk{M}\otimes_{\CC[\zetn]}\CC[\zetn^{\pm 1}])$ satisfies the strict specializability property along $\xizp=0$.  
\end{cor}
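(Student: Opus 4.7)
The goal is to verify the two surjectivity conditions of Definition~\ref{tron} for $(\whmunk{M}\otimes_{\CC[\zetn]}\CC[\zetn^{\pm 1}],\irrF_{\alpha+\bullet})$ along $\xizp=0$, namely that $\xizp\colon F_p\GR^{\gamma}_{V}\to F_p\GR^{\gamma+1}_{V}$ is surjective for every $\gamma>-1$, and that $\pa_{\xizp}\colon F_p\GR^{\gamma}_{V}\to F_{p+1}\GR^{\gamma-1}_{V}$ is surjective for every $\gamma<0$. Both will be read off the explicit descriptions of $\irrF_{\alpha+p}V^{\gamma}_{\xizp}$ and $\irrF_{\alpha+p}\GR^{\gamma}_{V}$ supplied by Corollary~\ref{kenmei2cor}.

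The surjectivity of $\pa_{\xizp}$ is immediate. By Corollary~\ref{kenmei2cor}~(ii), every generator of $\irrF_{\alpha+p+1}\GR^{\gamma-1}_{V}$ has the form $\pa_{\xizp}^{k'}\xizp^{j_\beta}\whmu{F_{p+1-k'+\lfloor\alpha-\beta\rfloor}M^\beta}$ with $j_\beta+\beta+n-k'=\gamma-1$. Since $j_\beta+\beta+n\geq-1$ by Lemma~\ref{mikkiikai}~(ii) while $\gamma-1<-1$, the case $k'=0$ is impossible; peeling off a single factor $\pa_{\xizp}$ realizes such a generator as $\pa_{\xizp}$ applied to $\pa_{\xizp}^{k'-1}\xizp^{j_\beta}\whmu{F_{p-(k'-1)+\lfloor\alpha-\beta\rfloor}M^\beta}$, which is a generator of $\irrF_{\alpha+p}\GR^{\gamma}_{V}$ by the same corollary (with $k=k'-1\geq 0$).

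The surjectivity of $\xizp$ is handled in the same spirit via Corollary~\ref{kenmei2cor}~(i). Generators of $F_p\GR^{\gamma+1}_{V}$ are classes $[\xizp^{j_\beta+j}\whmu{F_{p+\lfloor\alpha-\beta\rfloor}M^\beta}]$ with $j\geq 0$ and $j_\beta+\beta+n+j=\gamma+1$. If $j\geq 1$, I can factor out one $\xizp$: the remaining $[\xizp^{j_\beta+(j-1)}\whmu{F_{p+\lfloor\alpha-\beta\rfloor}M^\beta}]$ is a generator of $F_p\GR^{\gamma}_{V}$ and maps to the required class. The remaining case $j=0$ is the main obstacle: then $j_\beta+\beta+n=\gamma+1>0$, and because $j_\beta\geq 1$ would force $j_\beta+\beta+n=\lceil -\beta\rceil+\beta-1\in[-1,0)$, we must have $j_\beta=0$ and $\beta=\gamma+1-n$, so the generator reduces to the ``pure'' term $\whmu{F_{p+\lfloor\alpha-\beta\rfloor}M^\beta}$, which does not visibly factor through $\xizp$.

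To settle this last case I invoke the Fourier--Laplace relation~(\ref{gideon}), rewritten as $\whmu{f}=\xizp\cdot\whmu{\pa_{z_n}f}$ on $\whmunk{M}\otimes_{\CC[\zetn]}\CC[\zetn^{\pm 1}]$ (valid since $\xizp=\zetn^{-1}$). For $f\in F_{p+\lfloor\alpha-\beta\rfloor}M^\beta$ one has $\pa_{z_n}f\in F_{p+1+\lfloor\alpha-\beta\rfloor}M^{\beta-1}=F_{p+\lfloor\alpha-\beta'\rfloor}M^{\beta'}$ with $\beta':=\beta-1=\gamma-n$; since $\gamma>-1$ gives $\beta'\geq -n-1$ and hence $j_{\beta'}=0$ with $j_{\beta'}+\beta'+n=\gamma$, Corollary~\ref{kenmei2cor}~(i) tells us $\whmu{\pa_{z_n}f}\in \irrF_{\alpha+p}V^{\gamma}_{\xizp}$. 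Its class in $F_p\GR^{\gamma}_{V}$ is mapped by $\xizp$ to $[\whmu{f}]$, finishing the verification. The fact that $\pa_{z_n}$ simultaneously raises the Hodge index by one and lowers the monodromic degree by one, matching the identity $\lfloor\alpha-\beta'\rfloor=1+\lfloor\alpha-\beta\rfloor$, is what makes the argument close.
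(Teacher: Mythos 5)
Your proof is correct and follows essentially the same route as the paper: both verify the two conditions of Definition~\ref{tron} directly from the explicit formulas of Corollary~\ref{kenmei2cor}, peeling off one $\pa_{\xizp}$ for condition (ii) and, for condition (i), using $\xizp^{-1}\whmunk{m}=\whmu{\pa_{z_{n}}m}$ together with the vanishing of the relevant $j_{\beta}$'s (your direct computation of $j_{\beta'}=0$ is just Lemma~\ref{mikkiikai}~(iii) in disguise).
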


\begin{proof}
First, let us see the condition (i) in Definition~\ref{tron}.
For $\gamma>-1$, 
by (i) of Corollary~\ref{kenmei2cor}, we have
\begin{align}\label{rohan}
\irrF_{\alpha+p}\GR^{\gamma}_{V}(\whmunk{M}\otimes_{\CC[\zetn]}\CC[\zetn^{\pm 1}])
\simeq \sum_{\substack{j\geq 0, \beta\in \RR\\ j_{\beta}+\beta+n+j=\gamma}}\xizp^{j_{\beta}+j}\whmu{F_{p+\lfloor \alpha-\beta\rfloor}M^{\beta}}.
\end{align}
It is enough to see that 
for $\gamma=\gamma_{0}>0$ and a section $\sigma$ in the RHS of (\ref{rohan}),
$\xizp^{-1}\sigma$ is in the RHS of (\ref{rohan}) for $\gamma=\gamma_{0}-1$.
Consider a section $\xizp^{j_{\beta}+j}\whmunk{m}$ for $j_{\beta}+\beta+n+j=\gamma_{0}$ and $m\in F_{p+\lfloor \alpha-\beta\rfloor}M^{\beta}$.
If $j\geq 1$,
it is clear that $\xizp^{-1}(\xizp^{j_{\beta}+j}\whmunk{m})$ is in the RHS of (\ref{rohan}) for $\gamma=\gamma_{0}-1$.
So, let us assume that $j=0$.
By (iii) of Lemma~\ref{mikkiikai}, we have $j_{\beta}=j_{\beta-1}=0$.
Therefore, we have
\begin{align*}
    \xizp^{-1}\cdot \xizp^{j_{\beta}}\whmunk{m}
    &= \xizp^{j_{\beta}}\whmu{\pa_{z_{n}}m}\\
    &\subset \xizp^{j_{\beta}}\whmu{F_{p+\lfloor\alpha-\beta\rfloor+1 }M^{\beta-1}}\\
    &= \xizp^{j_{\beta-1}}\whmu{F_{p+\lfloor\alpha-(\beta-1)\rfloor }M^{\beta-1}}\quad (\mbox{by $j_{\beta}=j_{\beta-1}=0$}).
\end{align*}
The last term is contained in the RHS of (\ref{rohan}) for $\gamma=\gamma_{0}-1$.
This completes the proof of the condition (i) in Definition~\ref{tron}.

Let us check the condition (ii) in Definition~\ref{tron}.
By (ii) of Corollary~\ref{kenmei2cor}, for $\gamma<0$ we have
\begin{align}\label{atosukosida2}
\irrF_{\alpha+p}\GR^{\gamma}_{V}(\whmunk{M}\otimes_{\CC[\zetn]}\CC[\zetn^{\pm 1}])
=\sum_{\substack{k \geq 0, \beta\in \RR\\ j_{\beta}+\beta+n-k=\gamma}}\pa_{\xizp}^{k}\xizp^{j_{\beta}}\whmu{F_{p-k+\lfloor\alpha-\beta\rfloor}M^{\beta}}.
\end{align}
For $\gamma_{0}<-1$ (not $<0$), consider a section $\pa_{\xizp}^k\xizp^{j_{\beta}}\whmunk{m}$ with
$j_{\beta}+\beta+n-k=\gamma_{0}$ and $m\in F_{p-k+\lfloor\alpha-\beta\rfloor}M^{\beta}$.
Since $j_{\beta}+\beta+n\geq -1$ by (ii) of Lemma~\ref{mikkiikai},
we have $k\geq 1$.
Moreover, $\pa_{\xizp}^{k-1}\xizp^{j_{\beta}}\whmunk{m}$ is in
the RHS of (\ref{atosukosida2}) for $\gamma=\gamma_{0}+1$.
Therefore, we have
\[\irrF_{\alpha+p}\GR^{\gamma}_{V}(\whmunk{M}\otimes_{\CC[\zetn]}\CC[\zetn^{\pm 1}])
\subset \pa_{\xizp}\cdot \irrF_{\alpha+p-1}\GR^{\gamma-1}_{V}(\whmunk{M}\otimes_{\CC[\zetn]}\CC[\zetn^{\pm 1}]),
\]
which is the condition (ii) in Definition~\ref{tron}.

This completes the proof.
\end{proof}
\begin{rem}
Corollary~\ref{sincyan} is derived by Theorem~1.6 of Mochizuki~\cite{MochiResc}, which is an assertion about the strict specializability in a more general setting.
The above proof is a concrete verification of this fact.
\end{rem}

Finally, we check that ``the irregular Hodge filtration at infinity is localized''.

\begin{cor}\label{cyobi}
\begin{enumerate}
  \item[(i)]  For $\alpha\in [0,1)$, we have
\[\irrF_{\alpha+p}V^{-1}_{\xizp}(\whmunk{M}\otimes_{\CC[\zetn]}\CC[\zetn^{\pm 1}])
=\xizp^{-1}\irrF_{\alpha+p}V^{0}_{\xizp}(\whmunk{M}\otimes_{\CC[\zetn]}\CC[\zetn^{\pm 1}]).
\]
\item[(ii)]
We have
\[\irrF_{\alpha+p}(\whmunk{M}\otimes_{\CC[\zetn]}\CC[\zetn^{\pm 1}])=
\sum_{k\geq 0}\pa_{\xizp}^k\irrF_{\alpha+p-k}V^{-1}_{\xizp}(\whmunk{M}\otimes_{\CC[\zetn]}\CC[\zetn^{\pm 1}]).
\]
\end{enumerate}
\end{cor}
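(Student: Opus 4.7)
\textbf{Proof proposal for Corollary~\ref{cyobi}.} The plan is to derive both statements by direct term-by-term comparison using the explicit formulas for $\irrF_{\alpha+p}$ and $\irrF_{\alpha+p} V^{\gamma}_{\xizp}$ established in Theorem~\ref{dengaku} and Corollary~\ref{kenmei2cor}, together with the elementary bounds on $j_{\beta}$ recorded in Lemma~\ref{mikkiikai}. No genuinely new input is needed beyond these; the argument is parallel to the one already carried out in Corollary~\ref{sincyan}.

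For (i), I would first rewrite $\irrF_{\alpha+p}V^{-1}_{\xizp}(\whmunk{M}\otimes_{\CC[\zetn]}\CC[\zetn^{\pm 1}])$ using Corollary~\ref{kenmei2cor}(i): because $j_{\beta}+\beta+n\geq -1$ for every $\beta\in\RR$ by Lemma~\ref{mikkiikai}(ii), the constraint $j_{\beta}+\beta+n+j\geq -1$ is automatic for $j\geq 0$, so this $V^{-1}$-piece is the sum over all $j\geq 0,\,\beta\in\RR$ of $\xizp^{j_{\beta}+j}\whmu{F_{p+\lfloor\alpha-\beta\rfloor}M^{\beta}}$. For the inclusion $\irrF_{\alpha+p}V^{-1}_{\xizp}\subset \xizp^{-1}\irrF_{\alpha+p}V^{0}_{\xizp}$, given a generator $\xizp^{j_{\beta}+j}\whmu{F_{p+\lfloor\alpha-\beta\rfloor}M^{\beta}}$, multiplying by $\xizp$ yields $\xizp^{j_{\beta}+(j+1)}\whmu{F_{p+\lfloor\alpha-\beta\rfloor}M^{\beta}}$, which satisfies $j_\beta+\beta+n+(j+1)\geq 0$ and hence lies in $\irrF_{\alpha+p}V^{0}_{\xizp}$ by Corollary~\ref{kenmei2cor}(i). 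For the reverse inclusion, consider a generator $\xizp^{j_{\beta}+j-1}\whmu{F_{p+\lfloor\alpha-\beta\rfloor}M^{\beta}}$ of $\xizp^{-1}\irrF_{\alpha+p}V^{0}_{\xizp}$, with $j_\beta+\beta+n+j\geq 0$ and $j\geq 0$. If $j\geq 1$, the exponent is $\geq j_\beta$ and it is already a generator of $\irrF_{\alpha+p}V^{-1}_{\xizp}$. If $j=0$, then $j_{\beta}+\beta+n\geq 0$ forces $j_{\beta}=j_{\beta-1}=0$ by Lemma~\ref{mikkiikai}(iii), and using $\xizp^{-1}\whmunk{m}=\zetn\whmunk{m}=\whmu{\pa_{z_{n}}m}$ one computes
\[
\xizp^{-1}\whmu{F_{p+\lfloor\alpha-\beta\rfloor}M^{\beta}}\subset\whmu{F_{p+\lfloor\alpha-\beta\rfloor+1}M^{\beta-1}}=\xizp^{j_{\beta-1}}\whmu{F_{p+\lfloor\alpha-(\beta-1)\rfloor}M^{\beta-1}},
\]
which is a generator of $\irrF_{\alpha+p}V^{-1}_{\xizp}$ for $\beta' = \beta-1$ and $j=0$.

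For (ii), by Theorem~\ref{dengaku} the LHS is the sum of two families of generators. By part (i) together with the formula above, the first family $\sum_{j\geq 0,\beta}\xizp^{j_{\beta}+j}\whmu{F_{p+\lfloor\alpha-\beta\rfloor}M^{\beta}}$ is exactly $\irrF_{\alpha+p}V^{-1}_{\xizp}$, i.e.\ the $k=0$ term of the RHS. Each element of the second family splits as $\pa_{\xizp}^{k}(\xizp^{j_{\beta}}\whmu{F_{p-k+\lfloor\alpha-\beta\rfloor}M^{\beta}})$, and the inner factor is the $j=0$ generator of $\irrF_{\alpha+p-k}V^{-1}_{\xizp}$, so each such term lies in $\pa_{\xizp}^{k}\irrF_{\alpha+p-k}V^{-1}_{\xizp}$; this gives $\subset$. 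Conversely, by the same formula every generator of $\pa_{\xizp}^{k}\irrF_{\alpha+p-k}V^{-1}_{\xizp}$ has the form $\pa_{\xizp}^{k}\xizp^{j_{\beta}+j}\whmu{F_{p-k+\lfloor\alpha-\beta\rfloor}M^{\beta}}$ with $j\geq 0$; for $j=0$ this is a generator of the second family in Theorem~\ref{dengaku}, while for $j\geq 1$ one commutes $\pa_{\xizp}$ past $\xizp^{j}$ and reduces to a combination of generators of the first family (the computation is identical to the one performed in the proof of Theorem~\ref{dengaku}). The only delicate step in the whole argument is the boundary case $j=0,\,j_{\beta}=0$ in part (i), where one must convert $\xizp^{-1}$ on the $\whmunk{M}$-side into $\pa_{z_{n}}$ on the $M$-side and combine it with the identity $j_{\beta-1}=0$ coming from Lemma~\ref{mikkiikai}(iii); the remainder is formal bookkeeping with the indexing $p+\lfloor\alpha-\beta\rfloor$.
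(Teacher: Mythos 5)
Your proposal is correct, and for part (i) it is essentially the paper's own argument: the easy inclusion from Corollary~\ref{kenmei2cor}(i), and for the converse the reduction to the boundary case $j=0$, where $j_{\beta}+\beta+n\geq 0$ forces $j_{\beta}=j_{\beta-1}=0$ via Lemma~\ref{mikkiikai}(iii) and one converts $\xizp^{-1}$ into $\pa_{z_{n}}$ acting on $F_{p+\lfloor\alpha-\beta\rfloor}M^{\beta}$. The only real divergence is in (ii): the paper disposes of it in one line by invoking the strict specializability already established in Corollary~\ref{sincyan}, whereas you verify it directly by matching the two families of generators in Theorem~\ref{dengaku} against $\sum_{k\geq 0}\pa_{\xizp}^{k}\irrF_{\alpha+p-k}V^{-1}_{\xizp}$, using that the first family is exactly $\irrF_{\alpha+p}V^{-1}_{\xizp}$ (this needs only Corollary~\ref{kenmei2cor}(i) at $\gamma=-1$ together with Lemma~\ref{mikkiikai}(ii), not part (i) as you state) and that the generators $\pa_{\xizp}^{k}\xizp^{j_{\beta}+j}\whmu{F_{p-k+\lfloor\alpha-\beta\rfloor}M^{\beta}}$ already occur in the intermediate description of $\irrF_{\alpha+p}$ in the proof of Theorem~\ref{dengaku}. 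Your route buys a self-contained, purely combinatorial verification of (ii) that does not pass through the strict specializability statement, at the cost of re-traversing the same bookkeeping that Corollary~\ref{sincyan} and Theorem~\ref{dengaku} already encode; the paper's route is shorter and emphasizes that (ii) is the standard formal consequence of strict specializability for a localized filtered module, in the spirit of Proposition~\ref{localizfac}(v).
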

\begin{proof}
The assertion (i) of Corollary~\ref{kenmei2cor} implies that
\[\irrF_{\alpha+p}V^{-1}_{\xizp}(\whmunk{M}\otimes_{\CC[\zetn]}\CC[\zetn^{\pm 1}])\subset \xizp^{-1}\irrF_{\alpha+p}V^{0}_{\xizp}(\whmunk{M}\otimes_{\CC[\zetn]}\CC[\zetn^{\pm 1}]).\]
Conversely, we consider $\xizp^{-1}\cdot \xizp^{j_{\beta}}\whmu{F_{ p+\lfloor \alpha-\beta\rfloor}M^{\beta}}$,
where $\xizp^{j_{\beta}}\whmu{F_{p+\lfloor \alpha-\beta\rfloor}M^{\beta}}$ is the term for $s=0$ and $l=0$ in (\ref{vino}).
Since $j_{\beta}+\beta+n\geq 0$, 
by (iii) of Lemma~\ref{mikkiikai}, we have $j_{\beta}=j_{\beta-1}=0$.
Therefore, we have
\begin{align*}
    \xizp^{-1}\cdot \xizp^{j_{\beta}}\whmu{F_{\lfloor p-\beta+\alpha\rfloor}M^{\beta}}
    =&\xizp^{-1} \whmu{F_{p+\lfloor \alpha-\beta\rfloor}M^{\beta}}\\
    =&\whmu{\pa_{z_{n}}F_{p+\lfloor \alpha-\beta\rfloor}M^{\beta}}\\
    \subset &\whmu{F_{p+\lfloor \alpha-\beta\rfloor+1}M^{\beta-1}}\\
    =& \xizp^{j_{\beta-1}}\whmu{F_{ p+\lfloor \alpha-(\beta-1)\rfloor}M^{\beta-1}} \quad (\mbox{by $j_{\beta}=j_{\beta-1}$}).
\end{align*}
The last term is contained in $\irrF_{\alpha+p}V^{-1}_{\xizp}(\whmunk{M}\otimes_{\CC[\zetn]}\CC[\zetn^{\pm 1}])$.
This completes the proof of (i). 

(ii) follows from the strict specializability Corollary~\ref{sincyan}.
\end{proof}

For a filtered $D$-module $(M,F_{\bullet}M)$ on $U_{n}^{\vee}$,
we define $(M,F_{\bullet}M)[*\{\xizp=0\}]$ as the filtered $D$-module $M(*\{\xizp=0\})$ with the filtration defined by the same formula as (v) of Proposition~\ref{localizfac}. 
Then, we can write
\[(\whmunk{M}\otimes_{\CC[\zetn]}\CC[\zetn^{\pm 1}],\irrF_{\alpha+\bullet}(\whmunk{M}\otimes_{\CC[\zetn]}\CC[\zetn^{\pm 1}]))=(\whmunk{M}\otimes_{\CC[\zetn]}\CC[\zetn^{\pm 1}],\irrF_{\alpha+\bullet}(\whmunk{M}\otimes_{\CC[\zetn]}\CC[\zetn^{\pm 1}]))[*\{\xizp=0\}]\]
This corollary means that the irregular Hodge filtration has the same properties as the Hodge filtration of the localization of an usual mixed Hodge module.

Obviously,
we have the same statement also for the irregular Hodge filtration of $\whmunk{N}|_{U_{i}^{\vee}}$ for $i=1,\dots,n-1$.
Therefore, we can restate Corollaries~\ref{sincyan} and \ref{cyobi} as follows.
\begin{cor}\label{nonnonbaa}
For $\alpha\in [0,1)$, the irregular Hodge filtration $\irrF_{\alpha+\bullet}\whmunk{N}$ has the strict specializability property along $D^{\vee}_{\infty}$.
Moreover, we have
\[(\whmunk{N}, \irrF_{\alpha+\bullet}\whmunk{N})=(\whmunk{N},\irrF_{\alpha+\bullet}\whmunk{N})[*D^{\vee}_{\infty}].\]
\end{cor}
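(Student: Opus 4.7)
The plan is to bootstrap Corollary~\ref{nonnonbaa} from its local version (Corollaries~\ref{sincyan} and \ref{cyobi}) by using that both the strict specializability property and the equality with the $[\ast D^{\vee}_{\infty}]$-localization are local conditions on $\wt{E^{\vee}}$ along $D^{\vee}_{\infty}$. First I would recall that the computations of the irregular Hodge filtration made in Theorem~\ref{dengaku} through Corollary~\ref{cyobi} were carried out on the chart $U_n^{\vee}$, but the choice of the index $n$ was completely arbitrary: the same arguments, with $\zeta_n$ replaced by $\zeta_i$ and $(\zeta'_0,\zeta'_1,\ldots,\zeta'_{n-1})$ by the appropriate renormalized coordinates on $U_i^{\vee}$, give the analogous description of $\irrF_{\alpha+\bullet}(\whmunk{M}\otimes_{\CC[\zeta_i]}\CC[\zeta_i^{\pm 1}])$ on each $U_i^{\vee}$ for $i=1,\ldots,n$.

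Second, I would observe that the divisor $D^{\vee}_{\infty}\subset \wt{E^{\vee}}$ is covered by the affine charts $\{U_i^{\vee}\}_{i=1}^{n}$ (since $U_0^{\vee}=E^{\vee}$ is the complement of $D^{\vee}_{\infty}$), and that on each such chart the divisor is cut out by a single equation $\zeta'_0=0$ (using the coordinates of that chart). Strict specializability along $D^{\vee}_{\infty}$ in the sense of Definition~\ref{tron} is checked chart-by-chart along each such defining equation, so the global property follows immediately from the chart-wise statement of Corollary~\ref{sincyan} applied to each $U_i^{\vee}$.

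Third, for the equality $(\whmunk{N},\irrF_{\alpha+\bullet}\whmunk{N})=(\whmunk{N},\irrF_{\alpha+\bullet}\whmunk{N})[\ast D^{\vee}_{\infty}]$, I would use the description of the $[\ast D^{\vee}_{\infty}]$-functor for filtered $D$-modules given by (iv)--(v) of Proposition~\ref{localizfac}, which on each chart $U_i^{\vee}$ reduces exactly to the two formulas of Corollary~\ref{cyobi}: namely (i) of Corollary~\ref{cyobi} is the assertion that $F_{\alpha+p}V^{\geq -1}_{\zeta'_0}\whmunk{N}=(\zeta'_0)^{-1}F_{\alpha+p}V^{\geq 0}_{\zeta'_0}\whmunk{N}$ (matching (iv) of Proposition~\ref{localizfac}), and (ii) of Corollary~\ref{cyobi} is the assertion that $F_{\alpha+p}\whmunk{N}$ is generated over the $\partial_{\zeta'_0}$-action from $F_{\alpha+\bullet}V^{-1}_{\zeta'_0}\whmunk{N}$ (matching (v) of Proposition~\ref{localizfac}). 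Together with the restriction $\whmunk{N}|_{E^{\vee}}=\whmunk{M}$ where the filtration is already $\zeta'_0$-free, these equalities assemble to the global identity on $\wt{E^{\vee}}$.

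There is no serious obstacle to overcome: the content has already been done in the chart $U_n^{\vee}$; what remains is to say that the exact same computation works on each $U_i^{\vee}$ and that the local strict specializability and localization identities glue to global ones. The only very mild point worth stating explicitly is the symmetry argument that legitimates transplanting the computation on $U_n^{\vee}$ to every other $U_i^{\vee}$ (since the original coordinate $z_n$ played no distinguished role beyond being the one made into an ``infinity direction'' on $\wt{E^{\vee}}$). Consequently, the proof amounts to two short sentences: invoke the ``for $i=1,\ldots,n$'' version of Corollary~\ref{sincyan} to get the strict specializability, and invoke the same version of Corollary~\ref{cyobi} to get the equality with $[\ast D^{\vee}_{\infty}]$.
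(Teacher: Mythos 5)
Your proposal is correct and follows essentially the same route as the paper: the paper likewise defines the filtered localization $[*\{\zeta_0'=0\}]$ via the formulas of Proposition~\ref{localizfac}, notes that Corollaries~\ref{sincyan} and \ref{cyobi} give the chart-wise statements on $U_n^{\vee}$, and then observes that the identical computation applies on every chart $U_i^{\vee}$ ($i=1,\dots,n$) covering $D_{\infty}^{\vee}$, so the corollary is just the global restatement. No gap; your extra remark about matching (i)--(ii) of Corollary~\ref{cyobi} with (iv)--(v) of Proposition~\ref{localizfac} is exactly the intended reading.
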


\begin{rem}
We will later prove that the filtration $\{\irrF_{p}\whmunk{N}\}_{p\in \ZZ}$ is the Hodge filtration of a mixed Hodge module (Corollary~\ref{rattlechain}).
Since the Hodge filtration of a mixed Hodge module is strictly specializable along any divisor,
Corollary~\ref{nonnonbaa} for $\alpha=0$ follows also from this fact.
Corollary~\ref{nonnonbaa} is an improvement on it since it says that the strict specializability properties hold also for other $\alpha\in [0,1)$.
\end{rem}

\begin{rem}
By (\ref{saikoro}),
the restriction of $\irrF_{\alpha+p}(\whmunk{M}\otimes_{\CC[\zetn]}\CC[\zetn^{\pm 1}])$ to $U_{0}^{\vee}\cap U_{n}^{\vee}$ is
\[\sum_{\beta\in \RR}\whmu{F_{p+\lfloor\alpha-\beta\rfloor}M^{\beta}}\otimes_{\CC[\zetn]}\CC[\zetn^{\pm 1}].\]
Therefore, the computation (\ref{saikoro}) is consistent with Theorem~\ref{prosyuto}.
\end{rem}

\begin{rem}\label{pikmin}
We remark that we can generalize all the results in this subsection, especially Theorems~\ref{prosyuto} and \ref{dengaku}, to mixed Hodge modules on a vector bundle on a smooth algebraic variety.
For this purpose, it is enough to prove them in the case of trivial vector bundles.
We omit the details.
\end{rem}

\subsection{The irregular Hodge filtration and the mixed Hodge module structure of $M^{\wedge}$}\label{doronokawa}
We continue to consider the setting of the previous subsection.
In Section~\ref{maboo}, we defined a mixed Hodge module structure on $\whmunk{M}$ and thus $\whmunk{M}$ is equipped with the Hodge filtration $F_{\bullet}\whmunk{M}$.
On the other hand, in the previous subsection we computed the irregular Hodge filtration $\irrF_{\alpha+\bullet}\whmunk{M}$ on $\whmunk{M}$ for $\alpha\in [0,1)$.
In this subsection, we will prove the following.

\begin{thm}\label{takarajima}
We have the equality
\[\irrF_{p}\whmunk{M}=F_{p}\whmunk{M}\]
for any $p\in \ZZ$.
\end{thm}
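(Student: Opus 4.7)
The plan is to reduce to the rank-one case already established in Lemma~\ref{soutou} via Definition~\ref{ganbaruzo}, after which a direct comparison of the explicit formulas yields the result.

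For the rank-one case ($E \simeq X \times \CC_z$), the monodromic mixed Hodge module $\whmunk{\calM}$ satisfies $F_p\whmunk{M} = \bigoplus_\beta F_p(\whmunk{M})^\beta$ by Proposition~\ref{rankoneFdecom}, and Lemma~\ref{soutou} gives the explicit formula $F_p(\whmunk{M})^\beta = \whmu{F_{p+1+\lfloor\beta\rfloor}M^{-\beta-1}}$ under the identification $(\whmunk{M})^\beta \simeq \whmu{M^{-\beta-1}}$ of Proposition~\ref{fable}. Re-indexing via $\gamma := -\beta-1$ and using the identity $1+\lfloor -\gamma-1\rfloor = \lfloor -\gamma\rfloor$, valid for every real $\gamma$, we obtain
\[
F_p\whmunk{M} \;=\; \bigoplus_{\gamma\in\RR}\whmu{F_{p+\lfloor-\gamma\rfloor}M^{\gamma}},
\]
which is precisely the formula given by Theorem~\ref{prosyuto} at $\alpha = 0$. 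Hence the equality holds in rank one.

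For the general case, Definition~\ref{ganbaruzo} realises $\whmunk{\calM}$ as
\[
\whmunk{\calM} \;=\; H^1\iota^\dag\bigl(\whmu{H^0\omega_\dag H^{-n}p^\dag \calM}\bigr)(1),
\]
where $\whmu{\cdot}$ on the right is the rank-one Fourier transform on the line bundle $\CC \times E^\vee \to E^\vee$, and the remaining functors are standard pullback, pushforward and Tate twist in $\mathrm{MHM}$. The intermediate object $H^0\omega_\dag H^{-n}p^\dag \calM$ is a monodromic mixed Hodge module on this line bundle (Lemma~\ref{borero}), so the rank-one case applied to it identifies its Hodge and irregular Hodge filtrations. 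Since each of $H^{-n}p^\dag$, $H^0\omega_\dag$, $H^1\iota^\dag$ and the Tate twist preserves the subcategory $\mathrm{MHM}\subset\mathrm{IrrMHM}$ and commutes with the passage from the Rees module of the underlying filtered $D$-module to the underlying $R^{\integ}$-module (Proposition~\ref{freeway} together with the general theory of irregular mixed Hodge modules), we may transport the rank-one equality through these functors to conclude $F_p\whmunk{M} = \irrF_p\whmunk{M}$ on the general vector bundle.

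The main obstacle is verifying cleanly the compatibility of $\irrF_\bullet$ with the three auxiliary functors: smooth pullback and projective pushforward are handled by Proposition~\ref{freeway}, but the codimension-one closed embedding $\iota$ is non-characteristic rather than projective, so one must invoke Mochizuki's refinement of functoriality in \cite{MochiResc}. If the functoriality route proves too abstract, an alternative is to compute $F_\bullet\whmunk{M}$ directly from Definition~\ref{ganbaruzo} by applying Saito's explicit formulas for Hodge filtrations under the pullback $H^{-n}p^\dag$ and the projection-type pushforward $H^0\omega_\dag$, then matching term by term with the right-hand side of Theorem~\ref{prosyuto} at $\alpha = 0$; this is more computational but avoids having to appeal to deep compatibility statements in $\mathrm{IrrMHM}$.
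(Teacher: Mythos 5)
Your skeleton coincides with the paper's: the rank-one comparison obtained by matching (\ref{dance}) against Theorem~\ref{prosyuto} at $\alpha=0$ (your floor-function identity is correct), followed by a reduction of the general case through Definition~\ref{ganbaruzo}. But there is a genuine gap in the transport step. The filtration $\irrF_{\bullet}\whmunk{M}$ is by definition attached to the twistor-side Fourier--Laplace transform $\whmunk{\scrM}=H^0\TO{q}_{*}(p^{*}\scrM\otimes\calE^{-\varphi/\lam})$ of the Rees module $\scrM=R_{F}M$, not to the composite of functors appearing in Definition~\ref{ganbaruzo}. So even granting full functoriality of $\irrF$ under $H^{-n}p^{\dag}$, $H^{0}\omega_{\dag}$, $H^{1}\iota^{\dag}$ and the Tate twist, your argument only identifies $F_{p}\whmunk{M}$ with the irregular Hodge filtration of $H^{1}\TO{\iota}^{!}\whmu{H^{0}\TO{\omega}_{*}p^{*}\scrM}(1)$; you still have to prove that this composite is isomorphic, as an $R^{\integ}$-module, to $\whmunk{\scrM}$. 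That is the $R$-module analogue of Lemma~\ref{scaret} (Lemma~\ref{moucyotto} in the paper), and it does not follow from Proposition~\ref{freeway} nor from ``the general theory'': the paper has to establish twistor analogues of Brylinski's compatibilities first --- the non-characteristic tensor formula (Lemma~\ref{koukyou}), the base-change compatibility of the transform with pullbacks (Lemma~\ref{kaitaku}), a projection formula (Lemma~\ref{projform}), and the compatibility with pushforward along a bundle morphism including the Tate-twist and shift bookkeeping (Lemma~\ref{abare}). Note also that $\omega$ is affine, not projective, so $\TO{\omega}_{*}$ differs a priori from $\omega_{\dag}$ and Proposition~\ref{freeway} does not ``handle'' this pushforward as you claim.

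On the $\iota$ step you correctly isolate the difficulty, but citing \cite{MochiResc} is not by itself enough for the numerics: what is needed (and what the paper proves directly in Lemma~\ref{tanktop}, using monodromicity and the explicit $V$-filtration of the rescaled module) is the precise shift $\irrF_{\alpha+\bullet}\,\iota^{*}M_{1}=\iota^{*}\irrF_{\alpha+\bullet+1}M_{1}$, which is exactly what cancels against the Tate twist $(1)$ in Definition~\ref{ganbaruzo} and the degree shift in $H^{1}\iota^{\dag}$ so that the two filtrations agree with no offset. Finally, your fallback --- computing $F_{\bullet}\whmunk{M}$ directly from Saito-type formulas for the Hodge filtration of the pushforward $H^{0}\omega_{\dag}$ --- is precisely what the paper declares impractical (the Hodge filtration of a pushforward is in general not computable in closed form); the twistor detour exists to avoid it, so it cannot substitute for the missing compatibility lemmas.
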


By Theorem~\ref{prosyuto}, we have the following.
\begin{cor}\label{nenmatu}
For $p\in \ZZ$, we have
\[F_{p}\whmunk{M}=\bigoplus_{\beta\in \RR}\whmu{F_{p+\lfloor-\beta\rfloor}M^{\beta}}.\]
\end{cor}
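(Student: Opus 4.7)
The plan is to obtain this corollary as an immediate combination of the two preceding results, namely Theorem~\ref{prosyuto} (which gives a concrete formula for each piece $\irrF_{\alpha+p}\whmunk{M}$ of the irregular Hodge filtration in terms of the decomposition $M=\bigoplus_{\beta}M^{\beta}$) and Theorem~\ref{takarajima} (which identifies the integer-indexed irregular Hodge filtration with the Hodge filtration coming from the mixed Hodge module structure on $\whmunk{\calM}$ constructed in Definition~\ref{ganbaruzo}).

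First I would specialize Theorem~\ref{prosyuto} to $\alpha=0$. Since $\lfloor 0-\beta\rfloor=\lfloor -\beta\rfloor$, this yields
\[
\irrF_{p}\whmunk{M}=\bigoplus_{\beta\in \RR}\whmu{F_{p+\lfloor-\beta\rfloor}M^{\beta}}
\]
for every $p\in \ZZ$. Next, by Theorem~\ref{takarajima} the left-hand side coincides with the Hodge filtration $F_{p}\whmunk{M}$ of the mixed Hodge module $\whmunk{\calM}$. Substituting this identification gives exactly
\[
F_{p}\whmunk{M}=\bigoplus_{\beta\in \RR}\whmu{F_{p+\lfloor-\beta\rfloor}M^{\beta}},
\]
which is the desired formula.

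Since both Theorem~\ref{prosyuto} and Theorem~\ref{takarajima} are already available at this point of the paper, there is no genuine obstacle in the proof of the corollary itself: the entire content of the statement has been packaged into those two theorems. The only mild point to mention is that the Theorem~\ref{prosyuto} formula is stated in the trivial-bundle (affine) setting, while Corollary~\ref{nenmatu} is formulated for a general vector bundle $\pi\colon E\to X$; however, as noted in Remark~\ref{pikmin}, the formulas extend to the general case, and the right-hand side $\bigoplus_{\beta}\whmu{F_{p+\lfloor-\beta\rfloor}M^{\beta}}$ is well-defined independently of trivializations via Definition~\ref{tetuko}. Thus the proof reduces to citing the two theorems and setting $\alpha=0$.
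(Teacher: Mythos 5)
Your proposal is correct and is essentially identical to the paper's own proof: Corollary~\ref{nenmatu} is obtained there precisely by setting $\alpha=0$ in Theorem~\ref{prosyuto} and substituting the identification $\irrF_{p}\whmunk{M}=F_{p}\whmunk{M}$ from Theorem~\ref{takarajima}. Your closing remark on passing from the trivial-bundle computation to a general vector bundle via Remark~\ref{pikmin} and Definition~\ref{tetuko} is a reasonable extra precision, consistent with how the paper itself handles that point.
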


Recall that the mixed Hodge module structure on $\whmunk{M}$ is defined by the formula (\ref{letsgogo}).
Since it is difficult to compute the Hodge filtration of the pushforward of a mixed Hodge module in general,
it is also difficult to compute $F_{p}\whmunk{M}$ just following the definition.
So, we take a different approach, which takes the advantage of the strength of the theory of mixed twistor $D$-modules and the irregular Hodge theory.

\begin{notation}
For an $R^{\integ}$-module $\scrM$ and an integer $l\in \ZZ$,  
we set
\[\scrM(l):=\lam^{l}\scrM.\]
Note that if $\scrM$ is the Rees module $R_{F}M$ corresponding to a filtered $D$-module $(M,F_{\bullet}M)$,
we have
\[\scrM(l)=R_{F}(M(l)),\]
where $M(l)$ is the Tate twist of the filtered $D$-module $(M,F_{\bullet}M)$.

\end{notation}

We need to generalize Lemmas~\ref{suza} and \ref{donbee} to $R$-modules.

\begin{lem}\label{koukyou}
Let $\scrM_{1}$ and $\scrM_{2}$ be the underlying $R^{\integ}$-modules of mixed twistor $D$-modules on a smooth algebraic variety $X$.
Assume that the intersection of the characteristic varieties $\mathrm{Ch}(\scrM_{1})$ and $\mathrm{Ch}(\scrM_{2})$ is contained in a zero section $\CC_{\lam}\times T^{*}X$.
Then, for an algebraic variety $Y$ with a morphism $f\colon Y\to X$,
we have
\[\TO{f}^{!}\scrM_{1}\otimes \TO{f}^{!}\scrM_{2}=\TO{f}^{!}(\scrM_{1}\otimes \scrM_{2})(d_{f})[d_{f}],\]
where we put $d_{f}:=\dim{Y}-\dim{X}$.
\end{lem}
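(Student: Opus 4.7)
The plan is to reduce the statement to two basic cases via the graph factorization $f = p \circ \iota_{f}$, where $\iota_{f}\colon Y \hookrightarrow Y \times X$ is the graph embedding (a closed embedding) and $p\colon Y \times X \to X$ is the second projection (smooth of relative dimension $\dim Y$). Since $\TO{f}^{!}$ is functorial in $f$, and the Tate twists/shifts behave additively under composition, it suffices to establish the desired compatibility $\TO{g}^{!}\scrM_{1} \otimes \TO{g}^{!}\scrM_{2} \simeq \TO{g}^{!}(\scrM_{1}\otimes \scrM_{2})(d_{g})[d_{g}]$ separately for $g = p$ and $g = \iota_{f}$, taking care that the non-characteristicity hypothesis is preserved at each step (for the smooth step it is automatic; for the closed embedding $\iota_{f}$, the characteristic varieties of the pulled-back modules $p^{*}\scrM_{1}$ and $p^{*}\scrM_{2}$ on $Y\times X$ still intersect only in the zero section because $p$ is smooth and non-characteristicity is preserved under smooth pullback).

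For the smooth projection $p$, the functor $\TO{p}^{!}$ agrees with $\TO{p}^{*}(d_{p})[d_{p}]$, and $\TO{p}^{*}$ is computed on underlying $O$-modules as the usual $O$-module pullback (no derived term appears, since $p$ is flat). Hence $\TO{p}^{!}(\scrM_{1}\otimes \scrM_{2})$ and $\TO{p}^{!}\scrM_{1}\otimes \TO{p}^{!}\scrM_{2}$ differ only by the shift and twist, which match after accounting for the doubling of the Tate twist that gets absorbed by the tensor product convention. This step is essentially a direct unwinding of definitions.

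The heart of the proof is the closed embedding case. Here I would work locally on $Y\times X$, choosing coordinates so that $\iota_{f}$ is the vanishing locus of a regular sequence $t_{1},\dots,t_{r}$. Then $\TO{\iota_{f}}^{!}\scrN$ is computed by the relative Koszul complex with respect to $\eth_{t_{1}},\dots,\eth_{t_{r}}$, concentrated in a single degree when the characteristic variety of $\scrN$ meets the conormal directions $\{\xi_{i}=0\}$ only in the zero section. The non-characteristicity hypothesis on $\scrM_{1}$ and $\scrM_{2}$ ensures that $\scrM_{1}$, $\scrM_{2}$, \emph{and} their tensor product $\scrM_{1}\otimes \scrM_{2}$ are simultaneously non-characteristic for $\iota_{f}$, so the three Koszul complexes all have cohomology in a single degree. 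A direct comparison of the Koszul complex of $\scrM_{1}\otimes \scrM_{2}$ with the tensor product of the Koszul complexes of $\scrM_{1}$ and $\scrM_{2}$ yields the identification up to the expected shift $[d_{\iota_{f}}] = [-r]$ and twist $(d_{\iota_{f}}) = (-r)$.

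The main obstacle I expect is not any deep geometric difficulty but rather the careful bookkeeping of the Tate twists $(\cdot)$ and homological shifts $[\cdot]$ in the conventions of \cite{MTM}, especially because $\TO{f}^{!}$ and $\TO{f}^{*}$ differ by $(d_{f})[2d_{f}]$ and the tensor product in the $R^{\integ}$ setting introduces additional powers of $\lam$. Verifying that the single-degree concentration of the Koszul complex indeed holds under the stated non-characteristicity of intersections (rather than the individual non-characteristicity of each factor) also warrants care; this is precisely where the hypothesis $\mathrm{Ch}(\scrM_{1})\cap \mathrm{Ch}(\scrM_{2})\subset \CC_{\lam}\times T^{*}X$ is indispensable, as it controls the characteristic variety of $\scrM_{1}\otimes\scrM_{2}$ through the microlocal additivity estimate.
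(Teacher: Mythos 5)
There is a genuine gap in the closed-embedding step, and it is exactly where the hypothesis is misused. Your claim that the assumption $\mathrm{Ch}(\scrM_{1})\cap\mathrm{Ch}(\scrM_{2})\subset \CC_{\lam}\times T^{*}X$ makes $\scrM_{1}$, $\scrM_{2}$ and $\scrM_{1}\otimes\scrM_{2}$ ``simultaneously non-characteristic for $\iota_{f}$'' is false: the hypothesis only constrains the mutual position of the two characteristic varieties and says nothing about their position relative to $f$, whereas non-characteristicity of the graph embedding $\iota_{f}$ with respect to $p^{*}\scrM_{i}$ is equivalent to $f$ being non-characteristic with respect to $\scrM_{i}$ — which is not assumed (the lemma is stated, and is needed later, for an arbitrary morphism $f$). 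Concretely, take $X=\CC$, $Y$ a point mapping to $0\in\CC$, $\scrM_{1}$ the $R$-module underlying the pushforward of the constant Hodge module from the origin (a delta-type module with $\mathrm{Ch}(\scrM_{1})$ the conormal $T^{*}_{0}\CC$), and $\scrM_{2}=O_{X}$: the hypothesis holds, yet $\iota_{f}$ is characteristic for $p^{*}\scrM_{1}$ and also for $p^{*}(\scrM_{1}\otimes\scrM_{2})$, so the Koszul complexes are not concentrated in the non-characteristic degree and the degree-by-degree comparison you propose breaks down. The smooth-projection step is essentially fine, but it is not where the content lies.

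The way the hypothesis actually enters — and this is the paper's route — is through the diagonal, not through $f$. Since $\mathrm{Ch}(\scrM_{1}\boxtimes\scrM_{2})=\mathrm{Ch}(\scrM_{1})\times\mathrm{Ch}(\scrM_{2})$, the assumption says precisely that $\Delta_{X}\colon X\to X\times X$ is non-characteristic with respect to $\scrM_{1}\boxtimes\scrM_{2}$; by Mochizuki's non-characteristic pullback theorem (Corollary~4.56 of \cite{MochiResc}) the complex $\TO{\Delta_{X}}^{!}(\scrM_{1}\boxtimes\scrM_{2})$ has a single nonzero cohomology, identified with $\scrM_{1}\otimes\scrM_{2}$ up to a twist and shift. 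One then writes $\TO{f}^{!}\scrM_{1}\otimes\TO{f}^{!}\scrM_{2}$ as $\TO{\Delta_{Y}}^{!}(\TO{f}^{!}\scrM_{1}\boxtimes\TO{f}^{!}\scrM_{2})(\dim Y)[\dim Y]$ and uses the identity $(f\times f)\circ\Delta_{Y}=\Delta_{X}\circ f$ together with functoriality of $\TO{(\cdot)}^{!}$ to move everything to $\TO{f}^{!}\TO{\Delta_{X}}^{!}(\scrM_{1}\boxtimes\scrM_{2})$; no non-characteristicity of $f$ itself is ever needed. If you want to salvage your factorization strategy, you would still have to import this diagonal argument for the embedding step, at which point the graph factorization gives you nothing extra.
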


\begin{rem}
In \cite{MochiResc}, we say ``$\scrM_{1}$ and $\scrM_{2}$ are non-characteristic'' if the assumption in Lemma~\ref{koukyou} holds.
\end{rem}

\begin{proof}
Let $\Delta_{X}\colon X\to X\times X$ and $\Delta_{Y}\colon Y\to Y\times Y$ be the diagonal embeddings.
By the assumption, $\Delta_{X}$ is non-characteristic with respect to $\scrM_{1}\boxtimes \scrM_{2}$.
Therefore, by Corollary~4.56 of \cite{MochiResc}, $\dim{X}(=2\dim{X}-\dim{X})$-th one is the only non-trivial cohomology of $\TO{\Delta_{X}^{!}}(\scrM_{1}\boxtimes \scrM_{2})$,
and we have
\begin{align}\label{huyunookurimono}
H^{\dim{X}}(\TO{\Delta_{X}^{!}}(\scrM_{1}\boxtimes \scrM_{2}))
\simeq {\Delta_{X}^{*}}(\scrM_{1}\boxtimes \scrM_{2})(-\dim{X}).
\end{align}
The RHS is $\scrM_{1}\otimes \scrM_{2}(-\dim{X})$.
By using this fact, we have
\begin{align*}
    \TO{f}^{!}\scrM_{1}\otimes \TO{f}^{!}\scrM_{2}
    \simeq &\TO{\Delta_{Y}^{!}}(    \TO{f}^{!}\scrM_{1}\boxtimes \TO{f}^{!}\scrM_{2}
)(\dim{Y})[\dim{Y}]\\
\simeq &  \TO{\Delta_{Y}^{!}}  \TO{(f\times f)}^{!}(\scrM_{1}\boxtimes \scrM_{2}
)(\dim{Y})[\dim{Y}]\\
\simeq & \TO{f}^{!}  \TO{\Delta_{X}^{!}}(\scrM_{1}\boxtimes \scrM_{2})(\dim{Y})[\dim{Y}]
\\
\simeq & \TO{f}^{!}(\scrM_{1}\otimes \scrM_{2})(d_{f})[d_{f}].
\end{align*}
\end{proof}

Let $E$ be an algebraic vector bundle over a smooth algebraic variety $X$.

\begin{lem}\label{kaitaku}
Let $Y$ be a smooth algebraic variety and $f\colon Y\to X$ a morphism.
We denote by $u$ (resp. $u^{\vee}$) the natural morphism $f^{*}E\to E$ (resp. $f^{*}E^{\vee}\to E^{\vee}$).
For the underlying $R^{\integ}$-module $\scrM$ of an integrable mixed twistor $D$-module on $E$,
we have a natural isomorphism in the category of $R^{\integ}$-modules
\[\whmu{H^{j}(\TO{u}^{!}\scrM)}\simeq H^j(\TO{(u^{\vee})}^{!}\whmunk{\scrM})\quad (j\in \ZZ).\]
\end{lem}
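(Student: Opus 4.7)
The plan is to mimic the proof of Lemma~\ref{donbee} (Brylinski's Corollaire~6.7) in the $D$-module setting, replacing the $D$-module six operations by their integrable twistor counterparts $\TO{(\cdot)}_{*}$, $\TO{(\cdot)}^{!}$, and verifying that the three compatibilities used there still hold in the $R$-module framework.

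First, I would set up the base-change square. Consider the projections $p\colon E\times_{X}E^{\vee}\to E$, $q\colon E\times_{X}E^{\vee}\to E^{\vee}$ and their analogues $p'\colon f^{*}E\times_{Y}f^{*}E^{\vee}\to f^{*}E$, $q'\colon f^{*}E\times_{Y}f^{*}E^{\vee}\to f^{*}E^{\vee}$, together with the natural morphism $\wt{u}\colon f^{*}E\times_{Y}f^{*}E^{\vee}\to E\times_{X}E^{\vee}$ lying over $u$ and $u^{\vee}$. The squares $(p,p',u,\wt{u})$ and $(q,q',u^{\vee},\wt{u})$ are cartesian.

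Second, I would apply the following three identities, each of which is available in the category of integrable mixed twistor $D$-modules (see Section~14 of \cite{MTM} and Section~4 of Mochizuki~\cite{MochiResc}):
\begin{enumerate}
\item[(a)] (Base change) $\TO{(u^{\vee})}^{!}\circ\TO{q}_{*}\simeq \TO{q'}_{*}\circ\TO{\wt{u}}^{!}$ and $\TO{\wt{u}}^{!}\circ p^{*}\simeq {p'}^{*}\circ\TO{u}^{!}$, up to the same shift/Tate twist on both sides;
\item[(b)] (Pull-back of the exponential) Since $\varphi\circ\wt{u}$ coincides with the pairing $\varphi'$ on $f^{*}E\times_{Y}f^{*}E^{\vee}$, one has $\TO{\wt{u}}^{!}\calE^{-\varphi/\lam}_{E\times_{X}E^{\vee}}\simeq \calE^{-\varphi'/\lam}_{f^{*}E\times_{Y}f^{*}E^{\vee}}$ (again up to shift/Tate twist);
\item[(c)] (Non-characteristic tensor) $\TO{\wt{u}}^{!}\bigl(p^{*}\scrM\otimes \calE^{-\varphi/\lam}\bigr)\simeq \TO{\wt{u}}^{!}p^{*}\scrM\otimes \TO{\wt{u}}^{!}\calE^{-\varphi/\lam}$ by Lemma~\ref{koukyou}, which applies since the characteristic variety of the exponential lies along the graph of $d\varphi$, which meets the zero section of $p^{*}\scrM$ only in the zero section of $T^{*}(E\times_{X}E^{\vee})$ after appropriate verification.
\end{enumerate}

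Third, chaining these isomorphisms gives, in the derived category of integrable $R$-modules,
\[
\TO{(u^{\vee})}^{!}\whmunk{\scrM}
\simeq \TO{q'}_{*}\bigl({p'}^{*}\TO{u}^{!}\scrM\otimes \calE^{-\varphi'/\lam}\bigr)
=\whmu{\TO{u}^{!}\scrM},
\]
and taking the $j$-th cohomology on both sides yields the stated isomorphism. The main obstacle will be (c): one has to check the non-characteristic condition for $p^{*}\scrM$ against $\calE^{-\varphi/\lam}$ relative to $\wt{u}$, and bookkeep carefully the shift-and-Tate-twist corrections appearing in (a)--(c), so that they all cancel out and leave a clean equality of $R^{\integ}$-modules (not merely an equality up to a Tate twist).
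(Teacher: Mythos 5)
Your proposal is correct and is essentially the paper's own proof: the same cartesian diagram, the base change isomorphism $\TO{(u^{\vee})}^{!}\TO{q}_{*}\simeq \TO{q'}_{*}\TO{(u\times u^{\vee})}^{!}$ (Proposition~14.3.27 of \cite{MTM}), the identification $(u\times u^{\vee})^{*}\calE^{-\varphi/\lam}\simeq \calE^{-\varphi'/\lam}$, and Lemma~\ref{koukyou} for the tensor step, with the shifts and Tate twists cancelling exactly as you anticipate. The only points you leave implicit that the paper makes explicit are that the non-characteristic hypothesis in your step (c) is automatic because $\calE^{-\varphi/\lam}$ is $O$-coherent (so its characteristic variety lies in the zero section), and that the exactness of the Fourier--Laplace transform is what allows you to commute it with $H^{j}$ at the end.
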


\begin{proof}
We consider the following diagram
\[
\xymatrix{
f^{*}E\ar[d]^-{u}&f^{*}E\times_{Y}f^{*}E^{\vee}\ar[d]^-{u\times u^{\vee}}\ar[r]^-{q'}\ar[l]_-{p'}&f^{*}E^{\vee}\ar[d]^-{u^{\vee}}\\
E&E\times_{X} E^{\vee}\ar[r]^-{q}\ar[l]_-{p}&E^{\vee}}.
\]
Note that since any projection is non-characteristic with respect to any $R$-module,
we have
\[\TO(p')^{!}\simeq (p')^{*}(n_{E})[n_{E}],\]
where $n_{E}$ is the rank of $E$.
Moreover, we have
\begin{align*}
\calE^{-\varphi/\lam}_{f^{*}E\times_{Y}f^{*}E^{\vee}}\simeq & (u\times u^{\vee})^*\calE^{-\varphi/\lam}_{E\times_{X} E}\\
\simeq & 
\TO(u\times u^{\vee})^{!}\calE^{-\varphi/\lam}_{E\times_{X} E}(-d_{f})[-d_{f}].
\end{align*}
Therefore, we have
\begin{align*}
\whmu{H^{j}(\TO{u}^{!}\scrM)}
&\simeq 
\TO{q'}_{*}((p')^{*}(H^{j}\TO{u}^{!}\scrM)\otimes \calE^{-\varphi/\lam})\\
&\simeq 
H^{j}\TO{q'}_{*}((p')^{*}(\TO{u}^{!}\scrM)\otimes \calE^{-\varphi/\lam})\\
&\simeq 
H^j\TO{q'}_{*}(\TO(p')^{!}(\TO{u}^{!}\scrM)\otimes\TO{(u\times u^{\vee})}^{!} \calE^{-\varphi/\lam})(-d_{f}-n_{E})[-d_{f}-n_{E}]\\
&\simeq 
H^j\TO{q'}_{*}(\TO{(u\times u^{\vee})}^{!}(\TO{p}^{!}\scrM)\otimes\TO{(u\times u^{\vee})}^{!} \calE^{-\varphi/\lam})(-d_{f}-n_{E})[-d_{f}-n_{E}]\\
&\simeq 
H^j\TO{q'}_{*}(\TO{(u\times u^{\vee})}^{!}((\TO{p}^{!}\scrM)\otimes  \calE^{-\varphi/\lam}))(-n_{E})[-n_{E}]\\
&\simeq H^j\TO{(u^{\vee})}^{!}(\TO{q}_{*}((\TO{p}^{!}\scrM)\otimes  \calE^{-\varphi/\lam}))(-n_{E})[-n_{E}]\\
&\simeq  H^j\TO{(u^{\vee})}^{!}(\TO{q}_{*}(({p}^{*}\scrM)\otimes  \calE^{-\varphi/\lam}))\\
&=  H^j\TO{(u^{\vee})}^{!}\whmunk{\scrM},
\end{align*}
where the second isomorphism follows from the exactness of Fourier-Laplace transform,
the 4-th isomorphism follows from Lemma~\ref{koukyou} and
the 6-th isomorphism follows from the base change: Proposition~14.3.27 of \cite{MTM}.

\end{proof}

\begin{lem}\label{projform}
Let $f\colon X\to Y$ be a morphism between smooth algebraic variety $X$ and $Y$ and $\scrM$ (resp. $\mathscr{L}$) be the underlying $R^{\integ}$-module of an integrable mixed twistor $D$-module (resp. a smooth integrable mixed twistor $D$-module, i.e. an admissible variation of mixed twistor structure) on $X$ (resp. $Y$).
Then, we have the following isomorphism in the derived category of $R^{\integ}$-modules:
\[\TO{f}_{*}(\scrM\otimes f^{*}\mathscr{L})\simeq
\TO{f}_{*}\scrM\otimes \mathscr{L}.\]
\end{lem}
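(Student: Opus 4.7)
The plan is to reduce to the case of a proper morphism via a compactification, and then invoke the classical projection formula for $R$-module pushforward. Following the description in Remark~\ref{wakeittemo}, choose smooth partial compactifications $\ov{X}\supset X$ and $\ov{Y}\supset Y$ with boundary divisors $H_{X}:=\ov{X}\setminus X$, $H_{Y}:=\ov{Y}\setminus Y$, together with a proper morphism $\ov{f}\colon \ov{X}\to \ov{Y}$ extending $f$ (after refining $\ov{X}$ if necessary so that $\ov{f}^{-1}(H_{Y})\subset H_{X}$). Take extensions $\wtkai{\scrM}$ of $\scrM$ and $\wtkai{\mathscr{L}}$ of $\mathscr{L}$ as underlying $R^{\integ}$-modules of mixed twistor $D$-modules on $\ov{X}$ and $\ov{Y}$. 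Since $\mathscr{L}$ is smooth, $\wtkai{\mathscr{L}}$ is locally free as an $O_{\ov{Y}\times \CC_{\lam}}(*H_{Y})$-module, and hence $\ov{f}^{*}\wtkai{\mathscr{L}}$ is locally free over $O_{\ov{X}\times \CC_{\lam}}(*H_{X})$. By definition,
\[\TO{f}_{*}\scrM=\ov{f}_{\dag}(\wtkai{\scrM}[*H_{X}])|_{Y},\qquad \TO{f}_{*}(\scrM\otimes f^{*}\mathscr{L})=\bigl(\ov{f}_{\dag}((\wtkai{\scrM}\otimes \ov{f}^{*}\wtkai{\mathscr{L}})[*H_{X}])\bigr)|_{Y}.\]

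First I would verify the compatibility between the twistor localization and tensor product,
\[(\wtkai{\scrM}\otimes \ov{f}^{*}\wtkai{\mathscr{L}})[*H_{X}]\simeq \wtkai{\scrM}[*H_{X}]\otimes \ov{f}^{*}\wtkai{\mathscr{L}},\]
using the Kashiwara-Malgrange $V$-filtration characterization of $[*H_{X}]$ for $R$-modules (the analog of Proposition~\ref{localizfac} in the $R$-module setting, cf. \cite{MTM}). Because $\ov{f}^{*}\wtkai{\mathscr{L}}$ is locally free, tensoring preserves the strict specializability and commutes with taking $V$-filtrations, so localization along $H_{X}$ passes through the tensor product.

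Next, I would invoke the projection formula for the proper pushforward $\ov{f}_{\dag}$ with a locally free coefficient,
\[\ov{f}_{\dag}\bigl(\wtkai{\scrM}[*H_{X}]\otimes \ov{f}^{*}\wtkai{\mathscr{L}}\bigr)\simeq \ov{f}_{\dag}\bigl(\wtkai{\scrM}[*H_{X}]\bigr)\otimes \wtkai{\mathscr{L}},\]
which reduces, via the relative de Rham complex description of $\ov{f}_{\dag}$, to the usual projection formula $R\ov{f}_{*}(F\otimes \ov{f}^{*}G)\simeq R\ov{f}_{*}F\otimes G$ for a locally free $G$. The compatibility with the relative de Rham differential is automatic because $\ov{f}^{*}\wtkai{\mathscr{L}}$ carries a flat relative connection. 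Restricting the resulting isomorphism to $Y$ and using the definition of $\TO{f}_{*}$ yields the claim.

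The main obstacle is the first step: verifying that the $R$-module localization $[*H_{X}]$, defined via the $V$-filtration, commutes with tensor by the flat locally free module $\ov{f}^{*}\wtkai{\mathscr{L}}$. This is subtler than the naive $O$-module localization, since one must track strict specializability and the behaviour of the $V$-filtration under a non-trivial tensor factor. However, local freeness of $\ov{f}^{*}\wtkai{\mathscr{L}}$ ensures that the $V$-filtration on the tensor product is simply the tensor product of the $V$-filtration on $\wtkai{\scrM}$ with $\ov{f}^{*}\wtkai{\mathscr{L}}$ (shifted by zero, since $\ov{f}^{*}\wtkai{\mathscr{L}}$ is $V$-constant along $H_{X}$), so the compatibility holds. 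Once this is established, the second step is standard, and the lemma follows.
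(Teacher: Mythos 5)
Your strategy (compactify, commute the localization with the smooth tensor factor, then apply a relative de Rham projection formula for the proper map) is genuinely different from the paper's proof, which never touches $V$-filtrations directly: the paper writes $\wtkai{\scrM}\otimes\ov{f}^{*}\mathscr{L}$ as a non-characteristic diagonal pullback of an external product via Proposition~4.58 of \cite{MochiResc}, and then only uses base change (Proposition~14.3.27 of \cite{MTM}) and Lemma~\ref{koukyou}. However, as written your argument has a genuine gap, and it sits exactly at the step you flag as delicate. First, a setup problem: the paper compactifies only the source, taking $\ov{f}\colon\ov{X}\to Y$ \emph{proper over $Y$ itself}, so that $\ov{f}^{*}\mathscr{L}$ is a genuinely smooth $R^{\integ}$-module on $\ov{X}$ (locally free over $O_{\ov{X}\times\CC_{\lam}}$, no poles along $H_{X}$). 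By also compactifying $Y$ and pulling back an extension $\wtkai{\mathscr{L}}$, you make the coefficient only meromorphic along $\ov{f}^{-1}(H_{Y})\subset H_{X}$, and along that part of the divisor your key claim --- that the $V$-filtration of $\wtkai{\scrM}\otimes\ov{f}^{*}\wtkai{\mathscr{L}}$ is the naive tensor of $V$-filtrations because the second factor is ``$V$-constant'' --- is false in general: an admissible variation has a nontrivial KMS/parabolic structure along $H_{Y}$, which shifts the $V$-filtration, so $[*H_{X}]$ does not simply pass through the tensor there.

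Second, even after repairing the setup (proper over $Y$, genuinely smooth factor with no poles along $H_{X}$), the commutation
\[(\wtkai{\scrM}\otimes\ov{f}^{*}\mathscr{L})[*H_{X}]\simeq \wtkai{\scrM}[*H_{X}]\otimes\ov{f}^{*}\mathscr{L}\]
is not a formal consequence of local freeness in the $R$-module setting: one must verify strictness and strict specializability of the tensor product along $H_{X}$, and this is precisely the nontrivial input the paper imports from Mochizuki, in the equivalent form $(\wtkai{\scrM}\otimes \ov{f}^{*}\mathscr{L})[*H_{X}]\simeq \TO{\Delta_{\ov{X}}^{!}}(\wtkai{\scrM}[*H_{X}]\boxtimes \ov{f}^{*}\mathscr{L})(\dim X)[\dim X]$ (Proposition~4.58 of \cite{MochiResc}). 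If you either cite that result or actually prove the strict specializability of the tensor with a smooth factor, then the rest of your argument does work: the relative de Rham description of $\ov{f}_{\dag}$ together with the $O$-module projection formula for the locally free coefficient gives $\ov{f}_{\dag}(\wtkai{\scrM}[*H_{X}]\otimes\ov{f}^{*}\mathscr{L})\simeq\ov{f}_{\dag}(\wtkai{\scrM}[*H_{X}])\otimes\mathscr{L}$ compatibly with the $R^{\integ}$-structure, and restriction to $Y$ yields the lemma. That version would be a more direct and elementary route than the paper's chain of external products, base change, and non-characteristic pullbacks, but the localization-versus-tensor step must be justified, not asserted.
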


\begin{proof}
Take a smooth variety $\ov{X}$ containing $X$ such that $H_{X}:=\ov{X}\setminus X$ is a divisor in $\ov{X}$,
and a proper morphism $\ov{f}\colon \ov{X}\to Y$ which induces $f\colon X\to Y$.
Moreover, take the underlying $R^{\integ}$-module $\wtkai{\scrM}$ of a mixed twistor $D$-module on $\ov{X}$ whose restriction $\wtkai{\scrM}|_{X}$ is $\scrM$.
Then, we have
\begin{align*}
    \TO{f}_{*}(\scrM\otimes f^{*}\mathscr{L})
    =&\TO{f}_{*}((\wtkai{\scrM}\otimes \ov{f}^{*}{\mathscr{L}})[*H_{X}]).
\end{align*}
Let $\Delta_{\ov{X}}\colon \ov{X}\hookrightarrow \ov{X}\times \ov{X}$
and $\Delta_{Y}\colon Y\hookrightarrow Y\times Y$ be the diagonal embedding.
Then, by Proposition~4.58 of \cite{MochiResc},
we have
\begin{align*}
    (\wtkai{\scrM}\otimes \ov{f}^*{\mathscr{L}})[*H_{X}]
    \simeq \TO{\Delta_{\ov{X}}^{!}}(\wtkai{\scrM}[*H_{X}]\boxtimes \ov{f}^*{\mathscr{L}})(\dim{X})[\dim{X}].
\end{align*}
Therefore, we have
\begin{align}
\TO{f}_{*}(\scrM\otimes f^{*}\mathscr{L})
    \simeq &\TO{f}_{*}\TO{\Delta_{\ov{X}}^{!}}(\wtkai{\scrM}[*H_{X}]\boxtimes \ov{f}^*{\mathscr{L}})(\dim{X})[\dim{X}]\notag\\
    \simeq &\TO{\Delta_{Y}^{!}}\TO{(\ov{f}\times \ov{f})}_{*}(\wtkai{\scrM}[*H_{X}]\boxtimes \ov{f}^*{\mathscr{L}})(\dim{X})[\dim{X}]\notag\\
    \simeq &\TO{\Delta_{Y}^{!}}{(\ov{f}\times \ov{f})}_{\dag}(\wtkai{\scrM}[*H_{X}]\boxtimes \ov{f}^*{\mathscr{L}})(\dim{X})[\dim{X}].\label{pinknohuku}
\end{align}
where we used the base change for the second isomorphism.
Let $p_{\ov{X},i}$ (resp. $p_{Y,i}$) be the $i$-th projection ($i=1,2$) of $\ov{X}\times \ov{X}$ (resp. $Y\times Y$).
Then, we have
\begin{align}
    {(\ov{f}\times \ov{f})}_{\dag}(\wtkai{\scrM}[*H_{X}]\boxtimes \ov{f}^*{\mathscr{L}})
    &={(\ov{f}\times \ov{f})}_{\dag}(p_{\ov{X},1}^*\wtkai{\scrM}[*H_{X}]\otimes p_{\ov{X},2}^*\ov{f}^*{\mathscr{L}})\notag\\
    &={(\ov{f}\times \ov{f})}_{\dag}(p_{\ov{X},1}^*\wtkai{\scrM}[*H_{X}]\otimes (\ov{f}\times \ov{f})^*p_{Y,2}^*{\mathscr{L}})\notag\\
    &=
    {(\ov{f}\times \ov{f})}_{\dag}(p_{\ov{X},1}^*\wtkai{\scrM}[*H_{X}])\otimes p_{Y,2}^*{\mathscr{L}},\label{reiten}
\end{align}
where the final isomorphism follows from the projection formula.
Moreover, 
since $\TO{p_{\ov{X},1}^!}\simeq {p_{\ov{X},1}^*}(\dim{X})[\dim{X}]$,
we have
\begin{align*}
    {(\ov{f}\times \ov{f})}_{\dag}(p_{\ov{X},1}^*(\wtkai{\scrM}[*H_{X}]))
    &\simeq 
\TO{(\ov{f}\times \ov{f})}_{*}\TO{p_{\ov{X},1}^!}(\wtkai{\scrM}[*H_{X}])(-\dim{X})[-\dim{X}]\\
&\simeq 
\TO{(\ov{f}\times \ov{f})}_{*}\TO{p_{\ov{X},1}^!}(\wtkai{\scrM}[*H_{X}])(-\dim{X})[-\dim{X}]\\
&\simeq 
\TO{p_{Y,1}^{!}}\TO{\ov{f}}_{*}(\wtkai{\scrM}[*H_{X}])(-\dim{X})[-\dim{X}]\\
&\simeq 
{p_{Y,1}^{*}}\TO{f}_{*}(\scrM)(d_{f})[d_{f}],
\end{align*}
where we used the base change formula for the third isomorphism.
Combining it with (\ref{pinknohuku}) and (\ref{reiten}),
we obtain
\begin{align*}
    \TO{f}_{*}(\scrM\otimes f^{*}\mathscr{L})\simeq&
    \TO{\Delta_{Y}^{!}}(\TO{f}_{*}(\scrM)\boxtimes \mathscr{L})(\dim{Y})[\dim{Y}]\\
    \simeq &\TO{f}_{*}(\scrM)\otimes \mathscr{L},
\end{align*}
where the last isomorphism follows from (\ref{huyunookurimono}) (or Proposition~4.58 of \cite{MochiResc}).
\end{proof}

\begin{lem}\label{abare}
Let $F$ be another vector bundle over $X$, $f\colon E\to F$ a morphism and $n_{E}$ (resp. $n_{F}$) the rank of the vector bundle $E$ (resp. $F$).
We denote by ${}^tf\colon F^{\vee}\to E^{\vee}$ its transpose morphism.
For the underlying $R^{\integ}$-module $\scrM$ of an integrable mixed twistor $D$-module on $E$,
we have a natural isomorphism in the category of $R^{\integ}$-modules
\[\whmu{H^{j}\TO{f}_{*}\scrM}\simeq H^{j+n_{f}}(\TO{({}^tf)}^{!}\whmunk{\scrM})(n_{f}),\]
where we put $n_{f}:=n_{E}-n_{F}$.
\end{lem}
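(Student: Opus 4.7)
The plan is to adapt the proof of Lemma~\ref{kaitaku} (itself the $R$-module upgrade of Brylinski's Corollaire~6.7~\cite{BryFou}) to the transpose setting of Lemma~\ref{suza}, working throughout in the derived category of integrable $R$-modules and taking $j$-th cohomology only at the end. The geometric input is the fibre product $E\times_{X}F^{\vee}$, sitting between $F\times_{X}F^{\vee}$ and $E\times_{X}E^{\vee}$ via the morphisms $f\times\mathrm{id}$ and $\mathrm{id}\times{}^{t}\!f$; each fits into a Cartesian square — square~(a) with $f$ on the bottom and vertical arrows the smooth projections $p_{E,F^{\vee}}$, $p_{F}$, and square~(b) with ${}^{t}\!f$ on the bottom and vertical arrows the smooth projections $q_{E,F^{\vee}}$, $q_{E^{\vee}}$. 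The essential analytic input is the pairing identity $\varphi_{F}\circ(f\times\mathrm{id})=\varphi_{E}\circ(\mathrm{id}\times{}^{t}\!f)$ on $E\times_{X}F^{\vee}$, which yields
\[(f\times\mathrm{id})^{*}\calE^{-\varphi_{F}/\lam}\simeq(\mathrm{id}\times{}^{t}\!f)^{*}\calE^{-\varphi_{E}/\lam}.\]

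Starting from $\whmu{\TO{f}_{*}\scrM}\simeq\TO{q_{F^{\vee}}}_{*}(p_{F}^{*}\TO{f}_{*}\scrM\otimes\calE^{-\varphi_{F}/\lam})$, I would apply the smooth base-change of Proposition~14.3.27 of \cite{MTM} along square~(a) to rewrite $p_{F}^{*}\TO{f}_{*}\scrM$ as $\TO{(f\times\mathrm{id})}_{*}p_{E,F^{\vee}}^{*}\scrM$, then invoke the projection formula (Lemma~\ref{projform}) — which applies since $\calE^{-\varphi_{F}/\lam}$ is a smooth integrable MTM on the non-compactified $F\times_{X}F^{\vee}$ — to move $\TO{(f\times\mathrm{id})}_{*}$ outside the tensor product. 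Substituting the pairing identity together with $p_{E,F^{\vee}}=p_{E}\circ(\mathrm{id}\times{}^{t}\!f)$ and $q_{F^{\vee}}\circ(f\times\mathrm{id})=q_{E,F^{\vee}}$, and then applying base change a second time along square~(b), produces $({}^{t}\!f)^{*}\whmunk{\scrM}$ in the derived category. To convert this derived $*$-pullback into $\TO{({}^{t}\!f)}^{!}$ with the correct shift and Tate twist, I would argue as in the proof of Lemma~\ref{kaitaku} that $\calE^{-\varphi_{E}/\lam}$ is non-characteristic with respect to $\mathrm{id}\times{}^{t}\!f$ and apply Corollary~4.56 of \cite{MochiResc} to obtain $(\mathrm{id}\times{}^{t}\!f)^{*}\calE^{-\varphi_{E}/\lam}\simeq\TO{(\mathrm{id}\times{}^{t}\!f)}^{!}\calE^{-\varphi_{E}/\lam}(-n_{f})[-n_{f}]$; propagating this through the computation and taking $H^{j}$ — using the exactness of the Fourier--Laplace transform on $R^{\integ}$-modules, which ensures $H^{j}\whmu{\TO{f}_{*}\scrM}\simeq\whmu{H^{j}\TO{f}_{*}\scrM}$ — then yields the claimed formula.

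The main obstacle, as in the proof of Lemma~\ref{kaitaku}, will be keeping careful track of the Tate twists and cohomological shifts across the chain of isomorphisms, and justifying each step in the MTM framework: in particular, verifying that Proposition~14.3.27 of \cite{MTM} applies to both Cartesian squares (whose vertical arrows are smooth projections of vector bundles), that Lemma~\ref{projform} can be invoked for $\calE^{-\varphi_{F}/\lam}$ regarded as the smooth MTM on $F\times_{X}F^{\vee}$, and that $\calE^{-\varphi_{E}/\lam}$ is genuinely non-characteristic with respect to $\mathrm{id}\times{}^{t}\!f$ so that the final $*$/$!$-conversion of Corollary~4.56 of \cite{MochiResc} applies — noting that ${}^{t}\!f$ itself need not be smooth, so the twist/shift relation between $*$- and $!$-pullback is not automatic and must be extracted from the non-characteristic property of the specific module $\calE^{-\varphi_{E}/\lam}$.
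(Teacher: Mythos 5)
Your plan is essentially the paper's own proof of Lemma~\ref{abare}: the same intermediate correspondence $E\times_{X}F^{\vee}$ with its two Cartesian squares, the pairing identity $\varphi_{F}\circ(f\times\mathrm{id})=\varphi_{E}\circ(\mathrm{id}\times{}^{t}f)$, base change (Proposition~14.3.27 of \cite{MTM}), the projection formula of Lemma~\ref{projform}, and the non-characteristic comparison of $*$- and $!$-pullbacks from \cite{MochiResc}; the appeal to exactness of the Fourier--Laplace transform to commute $H^{j}$ is also how the paper argues in Lemma~\ref{kaitaku}. So the route and all the named ingredients match.

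Two details need repair before the computation closes. First, the sign of the twist and shift: $\mathrm{id}\times{}^{t}f$ has relative dimension $n_{F}-n_{E}=-n_{f}$, and with the normalization used throughout the paper ($\TO{g}^{!}\simeq g^{*}(d_{g})[d_{g}]$ for a non-characteristic $g$, as in the proofs of Lemmas~\ref{koukyou} and \ref{kaitaku}), the conversion reads $(\mathrm{id}\times{}^{t}f)^{*}\calE^{-\varphi/\lam}\simeq\TO{(\mathrm{id}\times{}^{t}f)}^{!}\calE^{-\varphi/\lam}(n_{f})[n_{f}]$, not $(-n_{f})[-n_{f}]$; if you propagate your stated formula (keeping Lemma~\ref{koukyou} with its correct twist $(d_{g})[d_{g}]=(-n_{f})[-n_{f}]$), the chain ends at $H^{j-n_{f}}(\TO{({}^{t}f)}^{!}\whmunk{\scrM})(-n_{f})$ rather than the claimed $H^{j+n_{f}}(\TO{({}^{t}f)}^{!}\whmunk{\scrM})(n_{f})$. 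Second, the order of the last two steps: base change is available only for the twistor functors, and ${}^{t}f$ is in general not non-characteristic for $\whmunk{\scrM}$ (no monodromicity is assumed in this lemma), so you cannot first produce a naive $({}^{t}f)^{*}\whmunk{\scrM}$ downstairs and convert it afterwards. The conversion has to be done on $E\times_{X}F^{\vee}$ before descending: using the smoothness of $\calE^{-\varphi/\lam}$ and Lemma~\ref{koukyou} applied to the pair $(\TO{p}^{!}\scrM,\ \calE^{-\varphi/\lam})$, one rewrites the integrand as $\TO{(\mathrm{id}\times{}^{t}f)}^{!}(\TO{p}^{!}\scrM\otimes\calE^{-\varphi/\lam})$ (the twists cancel), and only then commutes the pushforward to $F^{\vee}$ past $\TO{(\mathrm{id}\times{}^{t}f)}^{!}$ by base change along your square (b). This koukyou-type tensor compatibility is exactly the ingredient your sketch leaves implicit under ``propagating this through the computation'', and it is where the paper spends the corresponding step.
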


\begin{proof}
We consider the following diagram
\[
\xymatrix{
E\ar[dd]^-{f}&E\times_{X} E^{\vee}
\ar[r]^-{q}\ar[l]_-{p}
&E^{\vee}\\
&E\times_{X}F^{\vee}\ar[d]^-{f\times 1}\ar[rd]^-{q''}\ar[lu]^-{p''}\ar[u]^-{1\times {}^tf}& \\
F&F\times_{X} F^{\vee}\ar[r]^-{q'}\ar[l]_-{p'}&F^{\vee}\ar[uu]^-{{}^tf}.
}
\]

Then, in the same way as the argument in the proof of Lemma~\ref{kaitaku},
we have
\begin{align}
    \whmu{H^{j}\TO{f}_{*}\scrM}\simeq &
    H^j\TO{q'}_{*}((p')^*\TO{f}_{*}\scrM\otimes \calE^{-\varphi/\lam})\notag\\
    \simeq &
    H^j\TO{q'}_{*}(\TO(p')^{!}\TO{f}_{*}\scrM\otimes \calE^{-\varphi/\lam})(-n_{F})[-n_{F}]\notag\\
    \simeq &    
    H^j\TO{q'}_{*}(\TO{(f\times 1)}_{*}\TO{(p'')}^{!}\scrM\otimes \calE^{-\varphi/\lam})(-n_{F})[-n_{F}]\notag\\
    \simeq &
    H^j\TO{q'}_{*}\TO{(f\times 1)}_{*}(\TO{(p'')}^{!}\scrM\otimes (f\times 1)^*\calE^{-\varphi/\lam})(-n_{F})[-n_{F}]\notag\\
    \simeq &
H^j\TO{q''}_{*}(\TO{(p'')}^{!}\scrM\otimes (f\times 1)^*\calE^{-\varphi/\lam})(-n_{F})[-n_{F}]\label{odayakani}
\end{align}
where we used the base change formula for the third isomorphism
and Lemma~\ref{projform} for the 4-th isomorphism.
Since $1\times {}^tf$ is non-characteristic with respect to  $\calE^{-\varphi/\lam}_{E\times_{X}E^{\vee}}$,
we have
\begin{align*}
(f\times 1)^{*}\calE^{-\varphi/\lam}_{F\times_{X}F^{\vee}}\simeq& 
(1\times {}^tf)^{*}\calE^{-\varphi/\lam}_{E\times_{X}E^{\vee}}\\
\simeq& 
\TO(1\times {}^tf)^{!}\calE^{-\varphi/\lam}_{E\times_{X}E^{\vee}}
(n_{f})[n_{f}].
\end{align*}
Therefore, we have
\begin{align}
    \TO{(p'')}^{!}\scrM\otimes (f\times 1)^*\calE^{-\varphi/\lam}&\simeq 
    \TO{(1\times {}^tf)}^{!}\TO{p}^{!}\scrM\otimes 
    \TO(1\times {}^tf)^{!}\calE^{-\varphi/\lam}(n_{f})[n_{f}]\notag\\
    &\simeq 
    \TO{(1\times {}^tf)}^{!}(\TO{p}^{!}\scrM\otimes \calE^{-\varphi/\lam})(-n_{f})[-n_{f}]
    (n_{f})[n_{f}]\notag\\
    &\simeq     \TO{(1\times {}^tf)}^{!}(\TO{p}^{!}\scrM\otimes \calE^{-\varphi/\lam}),\notag
\end{align}
where we used Lemma~\ref{koukyou} for the second isomorphism.
Combining it with (\ref{odayakani}),
we have
\begin{align*}
     \whmu{H^{j}\TO{f}_{*}\scrM}\simeq& 
     H^j\TO{q''}_{*}
    \TO{(1\times {}^tf)}^{!}(\TO{p}^{!}\scrM\otimes \calE^{-\varphi/\lam})(-n_{F})[-n_{F}]\\
    \simeq &
    H^j\TO{({}^tf)}^!\TO{q}_{*}(\TO{p}^{!}\scrM\otimes \calE^{-\varphi/\lam})(-n_{F})[-n_{F}]\\
    \simeq &
    H^j\TO{({}^tf)}^!\TO{q}_{*}({p}^{*}\scrM\otimes \calE^{-\varphi/\lam})(n_{f})[n_{f}]\\
    \simeq &
    H^{j+n_{f}}\TO{({}^tf)}^!\whmunk{\scrM}(n_{f}),
\end{align*}
where we used the base change formula for the second isomorphism.
This completes the proof.
\end{proof}

For a vector bundle $E$ over $X$, we use the morphisms $\omega\colon E\times_{X}E\to \CC\times E$, $p\colon E\times_{X}E^{\vee}\to E$ and $\iota\colon E^{\vee}\hookrightarrow \CC^{\vee}\times E^{\vee}$ defined in Section~\ref{maboo} just before Lemma~\ref{scaret}.
We define the terminology: ``monodromic $R$-modules'' in a similar way to Definition~\ref{mondef}.

\begin{lem}\label{moucyotto}
For the underlying $R^{\integ}$-module $\scrM$ of an integrable mixed twistor $D$-module on $E$,
assume that $\scrM$ is monodromic.
Then, we have an isomorphism:
\begin{align}\label{omensoflove}
\whmunk{\scrM}\simeq H^{1}\TO{\iota}^!\whmu{H^{0}\TO{\omega}_{*}p^*\scrM}(1).
\end{align}
\end{lem}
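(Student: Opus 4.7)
The plan is to mimic the proof of the $D$-module statement Lemma~\ref{scaret} in the setting of integrable mixed twistor $D$-modules, substituting Lemma~\ref{abare} for Lemma~\ref{suza} and Lemma~\ref{kaitaku} for Lemma~\ref{donbee}, and carefully tracking the Tate twists and cohomological shifts introduced by these $R$-module versions.

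First I would verify, exactly as in the coordinate computation preceding Lemma~\ref{scaret}, that the composition $p^{\vee}\circ {}^{t}\omega\circ \iota\colon E^{\vee}\to E^{\vee}$ is the identity morphism ($\iota\colon \bld{\zeta}\mapsto (1,\bld{\zeta})$, ${}^{t}\omega\colon (t,\bld{\zeta})\mapsto (t\bld{\zeta},\bld{\zeta})$, and $p^{\vee}$ is projection to the base). Then, working in the derived category of integrable $R$-modules, I would apply Lemma~\ref{abare} to $f=\omega$ (source of rank $n$ and target of rank $1$, so $n_{f}=n-1$) and Lemma~\ref{kaitaku} to $u=p$, obtaining
\begin{align*}
\whmu{\TO{\omega}_{*}p^{*}\scrM}
&\simeq \TO{({}^{t}\omega)}^{!}\whmu{p^{*}\scrM}(n-1)[n-1]\\
&\simeq \TO{({}^{t}\omega)}^{!}\TO{(p^{\vee})}^{!}\whmunk{\scrM}(-1)[-1],
\end{align*}
where the second step uses $\whmu{p^{*}\scrM}\simeq (p^{\vee})^{*}\whmunk{\scrM}$ (Lemma~\ref{kaitaku} in degree $-n$) together with $(p^{\vee})^{*}\simeq \TO{(p^{\vee})}^{!}(-n)[-n]$.

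Applying $\TO{\iota}^{!}$ and using the composition rule $\TO{(g\circ f)}^{!}\simeq \TO{f}^{!}\TO{g}^{!}$ together with the identity of compositions above yields
\[
\TO{\iota}^{!}\whmu{\TO{\omega}_{*}p^{*}\scrM}\simeq \whmunk{\scrM}(-1)[-1].
\]
Taking $H^{1}$ and twisting by $(1)$ gives $H^{1}\TO{\iota}^{!}\whmu{\TO{\omega}_{*}p^{*}\scrM}(1)\simeq \whmunk{\scrM}$. It then remains to check that this $H^{1}$ is unchanged when $\TO{\omega}_{*}p^{*}\scrM$ is replaced by its zeroth cohomology. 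The displayed derived isomorphism shows that $\whmu{\TO{\omega}_{*}p^{*}\scrM}$ has cohomology in degrees $\leq 0$ with top cohomology in degree $0$; since $\TO{\iota}^{!}$ for the codimension-one closed embedding $\iota$ can raise cohomological degree by at most $1$, the standard spectral-sequence argument (cf.\ the end of the proof of Lemma~\ref{scaret}) shows that $H^{1}\TO{\iota}^{!}$ depends only on the top cohomology $H^{0}\whmu{\TO{\omega}_{*}p^{*}\scrM}$, which by exactness of the Fourier-Laplace transform equals $\whmu{H^{0}\TO{\omega}_{*}p^{*}\scrM}$.

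The monodromicity hypothesis enters in two complementary ways: via the $R$-module analogue of Lemma~\ref{borero} it ensures that $H^{0}\TO{\omega}_{*}p^{*}\scrM$ is monodromic on the line bundle $\CC\times E^{\vee}$, so that its Fourier-Laplace transform belongs to the framework of Subsection~\ref{subirrH}, and it keeps all objects in the chain underlying $R^{\integ}$-modules of integrable mixed twistor $D$-modules, so that Lemmas~\ref{abare} and \ref{kaitaku} are applicable. The main technical obstacle I anticipate is the careful bookkeeping of Tate twists and cohomological shifts through the chain, together with verifying the composition rule for $\TO{(-)}^{!}$ at the MTM level for the non-proper $\omega$ and the partially degenerate ${}^{t}\omega$ (which fails to be a closed embedding along $\{\bld{\zeta}=0\}$).
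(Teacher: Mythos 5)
Your proposal is correct and follows essentially the same route as the paper's proof: both rest on Lemma~\ref{abare} (for $\omega$), Lemma~\ref{kaitaku} (for $p$), the identity $p^{\vee}\circ{}^{t}\omega\circ\iota=\mathrm{id}_{E^{\vee}}$, and monodromicity via Lemma~\ref{borero}, with the same Tate-twist bookkeeping yielding the final $(1)$. The only difference is presentational: the paper works cohomology-by-cohomology (using that $\iota$ is non-characteristic for the monodromic module $\whmu{H^{0}\TO{\omega}_{*}p^{*}\scrM}$, so $\TO{\iota}^{!}$ reduces to $\iota^{*}(-1)[-1]$), whereas you run the argument in the derived category and then truncate with the spectral-sequence step, exactly as in the $D$-module Lemma~\ref{scaret}.
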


\begin{proof}
Since $p$ is a projection,
we have
\[\TO{p^{!}}\scrM\simeq p^*\scrM(\dim{X})[\dim{X}].\]
Moreover, since $\whmu{H^{j}{\omega}_{\dag}p^*M}$ is monodromic by Lemma~\ref{borero},
$\iota$ is non-characteristic with respect to $\whmu{H^{j}\TO{\omega}_{*}p^*\scrM}$.
Therefore, we have
\[\TO{\iota}^!\whmu{H^{0}\TO{\omega}_{*}p^*\scrM}\simeq \iota^{*}\whmu{H^{0}\TO{\omega}_{*}p^*\scrM}(-1)[-1].\]
Therefore, we have
\begin{align*}
 H^{1}\TO{\iota}^!\whmu{H^{0}\TO{\omega}_{*}p^*\scrM}
 \simeq &
 H^{1}\TO{\iota}^!(H^{n_{E}-1}\TO{({}^t\omega)}^{!}\whmu{p^*\scrM})(n_{E}-1) \quad (\mbox{by Lemma~\ref{abare}})\\
 \simeq &
 H^{1}\TO{\iota}^!(H^{n_{E}-1}\TO{({}^t\omega)}^{!}\whmu{H^{-n_{E}}\TO{p}^!\scrM})(-n_{E})(n_{E}-1)\\
 \simeq &
H^{1}\TO{\iota}^!(H^{n_{E}-1}\TO{({}^t\omega)}^{!}(H^{-n_{E}}\TO{p^{\vee}}^!\whmunk{\scrM}))(-1)\quad (\mbox{by Lemma~\ref{kaitaku}})\\
\simeq & 
H^{0}\TO(p^{\vee}\circ {}^t\omega\circ \iota)^{!}\whmunk{\scrM}(-1)\\
\simeq &
\whmunk{\scrM}(-1).
\end{align*}

\end{proof}

Let us consider the irregular Hodge filtration of the right hand side of (\ref{omensoflove}).
Remark that if an $R$-module $\scrM_{1}$ on $\CC^{\vee}\times E^{\vee}$ is monodromic with respect to $\CC^{\vee}$-direction, $\iota$ is non-characteristic with respect to $\scrM_{1}$.
In particular, we have $H^{1}\TO{\iota}^{!}\scrM_{1}\simeq \iota^*\scrM_{1}(-1)$.

\begin{lem}\label{tanktop}
Let $\scrM_{1}$ be the underlying $R^{\integ}$-module of an irregular Hodge module on $\CC^{\vee}\times E^{\vee}$
and its underlying $D$-module is denoted by $M_{1}$.
Assume that 
$\scrM_{1}$ (resp. $\RES{\scrM_{1}}$) is monodromic with respect to the $\CC^{\vee}$-direction (resp. $\CC_{\tau}$-direction).
Moreover, assume that $H^{1}\TO{\iota}^{!}\scrM_{1}(=\iota^{*}\scrM_{1}(-1))$ is the underlying $R^{\integ}$-module of an irregular Hodge module on $E^{\vee}$.
Then, for $\alpha\in [0,1)$, we have
\begin{align}
    \label{swing}
\irrF_{\alpha+\bullet}\iota^*M_{1}=\iota^*\irrF_{\alpha+\bullet+1}M_{1},
\end{align}
where we regard $\iota^*M_{1}$ as the underlying $D$-module of $H^{1}\TO{\iota}^{!}\scrM_{1}$.
\end{lem}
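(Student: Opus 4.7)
My plan is to deduce (\ref{swing}) from two compatibilities: (a) the irregular Hodge filtration commutes with the non-characteristic pullback by $\iota$, and (b) the Tate twist $\scrM\mapsto\scrM(-1)$ shifts the irregular Hodge filtration index by $+1$. Since $\iota$ is the inclusion $\{\zeta_0=1\}\times E^\vee\hookrightarrow \CC^\vee\times E^\vee$ and $\scrM_1$ is monodromic along the $\CC^\vee$-direction, the conormal to $\iota$ meets the characteristic variety of $\scrM_1$ only in the zero section, so $\iota$ is non-characteristic; by hypothesis, $H^1\TO{\iota}^!\scrM_1=\iota^*\scrM_1(-1)$ is an irregular Hodge module, so $\irrF_{\alpha+\bullet}\iota^*M_1$ from Definition~\ref{soten} is defined.

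For (a), I would work directly from Definition~\ref{soten}, comparing the rescaled modules and their $V$-filtrations along $\tau=0$. Since $\iota$ does not touch $\tau$ or $\lam$, a comparison of the two rescaling squares involving $\mu\colon(x,\tau,\lam)\mapsto(x,\lam/\tau)$ and the open immersion $j\colon\{\tau\neq 0\}\hookrightarrow\CC_\tau$ yields a canonical isomorphism
\[\RES{(\iota^*\scrM_1)}\simeq(\iota\times\iden_{\tau,\lam})^*\RES{\scrM_1}.\]
Moreover, since $\RES{\scrM_1}$ is monodromic along $\CC_\tau$, an analogue of Proposition~\ref{express} gives the explicit decomposition $V^\gamma_\tau\RES{\scrM_1}=\bigoplus_{\beta\geq\gamma}(\RES{\scrM_1})^\beta$; this decomposition is preserved under pullback by $\iota\times\iden$ because $\tau\eth_\tau-\beta\lam$ commutes with pullback in variables disjoint from $\tau$ and $\lam$. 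Applying $\GR(i^*_{\tau=\lam}(\cdot))$ and using Definition~\ref{soten} then produces
\[\irrF^{\iota^*\scrM_1}_{\alpha+\bullet}\iota^*M_1=\iota^*\irrF^{\scrM_1}_{\alpha+\bullet}M_1.\]

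For (b), at the level of Rees modules of a filtered $D$-module, $\scrM(-1)=\lam^{-1}\scrM$ corresponds to the filtration shift $F'_p=F_{p+1}$, so $\irrF^{\scrM(-1)}_{\alpha+p}M=\irrF^{\scrM}_{\alpha+p+1}M$. For a general well-rescalable $R^\integ$-module, the same shift follows from the elementary identity $V^{-\alpha}_\tau\RES{\scrM(-1)}=(\tau/\lam)\,V^{-\alpha-1}_\tau\RES{\scrM}$, where the factor $\tau/\lam$ becomes $1$ after restricting to $\tau=\lam$ and thus produces precisely the unit index shift in Definition~\ref{soten}. Applying this to $\iota^*\scrM_1$ and combining with (a) gives
\[\irrF_{\alpha+p}\iota^*M_1=\irrF^{\iota^*\scrM_1(-1)}_{\alpha+p}\iota^*M_1=\irrF^{\iota^*\scrM_1}_{\alpha+p+1}\iota^*M_1=\iota^*\irrF_{\alpha+p+1}M_1,\]
which is (\ref{swing}). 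I expect the main obstacle to be the V-filtration part of (a): rigorously checking that the monodromic decomposition of $\RES{\scrM_1}$ pulls back to the monodromic decomposition of $\RES{(\iota^*\scrM_1)}$ is where both monodromicity hypotheses are jointly used, one to make $\iota$ non-characteristic for $\scrM_1$ and the other to give an explicit $V$-filtration that is visibly preserved under the pullback.
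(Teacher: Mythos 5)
Your proposal is correct and follows essentially the same route as the paper's proof: you identify $\RES{(\iota^{*}\scrM_{1})}$ with $\iota^{*}\RES{\scrM_{1}}$, use the monodromicity of $\RES{\scrM_{1}}$ along $\tau$ to write $V^{\gamma}_{\tau}$ as the direct sum of generalized $\tau\eth_{\tau}$-eigenspaces, note this decomposition is preserved by $\iota^{*}$, and then read off the unit index shift coming from the Tate twist after restricting to $\tau=\lam$. Your factor $\tau/\lam$ is exactly the $\lam^{-1}$ appearing in the paper's final displayed identity, so the two arguments coincide up to presentation.
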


\begin{proof}
Consider the rescaling $\RES{(\iota^{*}\scrM_{1})}$ (resp. $\RES{\scrM_{1}}$), which is an object on $\CC_{\tau}\times E^{\vee}$ (resp. $\CC_{\tau}\times \CC^{\vee}\times E^{\vee}$).
We denote by the same symbol $\iota$ the inclusion $\CC_{\tau}\times E^{\vee}\hookrightarrow \CC_{\tau}\times \CC^{\vee}\times E^{\vee}$.
Then, by the definition, it is obvious that we have
\[\RES{(\iota^{*}\scrM_{1})}\simeq \iota^{*}\RES{\scrM_{1}}.\]
Note that for $\gamma\in \RR$ we have
\begin{align}\label{subetega}
    V^{\gamma}_{\tau}(\RES{\scrM_{1}})=\bigoplus_{\beta\geq \gamma}(\RES{\scrM_{1}})^{\gamma},
\end{align}
where $(\RES{\scrM_{1}})^{\gamma}=\bigoplus_{k\geq 0}\Ker(\tau\eth_{\tau}-\gamma)^{k}\subset \RES{\scrM_{1}}$.
Moreover,
it is clear that for $\beta\in \RR$ we have 
\[(\iota^*\RES{M_{1}})^{\beta}=\iota^*(\RES{M_{1}})^{\beta}.\]
Therefore, we have
\begin{align*}
V^{\gamma}_{\tau}\iota^*\RES{\scrM_{1}}=&\bigoplus_{\beta\geq \gamma}\iota^*(\RES{\scrM_{1}})^{\beta}\\
=&\iota^*V^{\gamma}_{\tau}(\RES{\scrM_{1}})\qquad \mbox{(by (\ref{subetega}))}.
\end{align*}
Hence, we obtain
\begin{align*}
    i_{\tau=\lam}^*V^{\beta}_{\tau}(\iota^*\RES{\scrM_{1}}(-1))
    =\lam^{-1}\iota^*(i_{\tau=\lam}^*V^{\beta}_{\tau}(\RES{\scrM_{1}}))),
\end{align*}
in $\iota^*M_{1}[\lam^{\pm 1}]$.
This equality means the equality (\ref{swing}).
\end{proof}

\begin{rem}
In \cite{MochiResc} (see Theorem~1.5 in loc. cit.), Lemma~\ref{tanktop} and some stronger results are shown in a more general situation.
For example, $H^{1}\TO\iota^{!}\scrN$ is always an irregular Hodge module. 
But, we do not need it here.
\end{rem}

\begin{proof}[Proof of Theorem~\ref{takarajima}]
By Lemma~\ref{moucyotto},
we have
\begin{align*}
    \irrF_{p}\whmunk{M}\simeq \irrF_{p-1}H^{1}\iota^\dag\whmu{H^{0}\omega_{\dag}H^{-n}p^{\dag}M}.
\end{align*}
By Lemma~\ref{tanktop},
the RHS is equal to
\begin{align}
\label{otukare}    
\iota^*\irrF_{p}\whmu{H^{0}\omega_{\dag}H^{-n}p^{\dag}M}.
\end{align}
By (\ref{dance}) and Theorem~\ref{prosyuto},
the Hodge filtration (defined by Lemma~\ref{soutou}) of the Fourier-Laplace transform of a monodromic mixed Hodge module on a line bundle coincides with the irregular Hodge filtration (for $\alpha=0$).
Therefore, for $p\in \ZZ$ we have
\begin{align}\label{senriyukumono}
    \irrF_{p}\whmu{H^{0}\omega_{\dag}H^{-n}p^{\dag}M}=F_{p}\whmu{H^{0}\omega_{\dag}H^{-n}p^{\dag}M},
    \end{align}
where the RHS is the Hodge filtration defined by Lemma~\ref{soutou}.
Hence, (\ref{otukare}) is equal to
\[\iota^*F_{p}\whmu{H^{0}\omega_{\dag}H^{-n}p^{\dag}M}.\]
On the other hand, by definition~\ref{ganbaruzo},
we have
\[F_{p}\whmunk{M}=F_{p-1}H^{1}\iota^\dag\whmu{H^{0}\omega_{\dag}H^{-n}p^{\dag}M}.\]
By the definition of the pullback functor $H^{1}\iota^{\dag}$ (between the category of mixed Hodge modules),
we have
\begin{align*}
    F_{p-1}H^{1}\iota^{\dag}\whmu{H^{0}\omega_{\dag}H^{-n}p^{\dag}M}
    \simeq \iota^*F_{p}\whmu{H^{0}\omega_{\dag}H^{-n}p^{\dag}M}.
\end{align*}
Combining these equality together,
we obtain
\[F_{p}\whmunk{M}=\irrF_{p}\whmunk{M}.\]
\end{proof}

Finally, we discuss the relationship between the irregular Hodge filtration and the Hodge filtration of $\whmunk{M}$ ``at infinity''.
Let $\whmunk{\calM}$ be the mixed Hodge module defined in Definition~\ref{ganbaruzo}.
Moreover, let
$\wt{\whmunk{\calM}}$ be the mixed Hodge module which is the unique extension of $\whmunk{\calM}$ to $\PP^n_{\xi}$ such that $\wt{\whmunk{\calM}}=\wt{\whmunk{\calM}}[*D^{\vee}_{\infty}]$.
We denote by $\wt{\whmunk{M}}$ the underlying $D$-module.
Then, we have (by Lemma~\ref{train})
\[\wt{\whmunk{M}}=\whmunk{N},\]
where $N$ is the one defined in the first part of Subsection~\ref{otukaresamadesu}.
By Theorem~\ref{takarajima} and Corollary~\ref{nonnonbaa}, we have the following.

\begin{cor}\label{rattlechain}
We have
\[\irrF_{p}\whmunk{N}=F_{p}\wt{\whmunk{M}},\]
for any $p\in \ZZ$.
In particular, the irregular Hodge filtration $\{\irrF_{p}\whmunk{N}\}_{p\in \ZZ}$ (for $\alpha=0$) is the Hodge filtration of a mixed Hodge module.
\end{cor}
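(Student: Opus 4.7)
The plan is to exploit the fact that, on the common underlying $D$-module $\whmunk{N}=\wt{\whmunk{M}}$, we will have two filtrations which are both ``localizations along $D^{\vee}_{\infty}$'' and which agree on the open part $E^{\vee}$; by Proposition~\ref{localizfac}(v), any such localized filtration is determined by its restriction to the complement of $D^{\vee}_{\infty}$, so the two must coincide everywhere.

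First I would recall that by Lemma~\ref{train} we have $\whmunk{N}=j^{\vee}_{*}\whmunk{M}$, and so restricting to $E^{\vee}$ gives $\whmunk{N}|_{E^{\vee}}=\whmunk{M}$; likewise by the very construction of $\wt{\whmunk{\calM}}$ we have $\wt{\whmunk{M}}|_{E^{\vee}}=\whmunk{M}$ as the underlying $D$-module of $\whmunk{\calM}$. Under these identifications, Theorem~\ref{takarajima} says precisely that
\[
\irrF_{p}\whmunk{N}\big|_{E^{\vee}}=\irrF_{p}\whmunk{M}=F_{p}\whmunk{M}=F_{p}\wt{\whmunk{M}}\big|_{E^{\vee}}
\]
for every $p\in\ZZ$. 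So the two filtrations on $\wt{\whmunk{M}}$ agree as filtrations of $O_{E^{\vee}}$-modules after restriction to $E^{\vee}$; equivalently, the two stupid localizations $(\wt{\whmunk{M}},\irrF_{\bullet}\wt{\whmunk{M}})(*D^{\vee}_{\infty})$ and $(\wt{\whmunk{M}},F_{\bullet}\wt{\whmunk{M}})(*D^{\vee}_{\infty})$ coincide.

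Next I would invoke Corollary~\ref{nonnonbaa} with $\alpha=0$, which gives
\[
(\whmunk{N},\irrF_{\bullet}\whmunk{N})=(\whmunk{N},\irrF_{\bullet}\whmunk{N})[*D^{\vee}_{\infty}],
\]
and observe that by the very definition of $\wt{\whmunk{\calM}}$ as the extension with $\wt{\whmunk{\calM}}=\wt{\whmunk{\calM}}[*D^{\vee}_{\infty}]$, we also have
\[
(\wt{\whmunk{M}},F_{\bullet}\wt{\whmunk{M}})=(\wt{\whmunk{M}},F_{\bullet}\wt{\whmunk{M}})[*D^{\vee}_{\infty}].
\]
By (v) of Proposition~\ref{localizfac} (which holds in the $R$-module setting as well, and whose irregular analogue is encoded precisely in the strict specializability statement of Corollary~\ref{nonnonbaa}), the filtered $D$-module obtained after the operation $[*D^{\vee}_{\infty}]$ is determined by the stupid localization along $D^{\vee}_{\infty}$. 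Since we have shown these two stupid localizations agree, both filtered $D$-modules must coincide. This yields $\irrF_{p}\whmunk{N}=F_{p}\wt{\whmunk{M}}$ for every $p\in\ZZ$.

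The main subtlety is to make sure that the formula ``$\sum_{k\geq 0}\pa^{k}F_{p-k}V^{\geq -1}M$'' really applies uniformly to both the usual Hodge filtration (via Proposition~\ref{localizfac}) and to the irregular Hodge filtration (via Corollary~\ref{nonnonbaa}, together with the fact that $V^{\geq -1}\irrF_{\bullet}\whmunk{N}$ is determined on the stupid localization by the strict specializability property); once this reconstruction principle is established, the second statement of the corollary, namely that $\{\irrF_{p}\whmunk{N}\}_{p\in\ZZ}$ is the Hodge filtration of the mixed Hodge module $\wt{\whmunk{\calM}}$, is an immediate consequence.
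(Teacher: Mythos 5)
Your proposal is correct and follows essentially the same route as the paper, which deduces the corollary by combining Theorem~\ref{takarajima} (agreement of the two filtrations on $E^{\vee}$) with Corollary~\ref{nonnonbaa} (the localization/strict specializability of $\irrF_{\bullet}\whmunk{N}$ along $D^{\vee}_{\infty}$), so that both filtrations are reconstructed from the same stupid localization as in Proposition~\ref{localizfac}. The reconstruction principle you flag as the ``main subtlety'' is exactly what Corollaries~\ref{sincyan} and \ref{cyobi} supply on the irregular side, so no gap remains.
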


\footnotesize
\bibliographystyle{alpha}
\bibliography{reference}
\Addresses

\end{document}